\providecommand{\tabularnewline}{\\}
\theoremstyle{plain}
\newtheorem{thm}{\protect\theoremname}
\theoremstyle{plain}
\newtheorem{prop}[thm]{\protect\propositionname}
\theoremstyle{plain}
\newtheorem{cor}[thm]{\protect\corollaryname}
\theoremstyle{plain}
\newtheorem{lem}[thm]{\protect\lemmaname}
\newenvironment{proof}[1][\protect\proofname]{\par
	\normalfont\topsep6\p@\@plus6\p@\relax
	\trivlist
	\itemindent\parindent
	\item[\hskip\labelsep\scshape #1]\ignorespaces
}{%
	\endtrivlist\@endpefalse
}
\providecommand{\proofname}{Proof}
\theoremstyle{definition}
\newtheorem{defn}[thm]{\protect\definitionname}
\providecommand{\corollaryname}{Corollary}
\providecommand{\definitionname}{Definition}
\providecommand{\lemmaname}{Lemma}
\providecommand{\propositionname}{Proposition}
\providecommand{\theoremname}{Theorem}
\begin{document}

\title{High dimensional logistic entropy clustering}

\author{Edouard GENETAY\thanks{CREST, ENSAI, Univ. Rennes, LumenAI }, Adrien
SAUMARD\thanks{CREST, ENSAI, Univ. Rennes} and R{'e}mi COULAUD\thanks{LMO, Univ. Paris-Saclay, SNCF} }
\maketitle
Minimization of the (regularized) entropy of classification probabilities
is a versatile class of discriminative clustering methods. The classification
probabilities are usually defined through the use of some classical
losses from supervised classification and the point is to avoid modelisation of the full data distribution by just optimizing the law of the labels conditioned on
the observations. We give the first theoretical study of such methods,
by specializing to logistic classification probabilities. We prove
that if the observations are generated from a two-component isotropic
Gaussian mixture, then minimizing the entropy risk over a Euclidean
ball indeed allows to identify the separation vector of the mixture.
Furthermore, if this separation vector is sparse, then penalizing
the empirical risk by a $\ell_{1}$-regularization term allows to
infer the separation in a high-dimensional space and to recover its
support, at standard rates of sparsity problems. Our approach is based
on the local convexity of the logistic entropy risk, that occurs if
the separation vector is large enough, with a condition on its norm
that is independent from the space dimension. This local convexity
property also guarantees fast rates in a classical, low-dimensional
setting.


\section{Introduction}

The clustering problem can be described as follows: given a measurable
space $\mathcal{X}$, a sample $(X_{1},...,X_{n})\in\mathcal{X}^{n}$,
and an integer $K\geq2$, define a (random) labelling function $Y:\mathcal{X}\rightarrow\{1,...,K\}$.
In particular, to each data $X_{i}$, associate a label $Y_{i}$.
If the function $Y$ is deterministic, then the task is termed ``hard
clustering''. If the function $Y$ is random, the distribution of
the labels $Y(x)$, for $x\in\mathcal{X}$, being characterized by
the uplets $(\mathbb{P}(Y(x)=1),...,\mathbb{P}(Y(x)=K))$, then the
clustering task is said to be ``soft''. In the soft clustering case,
a common approach - called the modelling approach - is to model the
distribution of the data, typically as a mixture distribution, and
to directly relate the probabilities $(\mathbb{P}(Y(x)=1),...,\mathbb{P}(Y(x)=K))$
to the parameters of the mixture~\cite{MR3967046}. One can then
reduce to a hard clustering by assigning each point $x$ to the maximizer
of classification probabilities (or choose one at random amongst the
maximizers if it is non-unique).  Hard clustering algorithms include
the celebrated K-means~\cite{lloyd1982least,steinhaus1957kmeans,macqueen1967kmeans},
hierachical clustering~\cite{jain1999data}, spectral clustering~\cite{ng2002spectral})
among others.

Particularly developed in the machine learning community for its flexibility
when addressing complex data, the so-called ``discriminative approach''
to clustering amounts to model the classification probabilities $(\mathbb{P}(Y(x)=1),...,\mathbb{P}(Y(x)=K))$,
which can be understood as the conditional probabilities of the labels
with respect to the position $x$. Proceeding this way indeed avoids
the modelling of the whole distribution of data and often reduces
to encode in the classification probabilities, the frontiers separating
the clusters. In general, this is done through the use of classical
learning losses such as the logistic, the Hinge or the Conditional
Random Fields loss~\cite{dai2010minimum,gomes2010discriminative}.
More formally, one puts the constraint of $\mathbb{P}(Y(x)=k)$, $k\in\{1,...,K\}$,
being proportional to $\exp(\ell(\beta_{k},x))$, for a vector $\beta_{k}$
and a loss $\ell$. For instance the logistic loss gives classification
probabilities proportional to $\exp(w_{k}^{t}x+b_{k})$ and the Hinge
loss (for $K=2$) induces probabilities of a form proportional to
$\exp(-[1-(w_{k}^{t}\varphi(x)+b_{k})]_{+})$ for some feature map
$\varphi$ and with $(w_{1},b_{1})=(-w_{2},-b_{2})$ in this binary
case.

In addition, these losses were primarily introduced for supervised
learning and in order to transfer them to the unsupervised setting,
one has to define what would be a desirable (unobserved) label. Arguably,
when classifying data, one would prefer to be as sure as possible
of its cluster choice. This is equivalent to saying that the maximum
of classification probabilities would be as close to one as possible.
Hence, a natural criterion to infer a labelling function, would be
to define $\tilde{Y}$ through the probabilities $\mathbb{P}(\tilde{Y}(x)=k)=Z_{\tilde{\beta}}^{-1}(x)\exp(\ell(\tilde{\beta}_{k},x))$,
with a normalizing constant $Z_{\tilde{\beta}}(x)=\sum$$_{k=1}^{K}\exp(\ell(\tilde{\beta}_{k},x))$,
such that
\begin{equation}
(\tilde{\beta}_{1},...,\tilde{\beta}_{K})\in\arg\max_{(\beta_{1},...,\beta_{K})}\left\{ \frac{1}{n}\sum_{i=1}^{n}\frac{1}{Z_{\beta}(x_{i})}\max_{k\in\left\{ 1,...,K\right\} }[\exp(\ell(\beta_{k},x_{i}))]\right\} .\label{eq:crit_max}
\end{equation}
The associated theoretical target is $\mathbb{P}(Y_{*}(x)=k)=\exp(\ell(\beta_{*,k},x))$
with,
\[
(\beta_{*,1},...,\beta_{*,K})\in\arg\max_{(\beta_{1},...,\beta_{K})}\left\{ \mathbb{E}\left[\frac{1}{Z_{\beta}(X)}\max_{k\in\left\{ 1,...,K\right\} }[\exp(\ell(\beta_{k},X))]\right]\right\} ,
\]
where $X$ follows the unknown - and not modeled - distribution of
data.

But the maximum is not a smooth function and it may cause difficulties
when trying to optimize (\ref{eq:crit_max}). As a smooth proxy, one
can try to minimize the entropy of the classification probabilities,
since it achieves its minimum value when the latter probabilities
are all equal to zero or one. This amounts to search for a labelling
function $\hat{Y}$ satisfying $\mathbb{P}(\hat{Y}(x)=k)=Z_{\hat{\beta}}^{-1}(x)\exp(\ell(\hat{\beta}_{k},x))$
with
\begin{equation}
(\hat{\beta}_{1},...,\hat{\beta}_{K})\in\arg\min_{(\beta_{1},...,\beta_{K})}\left\{ \frac{1}{n}\sum_{i=1}^{n}{\rm Ent}\left\{ \mathbb{P}(Y_{\beta}(x_{i})=1),...,\mathbb{P}(Y_{\beta}(x_{i})=K)\right\} \right\} ,\label{eq:crit_entropy_intro}
\end{equation}
where
\begin{equation}
{\rm Ent}\left\{ \mathbb{P}(Y_{\beta}(x_{i})=1),...,\mathbb{P}(Y_{\beta}(x_{i})=K)\right\} =\sum_{k=1}^{K}-\frac{\exp(\ell(\beta_{k},x_{i}))}{Z_{\beta}(x_{i})}\log\left(\frac{\exp(\ell(\beta_{k},x_{i}))}{Z_{\beta}(x_{i})}\right).\label{eq:def_entropy_intro}
\end{equation}
Often, one has to restrict the search among vectors $(\beta_{1},...,\beta_{K})$
in a compact set, or to add to the entropy a regularization term encoding
the complexity of the vectors $(\beta_{1},...,\beta_{K})$~\cite{gomes2010discriminative,dai2010minimum}.
In this second formulation, the theoretical target $(\beta_{0,1},...,\beta_{0,K})$
of estimation is,

The use of entropy terms in semi-supervised and unsupervised learning
is indeed natural and has been the object of active research~\cite{grandvalet2005semi,gomes2010discriminative,dai2010minimum,sugiyama2011information,sugiyama2014information,shi2012information,li2004minimum,aldana2015clustering,muller2012information}.
Furthermore, this approach is at the core of some state-of-the-art
deep clustering approaches~\cite{jabi2019deep}. Another fruitful
approach in discriminative clustering consists in considering convex
relaxations of some initial, untractable criteria and this methodology
often comes with strong theoretical guarantees~\cite{flammarion2017robust,joulin2010discriminative,bach2007diffrac,peng2007approximating,giraud2021introduction,bunea2016pecok,bunea2020model,giraud2019partial,mixon2017clustering,royer2017adaptive,chen2021cutoff}.

The starting point of our work consists in the following observation:
to our knowledge, no theoretical guarantee - of the type of convergence
rates - exists in the literature for (regularized) minimum entropy
estimators (\ref{eq:crit_entropy_intro}). This a weakness compared
to other approaches, such as convex relaxations techniques for instance.
But from a practical perspective, estimators of the form of (\ref{eq:crit_entropy_intro})
have already proved to be efficient and flexible - allowing for instance
feature maps embedding and the use of deep architectures - and the
lack of theoretical studies needs to be filled.

We consider the unsupervised classification of a bipartite high-dimensional
Gaussian mixture, with sparse means. This framework is indeed a good
benchmark, since on the one hand, it is sufficiently simple to allow
us to understand the nature of the target $(\beta_{0,1},...,\beta_{0,K})$
- with $K=2$ and $\beta_{0,1}=-\beta_{0,2}$ in our bipartite framework
- and to investigate the rate of convergence of estimators of the
form of (\ref{eq:crit_entropy_intro}), suitably regularized by a
$\ell_{1}$-penalty. On the other hand, the two-component high-dimensional
Gaussian mixture has received recently at lot of attention~\cite{bouveyron2014discriminative,azizyanminimax,ndaoud2018sharp,li2017minimax,jin2017phase,fan2018curse,cai2019chime,azizyan2015efficient,jin2016influential,brennan2019average,loffler2020computationally}.
Let us emphasize that our goal is not \textit{a priori} to provide
a state-of-the-art method, specifically designed to solve the high-dimensional
Gaussian mixture clustering, but to explore for the first time the
theoretical behavior of discriminative estimators that minimize the
(regularized) classification entropy and see how they can adapt to
a sparse setting.

\section{Some notations and definitions\label{sec:Notations}}

Let $a:=\left(a_{1},...,a_{d}\right)\in\mathbb{R}^{d}$ and $X$ be
a random variable valued in $\mathbb{R}^{d}$, with distribution $P$.
More precisely $X:=\varepsilon Z$ with $\varepsilon\sim\textrm{Rad}\left(\frac{1}{2}\right)$
and $Z\sim\mathcal{N}\left(a,I_{d}\right)$ a Gaussian vector independent
from $\varepsilon$, with normalized variance equal to the identity
matrix $I_{d}$. Take $n\in\mathbb{N}^{*}$, $X^{(1)},...,X^{(n)}$
are observations of $X$ independent and identically distributed according
to $P$. Our goal is to estimate the labelling function $Y_{*}(x)={\rm sign}(x^{t}a)$,
or its opposite, which gives the same hard clustering. This amounts
to estimate the separation vector $a$. To do this, we will use an
entropy criterion.

Set the logistic probability $p_{\beta}\left(X\right):=1/(1+e^{X^{t}\beta})$
where $\beta\in\mathbb{R}^{d}$ and its complementary probability
$q_{\beta}\left(X\right):=e^{X^{t}\beta}/(1+e^{X^{t}\beta})$. The
\textit{logistic entropy} $\rho_{\beta}$ is defined as follows, $\rho_{\beta}\left(X\right):=\rho\left(\beta^{t}X\right)=-p_{\beta}\left(X\right)\log p_{\beta}\left(X\right)-q_{\beta}\left(X\right)\log q_{\beta}\left(X\right)$.
The associated risk is $\mathcal{R}\left(\beta\right):=\mathbb{E}\left[\rho_{\beta}\left(X\right)\right]$.
The latter expectation will also be denoted $P\rho_{\beta}$ for short.
Let $\left\Vert \cdot\right\Vert _{1}$, $\left\Vert \cdot\right\Vert _{2}$
and $\left\Vert \cdot\right\Vert _{\infty}$ be respectively the $L_{1}$,$L_{2}$
and $L_{\infty}$-norm, and denote $B_{1}\left(0,R\right)$, $B_{2}\left(0,R\right)$
and $B_{\infty}\left(0,R\right)$ their corresponding balls centered
at $0$ with radius $R$ in $\mathbb{R}^{d}$. We consider the minimizer
$\beta_{0}$ of the risk $\mathcal{R}\left(\beta\right)$ over a $L_{2}$-ball
$B_{2}\left(0,R\right)$ - where the radius $R$ will be fixed latter
-, $\beta_{0}\in\underset{\beta\in B_{2}\left(0,R\right)}{\arg\min}\left\{ \mathcal{R}\left(\beta\right)\right\} $,
with excess risk $\mathcal{E}\left(\beta,\beta_{0}\right):=\mathcal{R}\left(\beta\right)-\mathcal{R}\left(\beta_{0}\right)$,
for $\beta\in B_{2}\left(0,R\right)$. The empirical distribution
of $X^{(1)},...,X^{(n)}$ is $P_{n}:=\frac{1}{n}\sum_{i=1}^{n}\delta_{X^{(i)}}$,
where $\delta_{X^{(i)}}$ is the Dirac distribution on $X^{(i)}$,
and the quantity$\hat{\mathcal{R}}_{n}\left(\beta\right):=P_{n}\rho_{\beta}=\frac{1}{n}\sum_{i=1}^{n}\rho_{\beta}\left(X^{(i)}\right)$
is the empirical counterpart of $\mathcal{R}\left(\beta\right)$,
called the empirical risk.

We denote by $\gamma$ the probability density function of a centered
standard real Gaussian variable $\mathcal{N}\left(0,1\right)$. $\Phi$
is its cumulative distribution function and $\Phi^{c}:t\mapsto\int_{t}^{\infty}\gamma\left(u\right)du$
the tail distribution of the density $\gamma$. In addition, we write
$G$ the so-called Gaussian Mill's ratio $G\left(x\right):=\frac{\Phi^{c}\left(x\right)}{\gamma\left(x\right)}$.
In this article $\alpha:x\mapsto-\frac{e^{x}}{\left(1+e^{x}\right)^{2}}\left(1+x\frac{1-e^{x}}{1+e^{x}}\right)$
and $x_{1}$ is the unique element of $\left\{ x>0:\alpha(x)=0\right\} $,
satisfying $x_{1}\in[1.54,1.55]$.

$\forall u,v\in\mathbb{R},u\wedge v:=\min\left(u,v\right)$ and $u\lor v:=\max\left(u,v\right)$.
For a vector $\beta=(\beta_{1},...,\beta_{p})^{t}\in\mathbb{R}^{p}$,
we define its support as the set $S$ of indices such that $S=\{i\in\{1,...,p\};\beta_{i}\neq0\}$.
The vector $\beta$ is said to be $s$-sparse if ${\rm Card}(S)\leq s.$
Furthermore, for a set of indices $I\subset\{1,...,p\}$, we denote
$\beta^{I}\in\mathbb{R}^{p}$ the vector such that $\beta_{i}^{I}=\beta_{i}$
if $i\in I$ and $\beta_{j}^{I}=0$ if $j\not\in I$.

\section{Minimising the risk over a $L_{2}$-ball}

Recall that 
\[
\beta_{0}\in\underset{\beta\in B_{2}\left(0,R\right)}{\arg\min}\left\{ \mathcal{R}\left(\beta\right)\right\} ,
\]
where the radius $R$ will be fixed later. Let us investigate the
geometry of the risk $\mathcal{R}$ defined by the logistic entropy.
\begin{prop}
\label{lem:minimum_of_risk_on_a_ball}The risk is symmetric, $\mathcal{R}\left(\beta\right)=\mathcal{R}\left(-\beta\right)$,
and the risk value $\mathcal{R}\left(\beta\right)$ with $\left\Vert \beta\right\Vert _{2}=r$
fixed is decreasing with respect to $\left|\beta^{t}a\right|$.
\end{prop}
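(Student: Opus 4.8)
The plan is to reduce the whole statement to a one-dimensional computation and to exploit the evenness of the scalar logistic entropy $\rho$. Write $\rho(u)=-p(u)\log p(u)-q(u)\log q(u)$ with $p(u)=1/(1+e^{u})$ and $q(u)=1-p(u)$; since $p(-u)=q(u)$, the two terms simply swap and $\rho$ is even. The symmetry $\mathcal{R}(\beta)=\mathcal{R}(-\beta)$ then follows at once, as $\mathcal{R}(-\beta)=\mathbb{E}[\rho(-\beta^{t}X)]=\mathbb{E}[\rho(\beta^{t}X)]=\mathcal{R}(\beta)$. For the monotonicity assertion, I would first eliminate the Rademacher sign: conditioning on $Z$ in $X=\varepsilon Z$ and averaging over $\varepsilon$, evenness of $\rho$ yields $\mathcal{R}(\beta)=\mathbb{E}_{Z}[\rho(\beta^{t}Z)]$. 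Because $\beta^{t}Z\sim\mathcal{N}(\beta^{t}a,\|\beta\|_{2}^{2})$, the risk depends on $\beta$ only through $m:=\beta^{t}a$ and $r:=\|\beta\|_{2}$, so that with $T\sim\mathcal{N}(0,1)$,
\[
\mathcal{R}(\beta)=g(m),\qquad g(m):=\mathbb{E}\bigl[\rho(m+rT)\bigr].
\]
At fixed $r$ it therefore suffices to show that $g$ is even and decreasing on $[0,\infty)$.

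Evenness of $g$ is immediate from evenness of $\rho$ together with $T\stackrel{d}{=}-T$. For the monotonicity I would differentiate. A short computation gives $p'=-pq$ and $q'=pq$, whence
\[
\rho'(u)=pq\,\log\frac{p}{q}=-u\,\frac{e^{u}}{(1+e^{u})^{2}}=:-\psi(u),
\]
where $\psi(u)=u\,h(u)$ with $h(u)=e^{u}/(1+e^{u})^{2}$. One checks that $h$ is even and strictly positive, so $\psi$ is odd and nonnegative on $[0,\infty)$. Since $\rho'$ is bounded and continuous, differentiating under the expectation is legitimate and, writing the $\mathcal{N}(m,r^{2})$ density as $\tfrac{1}{r}\gamma(\tfrac{\,\cdot\,-m}{r})$,
\[
g'(m)=\mathbb{E}\bigl[\rho'(m+rT)\bigr]=-\int_{\mathbb{R}}\psi(u)\,\frac{1}{r}\gamma\!\left(\frac{u-m}{r}\right)du.
\]

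The crux is to show $g'(m)\le0$ for $m>0$. The tempting route of symmetrizing in $T$ and trying to prove $\psi(m+s)+\psi(m-s)\ge0$ pointwise is precisely where the obstacle sits, and it fails: $\psi$ is not monotone on $[0,\infty)$, since it vanishes at $0$, rises, and then decays back to $0$, so $\psi(m+s)$ cannot be bounded below by $\psi(s-m)$ when $s>m$. Instead I would fold the integral at the origin in the variable $u$. Using that $\psi$ is odd and $\gamma$ is even, the substitution $u\mapsto-u$ on $(-\infty,0)$ gives
\[
\int_{\mathbb{R}}\psi(u)\,\frac{1}{r}\gamma\!\left(\frac{u-m}{r}\right)du=\int_{0}^{\infty}\psi(u)\,\frac{1}{r}\left[\gamma\!\left(\frac{u-m}{r}\right)-\gamma\!\left(\frac{u+m}{r}\right)\right]du.
\]
Now the sign is transparent: for $u>0$ one has $\psi(u)\ge0$, and for $m>0$ the inequality $|u-m|<u+m$ together with the even, unimodal shape of $\gamma$ gives $\gamma(\tfrac{u-m}{r})>\gamma(\tfrac{u+m}{r})$. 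Hence the integrand is nonnegative, so $g'(m)\le0$ and $g$ decreases on $[0,\infty)$; combined with evenness this shows that $\mathcal{R}(\beta)=g(\beta^{t}a)$ is decreasing in $|\beta^{t}a|$ at fixed $\|\beta\|_{2}$, as claimed. The key idea is to move the symmetrization from the Gaussian noise onto the sign of the argument of $\psi$, so that the non-monotone odd factor is paired with a cleanly signed difference of Gaussian weights.
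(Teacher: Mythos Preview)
Your proof is correct and follows essentially the same strategy as the paper: reduce to a one-dimensional function of $\mu=\beta^{t}a$ at fixed $r=\|\beta\|_{2}$, differentiate under the expectation to obtain $\partial_{\mu}\mathcal{R}=-\mathbb{E}[\psi(\mu+rN)]$ with the same odd function $\psi(u)=u\,e^{u}/(1+e^{u})^{2}$, and then show this expectation has the sign of $\mu$ using the oddness of $\psi$ together with the even, unimodal shape of the Gaussian density.

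The one genuine difference is in how the sign of $\mathbb{E}[\psi(\mu+rN)]$ is established. The paper isolates this as a separate technical lemma (its Lemma~\ref{lem:technical_lemma_same_sign_as_mu}) and proves it by folding at the noise variable $N=0$, which then forces a further case split of the negative half into three subintervals $(0,\mu/r)$, $(\mu/r,2\mu/r)$, $(2\mu/r,\infty)$ with two separate substitutions. You instead change variable to $u=\mu+rN$ first and fold at $u=0$, the zero of $\psi$, which immediately pairs $\psi(u)\ge0$ with the cleanly signed difference $\gamma((u-\mu)/r)-\gamma((u+\mu)/r)$. Your route is shorter and makes the mechanism more transparent; the paper's version has the minor advantage of being stated for a general symmetric unimodal noise $U$, but your argument extends verbatim to that setting as well.
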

Proposition \ref{lem:minimum_of_risk_on_a_ball} states that the risk
is symmetric around zero, and that its values on a sphere are increasing
with respect to the distance to the line $\mathbb{R}a$. Its proof
can be found in Section \ref{subsec:Proofs_main_results}.
\begin{prop}
\label{lem:minimum_risk_on_a_line}The function $\lambda\mapsto\mathcal{R}\left(\lambda\beta\right)$
is decreasing for $\lambda\in\mathbb{R}_{+}$.
\end{prop}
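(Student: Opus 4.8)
The plan is to strip the Rademacher sign out of the risk, reducing $\mathcal{R}(\lambda\beta)$ to an expectation of a scalar profile evaluated at a non-degenerate Gaussian, and then to obtain the monotonicity by a pointwise argument inside the expectation. So I would first record the elementary variations of the one-dimensional profile $\rho$. Writing $H(t)=-t\log t-(1-t)\log(1-t)$ for the binary entropy and $p(u):=1/(1+e^{u})$, $q(u):=e^{u}/(1+e^{u})$ for the scalar logistic probabilities, one has $\rho(u)=H(q(u))$. Since $p(-u)=q(u)$, the profile $\rho$ is even. Differentiating, using $q'(u)=p(u)q(u)$ together with $H'(q)=\log(p/q)=-u$, gives $\rho'(u)=-u\,p(u)q(u)$. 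As $p(u)q(u)>0$, this shows that $\rho$ is strictly decreasing on $(0,\infty)$, strictly increasing on $(-\infty,0)$, with its unique maximum at $0$.

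Next I would eliminate $\varepsilon$. By definition $X=\varepsilon Z$, so $\beta^{t}X=\varepsilon\,\beta^{t}Z$, and since $\rho$ is even and $\varepsilon\in\{-1,1\}$, we have the pathwise identity $\rho(\beta^{t}X)=\rho(\beta^{t}Z)$. Hence $\mathcal{R}(\beta)=\mathbb{E}[\rho(\beta^{t}Z)]$ with $Z\sim\mathcal{N}(a,I_{d})$, and in particular
\[
\mathcal{R}(\lambda\beta)=\mathbb{E}\!\left[\rho\!\left(\lambda\,\beta^{t}Z\right)\right],\qquad\lambda\in\mathbb{R}_{+}.
\]

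The monotonicity then follows pointwise. Fix $\beta\neq0$ and a realisation of $Z$, and set $u:=\beta^{t}Z$. By evenness $\rho(\lambda u)=\rho(\lambda|u|)$, and since $\lambda\mapsto\lambda|u|$ is increasing on $\mathbb{R}_{+}$, the first step shows that $\lambda\mapsto\rho(\lambda u)$ is non-increasing, and strictly decreasing whenever $u\neq0$. Because $\beta\neq0$, the variable $\beta^{t}Z$ is a non-degenerate Gaussian, so $\mathbb{P}(\beta^{t}Z=0)=0$; thus for $0\le\lambda_{1}<\lambda_{2}$ one has $\rho(\lambda_{1}\beta^{t}Z)\ge\rho(\lambda_{2}\beta^{t}Z)$ almost surely, with strict inequality on an event of full probability. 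Integrating yields $\mathcal{R}(\lambda_{1}\beta)>\mathcal{R}(\lambda_{2}\beta)$, which is the claim. Equivalently, since $\rho'$ is bounded, dominated convergence legitimises differentiation under the integral and gives $\tfrac{d}{d\lambda}\mathcal{R}(\lambda\beta)=-\lambda\,\mathbb{E}[(\beta^{t}Z)^{2}\,p_{\lambda\beta}(Z)q_{\lambda\beta}(Z)]<0$ for $\lambda>0$.

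I do not expect a genuine difficulty here: the only delicate points are the reduction that makes $\varepsilon$ disappear, which rests solely on $\rho$ being even, and the strictness of the decrease, for which one must discard the null event $\{\beta^{t}Z=0\}$ and hence assume $\beta\neq0$ (for $\beta=0$ the function is trivially constant).
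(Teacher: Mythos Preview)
Your proof is correct. Both you and the paper first remove the Rademacher sign via the evenness of $\rho$, so that $\mathcal{R}(\lambda\beta)=\mathbb{E}[\rho(\lambda\,\beta^{t}Z)]$, and then establish strict decrease in $\lambda$. The paper's route is to introduce the parametrisation $\mathcal{R}(\mu,r)$ with $\mu=\beta^{t}a$, $r=\|\beta\|_{2}$, apply the chain rule along the line $\mu=ru$, and differentiate under the integral to obtain
\[
\frac{\partial}{\partial r}\mathcal{R}(ru,r)=-\mathbb{E}\!\left[\frac{rN_{u}^{2}e^{rN_{u}}}{(1+e^{rN_{u}})^{2}}\right]<0,
\]
which is exactly your alternative derivative formula in a different dress. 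Your primary argument is more elementary: once you know $\rho$ is even with $\rho'(u)=-u\,p(u)q(u)$, the map $\lambda\mapsto\rho(\lambda|u|)$ is pointwise non-increasing and strictly so off the null set $\{\beta^{t}Z=0\}$, and integrating finishes. This bypasses the $(\mu,r)$ bookkeeping and the explicit differentiation, at the modest cost of having to invoke non-degeneracy of $\beta^{t}Z$ for strictness (which you do). Either way the content is the same; your version is just a shorter path to it.
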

In Proposition \ref{lem:minimum_risk_on_a_line}, it is proved that
the risk is decreasing on semi-lines starting at zero. For a proof
of this result, see Section \ref{subsec:Proofs_main_results}. From
Propositions \ref{lem:minimum_of_risk_on_a_ball} and \ref{lem:minimum_risk_on_a_line},
we characterize the minimizers of the risk over a $L_{2}$-ball. 
\begin{cor}
\label{prop:unicity_minimum_on_half_ball}The minimum of $\mathcal{R}\left(\beta\right)$
on $B_{2}\left(0,R\right)$ is reached at $\pm\beta_{0}$ where $\beta_{0}:=Ra/\left\Vert a\right\Vert _{2}$.
\end{cor}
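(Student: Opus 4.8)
The plan is to combine the two preceding propositions with a Cauchy--Schwarz argument, first reducing the minimization to the boundary sphere and then to the direction of $a$.

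First I would argue that any minimizer must lie on the boundary $\left\{ \left\Vert \beta\right\Vert _{2}=R\right\} $. Indeed, take any $\beta\in B_{2}\left(0,R\right)$ with $\beta\neq0$ and $\left\Vert \beta\right\Vert _{2}=r<R$. Setting $\lambda=R/r>1$, the point $\lambda\beta$ lies on the sphere of radius $R$, and Proposition \ref{lem:minimum_risk_on_a_line} (monotonicity of $\lambda\mapsto\mathcal{R}\left(\lambda\beta\right)$ on $\mathbb{R}_{+}$) gives $\mathcal{R}\left(\lambda\beta\right)\leq\mathcal{R}\left(\beta\right)$. The only interior point not reached by this scaling is the origin, but there $p_{0}\equiv q_{0}\equiv1/2$, so $\mathcal{R}\left(0\right)=\log2$ equals the maximal possible value of the binary entropy and is therefore not a minimizer. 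Hence it suffices to minimize $\mathcal{R}$ over the sphere $\left\Vert \beta\right\Vert _{2}=R$.

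Next, on this sphere the norm $r=R$ is fixed, so Proposition \ref{lem:minimum_of_risk_on_a_ball} applies: $\mathcal{R}\left(\beta\right)$ is a decreasing function of $\left|\beta^{t}a\right|$. Minimizing the risk over the sphere is therefore equivalent to maximizing $\left|\beta^{t}a\right|$ subject to $\left\Vert \beta\right\Vert _{2}=R$. By the Cauchy--Schwarz inequality, $\left|\beta^{t}a\right|\leq\left\Vert \beta\right\Vert _{2}\left\Vert a\right\Vert _{2}=R\left\Vert a\right\Vert _{2}$, with equality precisely when $\beta$ is collinear with $a$. On the sphere this forces $\beta=\pm Ra/\left\Vert a\right\Vert _{2}=\pm\beta_{0}$, which are exactly the claimed minimizers.

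The argument is essentially a routine chaining of the two propositions, so there is no deep obstacle; the only point requiring care is the status of the origin, which the scaling argument does not reach and which must be dismissed separately via the direct computation $\mathcal{R}\left(0\right)=\log2$. If the monotonicity statements in Propositions \ref{lem:minimum_risk_on_a_line} and \ref{lem:minimum_of_risk_on_a_ball} are strict, the chain of inequalities is strict away from $\pm\beta_{0}$ and one obtains that $\left\{ \pm\beta_{0}\right\} $ is exactly the set of minimizers; otherwise the same computation still shows they are among the minimizers, which is all the statement requires.
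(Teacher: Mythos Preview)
Your proof is correct and follows exactly the route the paper intends: the corollary is stated as an immediate consequence of Propositions~\ref{lem:minimum_of_risk_on_a_ball} and~\ref{lem:minimum_risk_on_a_line}, with no separate proof given, and your argument is precisely the natural chaining of those two results via Cauchy--Schwarz. The handling of the origin is a reasonable addition; alternatively, one can simply note that Proposition~\ref{lem:minimum_risk_on_a_line} (strict decrease for $\lambda>0$) already forces $\mathcal{R}(0)>\mathcal{R}(\beta)$ for every $\beta\neq 0$.
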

From Corollary \ref{prop:unicity_minimum_on_half_ball}, we deduce
that estimating $\beta_{0}$ or its opposite directly gives an estimation
of the best labelling function $Y_{*}$ for our clustering problem.
A look at the proof of Propositions \ref{lem:minimum_of_risk_on_a_ball}
and \ref{lem:minimum_risk_on_a_line} shows that these results, and
hence Corollary \ref{prop:unicity_minimum_on_half_ball}, hold true
in the more general setting where the distribution of $Z$ is only
assumed to be spherically symmetric.

In order to tackle the estimation of a sparse separation vector $a$,
the following property will be helpful.
\begin{thm}
\label{lem:Condition-2} Let $\beta_{0}=Ra/\left\Vert a\right\Vert _{2}$
and let $\Lambda_{min}$ be the smallest eigenvalue of the Hessian
$d_{\beta_{0}}^{2}\mathcal{R}$ at $\beta_{0}$. Take a parameter
\textup{$\nu=0.95$,} $R\geq\sqrt{x_{1}+0.08}$ ($R=1.28$ for instance)
and assume that $\left\Vert a\right\Vert _{2}\geq2R$, then 
\[
\Lambda_{min}\geq\frac{\nu}{4}\left(\Phi^{c}\left(\left\Vert a\right\Vert _{2}-\frac{x_{1}}{R}\right)-\Phi^{c}\left(\left\Vert a\right\Vert _{2}+\frac{x_{1}}{R}\right)\right).
\]
 
\end{thm}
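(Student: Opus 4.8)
The plan is to reduce the $d$-dimensional Hessian to two one-dimensional Gaussian integrals and then to control the region where $\rho$ is concave. First I would compute $\rho'(u)=-u\,q(1-q)$ and $\rho''(u)=-q(1-q)\bigl(1+u(1-2q)\bigr)$, writing $q=e^{u}/(1+e^{u})$. This $\rho''$ is exactly the function $\alpha$ of the statement, so $\alpha$ is even, attains the value $0$ only at $\pm x_{1}$, is negative on $(-x_{1},x_{1})$ and nonnegative outside, and satisfies $\alpha(u)\ge\alpha(0)=-1/4$ for every $u$: on $(-x_{1},x_{1})$ the factors $u$ and $1-2q$ have opposite signs, so $1+u(1-2q)\in[0,1]$, and $q(1-q)\le1/4$, whence $|\alpha|\le1/4$ there, while elsewhere $\alpha\ge0$. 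Since $\rho'$ and $\rho''$ are bounded and $X$ has all Gaussian moments, I may differentiate twice under the expectation to get $d_{\beta}^{2}\mathcal{R}=\mathbb{E}[\alpha(\beta^{t}X)XX^{t}]$.

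Next I would exploit $X=\varepsilon Z$. Since $\varepsilon^{2}=1$ we have $XX^{t}=ZZ^{t}$, and the evenness of $\alpha$ gives $\alpha(\beta_{0}^{t}X)=\alpha(\beta_{0}^{t}Z)$, so $d_{\beta_{0}}^{2}\mathcal{R}=\mathbb{E}[\alpha(\beta_{0}^{t}Z)ZZ^{t}]$ with $Z\sim\mathcal{N}(a,I_{d})$. In an orthonormal basis whose first vector is $a/\left\Vert a\right\Vert _{2}$, set $Z_{1}=a^{t}Z/\left\Vert a\right\Vert _{2}\sim\mathcal{N}(\left\Vert a\right\Vert _{2},1)$ and $Z_{2},\dots,Z_{d}\sim\mathcal{N}(0,1)$ independent; then $\beta_{0}^{t}Z=RZ_{1}$. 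Independence and the centring of $Z_{2},\dots,Z_{d}$ kill every off-diagonal entry, so the Hessian is diagonal with eigenvalue $\lambda_{1}=\mathbb{E}[\alpha(RZ_{1})Z_{1}^{2}]$ in the direction $a$ and eigenvalue $\lambda_{\perp}=\mathbb{E}[\alpha(RZ_{1})]$ of multiplicity $d-1$. Hence $\Lambda_{min}=\min(\lambda_{1},\lambda_{\perp})$, and, this being precisely the reason the final condition is dimension-free, everything now depends only on the scalar $Z_{1}$.

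To bound these, I would first record the identity $\Phi^{c}(\left\Vert a\right\Vert _{2}-x_{1}/R)-\Phi^{c}(\left\Vert a\right\Vert _{2}+x_{1}/R)=\mathbb{P}(|RZ_{1}|<x_{1})$, so that the target is $\tfrac{\nu}{4}$ times the mass that $RZ_{1}$ places on the concave region of $\rho$. Splitting $\lambda_{\perp}=\mathbb{E}[\alpha(RZ_{1})\mathbf{1}\{|RZ_{1}|\ge x_{1}\}]+\mathbb{E}[\alpha(RZ_{1})\mathbf{1}\{|RZ_{1}|<x_{1}\}]$, the second term is $\ge-\tfrac14\mathbb{P}(|RZ_{1}|<x_{1})$ because $\alpha\ge-1/4$, while the first is nonnegative; dropping its nonnegative left tail, it therefore suffices to show that $\int_{x_{1}/R}^{\infty}\alpha(Rz)\gamma(z-\left\Vert a\right\Vert _{2})\,dz\ge\tfrac{1+\nu}{4}\mathbb{P}(|RZ_{1}|<x_{1})$. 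For $\lambda_{1}$ the same splitting is far more comfortable: on the concave region the extra weight $Z_{1}^{2}<(x_{1}/R)^{2}$ shrinks the negative part, whereas on the bulk it is of order $\left\Vert a\right\Vert _{2}^{2}$, so $\lambda_{1}$ comfortably exceeds the target; thus $\lambda_{\perp}$ is the binding eigenvalue.

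The crux is this last, convex-region lower bound, uniform over $\left\Vert a\right\Vert _{2}\ge2R$. The hypotheses enter here exactly: $\left\Vert a\right\Vert _{2}\ge2R$ forces the bulk of $RZ_{1}$ near $R\left\Vert a\right\Vert _{2}\ge2R^{2}$, and $R\ge\sqrt{x_{1}+0.08}$ makes $2R^{2}\ge2x_{1}+0.16>x_{1}$, so the mean sits strictly inside the region where $\alpha>0$; the margin $0.08$ and the value $\nu=0.95$ are the slack needed for the inequality to close. The delicate point is the extremal regime $\left\Vert a\right\Vert _{2}\approx2R$, where the concave region still carries appreciable Gaussian mass, so estimating $\mathbb{P}(|RZ_{1}|<x_{1})$ by width times peak density is too lossy. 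Instead I would pass to density form via $\mathbb{P}(|RZ_{1}|<x_{1})\le\Phi^{c}(\left\Vert a\right\Vert _{2}-x_{1}/R)=G(\left\Vert a\right\Vert _{2}-x_{1}/R)\,\gamma(\left\Vert a\right\Vert _{2}-x_{1}/R)$, and bound the convex integral from below using the monotonicity of $z\mapsto\gamma(z-\left\Vert a\right\Vert _{2})$ on $(x_{1}/R,\left\Vert a\right\Vert _{2})$. Matching these two density-form estimates, with the Mill's-ratio factor $G$ absorbing the dependence on $\left\Vert a\right\Vert _{2}$, is the one genuinely technical step; the reductions above are routine.
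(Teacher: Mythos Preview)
Your approach is correct and is essentially the paper's: your diagonalisation into $\lambda_{1},\lambda_{\perp}$ is the paper's parallel/perpendicular decomposition (Lemma~\ref{lem:risk_bilinear_simple_form}), the split on $|RZ_{1}|\lessgtr x_{1}$ with $|\alpha|\le\tfrac14$ on the concave region is Lemma~\ref{lem:control_of_A_B}, and the density/Mill's-ratio reformulation of the final inequality is Lemma~\ref{lem:risk_bilinear_values_control}. The one ingredient you leave unnamed is the pointwise lower bound $\alpha(x)\ge(x-x_{1}-0.08)e^{-x}$ on $[x_{1},\infty)$ (Lemma~\ref{lem:control_alpha_by_phi}), which is what makes the convex-region integral explicitly computable via the closed-form Gaussian integrals of Lemmas~\ref{lem:good_asympt_behaviour}--\ref{lem:good_asympt_behaviour-1} and is where the constant $0.08$ enters.
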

Theorem \ref{lem:Condition-2} states that if the radius $R$ and
the mean vector $a$ are sufficiently large, then the risk defined
by the logistic entropy is locally strongly convex around $\beta_{0}$.
The risk is not convex over the whole $L_{2}$-ball $B_{2}(0,R)$,
but this local convexity is very convenient, since it allows to deduce
a quadratic growth of the excess risk pointed on $\beta_{0}$, as
follows.
\begin{lem}
\label{lem:lemme_D01_remi}Set $\beta_{0}$ the unique minimum of
$\mathcal{R}\left(\cdot\right)$ on $\Psi_{U}:=\left\{ \beta\in B_{2}\left(0,R\right):\beta^{t}U>0\right\} $
where $U$ is a random variable uniformly distributed on the unit
$L^{2}$-ball. Assume that $R\geq\sqrt{x_{1}+0.08}$ and $\left\Vert a\right\Vert _{2}\geq2R$.
We have
\[
\underset{\beta\in\Psi_{U}}{\inf}\frac{\mathcal{E}\left(\beta,\beta_{0}\right)}{\left\Vert \beta-\beta_{0}\right\Vert _{2}^{2}}\geq c_{0}>0
\]
with 
\[
c_{0}=L_{0}\frac{\left(\left\Vert a\right\Vert _{2}-R\right)^{6}}{\left\Vert a\right\Vert _{2}^{8}R^{2}}.\exp\left(-\left\Vert a\right\Vert _{2}R-2R^{2}\right)
\]
for a numerical constant $L_{0}$ ($L_{0}=9\times2^{22}$ holds).
\end{lem}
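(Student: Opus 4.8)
The plan is to convert the pointwise strong convexity at $\beta_{0}$ granted by Theorem \ref{lem:Condition-2} into a quadratic lower bound valid uniformly on the whole half-ball $\Psi_{U}$, exploiting the rotational structure of the problem. First I would reduce the risk to a function of two scalar variables. Since $\rho$ is even and $X=\varepsilon Z$, one has $\mathcal{R}(\beta)=\mathbb{E}[\rho(\beta^{t}Z)]$, and writing $\hat{a}=a/\left\Vert a\right\Vert _{2}$, $u=\beta^{t}\hat{a}$ and $v$ for the distance from $\beta$ to the line $\mathbb{R}\hat{a}$, the rotational invariance around $\hat{a}$ gives $\mathcal{R}(\beta)=g(u,v)$ for a function $g$, with $\beta_{0}\leftrightarrow(R,0)$ and $\left\Vert \beta-\beta_{0}\right\Vert _{2}^{2}=(u-R)^{2}+v^{2}$. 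It then suffices to prove $g(u,v)-g(R,0)\geq c_{0}\,[(u-R)^{2}+v^{2}]$, which I would obtain by splitting the excess risk as $[g(u,v)-g(u,0)]+[g(u,0)-g(R,0)]$, a ``perpendicular'' and a ``radial'' increment.

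For the perpendicular increment, introduce $V\sim\mathcal{N}(\left\Vert a\right\Vert _{2},1)$ and $\eta\sim\mathcal{N}(0,1)$ independent, so that $g(u,v)=\mathbb{E}[\rho(uV+v\eta)]$. Gaussian integration by parts (Stein's identity) in $\eta$ yields $\partial_{v}g(u,v)=v\,\mathbb{E}[\alpha(uV+v\eta)]$, whence $g(u,v)-g(u,0)=\int_{0}^{v}t\,\mathbb{E}[\alpha(uV+t\eta)]\,dt$; the quadratic growth in $v$ therefore reduces to a uniform positive lower bound on $\mathbb{E}[\alpha(uV+t\eta)]$ for $(u,t)$ in the relevant range, i.e.\ to a uniform version of the computation behind Theorem \ref{lem:Condition-2} (note that $\mathbb{E}[\alpha(RV)]$ is exactly the perpendicular Hessian eigenvalue at $\beta_{0}$). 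For the radial increment, Proposition \ref{lem:minimum_risk_on_a_line} already gives $g(u,0)\geq g(R,0)$; I would quantify this by lower bounding the magnitude of the radial derivative $\frac{d}{ds}\mathcal{R}(s\hat{a})=\mathbb{E}[\rho'(sV)V]$ on $[0,R]$ by some $\kappa>0$, turning monotonicity into $g(u,0)-g(R,0)\geq\kappa(R-u)\geq\frac{\kappa}{2R}(R-u)^{2}$.

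These two estimates only cover a cone $u\geq u_{*}$ on which the mean $u\left\Vert a\right\Vert _{2}$ is large enough to keep $\mathbb{E}[\alpha(uV+t\eta)]$ positive. For the complementary region $u<u_{*}$ (points of $\Psi_{U}$ with $\beta$ almost orthogonal to $a$), the logistic entropy is close to its maximal value $\log2$ while $\mathcal{R}(\beta_{0})$ is exponentially small, so $\mathcal{E}(\beta,\beta_{0})$ is bounded below by an order-one gap; since $\left\Vert \beta-\beta_{0}\right\Vert _{2}\leq2R$ there, the ratio $\mathcal{E}/\left\Vert \beta-\beta_{0}\right\Vert _{2}^{2}$ exceeds a non-exponential constant and the bound is automatic. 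Taking $c_{0}$ to be the minimum of the three resulting constants gives the claim, the binding one being the perpendicular constant on the cone.

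The main obstacle is the uniform control of $\mathbb{E}[\alpha(uV+t\eta)]$ over the cone. Because $\alpha$ changes sign at $\pm x_{1}$, this expectation is a competition between a positive contribution from $\{|uV+t\eta|>x_{1}\}$---where $\alpha$ is nonetheless exponentially small, the mean $u\left\Vert a\right\Vert _{2}$ being large---and a negative contribution from the central band $\{|uV+t\eta|\leq x_{1}\}$. Bounding these explicitly, with the variance $u^{2}+t^{2}$ ranging up to $R^{2}$ and the mean as small as $u_{*}\left\Vert a\right\Vert _{2}$, through the Gaussian tail function $\Phi^{c}$ and Mill's ratio $G$, is what forces the worst-case exponent $\exp(-\left\Vert a\right\Vert _{2}R-2R^{2})$ and the polynomial factor $(\left\Vert a\right\Vert _{2}-R)^{6}/(\left\Vert a\right\Vert _{2}^{8}R^{2})$, and is the delicate, constant-tracking heart of the proof.
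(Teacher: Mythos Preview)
Your decomposition into a perpendicular increment (via uniform positivity of $\mathbb{E}[\alpha(uV+t\eta)]$ on a cone $u\geq u_{*}$), a radial increment (via a lower bound on $|\mathbb{E}[\rho'(sV)V]|$), and an off-cone gap is plausible but is a genuinely different route that demands more than the paper proves: you need the Hessian-type quantity $\mathbb{E}[\alpha(uV+t\eta)]$ to stay positive over a whole two-parameter region, whereas Theorem~\ref{lem:Condition-2} only gives it at the single point $(u,t)=(R,0)$. The paper sidesteps this by extracting the local quadratic bound from a third-order Taylor expansion at $\beta_{0}$ (Lemma~\ref{lem:minoration_excess_risk_locally}) in which the quadratic growth comes not from the Hessian but from the \emph{first} derivative: since $\beta_{0}$ is a boundary minimiser, $(d_{\beta_{0}}\mathcal{R})(\beta-\beta_{0})$ is a positive multiple of $\langle\beta_{0}-\beta,\beta_{0}\rangle$, and the elementary geometric inequality $\langle\beta_{0}-\beta,\beta_{0}\rangle\geq\tfrac{1}{2}\|\beta-\beta_{0}\|_{2}^{2}$ for $\beta\in B_{2}(0,R)$ (Lemma~\ref{lem:comparison_scal_prod_and_sqared_norm}) converts this linear term into a quadratic one. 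Theorem~\ref{lem:Condition-2} is then only used to make the second-order Taylor term nonnegative, and the third-order remainder is bounded crudely. This yields the claim on a small ball $B_{2}(\beta_{0},\varepsilon_{\max})$; outside it, rather than a gap argument, the paper uses the monotonicity of Propositions~\ref{lem:minimum_of_risk_on_a_ball} and~\ref{lem:minimum_risk_on_a_line} to slide any $\beta$ (radially outward, then rotating toward $\beta_{0}$) onto $\partial B_{2}(\beta_{0},\varepsilon_{\max})$ without increasing the excess risk, combined with $\|\beta-\beta_{0}\|_{2}^{2}\leq 2R^{2}$. Your approach could in principle be carried through, but the first-derivative/boundary trick plus monotonicity is shorter, requires curvature information only at $\beta_{0}$, and is what actually produces the stated form of $c_{0}$.
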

The quadratic growth of the excess risk stated in Lemma \ref{lem:lemme_D01_remi}
will turn out to be a keystone to prove the oracle inequality for
the excess risk of the minimizer of empirical risk regularized by
a $\ell_{1}$ penalty (see Section \ref{sec:An-oracle-inequality}).
The proof of Lemma \ref{lem:lemme_D01_remi} is postponed to Section
\ref{subsec:Proofs_main_results}.

\section{An oracle inequality in high dimension\label{sec:An-oracle-inequality}}

Recall that $\beta_{0}=Ra/\left\Vert a\right\Vert _{2}$ is a minimizer
of the risk over the $L_{2}$-ball of radius $R$: $\beta_{0}\in\underset{\beta\in B_{2}(0,R)}{\arg\min}\mathcal{R}\left(\beta\right)$.
Set $\Psi_{U}:=\left\{ \beta\in B_{2}\left(0,R\right):\beta^{t}U>0\right\} $
and where $U$ is a random variable uniformly distributed on the unit
Euclidean sphere, independent from the observations. We have $\mathbb{P}(\beta_{0}^{t}U=0)=0$
and so $\beta_{0}$ or its opposite belongs to $\Psi_{U}$. Without
loss of generality, we assume that $\beta_{0}\in\Psi_{U}$ and analyze
the situation conditionnally on the choice of $U$. 

We investigate the behavior of the following estimator,
\begin{equation}
\hat{\beta}:=\underset{\beta\in\Psi_{U}}{\arg\min}\left\{ \mathcal{R}_{n}\left(\beta\right)+\lambda\left\Vert \beta\right\Vert _{1}\right\} .\label{eq:def_beta_hat_penalized}
\end{equation}
Set also the empirical process $V_{n}\left(\beta\right):=\left(P_{n}-P\right)\left(\rho_{\beta}\right)$.
For some $T>1,$ define the event
\begin{equation}
\mathcal{T}:=\left\{ \sup_{\beta\in B_{2}\left(0,R\right)}\frac{\left|V_{n}\left(\beta\right)-V_{n}\left(\beta_{0}\right)\right|}{\left\Vert \beta-\beta_{0}\right\Vert _{1}\lor\lambda_{0}}\leq2T\lambda_{0}\right\} ,\label{eq:def_T}
\end{equation}
where $\lambda_{0}$>0 is to be fixed in the following theorem.
\begin{thm}
\label{thm:main_result} Fix $n\geq2$. Assume that $\beta_{0}$ -
or equivalently $a$ - is $s$-sparse, for some integer $s\geq1$,
and denote $S$ its support. Assume also that $R=\sqrt{x_{1}+0.08}$
and $\left\Vert a\right\Vert _{2}\geq2R$. Set $M_{n}:=\left\Vert a\right\Vert _{\infty}+\sqrt{2\log d}+\sqrt{2\log\left(1+n\right)}$
and 
\[
\lambda_{0}:=3LM_{n}\left(5\sqrt{3\log\left(2d\right)}\log n+4\right)n^{-1/2}.
\]
 When the event $\mathcal{T}$ occurs, it holds: $\forall\lambda>2T\lambda_{0}$,
\begin{equation}
\mathcal{E}\left(\hat{\beta},\beta_{0}\right)+4\left(\lambda-2T\lambda_{0}\right)\left\Vert \hat{\beta}^{S^{c}}\right\Vert _{1}\leq A_{\left\Vert a\right\Vert _{2},R}s\left(T\lambda_{0}+\lambda\right)^{2},\label{eq:oracle_inequality}
\end{equation}
where $A_{\left\Vert a\right\Vert _{2},R}$ is a constant depending
only on $\left\Vert a\right\Vert _{2}$ and $R$. More precisely,
for a numerical constant $A_{0}$, one can take
\[
A_{\left\Vert a\right\Vert _{2},R}=A_{0}\left\Vert a\right\Vert _{2}^{8}\left(\left\Vert a\right\Vert _{2}-R\right)^{-6}R^{2}e^{\left\Vert a\right\Vert _{2}R+2R^{2}}.
\]

Furthermore, the event $\mathcal{T}$ occurs with probability at least
\[
1-\frac{3}{4}\log\left(\frac{4R^{2}nd}{L^{2}M_{n}^{2}}\right)\exp\left(-21\left(T-1\right)^{2}\log\left(2d\right)\log^{2}n\right)-\frac{1}{25T^{2}\log\left(2d\right)n\log^{2}n}.
\]
\end{thm}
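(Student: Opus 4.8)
The plan is to split Theorem \ref{thm:main_result} into a \emph{deterministic} oracle inequality valid on the event $\mathcal{T}$, and a \emph{probabilistic} estimate of $\mathbb{P}(\mathcal{T})$. For the deterministic part I would start from the optimality of $\hat\beta$ over $\Psi_{U}$: since $\beta_{0}\in\Psi_{U}$,
\[
\mathcal{R}_{n}(\hat\beta)+\lambda\|\hat\beta\|_{1}\leq\mathcal{R}_{n}(\beta_{0})+\lambda\|\beta_{0}\|_{1}.
\]
Writing $\mathcal{R}_{n}=\mathcal{R}+V_{n}$ and rearranging produces the basic inequality
\[
\mathcal{E}(\hat\beta,\beta_{0})+\lambda\|\hat\beta\|_{1}\leq\bigl(V_{n}(\beta_{0})-V_{n}(\hat\beta)\bigr)+\lambda\|\beta_{0}\|_{1}.
\]

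On $\mathcal{T}$ the empirical-process increment is controlled by $|V_{n}(\hat\beta)-V_{n}(\beta_{0})|\leq 2T\lambda_{0}\bigl(\|\hat\beta-\beta_{0}\|_{1}\lor\lambda_{0}\bigr)\leq 2T\lambda_{0}\|\hat\beta-\beta_{0}\|_{1}+2T\lambda_{0}^{2}$, so no case split on the maximum is needed. Since $\beta_{0}$ is supported on $S$, the triangle inequality gives $\|\beta_{0}\|_{1}-\|\hat\beta\|_{1}\leq\|\hat\beta^{S}-\beta_{0}^{S}\|_{1}-\|\hat\beta^{S^{c}}\|_{1}$ and $\|\hat\beta-\beta_{0}\|_{1}=\|\hat\beta^{S}-\beta_{0}^{S}\|_{1}+\|\hat\beta^{S^{c}}\|_{1}$. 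Substituting and moving the off-support contribution of the empirical process to the left yields
\[
\mathcal{E}(\hat\beta,\beta_{0})+(\lambda-2T\lambda_{0})\|\hat\beta^{S^{c}}\|_{1}\leq(\lambda+2T\lambda_{0})\|\hat\beta^{S}-\beta_{0}^{S}\|_{1}+2T\lambda_{0}^{2},
\]
where $\lambda>2T\lambda_{0}$ makes the left-hand side nonnegative.

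The key simplification is that Lemma \ref{lem:lemme_D01_remi} provides a \emph{global} quadratic growth on $\Psi_{U}$, $\mathcal{E}(\hat\beta,\beta_{0})\geq c_{0}\|\hat\beta-\beta_{0}\|_{2}^{2}$, stronger than the restricted strong convexity usually invoked, so no compatibility or cone condition is required. I would then bound the support term by Cauchy--Schwarz, $\|\hat\beta^{S}-\beta_{0}^{S}\|_{1}\leq\sqrt{s}\,\|\hat\beta-\beta_{0}\|_{2}\leq\sqrt{s}\,\sqrt{\mathcal{E}(\hat\beta,\beta_{0})/c_{0}}$, and close with a Young inequality $\sqrt{\mathcal{E}}\,b\leq\tfrac{3}{4}\mathcal{E}+\tfrac{1}{3}b^{2}$ applied with $b=(\lambda+2T\lambda_{0})\sqrt{s/c_{0}}$; the choice $\tfrac{3}{4}$ of the free parameter is exactly what leaves the factor $4$ in front of $\|\hat\beta^{S^{c}}\|_{1}$ after multiplying through by $4$. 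Using $(\lambda+2T\lambda_{0})^{2}\leq 4(T\lambda_{0}+\lambda)^{2}$, absorbing the harmless additive $2T\lambda_{0}^{2}$, and substituting the explicit $c_{0}$ of Lemma \ref{lem:lemme_D01_remi} gives the announced inequality with $A_{\|a\|_{2},R}$ a numerical multiple of $1/c_{0}$, which is precisely of the stated form $A_{0}\|a\|_{2}^{8}(\|a\|_{2}-R)^{-6}R^{2}e^{\|a\|_{2}R+2R^{2}}$.

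The main obstacle is the probabilistic estimate of $\mathbb{P}(\mathcal{T})$, that is, a uniform control of $\sup_{\beta\in B_{2}(0,R)}|V_{n}(\beta)-V_{n}(\beta_{0})|/(\|\beta-\beta_{0}\|_{1}\lor\lambda_{0})$. Here I would (i) condition on the high-probability event $\{\max_{i}\|X^{(i)}\|_{\infty}\leq M_{n}\}$, which holds since the coordinates of $X$ are Gaussian with mean $a$, a maximal inequality over the $nd$ entries explaining the term $M_{n}=\|a\|_{\infty}+\sqrt{2\log d}+\sqrt{2\log(1+n)}$; (ii) exploit that $\rho$ has bounded derivative, so $\rho_{\beta}-\rho_{\beta_{0}}$ is Lipschitz in $(\beta-\beta_{0})^{t}x$ and admits the linear envelope $|\rho_{\beta}(x)-\rho_{\beta_{0}}(x)|\lesssim\|x\|_{\infty}\|\beta-\beta_{0}\|_{1}$; (iii) run a peeling argument over geometric scales of $\|\beta-\beta_{0}\|_{1}$, the normalization by $\|\beta-\beta_{0}\|_{1}\lor\lambda_{0}$ being exactly what makes the peeling telescope, applying at each of the $O(\log n)$ slices a concentration inequality (Bernstein or bounded differences for the truncated increments, combined with a contraction and maximal inequality over the $2d$ signed coordinate directions). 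Union-bounding over the slices and directions produces the $\log(2d)\log^{2}n$ factors, and $\lambda_{0}$ is calibrated so that the threshold $2T\lambda_{0}$ dominates the fluctuation at every scale. The genuinely delicate points are the unboundedness of the Gaussian design, which forces the truncation and the $M_{n}$ bookkeeping, and tuning the peeling so that even the worst slice stays below $2T\lambda_{0}$; by contrast the deterministic oracle inequality is routine once $c_{0}$ and $\mathcal{T}$ are in hand.
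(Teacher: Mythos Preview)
Your proposal is correct and follows essentially the same route as the paper: basic inequality from the optimality of $\hat\beta$, control of $V_n(\hat\beta)-V_n(\beta_0)$ on $\mathcal{T}$, support decomposition of the $\ell_1$-norms, Cauchy--Schwarz on the $S$-part, Lemma~\ref{lem:lemme_D01_remi} to convert $\|\hat\beta-\beta_0\|_2$ into excess risk, and a Young inequality to close; for the probabilistic part, truncation at level $M_n$, a Lipschitz envelope, peeling over $\ell_1$-scales, and concentration at each slice.

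The one genuine (minor) difference is that the paper splits into the two cases $\|\hat\beta-\beta_0\|_1\lor\lambda_0=\lambda_0$ and $\|\hat\beta-\beta_0\|_1\lor\lambda_0=\|\hat\beta-\beta_0\|_1$, whereas you bypass this via $a\lor b\leq a+b$, carrying the additive $2T\lambda_0^{2}$ and absorbing it at the end; you also tune the Young parameter to $3/4$ to hit the factor $4$ in front of $\|\hat\beta^{S^c}\|_1$, while the paper uses the symmetric $2ab\leq a^2+b^2$ and obtains a factor $2$ there. Your variant is slightly slicker and your constants are no worse. On the probability side, the paper does not literally condition on $\{\max_i\|X^{(i)}\|_\infty\leq M_n\}$ but instead decomposes $V_n=V_n^{\mathrm{trunc}}+(V_n-V_n^{\mathrm{trunc}})$ and controls the two pieces separately (peeling plus Massart for the truncated process, a Chebyshev-type bound for the tail piece); this is the same idea as your step (i) and yields the two displayed error terms in the probability bound.
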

According to Theorem~\ref{thm:main_result}, if the regularization
parameter $\lambda$ is equal for instance to $3T\lambda_{0}$, then
the rate of convergence of the excess risk is of the order 
\[
\frac{s\log d\log^{2}n\log\left(d\lor n\right)}{n},
\]
with a pre-factor that only depends on $\left\Vert a\right\Vert _{2}$
and $R$. Thus the estimator $\hat{\beta}$ adapts to sparsity and
is able to estimate $\beta_{0}$ even if $d>>n$. Furthermore, the
rate of convergence of $\left\Vert \hat{\beta}^{S^{c}}\right\Vert _{1}$
would be given by
\[
s\sqrt{\frac{\log d\log^{2}n\log\left(d\lor n\right)}{n}},
\]
with also a pre-factor that only depends on $\left\Vert a\right\Vert _{2}$
and $R$. This means that if $s$ and $d$ are such that this rate
(for a bounded $\left\Vert a\right\Vert _{2}$) goes to zero with
$n$ growing to infinity, then the support $S$ of $\beta_{0}$ is
recovered in the sense that $\left\Vert \hat{\beta}^{S^{c}}\right\Vert _{1}$
goes to zero.

Note however that the dependence in $\left\Vert a\right\Vert _{2}$
is exponential in our bounds. This due to our argument of proof, which
uses the local convexity of the risk around $\beta_{0}$. But when
$\left\Vert a\right\Vert _{2}$ is large, the risk tends to be flat
(see Theorem \ref{lem:Condition-2}). This local convexity argument
is also at the core the approach, developed in \cite{MR2677722},
to the non-convex $\ell_{1}$-penalized loss in mixture regression
(see also \cite[Chapter 9]{buhlmann2011statistics}). Note that the
needed lower bound on $\left\Vert a\right\Vert _{2}$ is independent
from the dimension $d.$

A careful look at the proofs also shows that when the conclusion of
Lemma \ref{lem:lemme_D01_remi} holds, that is the excess risk dominates
the square of the Euclidean distance, then Theorem \ref{thm:main_result}
still holds for a general, bounded and Lipschitz loss.

It is also worth noting that in a classical, non-sparse case where
the dimension is (much) smaller than the sample size, a convergence
bound could also be obtained, by standard empirical process techniques.
Indeed, the loss $\rho$ is bounded and Lipschitz, so the rate of
convergence of the following estimator,
\[
\tilde{\beta}\in\arg\min_{\beta\in B_{2}(0,R)}\left\{ \hat{\mathcal{R}}_{n}(\beta)\right\} ,
\]
is of the order 
\[
\sqrt{\frac{Rd}{n}}+\text{\ensuremath{\sqrt{\frac{\log(1/\delta)}{n}}}+\ensuremath{\frac{\log(1/\delta)}{n}}},
\]
up to a numerical pre-factor and on an event of probability at least
$1-\delta$ for $\delta\in(0,1)$. An important remark is that the
latter rate in $\sqrt{d/n}$ holds without any assumption on $R$
and $\left\Vert a\right\Vert _{2}$, because the local convexity of
the risk on $\beta_{0}$ - that is Theorem \ref{lem:Condition-2}
- is not needed to prove it. If Theorem \ref{lem:Condition-2} furthermore
holds, it is easy to see that the rate is actually $d/n$, up to a
pre-factor. Indeed, Theorem induces a so-called margin relation for
the excess risk, which in turn induces a fast rate, since the loss
is bounded (see for instance \cite{Massart:07}).


Also, one can consider the adaptive selection of the regularization
parameter. For this, a sensible idea is to consider a BIC-type criterion
defined with the active set of the estimators corresponding to different
values of the regularization parameter.

We postpone to a forthcoming addition the practical implementation
of the estimator, together with comparisons in the sparse two-component
Gaussian mixture model with other available algorithms. 

\section{Proofs}

Define the empirical process $V_{n}\left(\beta\right)=\left(P_{n}-P\right)\left(\rho_{\beta}\right)$
and $V_{n}^{trunc}\left(\beta\right)=\left(P_{n}-P\right)\left(\rho_{\beta}I_{\left\{ G\left(X\right)\leq M_{n}\right\} }\right)$
where $G\left(X\right):=\left\Vert X\right\Vert _{\infty}$ and note
that $\rho_{\beta}:\beta\mapsto\rho(\beta^{t}X)$ is $L$-lipschitz
(with $L<2.5$).

\subsection{Proofs of the main results\label{subsec:Proofs_main_results}}
\begin{proof}[Proof of Proposition \ref{lem:minimum_of_risk_on_a_ball}]
Take $X=\varepsilon Z$ where $\varepsilon\sim Rad\left(1/2\right)$
and $Z\sim\mathcal{N}\left(a,I_{d}\right)$, with $a\in\mathbb{R}^{d}$
and also $N\sim\mathcal{N}\left(0,1\right)$. Because expression~(\ref{eq:criterion_form_2})
of Lemma \ref{lem:formula_derivatives} is symmetric in $X$, one
has $\mathcal{R}\left(\beta\right)=\mathcal{R}\left(-\beta\right)$
and

\begin{align*}
\mathcal{R}\left(\beta\right) & =\mathbb{E}\left[\log\left(1+e^{Z^{t}\beta}\right)-\frac{Z^{t}\beta e^{Z^{t}\beta}}{1+e^{Z^{t}\beta}}\right].
\end{align*}
The distribution of the real-valued random variable $Z^{t}\beta$
is $\mathcal{N}\left(\beta^{t}a,\left\Vert \beta\right\Vert _{2}^{2}\right)$
and we assume that $\left\Vert \beta\right\Vert _{2}=r$. The criterion
can be seen as a function of $\mu:=\beta^{t}a$ and $r$:

\begin{align}
\mathcal{R}\left(\beta\right) & =\mathbb{E}\left[\log\left(1+e^{\mu+rN}\right)-\frac{\left(\mu+rN\right)e^{\mu+rN}}{1+e^{\mu+rN}}\right]=:\mathcal{R}\left(\mu,r\right).\label{eq:risk_derivative_mean}
\end{align}
 Its derivative with respect to $\mu$ is:
\begin{align*}
\partial_{\mu}\mathcal{R}\left(\mu,r\right) & =\frac{d}{d\mu}\mathbb{E}\left[\log\left(1+e^{\mu+rN}\right)-\frac{\left(\mu+rN\right)e^{\mu+rN}}{1+e^{\mu+rN}}\right]\\
 & =\mathbb{E}\left[\frac{d}{d\mu}\log\left(1+e^{\mu+rN}\right)-\frac{d}{d\mu}\frac{\left(\mu+rN\right)e^{\mu+rN}}{1+e^{\mu+rN}}\right]\\
 & =\mathbb{E}\left[\frac{e^{\mu+rN}}{1+e^{\mu+rN}}-\left(\frac{e^{\mu+rN}}{1+e^{\mu+rN}}+\frac{\left(\mu+rN\right)e^{\mu+rN}}{1+e^{\mu+rN}}+\left(\mu+rN\right)e^{\mu+rN}\frac{-e^{\mu+rN}}{\left(1+e^{\mu+rN}\right)^{2}}\right)\right]\\
 & =\mathbb{E}\left[-\frac{\left(\mu+rN\right)e^{\mu+rN}}{1+e^{\mu+rN}}+\left(\mu+rN\right)e^{\mu+rN}\frac{e^{\mu+rN}}{\left(1+e^{\mu+rN}\right)^{2}}\right]\\
 & =\mathbb{E}\left[\frac{\left(\mu+rN\right)e^{\mu+rN}}{1+e^{\mu+rN}}\left(\frac{e^{\mu+rN}}{1+e^{\mu+rN}}-1\right)\right]\\
\partial_{\mu}\mathcal{R}\left(\mu,r\right) & =-\mathbb{E}\left[\frac{\left(\mu+rN\right)e^{\mu+rN}}{\left(1+e^{\mu+rN}\right)^{2}}\right].
\end{align*}
Let us define $g:x\mapsto\frac{xe^{x}}{(1+e^{x})^{2}}$ so that $\partial_{\mu}\mathcal{R}\left(\mu,r\right)=-\mathbb{E}\left[g\left(\mu+rN\right)\right]$.
We use the lemma~\ref{lem:technical_lemma_same_sign_as_mu} and the
fact that $g$ is odd and positive on $(0,+\infty)$ to conclude that
$\mathbb{E}\left[g\left(\mu+rN\right)\right]$ has the sign of $\mu$,
which gives the result.
\end{proof}
\begin{proof}[Proof of Proposition \ref{lem:minimum_risk_on_a_line}]
Take $\beta\in\mathbb{R}^{d}$, there is $u\in\mathbb{R}$ such that
$\beta^{t}a=u\left\Vert \beta\right\Vert _{2}$. Recall Identity~(\ref{eq:risk_derivative_mean})
above, where $\mathcal{R}$ can be seen as a function of $\mu$ and
$r$ with $Z^{t}\beta\sim\mathcal{N}\left(\mu,r^{2}\right)$. Then
we have
\[
\frac{\partial\mathcal{R}\left(\lambda\beta\right)}{\partial\lambda}=\frac{\partial\mathcal{R}\left(\lambda\beta^{t}a,\left\Vert \lambda\beta\right\Vert _{2}\right)}{\partial\lambda}=\frac{\partial\mathcal{R}\left(ru,r\right)}{\partial r}\left\Vert \beta\right\Vert _{2}.
\]
We set $\forall u\in\mathbb{R},N_{u}\sim\mathcal{N}\left(u,1\right)$
and Equation~(\ref{eq:risk_derivative_mean}) gives:

\begin{align*}
\frac{\partial\mathcal{R}\left(ru,r\right)}{\partial r} & =\frac{\partial}{\partial r}\mathbb{E}\left[\log\left(1+e^{ru+rN_{0}}\right)-\frac{\left(ru+rN_{0}\right)e^{\left(ru+rN_{0}\right)}}{1+e^{\left(ru+rN_{0}\right)}}\right]\\
 & =\mathbb{E}\left[\frac{\partial}{\partial r}\log\left(1+e^{rN_{u}}\right)-\frac{\partial}{\partial r}\left(\frac{rN_{u}e^{rN_{u}}}{1+e^{rN_{u}}}\right)\right]\\
 & =\mathbb{E}\left[\frac{N_{u}e^{rN_{u}}}{1+e^{rN_{u}}}-\left(\frac{N_{u}e^{rN_{u}}}{1+e^{rN_{u}}}+\frac{rN_{u}\left(N_{u}e^{rN_{u}}\right)}{1+e^{rN_{u}}}+rN_{u}e^{rN_{u}}\frac{-N_{u}e^{rN_{u}}}{\left(1+e^{rN_{u}}\right)^{2}}\right)\right]\\
 & =\mathbb{E}\left[-\frac{rN_{u}\left(N_{u}e^{rN_{u}}\right)}{1+e^{rN_{u}}}+rN_{u}e^{rN_{u}}\frac{N_{u}e^{rN_{u}}}{\left(1+e^{rN_{u}}\right)^{2}}\right]\\
 & =\mathbb{E}\left[\frac{rN_{u}\left(N_{u}e^{rN_{u}}\right)}{1+e^{rN_{u}}}\left(\frac{e^{rN_{u}}}{1+e^{rN_{u}}}-1\right)\right]\\
 & =\mathbb{E}\left[\frac{rN_{u}\left(N_{u}e^{rN_{u}}\right)}{1+e^{rN_{u}}}\left(\frac{-1}{1+e^{rN_{u}}}\right)\right]\\
 & =-\mathbb{E}\left[\frac{rN_{u}^{2}e^{rN_{u}}}{\left(1+e^{rN_{u}}\right)^{2}}\right]<0.
\end{align*}
Hence $\frac{\partial\mathcal{R}\left(\lambda\beta\right)}{\partial\lambda}<0$
as required.
\end{proof}
\begin{proof}[Proof of Theorem \ref{lem:Condition-2}]
We make use of Equation~(\ref{eq:final_condition_on_a_R_nu}) from
Lemma~\ref{lem:risk_bilinear_values_control}: $\forall a\in\mathbb{R}^{d}$,
$R,\nu>0$, 
\begin{align}
R\left(1-\left(R-\left\Vert a\right\Vert _{2}+\frac{x_{1}+\frac{8}{100}}{R}\right)G\left(\frac{x_{1}}{R}+R-\left\Vert a\right\Vert _{2}\right)\right) & \geq\left(1+\nu\right)\frac{e^{x_{1}}}{4}G\left(\left\Vert a\right\Vert _{2}-\frac{x_{1}}{R}\right),\label{eq:final_condition_on_a_R_nu-1}
\end{align}
where, see Section \ref{sec:Notations}, $x_{1}$ is a positive numerical
constant and the function $G$ is the so-called Gaussian Mill's ratio.
By Proposition \ref{prop:G_decreasing}, we also have that $G$ is
decreasing on $\mathbb{R}$. Hence, if Equation (\ref{eq:final_condition_on_a_R_nu-1})
is satisfied for some values of $\left\Vert a\right\Vert _{2},R$
and $\nu$ such that $\left\Vert a\right\Vert _{2}-\left(R+\left(x_{1}+0.08\right)R^{-1}\right)>0$,
then it is satisfied for any triplet $(\left\Vert a\right\Vert _{2}+h,R,\nu)$
with $h>0$. In addition, we know from Lemma~\ref{lem:particular_case_of_Eq}
that $\left\Vert a\right\Vert _{2}=2R\approx2.548$, $R=\sqrt{x_{1}+0.08}\approx1.2741$
and $\nu=0.95$ make Equation~(\ref{eq:final_condition_on_a_R_nu-1})
hold true. Consequently, it also holds true when $\left\Vert a\right\Vert _{2}\geq2R\approx2.548$,
$R=\sqrt{x_{1}+0.08}\approx1.2741$ and $\nu=0.95$.

According to lemma~\ref{lem:risk_bilinear_values_control}, when
Equation (\ref{eq:final_condition_on_a_R_nu-1}) holds, one can control
from below the values of $\left(d_{\beta_{0}}^{2}\mathcal{R}\right)\left(h,h\right)$.
More precisely, 
\begin{align*}
\Lambda_{min} & :=\inf_{\left\Vert h\right\Vert =1}\left(d_{\beta_{0}}^{2}\mathcal{R}\right)\left(h,h\right)\\
 & \geq\underset{\eta=\left\Vert h_{\parallel}\right\Vert }{\inf_{\left\Vert h\right\Vert =1}}\frac{\nu}{4}\left(\eta^{2}\frac{x_{1}^{2}}{R^{2}}+1-\eta^{2}\right)\left(\Phi^{c}\left(\left\Vert a\right\Vert _{2}-\frac{x_{1}}{R}\right)-\Phi^{c}\left(\left\Vert a\right\Vert _{2}+\frac{x_{1}}{R}\right)\right)\\
 & =\frac{\nu}{4}\left(\Phi^{c}\left(\left\Vert a\right\Vert _{2}-\frac{x_{1}}{R}\right)-\Phi^{c}\left(\left\Vert a\right\Vert _{2}+\frac{x_{1}}{R}\right)\right)\underset{=1}{\underbrace{\inf_{0\leq\eta\leq1}\left(\eta^{2}\frac{x_{1}^{2}}{R^{2}}+1-\eta^{2}\right)}}
\end{align*}
because $x_{1}/R\geq1$. This proves the result.
\end{proof}
\begin{proof}[Proof of Lemma \ref{lem:lemme_D01_remi}]
The risk $\mathcal{R}$ admits two minima $\beta_{0}$ and $-\beta_{0}$
on $B_{2}\left(0,R\right)$. We consider 
\[
\Psi_{U}=\left\{ \beta\in B_{2}\left(0,R\right):\beta^{t}U>0\right\} ,
\]
 where $U$ is a random variable uniformly distributed on the unit
$L^{2}$-ball. The probability that $U\in\beta_{0}^{\perp}$ is $0$
then with probability $1$ we have $U\notin\beta_{0}^{\perp}$ and
there is therefore only one vector among $\beta_{0}$ and $-\beta_{0}$
that satisfies $\beta_{0}^{t}U>0$. We call $\beta_{0}$ the vector
satisfying both $\mathcal{R}\left(\beta_{0}\right)$ is the minimum
of $\mathcal{R}\left(\cdot\right)$ and $\beta_{0}^{t}U>0$.\\
Take $\beta\in\Psi_{U}$ and let $\varepsilon\in\left(0,R\right)$,
we are about to control $\mathcal{E}\left(\beta,\beta_{0}\right)$
on $B_{2}\left(\beta_{0},\varepsilon\right)$ and $\left\{ \nu\in B_{2}\left(0,R\right):\beta_{0}^{t}\nu>0\right\} \setminus B_{2}\left(\beta_{0},\varepsilon\right)$
but these two sets may not be included $\Psi_{U}$. To bypass this
issue, remark that the risk $\mathcal{R}$ is symmetric with respect
to $0$. Hence, in the case where $\beta\notin\left\{ \nu\in B_{2}\left(0,R\right):\beta_{0}^{t}\nu>0\right\} $,
we will have $\mathcal{E}\left(\beta,\beta_{0}\right)=\mathcal{E}\left(-\beta,\beta_{0}\right)$
where $-\beta\in\left\{ \nu\in B_{2}\left(0,R\right):\beta_{0}^{t}\nu>0\right\} $.
Consequently, one can always control $\mathcal{E}\left(\cdot,\beta_{0}\right)$
on $\Psi_{U}$ with its values on $\left\{ \nu\in B_{2}\left(0,R\right):\beta_{0}^{t}\nu>0\right\} $,
and without loss of generality we will focus on the control of $\mathcal{E}\left(\cdot,\beta_{0}\right)$
on $\left\{ \nu\in B_{2}\left(0,R\right):\beta_{0}^{t}\nu>0\right\} $.

\uline{Case 1}: $\beta\in B_{2}\left(\beta_{0},\varepsilon\right)$\\
We know from Lemma~\ref{lem:minoration_excess_risk_locally} that
$\forall\beta\in B_{2}\left(\beta_{0},\varepsilon\right)$,
\[
\frac{\mathcal{E}\left(\beta,\beta_{0}\right)}{e^{-\left(\left\Vert a\right\Vert _{2}R-R^{2}/2\right)}\left\Vert \beta-\beta_{0}\right\Vert _{2}^{2}}\geq\frac{1}{16}\left(1+\left(\left\Vert a\right\Vert _{2}-R\right)^{2}\right)-24\left\Vert a\right\Vert ^{4}e^{R^{2}/2}e^{\varepsilon\left\Vert a\right\Vert _{2}}\left\Vert \beta-\beta_{0}\right\Vert _{2}.
\]
When $\left\Vert \beta-\beta_{0}\right\Vert _{2}\leq\frac{1}{2}\frac{\frac{1}{16}\left(1+\left(\left\Vert a\right\Vert _{2}-R\right)^{2}\right)}{24\left\Vert a\right\Vert ^{4}e^{R^{2}/2}e^{\varepsilon\left\Vert a\right\Vert _{2}}}$,
one has
\[
\frac{\mathcal{E}\left(\beta,\beta_{0}\right)}{e^{-\left(\left\Vert a\right\Vert _{2}R-R^{2}/2\right)}\left\Vert \beta-\beta_{0}\right\Vert _{2}^{2}}\geq\frac{1+\left(\left\Vert a\right\Vert _{2}-R\right)^{2}}{16}.
\]
In particular, the latter inequality holds when
\begin{align*}
\varepsilon & \leq\frac{1}{768}\frac{1+\left(\left\Vert a\right\Vert _{2}-R\right)^{2}}{\left\Vert a\right\Vert _{2}^{4}e^{R^{2}/2}}e^{-\varepsilon\left\Vert a\right\Vert _{2}},
\end{align*}
which is satisfied for
\begin{align*}
\varepsilon & \leq\frac{1}{768}\frac{1+\left(\left\Vert a\right\Vert _{2}-R\right)^{2}}{\left\Vert a\right\Vert _{2}^{4}e^{R^{2}/2}}\exp\left(-\frac{1}{384}\frac{1+\left(\left\Vert a\right\Vert _{2}-R\right)^{2}}{\left\Vert a\right\Vert _{2}^{4}e^{R^{2}/2}}\left\Vert a\right\Vert _{2}\right)\\
 & =\frac{1}{768}\frac{1+\left(\left\Vert a\right\Vert _{2}-R\right)^{2}}{\left\Vert a\right\Vert _{2}^{4}}\exp\left(-R^{2}/2-\frac{1+\left(\left\Vert a\right\Vert _{2}-R\right)^{2}}{384\left\Vert a\right\Vert _{2}^{3}e^{R^{2}/2}}\right)=:\varepsilon_{max}.
\end{align*}
Then for all $\beta\in B_{2}\left(\beta_{0},\varepsilon_{max}\right)$,
we have

\[
\frac{\mathcal{E}\left(\beta,\beta_{0}\right)}{\left\Vert \beta-\beta_{0}\right\Vert _{2}^{2}}\geq\frac{1}{32}\left(1+\left(\left\Vert a\right\Vert _{2}-R\right)^{2}\right)e^{-\left(\left\Vert a\right\Vert _{2}R-R^{2}/2\right)}.
\]

\uline{Case }2: $\beta\in\left\{ \nu\in B_{2}\left(0,R\right):\beta_{0}^{t}\nu>0\right\} \setminus B_{2}\left(\beta_{0},\varepsilon_{max}\right)$.

Lemmas~\ref{lem:minimum_of_risk_on_a_ball} and~\ref{lem:minimum_risk_on_a_line}
imply that $\forall\lambda>1,\mathcal{E}\left(\lambda\beta,\beta_{0}\right)<\mathcal{E}\left(\beta,\beta_{0}\right)$
and 
\[
\text{if }\nu\in\left\{ \mu\in B_{2}\left(0,R\right):\left\Vert \mu\right\Vert =\left\Vert \beta\right\Vert \text{ \& }\beta_{0}^{t}\mu>\beta_{0}^{t}\beta\right\} ,\text{ }\mathcal{E}\left(\nu,\beta_{0}\right)<\mathcal{E}\left(\beta,\beta_{0}\right).
\]
 With these two properties, we are always able to control $\mathcal{E}\left(\beta,\beta_{0}\right)$
with another value $\mathcal{E}\left(\nu,\beta_{0}\right)$ where
$\nu\in B_{2}\left(\beta_{0},\varepsilon_{max}\right)$. Indeed, if
$\mathbb{R}\cdot\beta$ intersects $B_{2}\left(\beta_{0},\varepsilon_{max}\right)$,
there there exists $\lambda>1$ such that $\mathcal{E}\left(\beta,\beta_{0}\right)>\mathcal{E}\left(\lambda\beta,\beta_{0}\right)$,
where $\lambda\beta\in B_{2}\left(\beta_{0},\varepsilon_{max}\right)$.
Oherwise, we have $\mathcal{E}\left(\beta,\beta_{0}\right)\geq\mathcal{E}\left(R\frac{\beta}{\left\Vert \beta\right\Vert _{2}},\beta_{0}\right)$
and $\mathcal{E}\left(R\frac{\beta_{0}}{\left\Vert \beta_{0}\right\Vert _{2}},\beta_{0}\right)>\mathcal{E}\left(\beta_{inter},\beta_{0}\right)$
where $\beta_{inter}$ is the rotation of $R\frac{\beta_{0}}{\left\Vert \beta_{0}\right\Vert _{2}}$
towards $\beta_{0}$ so that $\beta_{inter}$ is at the frontier of
$B_{2}\left(\beta_{0},\varepsilon_{max}\right)$. Moreover, $\forall\beta\in\left\{ \nu\in B_{2}\left(0,R\right):\beta_{0}^{t}\nu>0\right\} $
we have $\left\Vert \beta-\beta_{0}\right\Vert _{2}^{2}\leq2R^{2}$.
Consequently, 
\[
\text{for all }\beta\in\left\{ \nu\in B_{2}\left(0,R\right):\beta_{0}^{t}\nu>0\right\} \setminus B_{2}\left(\beta_{0},\varepsilon_{max}\right),
\]
 there exists $\nu\in B_{2}\left(\beta_{0},\varepsilon_{max}\right)$
such that $\left\Vert \nu-\beta_{0}\right\Vert _{2}=\varepsilon_{max}$
and 
\[
\frac{\mathcal{E}\left(\beta,\beta_{0}\right)}{\left\Vert \beta-\beta_{0}\right\Vert _{2}^{2}}\geq\frac{\mathcal{E}\left(\beta,\beta_{0}\right)}{2R^{2}}\geq\frac{\mathcal{E}\left(\nu,\beta_{0}\right)}{2R^{2}}.
\]
Furthermmore, from Case 1 above, we have that $\forall\nu\in B_{2}\left(\beta_{0},\varepsilon_{max}\right)$
such that $\left\Vert \nu-\beta_{0}\right\Vert _{2}=\varepsilon_{max}$,

\[
\mathcal{E}\left(\nu,\beta_{0}\right)\geq\frac{1}{32}\left(1+\left(\left\Vert a\right\Vert _{2}-R\right)^{2}\right)e^{-\left(\left\Vert a\right\Vert _{2}R-R^{2}/2\right)}\varepsilon_{max}^{2}.
\]
Hence, for all $\beta\in\left\{ \nu\in B_{2}\left(0,R\right):\beta_{0}^{t}\nu>0\right\} \setminus B_{2}\left(\beta_{0},\varepsilon_{max}\right)$,
\[
\frac{\mathcal{E}\left(\beta,\beta_{0}\right)}{\left\Vert \beta-\beta_{0}\right\Vert _{2}^{2}}\geq\frac{\left(1+\left(\left\Vert a\right\Vert _{2}-R\right)^{2}\right)e^{-\left(\left\Vert a\right\Vert _{2}R-R^{2}/2\right)}\varepsilon_{max}^{2}}{64R^{2}}.
\]
Finally, from the two cases, we get
\[
\underset{\beta\in\left\{ \nu\in B_{2}\left(0,R\right):\beta_{0}^{t}\nu>0\right\} }{\inf}\frac{\mathcal{E}\left(\beta,\beta_{0}\right)}{\left\Vert \beta-\beta_{0}\right\Vert _{2}^{2}}\geq\frac{\left(1+\left(\left\Vert a\right\Vert _{2}-R\right)^{2}\right)e^{-\left(\left\Vert a\right\Vert _{2}R-R^{2}/2\right)}\varepsilon_{max}^{2}}{64R^{2}}.
\]
Consequently, the result is also true when one takes the infimum over
$\Psi_{U}$:{\small{}
\begin{align*}
\underset{\beta\in\Psi_{U}}{\inf}\frac{\mathcal{E}\left(\beta,\beta_{0}\right)}{\left\Vert \beta-\beta_{0}\right\Vert _{2}^{2}} & \geq\frac{\left(1+\left(\left\Vert a\right\Vert _{2}-R\right)^{2}\right)e^{-\left(\left\Vert a\right\Vert _{2}R-R^{2}/2\right)}}{64R^{2}}\left(\frac{1}{768}\frac{1+\left(\left\Vert a\right\Vert _{2}-R\right)^{2}}{\left\Vert a\right\Vert _{2}^{4}}\exp\left(-R^{2}/2-\frac{1+\left(\left\Vert a\right\Vert _{2}-R\right)^{2}}{384\left\Vert a\right\Vert _{2}^{3}e^{R^{2}/2}}\right)\right)^{2}\\
 & =\frac{\left(1+\left(\left\Vert a\right\Vert _{2}-R\right)^{2}\right)^{3}}{9\cdot2^{22}\left\Vert a\right\Vert _{2}^{8}R^{2}}.\exp\left(-\left\Vert a\right\Vert _{2}R-R^{2}/2-R^{2}-\frac{1+\left(\left\Vert a\right\Vert _{2}-R\right)^{2}}{384\left\Vert a\right\Vert _{2}^{3}e^{R^{2}/2}}\right)\\
 & \geq\frac{\left(\left\Vert a\right\Vert _{2}-R\right)^{6}}{9\cdot2^{22}\left\Vert a\right\Vert _{2}^{8}R^{2}}.\exp\left(-\left\Vert a\right\Vert _{2}R-R^{2}/2-R^{2}-R^{2}/2\right)\\
 & \geq\frac{\left(\left\Vert a\right\Vert _{2}-R\right)^{6}}{9\cdot2^{22}\left\Vert a\right\Vert _{2}^{8}R^{2}}.\exp\left(-\left\Vert a\right\Vert _{2}R-2R^{2}\right).
\end{align*}
}{\small\par}
\end{proof}
We present now the proof of our main result, that is the oracle inequality
stated in Section \ref{sec:An-oracle-inequality}.
\begin{proof}[Proof of Theorem \ref{thm:main_result}]
We know from Lemma~\ref{lem:inegalite_essentielle} that 
\begin{equation}
\mathcal{E}\left(\hat{\beta},\beta_{0}\right)+\lambda\left\Vert \hat{\beta}\right\Vert _{1}\leq\left|V_{n}\left(\hat{\beta}\right)-V_{n}\left(\beta_{0}\right)\right|+\lambda\left\Vert \beta_{0}\right\Vert _{1}\label{eq:essential_inequality}
\end{equation}

We set ourselves in the event $\mathcal{T}$ defined in (\ref{eq:def_T}).
It holds 
\[
\sup_{\beta\in B_{2}\left(0,R\right)}\frac{\left|V_{n}\left(\beta\right)-V_{n}\left(\beta_{0}\right)\right|}{\left\Vert \beta-\beta_{0}\right\Vert _{1}\lor\lambda_{0}}\leq2T\lambda_{0}
\]
 and, as $\hat{\beta}\in B_{2}\left(0,R\right)$, Equation~(\ref{eq:essential_inequality})
gives
\[
\mathcal{E}\left(\hat{\beta},\beta_{0}\right)+\lambda\left\Vert \hat{\beta}\right\Vert _{1}\leq2T\lambda_{0}\left\Vert \hat{\beta}-\beta_{0}\right\Vert _{1}\lor\lambda_{0}+\lambda\left\Vert \beta_{0}\right\Vert _{1}.
\]

Case 1: $\left\Vert \hat{\beta}-\beta_{0}\right\Vert _{1}\lor\lambda_{0}=\lambda_{0}$.
We successively have
\begin{eqnarray*}
\mathcal{E}\left(\hat{\beta},\beta_{0}\right) & \leq & 2T\lambda_{0}^{2}+\lambda\left(\left\Vert \beta_{0}\right\Vert _{1}-\left\Vert \hat{\beta}\right\Vert _{1}\right)\\
 & \leq & 2T\lambda_{0}^{2}+\lambda\left|\left\Vert \beta_{0}\right\Vert _{1}-\left\Vert \hat{\beta}\right\Vert _{1}\right|\\
 & \leq & 2T\lambda_{0}^{2}+\lambda\left\Vert \beta_{0}-\hat{\beta}\right\Vert _{1}.
\end{eqnarray*}
Hence,
\begin{eqnarray*}
\mathcal{E}\left(\hat{\beta},\beta_{0}\right)+2\lambda\left\Vert \beta_{0}-\hat{\beta}\right\Vert _{1} & \leq & 2T\lambda_{0}^{2}+3\lambda\left\Vert \beta_{0}-\hat{\beta}\right\Vert _{1}\\
 & \leq & 2T\lambda_{0}^{2}+3\lambda\lambda_{0}\\
 & \leq & 2\left(T\lambda_{0}+\lambda\right)^{2}.
\end{eqnarray*}
Finally, since $2\left(\lambda-2T\lambda_{0}\right)\leq2\lambda$
and $\left\Vert \hat{\beta}^{S^{c}}\right\Vert _{1}=\left\Vert \beta_{0}^{S^{c}}-\hat{\beta}^{S^{c}}\right\Vert _{1}\leq\left\Vert \beta_{0}-\hat{\beta}\right\Vert _{1}$

\[
\mathcal{E}\left(\hat{\beta},\beta_{0}\right)+2\left(\lambda-2T\lambda_{0}\right)\left\Vert \hat{\beta}^{S^{c}}\right\Vert _{1}\leq3\left(T\lambda_{0}+\lambda\right)^{2}.
\]

Case 2: $\left\Vert \hat{\beta}-\beta_{0}\right\Vert _{1}\lor\lambda_{0}=\left\Vert \hat{\beta}-\beta_{0}\right\Vert _{1}$.
We have $\left\Vert \hat{\beta}\right\Vert _{1}=\left\Vert \hat{\beta}^{S}\right\Vert _{1}+\left\Vert \hat{\beta}^{S^{c}}\right\Vert _{1}$,
$\left\Vert \beta_{0}\right\Vert _{1}=\left\Vert \beta_{0}^{S}\right\Vert _{1}$
and $\left\Vert \hat{\beta}-\beta_{0}\right\Vert _{1}=\left\Vert \hat{\beta}^{S}-\beta_{0}^{S}\right\Vert _{1}+\left\Vert \hat{\beta}^{S^{c}}\right\Vert _{1}=\left\Vert \hat{\beta}^{S}-\beta_{0}\right\Vert _{1}+\left\Vert \hat{\beta}^{S^{c}}\right\Vert _{1}$.
Consequently, it holds successively 

\begin{align*}
\mathcal{E}\left(\hat{\beta},\beta_{0}\right)+\lambda\left\Vert \hat{\beta}\right\Vert _{1} & \leq2T\lambda_{0}\left\Vert \hat{\beta}-\beta_{0}\right\Vert _{1}+\lambda\left\Vert \beta_{0}\right\Vert _{1},\\
\mathcal{E}\left(\hat{\beta},\beta_{0}\right)+\lambda\left\Vert \hat{\beta}^{S}\right\Vert _{1}+\lambda\left\Vert \hat{\beta}^{S^{c}}\right\Vert _{1} & \leq2T\lambda_{0}\left\Vert \hat{\beta}^{S}-\beta_{0}\right\Vert _{1}+2T\lambda_{0}\left\Vert \hat{\beta}^{S^{c}}\right\Vert _{1}+\lambda\left\Vert \beta_{0}^{S}\right\Vert _{1},\\
\mathcal{E}\left(\hat{\beta},\beta_{0}\right)+\lambda\left\Vert \hat{\beta}^{S^{c}}\right\Vert _{1}-2T\lambda_{0}\left\Vert \hat{\beta}^{S^{c}}\right\Vert _{1} & \leq2T\lambda_{0}\left\Vert \hat{\beta}^{S}-\beta_{0}\right\Vert _{1}+\lambda\left\Vert \beta_{0}^{S}\right\Vert _{1}-\lambda\left\Vert \hat{\beta}^{S}\right\Vert _{1},\\
\mathcal{E}\left(\hat{\beta},\beta_{0}\right)+\left(\lambda-2T\lambda_{0}\right)\left\Vert \hat{\beta}^{S^{c}}\right\Vert _{1} & \leq2T\lambda_{0}\left\Vert \hat{\beta}^{S}-\beta_{0}\right\Vert _{1}+\lambda\left\Vert \beta_{0}^{S}-\hat{\beta}^{S}\right\Vert _{1},\\
\mathcal{E}\left(\hat{\beta},\beta_{0}\right)+\left(\lambda-2T\lambda_{0}\right)\left\Vert \hat{\beta}^{S^{c}}\right\Vert _{1} & \leq\left(2T\lambda_{0}+\lambda\right)\left\Vert \beta_{0}-\hat{\beta}^{S}\right\Vert _{1}.
\end{align*}
Since $\beta_{0}-\hat{\beta}^{S}$ has at most $s$ non-zero coordinates,
one has $\left\Vert \beta_{0}-\hat{\beta}^{S}\right\Vert _{1}\leq\sqrt{s}\left\Vert \beta_{0}-\hat{\beta}^{S}\right\Vert _{2}\leq\sqrt{s}\left\Vert \beta_{0}-\hat{\beta}\right\Vert _{2}$.
Hence, for any $c_{0}>0$,

\[
\mathcal{E}\left(\hat{\beta},\beta_{0}\right)+\left(\lambda-2T\lambda_{0}\right)\left\Vert \hat{\beta}^{S^{c}}\right\Vert _{1}\leq\left(T\lambda_{0}+\lambda\right)\sqrt{\frac{s}{c_{0}}}\sqrt{c_{0}}\left\Vert \beta_{0}-\hat{\beta}\right\Vert _{2}.
\]
Now use the fact that $\forall a,b,2ab\leq a^{2}+b^{2}$ to get

\begin{align*}
\mathcal{E}\left(\hat{\beta},\beta_{0}\right)+\left(\lambda-2T\lambda_{0}\right)\left\Vert \hat{\beta}^{S^{c}}\right\Vert _{1} & \leq\left(T\lambda_{0}+\lambda\right)^{2}\frac{s}{2c_{0}}+\frac{c_{0}\left\Vert \beta_{0}-\hat{\beta}\right\Vert _{2}^{2}}{2}.
\end{align*}
So we can use Lemma~\ref{lem:lemme_D01_remi} and have $\mathcal{E}\left(\hat{\beta},\beta_{0}\right)\geq c_{0}\left\Vert \beta_{0}-\hat{\beta}\right\Vert _{2}^{2}$
where $c_{0}=\frac{\left(\left\Vert a\right\Vert _{2}-R\right)^{6}}{9\cdot2^{22}\left\Vert a\right\Vert _{2}^{8}R^{2}}e^{-\left\Vert a\right\Vert _{2}R-2R^{2}}$.
Consequently, for this choice of $c_{0}$,
\begin{align*}
\mathcal{E}\left(\hat{\beta},\beta_{0}\right)+\left(\lambda-2T\lambda_{0}\right)\left\Vert \hat{\beta}^{S^{c}}\right\Vert _{1} & \leq\left(T\lambda_{0}+\lambda\right)^{2}\frac{s}{2c_{0}}+\frac{\mathcal{E}\left(\hat{\beta},\beta_{0}\right)}{2},
\end{align*}
which gives
\[
\mathcal{E}\left(\hat{\beta},\beta_{0}\right)+2\left(\lambda-2T\lambda_{0}\right)\left\Vert \hat{\beta}^{S^{c}}\right\Vert _{1}\leq\left(T\lambda_{0}+\lambda\right)^{2}.\frac{s}{c_{0}}.
\]
Finally, combining the two cases, we obtain
\[
\mathcal{E}\left(\hat{\beta},\beta_{0}\right)+2\left(\lambda-2T\lambda_{0}\right)\left\Vert \hat{\beta}^{S^{c}}\right\Vert _{1}\leq\left(T\lambda_{0}+\lambda\right)^{2}.\max\left(\frac{s}{c_{0}},2\right).
\]
The bound on the probability of the event $\mathcal{T}$ is given
in Theorem \ref{thm:control_proba_main_event}.
\end{proof}

\subsection{Auxiliary results}

Let us first state the following basic lemma, where we compute the
derivatives of the loss and its risk.
\begin{lem}
\label{lem:formula_derivatives} With notations of section~\ref{sec:Notations},
it holds
\begin{eqnarray}
\rho_{\beta}\left(X\right) & = & -\log q_{\beta}\left(X\right)+X^{t}\beta.p_{\beta}\left(X\right)\label{eq:criterion_form_1}\\
\rho_{\beta}\left(X\right) & = & \log\left(1+e^{X^{t}\beta}\right)-\frac{X^{t}\beta e^{X^{t}\beta}}{1+e^{X^{t}\beta}}\label{eq:criterion_form_2}\\
\frac{\partial p_{\beta}\left(X\right)}{\partial\beta_{u}} & = & -X_{u}p_{\beta}\left(X\right)q_{\beta}\left(X\right)\label{eq:p_derivative}\\
\frac{\partial q_{\beta}\left(X\right)}{\partial\beta_{u}} & = & X_{u}p_{\beta}\left(X\right)q_{\beta}\left(X\right)\label{eq:q_derivative}\\
\frac{\partial\rho_{\beta}\left(X\right)}{\partial\beta_{u}} & = & -X^{t}\beta X_{u}p_{\beta}\left(X\right)q_{\beta}\left(X\right)\label{eq:criterion_1st_derivatives}\\
\frac{\partial}{\partial\beta_{v}}\frac{\partial}{\partial\beta_{u}}\rho_{\beta}\left(X\right) & = & =-X_{v}X_{u}\alpha\left(X^{t}\beta\right)\label{eq:criterion_2nd_derivatives}\\
\frac{\partial}{\partial\beta_{w}}\frac{\partial}{\partial\beta_{v}}\frac{\partial}{\partial\beta_{u}}\rho_{\beta}\left(X\right) & = & -X_{w}X_{v}X_{u}\alpha'\left(X^{t}\beta\right)\label{eq:criterion_3rd_derivatives}\\
\left(d_{\beta}\mathcal{R}\right)\left(h\right) & = & \mathbb{E}\left[-X^{t}\beta.p_{\beta}\left(X\right)q_{\beta}\left(X\right)X^{t}h\right]\label{eq:risk_linear_op}\\
\left(d_{\beta}^{2}\mathcal{R}\right)\left(h,k\right) & = & \mathbb{E}\left[X^{t}h.X^{t}k.\alpha\left(X^{t}\beta\right)\right]\label{eq:risk_bilinear_op}\\
\left(d_{\beta}^{3}\mathcal{R}\right)\left(h,k,l\right) & = & \mathbb{E}\left[X^{t}h.X^{t}k.X^{t}l.\alpha'\left(X^{t}\beta\right)\right]\label{eq:risk_trilinear_op}
\end{eqnarray}
\end{lem}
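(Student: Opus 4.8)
The plan is to reduce every quantity to a function of the single scalar $t:=X^{t}\beta$ and to exploit that $\partial t/\partial\beta_{u}=X_{u}$, so that all $\beta$-partial derivatives factor through the ordinary derivative $d/dt$ with Jacobian factors $X_{u},X_{v},X_{w}$. Writing $p:=p_{\beta}(X)=1/(1+e^{t})$ and $q:=q_{\beta}(X)=1-p=e^{t}/(1+e^{t})$, I would first record the elementary identities $-\log p=\log(1+e^{t})$ and $-\log q=\log(1+e^{t})-t$. Substituting these into $\rho_{\beta}(X)=-p\log p-q\log q$ and using $p+q=1$ collapses the expression to $\log(1+e^{t})-tq$, which is (\ref{eq:criterion_form_2}); rewriting $tq=t(1-p)=t-tp$ and recombining with $-\log q$ then yields (\ref{eq:criterion_form_1}).

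For the pointwise derivatives I would differentiate these closed forms. Since $p'(t)=-e^{t}/(1+e^{t})^{2}=-pq$ and $q=1-p$, the chain rule gives (\ref{eq:p_derivative}) and (\ref{eq:q_derivative}) at once. Differentiating (\ref{eq:criterion_form_2}) in $t$ produces $\rho'(t)=q-(q+t\,pq)=-t\,pq$, which after multiplying by $X_{u}$ is (\ref{eq:criterion_1st_derivatives}). The second derivative uses $\frac{d}{dt}\bigl(e^{t}/(1+e^{t})^{2}\bigr)=pq\,\frac{1-e^{t}}{1+e^{t}}$, so that $\rho''(t)=-pq\bigl(1+t\,\frac{1-e^{t}}{1+e^{t}}\bigr)$; this is precisely the function $\alpha$ from Section~\ref{sec:Notations}, matching (\ref{eq:criterion_2nd_derivatives}), and one further differentiation identifies the scalar third derivative with $\alpha'$, matching (\ref{eq:criterion_3rd_derivatives}). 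The only real work here is collecting the rational terms over the common denominator $(1+e^{t})^{2}$ to recover the exact form of $\alpha$ and $\alpha'$.

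Finally, for the risk derivatives (\ref{eq:risk_linear_op})--(\ref{eq:risk_trilinear_op}) I would differentiate $\mathcal{R}(\beta)=\mathbb{E}[\rho_{\beta}(X)]$ under the expectation and contract the pointwise tensors with $h,k,l$. The step deserving care is the interchange of $\partial_{\beta}$ and $\mathbb{E}$: each scalar derivative $\rho'(t),\rho''(t),\rho'''(t)$ is a product of the bounded logistic factor $pq\in[0,1/4]$ with an at most affine function of $t=X^{t}\beta$, so the corresponding $\beta$-gradients, Hessians and third derivatives are dominated, locally uniformly in $\beta$ on $B_{2}(0,R)$, by fixed polynomials in the coordinates of $X$. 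Because $X$ is Gaussian up to an independent sign, these dominating functions have finite expectation, so dominated convergence legitimates differentiation under $\mathbb{E}$, and (\ref{eq:risk_linear_op})--(\ref{eq:risk_trilinear_op}) follow directly from (\ref{eq:criterion_1st_derivatives})--(\ref{eq:criterion_3rd_derivatives}). I do not expect any genuine obstacle; the computation is routine, the two points needing attention being the algebraic recognition of $\alpha$ and $\alpha'$ and the (easy) domination argument justifying differentiation under the integral sign.
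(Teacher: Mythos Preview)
Your proposal is correct and follows essentially the same route as the paper: reduce everything to the scalar variable $t=X^{t}\beta$, compute $\rho,\rho',\rho'',\rho'''$ in terms of $p,q$, recognise $\alpha$ and $\alpha'$, and then pass to the risk derivatives by differentiating under the expectation. The only cosmetic differences are that the paper derives form~(\ref{eq:criterion_form_1}) first via $p=qe^{-t}$ and then (\ref{eq:criterion_form_2}), whereas you go the other way, and that the paper justifies the interchange with $\mathbb{E}$ by saying the derivatives are ``uniformly bounded in $\beta$'' while you give the slightly more explicit polynomial-domination argument; both are adequate here.
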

\begin{proof}
Consider $X,\beta\in\mathbb{R}^{d}$, $\rho_{\beta}\left(X\right)$
is defined in section~\ref{sec:Notations}. For simplicity, $p$
and $q$ stand for $p_{\beta}\left(X\right)$ and $q_{\beta}\left(X\right)$
recall that $p_{\beta}\left(X\right)=q_{\beta}\left(X\right)e^{-X\beta}$:
\begin{align*}
\rho_{\beta}\left(X\right) & =-p\log p-q\log q\\
 & =-\left(qe^{-X\beta}\right)\log\left(qe^{-X\beta}\right)-q\log q\\
 & =-qe^{-X^{t}\beta}\log q+X^{t}\beta qe^{-X^{t}\beta}-q\log q\\
 & =-q\left(1+e^{-X^{t}\beta}\right)\log q+X^{t}\beta qe^{-X^{t}\beta}\\
 & =-\log q+X^{t}\beta p\\
 & =\log\left(1+e^{X^{t}\beta}\right)-\frac{X^{t}\beta e^{X^{t}\beta}}{1+e^{X^{t}\beta}}.
\end{align*}
Denote $\beta_{u}$ the $u$-th component of $\beta$. We have,

\begin{align*}
\frac{\partial p_{\beta}\left(X\right)}{\partial\beta_{u}} & =\frac{\partial}{\partial\beta_{u}}\left[\frac{1}{1+e^{X^{t}\beta}}\right]=-\frac{X_{u}e^{X^{t}\beta}}{\left(1+e^{X^{t}\beta}\right)^{2}}\\
 & =-X_{u}p_{\beta}\left(X\right)q_{\beta}\left(X\right)
\end{align*}
and

\begin{align*}
\frac{\partial q_{\beta}\left(X\right)}{\partial\beta_{u}} & =\frac{\partial}{\partial\beta_{u}}\left[1-p_{\beta}\left(X\right)\right]=-\frac{\partial p_{\beta}\left(X\right)}{\partial\beta_{u}}\\
 & =X_{u}p_{\beta}\left(X\right)q_{\beta}\left(X\right).
\end{align*}
Secondly, we use Equation~(\ref{eq:criterion_form_2}) to have

\begin{align*}
\frac{\partial\rho_{\beta}\left(X\right)}{\partial\beta_{u}} & =-\frac{\partial\log q}{\partial\beta_{u}}+\frac{\partial\left(X^{t}\beta p\right)}{\partial\beta_{u}}\\
 & =-\frac{\frac{\partial q}{\partial\beta_{u}}}{q}+\frac{\partial\left(X^{t}\beta\right)}{\partial\beta_{u}}p+X^{t}\beta\frac{\partial p}{\partial\beta_{u}}\\
 & =-\frac{\left(X_{u}pq\right)}{q}+X_{u}p+X^{t}\beta\left(-X_{u}pq\right)\\
 & =-X^{t}\beta X_{u}p_{\beta}\left(X\right)q_{\beta}\left(X\right).
\end{align*}
The second derivatives are
\begin{align*}
\frac{\partial}{\partial\beta_{v}}\frac{\partial}{\partial\beta_{u}}\rho_{\beta}\left(X\right) & =-\frac{\partial\left(X\beta\right)}{\partial\beta_{v}}X_{u}pq-X^{t}\beta X_{u}\frac{\partial p}{\partial\beta_{v}}q-X^{t}\beta X_{u}p\frac{\partial q}{\partial\beta_{v}}\\
 & =-\left(X_{v}\right)X_{u}pq-X^{t}\beta X_{u}\left(-X_{v}pq\right)q-X^{t}\beta X_{u}p\left(X_{v}qp\right)\\
 & =-X_{v}X_{u}pq\left(1-X^{t}\beta\left(q-p\right)\right)\\
 & =-X_{v}X_{u}\frac{e^{X^{t}\beta}}{\left(1+e^{X^{t}\beta}\right)^{2}}\left(1+X^{t}\beta\left(\frac{1-e^{X^{t}\beta}}{1+e^{X^{t}\beta}}\right)\right)\\
 & =X_{v}X_{u}\alpha\left(X^{t}\beta\right).
\end{align*}
The third derivatives are

\begin{align*}
\frac{\partial}{\partial\beta_{w}}\frac{\partial}{\partial\beta_{v}}\frac{\partial}{\partial\beta_{u}}\rho_{\beta}\left(X\right) & =X_{v}X_{u}\frac{\partial}{\partial\beta_{w}}\left[\alpha\left(X^{t}\beta\right)\right]\\
 & =X_{w}X_{v}X_{u}\alpha'\left(X^{t}\beta\right).
\end{align*}
As the derivatives are uniformly bounded with respect to $\beta$,
the theorem of derivation under integral can be applied and it comes
that $\forall h,k,l,\in\mathbb{R}^{d}$,
\begin{align*}
\left(d_{\beta}\mathcal{R}\right)\left(h\right) & =\mathbb{E}\left[-X^{t}\beta.p_{\beta}\left(X\right)q_{\beta}\left(X\right)X^{t}h\right],\\
\left(d_{\beta}^{2}\mathcal{R}\right)\left(h,k\right) & =\mathbb{E}\left[X^{t}h.X^{t}k.\alpha\left(X^{t}\beta\right)\right],\\
\left(d_{\beta}^{3}\mathcal{R}\right)\left(h,k,l\right) & =\mathbb{E}\left[X^{t}h.X^{t}k.X^{t}l.\alpha'\left(X^{t}\beta\right)\right].
\end{align*}
\end{proof}
\begin{lem}
\label{lem:technical_lemma_same_sign_as_mu}Take $r>0$. For any function
$g$ odd on $\mathbb{R}$, positive on $(0,+\infty)$ and when $U$
is a symetric random variable with a density $\gamma$ decreasing
on $\mathbb{R}_{+}$, the quantity $\mathbb{E}\left[g\left(\mu+rU\right)\right]$
has the sign of $\mu$.
\end{lem}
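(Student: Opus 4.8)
The plan is to prove strict positivity of $\mathbb{E}\left[g\left(\mu+rU\right)\right]$ for $\mu>0$; the other two cases then follow by symmetry. Indeed, when $\mu=0$ the map $u\mapsto g\left(ru\right)$ is odd and $U$ is symmetric, so $\mathbb{E}\left[g\left(rU\right)\right]=0$. For $\mu<0$, write $\mu=-\mu'$ with $\mu'>0$; since $U$ and $-U$ have the same law and $g\left(-t\right)=-g\left(t\right)$, one gets
\[
\mathbb{E}\left[g\left(-\mu'+rU\right)\right]=\mathbb{E}\left[g\left(-\mu'-rU\right)\right]=-\mathbb{E}\left[g\left(\mu'+rU\right)\right],
\]
which is negative once the case $\mu'>0$ is settled. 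So it suffices to treat $\mu>0$.

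The key step is to symmetrize the integral so that the integrand becomes manifestly signed. Let $\gamma_{r}\left(v\right):=r^{-1}\gamma\left(v/r\right)$ denote the density of $rU$, which is again symmetric and decreasing on $\mathbb{R}_{+}$. Substituting $x=\mu+v$,
\[
\mathbb{E}\left[g\left(\mu+rU\right)\right]=\int_{\mathbb{R}}g\left(\mu+v\right)\gamma_{r}\left(v\right)dv=\int_{\mathbb{R}}g\left(x\right)\gamma_{r}\left(x-\mu\right)dx.
\]
Splitting this integral at $0$, applying the change $x\mapsto-x$ on $\left(-\infty,0\right)$, and using $g\left(-x\right)=-g\left(x\right)$ together with $\gamma_{r}\left(-x-\mu\right)=\gamma_{r}\left(x+\mu\right)$ yields
\[
\mathbb{E}\left[g\left(\mu+rU\right)\right]=\int_{0}^{\infty}g\left(x\right)\bigl(\gamma_{r}\left(x-\mu\right)-\gamma_{r}\left(x+\mu\right)\bigr)dx.
\]

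It then remains to check that the integrand is nonnegative and not a.e. zero. For $x>0$ we have $g\left(x\right)>0$ by hypothesis, while $\left|x-\mu\right|<x+\mu$ strictly whenever $x,\mu>0$; since $\gamma_{r}$ is symmetric and decreasing on $\mathbb{R}_{+}$, this gives $\gamma_{r}\left(x-\mu\right)=\gamma_{r}\left(\left|x-\mu\right|\right)\geq\gamma_{r}\left(x+\mu\right)$, so the integrand is $\geq0$ on $\left(0,\infty\right)$. I expect the only delicate point to be strictness: if "decreasing" is read in the strict sense (as for the Gaussian density relevant to the application), the comparison above is strict for every $x>0$ and the conclusion is immediate; in the merely nonincreasing case one argues that $\gamma_{r}\left(x-\mu\right)=\gamma_{r}\left(x+\mu\right)$ cannot hold for a.e. $x>0$, since this would force $\gamma_{r}$ to be constant on a half-line, contradicting $\int\gamma_{r}=1$, so the integrand is positive on a set of positive measure. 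The remaining ingredients—the symmetry reductions, the change of variables, and the monotone-rearrangement inequality $\left|x-\mu\right|\leq x+\mu$—are routine.
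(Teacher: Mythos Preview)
Your argument is correct and in fact cleaner than the paper's. Both proofs reduce to the case $\mu>0$ by the same symmetry observation, and both ultimately exploit the oddness of $g$ together with the evenness and monotonicity of the density to compare paired contributions. The difference is in the pairing scheme: the paper splits the range of $U$ into four regions $(0,\infty)$, $(0,\mu/r)$, $(\mu/r,2\mu/r)$, $(2\mu/r,\infty)$ and pairs them two by two via the substitutions $x\mapsto x+2\mu/r$ and $x\mapsto 2\mu/r-x$, obtaining two integrals of the form $\int g(\cdot)\bigl(\gamma(\cdot)-\gamma(\cdot+\text{shift})\bigr)$. You instead shift first to write the expectation as $\int g(x)\gamma_{r}(x-\mu)\,dx$ and then fold once about $x=0$, arriving in a single step at $\int_{0}^{\infty}g(x)\bigl(\gamma_{r}(x-\mu)-\gamma_{r}(x+\mu)\bigr)\,dx$. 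Your route is shorter and makes the role of the inequality $\lvert x-\mu\rvert<x+\mu$ transparent; the paper's decomposition achieves the same comparison but through a more laborious bookkeeping. Your discussion of strictness (strictly decreasing density versus merely nonincreasing) is also a small refinement not spelled out in the paper.
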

\begin{proof}
Take $r,\mu>0$, $U_{1}$ and $U_{2}$ two independent copies of $U$.
It holds
\begin{align*}
\mathbb{E}\left[g\left(\mu+rU\right)\right] & =\mathbb{E}\left[g\left(\mu+rU_{1}\right)I_{U_{1}>0}+g\left(\mu-rU_{2}\right)I_{U_{2}>0}\right]\\
 & =\mathbb{E}\left[g\left(\mu+rU_{1}\right)I_{U_{1}>0}+\right.\\
 & \qquad\left.g\left(\mu-rU_{2}\right)I_{0<U_{2}<\frac{\mu}{r}}+g\left(\mu-rU_{2}\right)I_{\frac{\mu}{r}<U_{2}<2\frac{\mu}{r}}+g\left(\mu-rU_{2}\right)I_{2\frac{\mu}{r}<U_{2}}\right]\\
\\
 & =\mathbb{E}\left[g\left(\mu+rU_{1}\right)I_{U_{1}>0}+g\left(\mu-rU_{2}\right)I_{2\frac{\mu}{r}<U_{2}}\right.\\
 & \qquad\left.+g\left(\mu-rU_{1}\right)I_{0<U_{2}<\frac{\mu}{r}}+g\left(\mu-rU_{2}\right)I_{\frac{\mu}{r}<U_{2}<2\frac{\mu}{r}}\right]\\
\\
 & =\mathbb{E}\left[g\left(\mu+rU_{1}\right)I_{U_{1}>0}+g\left(\mu-rU_{2}\right)I_{2\frac{\mu}{r}<U_{2}}\right]\\
 & \qquad+\mathbb{E}\left[g\left(\mu-rU_{1}\right)I_{0<U_{2}<\frac{\mu}{r}}+g\left(\mu-rU_{2}\right)I_{\frac{\mu}{r}<U_{2}<2\frac{\mu}{r}}\right].
\end{align*}
Let us compute the sign of $\mathbb{E}\left[g\left(\mu+rU_{1}\right)I_{U_{1}>0}+g\left(\mu-rU_{2}\right)I_{2\frac{\mu}{r}<U_{2}}\right]$:

\begin{align*}
 & \mathbb{E}\left[g\left(\mu+rU_{1}\right)I_{U_{1}>0}+g\left(\mu-rU_{2}\right)I_{2\frac{\mu}{r}<U_{2}}\right]\\
 & =\int_{0}^{\infty}g\left(\mu+rx\right)\gamma\left(x\right)dx+\underset{x=y+\frac{2\mu}{r}}{\underbrace{\int_{2\frac{\mu}{r}}^{\infty}g\left(\mu-rx\right)\gamma\left(x\right)dx}}\\
 & =\int_{0}^{\infty}g\left(\mu+rx\right)\gamma\left(x\right)dx+\int_{0}^{\infty}g\left(\mu-r\left(y+\frac{2\mu}{r}\right)\right)\gamma\left(y+\frac{2\mu}{r}\right)dy\\
 & =\int_{0}^{\infty}g\left(\mu+rx\right)\gamma\left(x\right)dx+\int_{0}^{\infty}g\left(-ry-\mu\right)\gamma\left(y+\frac{2\mu}{r}\right)dy\\
 & =\int_{0}^{\infty}g\left(\mu+rx\right)\gamma\left(x\right)dx+\int_{0}^{\infty}-g\left(\mu+ry\right)\gamma\left(y+\frac{2\mu}{r}\right)dy\\
 & =\int_{0}^{\infty}\underset{>0}{\underbrace{g\left(\mu+rx\right)}}\underset{>0}{\underbrace{\left[\gamma\left(x\right)-\gamma\left(x+\frac{2\mu}{r}\right)\right]}}dx\\
 & >0.
\end{align*}
Let us now compute the sign of $\mathbb{E}\left[g\left(\mu-rU_{1}\right)I_{0<U_{2}<\frac{\mu}{r}}+g\left(\mu-rU_{2}\right)I_{\frac{\mu}{r}<U_{2}<2\frac{\mu}{r}}\right]$:

\begin{align*}
 & \mathbb{E}\left[g\left(\mu-rU_{1}\right)I_{0<U_{2}<\frac{\mu}{r}}+g\left(\mu-rU_{2}\right)I_{\frac{\mu}{r}<U_{2}<2\frac{\mu}{r}}\right]\\
 & =\int_{0}^{\frac{\mu}{r}}g\left(\mu-rx\right)\gamma\left(x\right)dx+\underset{x=\frac{2\mu}{r}-y}{\underbrace{\int_{\frac{\mu}{r}}^{\frac{2\mu}{r}}g\left(\mu-rx\right)\gamma\left(x\right)dx}}\\
 & =\int_{0}^{\frac{\mu}{r}}g\left(\mu-rx\right)\gamma\left(x\right)dx+\int_{0}^{\frac{\mu}{r}}g\left(\mu-r\left(\frac{2\mu}{r}-y\right)\right)\gamma\left(\frac{2\mu}{r}-y\right)dy\\
 & =\int_{0}^{\frac{\mu}{r}}g\left(\mu-rx\right)\gamma\left(x\right)dx+\int_{0}^{\frac{\mu}{r}}g\left(ry-\mu\right)\gamma\left(\frac{2\mu}{r}-y\right)dy\\
 & =\int_{0}^{\frac{\mu}{r}}g\left(\mu-rx\right)\gamma\left(x\right)dx+\int_{0}^{\frac{\mu}{r}}-g\left(\mu-ry\right)\gamma\left(\frac{2\mu}{r}-y\right)dy\\
 & =\int_{0}^{\frac{\mu}{r}}\underset{>0}{\underbrace{g\underset{>0}{\underbrace{\left(\mu-rx\right)}}}}\underset{>0}{\underbrace{\left[\gamma\left(x\right)-\gamma\underset{\frac{2\mu}{r}-x>\frac{\mu}{r}>x}{\underbrace{\left(\frac{2\mu}{r}-x\right)}}\right]}}dx\\
 & >0
\end{align*}
Hence $\mathbb{E}\left[g\left(\mu+rU\right)\right]>0$. If $\mu<0$,
then one has $\mathbb{E}\left[g\left(\mu+rU\right)\right]=-\mathbb{E}\left[g\left(-\mu-rU\right)\right]$
and the previous result applies since $-\mu>0$ and $-U\sim U$. Thus
we find that if $\mu<0,\mathbb{E}\left[g\left(\mu+rU\right)\right]<0$.
\end{proof}
\begin{thm}
\label{thm:control_proba_main_event}Set $\Theta=B_{2}\left(0,R\right)$,
$M_{n}=\left\Vert a\right\Vert _{\infty}+\sqrt{2\log d}+\sqrt{2\log\left(1+n\right)}$
and 
\[
\lambda_{0}=3n^{-1/2}LM_{n}\left(5\sqrt{3\log\left(2d\right)}\log n+4\right).
\]
It holds $\forall n\geq2$, $\forall T\geq1$, 
\[
P\left(\sup_{\beta\in\Theta}\frac{\left|V_{n}\left(\beta\right)-V_{n}\left(\beta_{0}\right)\right|}{\left\Vert \beta-\beta_{0}\right\Vert _{1}\lor\lambda_{0}}>2T\lambda_{0}\right)\leq\frac{3}{4}\log\left(\frac{4R^{2}nd}{L^{2}M_{n}^{2}}\right)\exp\left(-21\left(T-1\right)^{2}\log\left(2d\right)\log^{2}n\right)+\frac{1}{25T^{2}\log\left(2d\right)n\log^{2}n}.
\]
\end{thm}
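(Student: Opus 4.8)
The plan is to bound the normalised centred empirical process by combining a truncation step, a dyadic peeling of the ball $B_2(0,R)$ in the $\ell_1$-metric, and a concentration inequality applied slice by slice; the two terms of the announced bound will come respectively from the peeling-plus-concentration argument and from a moment bound in the variance-dominated regime.

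\textbf{Step 1 (Truncation).} I would first replace $V_n$ by the truncated process $V_n^{trunc}$, whose increments are deterministically bounded. Since $\rho$ is $L$-Lipschitz, on the event $\{\,\Vert X^{(i)}\Vert_\infty\le M_n\ \forall i\,\}$ one has $|\rho_\beta(X^{(i)})-\rho_{\beta_0}(X^{(i)})|\le L\Vert X^{(i)}\Vert_\infty\,\Vert\beta-\beta_0\Vert_1\le LM_n\Vert\beta-\beta_0\Vert_1$. The probability that truncation is active, $P(\max_i\Vert X^{(i)}\Vert_\infty>M_n)$, is controlled by a union bound over the $d$ coordinates and $n$ samples together with the Gaussian tail $\Phi^c$: the value $M_n=\Vert a\Vert_\infty+\sqrt{2\log d}+\sqrt{2\log(1+n)}$ is tuned precisely so that $M_n-\Vert a\Vert_\infty$ absorbs $\log(nd)$, and this step feeds into the residual polynomial term.

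\textbf{Step 2 (Peeling).} As $\beta,\beta_0\in B_2(0,R)$ and $\Vert\cdot\Vert_1\le\sqrt d\,\Vert\cdot\Vert_2$, one has $\Vert\beta-\beta_0\Vert_1\le 2R\sqrt d$ throughout. I would slice $\Theta$ into the dyadic shells $C_j:=\{\beta\in\Theta:2^{j-1}\lambda_0<\Vert\beta-\beta_0\Vert_1\le 2^j\lambda_0\}$, $j\ge 1$, plus the central region $\{\Vert\beta-\beta_0\Vert_1\le\lambda_0\}$ where the denominator equals $\lambda_0$. The number of shells is of order $\log_2(2R\sqrt d/\lambda_0)$, which after inserting the value of $\lambda_0$ yields the logarithmic prefactor $\log(4R^2nd/(L^2M_n^2))$. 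On $C_j$ the denominator $\Vert\beta-\beta_0\Vert_1\vee\lambda_0$ is comparable to $2^j\lambda_0$, so it suffices to show that $\sup_{\beta\in C_j}|V_n^{trunc}(\beta)-V_n^{trunc}(\beta_0)|$ stays below $2T\lambda_0\cdot 2^{j-1}\lambda_0$ outside an exponentially small event.

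\textbf{Step 3 (Expected supremum and concentration).} On each shell I would bound $\mathbb{E}\sup_{\beta\in C_j}|(P_n-P)(\rho_\beta-\rho_{\beta_0})|$ by symmetrisation, then remove the Lipschitz loss via the contraction principle, reducing to the Rademacher average of the linear class $\{x\mapsto x^t(\beta-\beta_0)\}$. By $\ell_1/\ell_\infty$ duality this is at most $2^j\lambda_0\cdot\mathbb{E}\left\Vert\frac1n\sum_i\sigma_i X^{(i)}\right\Vert_\infty$, and the maximal inequality for the $2d$ post-truncation coordinates bounded by $M_n$ gives a factor $M_n\sqrt{2\log(2d)/n}$; hence the expectation is of order $2^j\lambda_0\cdot LM_n\sqrt{\log(2d)/n}$, a fraction of the target $\lambda_0\cdot 2^j\lambda_0$. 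I would then apply Bousquet's form of Talagrand's inequality on $C_j$, with per-summand variance $v_j^2$ at most $L^2M_n^2(2^j\lambda_0)^2$ and envelope at most $LM_n\,2^j\lambda_0$. Matching the sub-Gaussian fluctuation $\sqrt{x v_j^2/n}$ to the remaining budget $\sim T\lambda_0\cdot2^j\lambda_0$ forces $x\sim T^2\log(2d)\log^2 n$, which is the content of the exponent $21(T-1)^2\log(2d)\log^2 n$ (the $(T-1)^2$ recording the part of the budget spent on the expectation); the additive constant in $\lambda_0$ is what keeps the sub-exponential envelope term below the target as well. A union bound over the $\sim\log(\cdots)$ shells multiplies this tail by the logarithmic prefactor.

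\textbf{Step 4 (Central region and assembly).} On the central region the denominator is frozen at $\lambda_0$ and the process is variance-dominated; here I would use a second-moment (Chebyshev) bound rather than the exponential one, which produces the polynomial residual $\frac{1}{25T^2\log(2d)n\log^2 n}$. Adding back the truncation probability from Step 1 and summing the contributions of the shells and of the central region gives the stated inequality. \emph{The main obstacle} is the bookkeeping in Step 3: one must simultaneously keep the expectation term, the sub-Gaussian (variance) fluctuation, and the sub-exponential (envelope) fluctuation below the target, uniformly over all shells, and check that the precise scaling of $\lambda_0$ — carrying both a $\sqrt{\log(2d)}\log n$ factor and the additive constant — delivers exactly the exponent $(T-1)^2\log(2d)\log^2 n$. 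Pinning down the variance factor (hence the role of truncation in bounding $\mathbb{E}[(X^t(\beta-\beta_0))^2]$ on $C_j$) and reconciling it with the $\log^2 n$ in the exponent is where the computation is most technical.
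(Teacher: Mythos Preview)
Your skeleton---truncation, dyadic peeling in $\ell_1$, concentration slice by slice---is the paper's strategy, but two points are off.

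\textbf{Where the polynomial term comes from.} You attribute the residual $\frac{1}{25T^{2}\log(2d)\,n\log^{2}n}$ to a Chebyshev argument on the central region $\{\Vert\beta-\beta_0\Vert_1\le\lambda_0\}$. This is not what happens. In the paper the central ball is treated exactly like the other shells: the same exponential concentration applies there (it is simply one more term in the peeling union). The polynomial term comes \emph{entirely} from the truncation residual. Concretely, one splits the process as
\[
|V_n(\beta)-V_n(\beta_0)|\le |V_n^{trunc}(\beta)-V_n^{trunc}(\beta_0)|+|(V_n-V_n^{trunc})(\beta)-(V_n-V_n^{trunc})(\beta_0)|,
\]
and the second piece is bounded pointwise by $L\Vert\beta-\beta_0\Vert_1\cdot\frac{1}{n}\sum_i F(X^{(i)})$ with $F(X)=G(X)\mathbf{1}_{\{G(X)>M_n\}}+\mathbb{E}[G(X)\mathbf{1}_{\{G(X)>M_n\}}]$. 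A Chebyshev inequality on $\frac{1}{n}\sum_i F(X^{(i)})$ (using $\mathbb{E}[G(X)^2\mathbf{1}_{\{G(X)>M_n\}}]\lesssim M_n^2/n$ for the chosen $M_n$) yields the polynomial term. Your Step~1 gestures at truncation but frames it as conditioning on $\{\max_i\Vert X^{(i)}\Vert_\infty\le M_n\}$; that event's probability does not obviously produce the $1/(T^2 n\log^2 n)$ shape---the variance/Chebyshev route on $F$ does.

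\textbf{Concentration on each shell.} Your Step~3 proposes symmetrisation plus contraction plus Bousquet/Talagrand. That is a legitimate alternative, but it is not what the paper does. The paper bounds the metric entropy of the class $\{(\rho_\beta-\rho_{\beta_0})\mathbf{1}_{\{G\le M_n\}}:\beta\in\Theta(\varepsilon)\}$ via the entropy of an $\ell_1$-ball (Maurey-type bound, $H(u,\cdot)\lesssim(\varepsilon^2/u^2)\log(2d)$), then uses a Dudley-type bound (Corollary~14.4 of B\"uhlmann--van de Geer) for the expected supremum and Massart's bounded-differences concentration for the deviation. Your route would also work and is arguably cleaner for the expectation; the paper's entropy route makes the $\log n$ factor appear through the number of chaining scales, which is why $\lambda_0$ carries the $\log n$ alongside $\sqrt{\log(2d)}$.
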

\begin{proof}
First, the triangular inequality gives 
\[
\left|V_{n}\left(\beta\right)-V_{n}\left(\beta_{0}\right)\right|\leq\left|V_{n}^{trunc}\left(\beta\right)-V_{n}^{trunc}\left(\beta_{0}\right)\right|+\left|V_{n}^{trunc}\left(\beta\right)-V_{n}^{trunc}\left(\beta_{0}\right)-\left(V_{n}\left(\beta\right)-V_{n}\left(\beta_{0}\right)\right)\right|,
\]
and since $\forall a,b,t>0$ on has ``$a+b>2t$'' implies ``either
$a>t$ or $b>t$'', the probability of interest can be controlled
as follows:
\begin{align*}
P\left(\sup_{\beta\in\Theta}\frac{\left|V_{n}\left(\beta\right)-V_{n}\left(\beta_{0}\right)\right|}{\left\Vert \beta-\beta_{0}\right\Vert _{1}\lor\lambda_{0}}>2T\lambda_{0}\right) & \leq P\left(\sup_{\beta\in\Theta}\frac{\left|V_{n}^{trunc}\left(\beta\right)-V_{n}^{trunc}\left(\beta_{0}\right)\right|}{\left\Vert \beta-\beta_{0}\right\Vert _{1}\lor\lambda_{0}}>T\lambda_{0}\right)\\
 & \quad+P\left(\sup_{\beta\in\Theta}\frac{\left|V_{n}^{trunc}\left(\beta\right)-V_{n}^{trunc}\left(\beta_{0}\right)-\left(V_{n}\left(\beta\right)-V_{n}\left(\beta_{0}\right)\right)\right|}{\left\Vert \beta-\beta_{0}\right\Vert _{1}\lor\lambda_{0}}>T\lambda_{0}\right).
\end{align*}
Apply now Lemma~\ref{lem:control_V_tail} to have:

\begin{align*}
P\left(\sup_{\beta\in\Theta}\frac{\left|V_{n}\left(\beta\right)-V_{n}\left(\beta_{0}\right)\right|}{\left\Vert \beta-\beta_{0}\right\Vert _{1}\lor\lambda_{0}}>2T\lambda_{0}\right) & \leq P\left(\sup_{\beta\in\Theta}\frac{\left|V_{n}^{trunc}\left(\beta\right)-V_{n}^{trunc}\left(\beta_{0}\right)\right|}{\left\Vert \beta-\beta_{0}\right\Vert _{1}\lor\lambda_{0}}>T\lambda_{0}\right)\\
 & \quad+P\left(\frac{1}{n}\sum_{i=1}^{n}F\left(X^{(i)}\right)>\frac{T\lambda_{0}}{L}\right).
\end{align*}
From Lemmas~\ref{lem:peeling_device} and~\ref{lem:control_of_F(X)},
we get{\scriptsize{}
\begin{align*}
P\left(\sup_{\beta\in\Theta}\frac{\left|V_{n}\left(\beta\right)-V_{n}\left(\beta_{0}\right)\right|}{\left\Vert \beta-\beta_{0}\right\Vert _{1}\lor\lambda_{0}}>2T\lambda_{0}\right) & \leq\frac{3}{4}\log\left(\frac{4R^{2}nd}{L^{2}M_{n}^{2}}\right)\exp\left(-21\left(T-1\right)^{2}\log\left(2d\right)\log^{2}n\right)+4L^{2}\frac{M_{n}^{2}+\left\Vert a\right\Vert _{\infty}+1}{n^{2}\lambda_{0}^{2}T^{2}}.
\end{align*}
}Furthermore,
\begin{align*}
4L^{2}\frac{M_{n}^{2}+\left\Vert a\right\Vert _{\infty}+1}{n^{2}\lambda_{0}^{2}T^{2}} & \leq4L^{2}\frac{M_{n}^{2}+\left\Vert a\right\Vert _{\infty}+1}{n^{2}\frac{9L^{2}M_{n}^{2}\left(5\sqrt{3\log\left(2d\right)}\log n+4\right)^{2}}{n}T^{2}}\\
 & \leq4\frac{1+\frac{\left\Vert a\right\Vert _{\infty}+1}{M_{n}^{2}}}{9\times25\left(3\log\left(2d\right)\log^{2}n\right)nT^{2}}\\
 & \leq\frac{1}{25T^{2}\log\left(2d\right)n\log^{2}n}.
\end{align*}
Hence,{\scriptsize{}
\begin{align*}
P\left(\sup_{\beta\in\Theta}\frac{\left|V_{n}\left(\beta\right)-V_{n}\left(\beta_{0}\right)\right|}{\left\Vert \beta-\beta_{0}\right\Vert _{1}\lor\lambda_{0}}>2T\lambda_{0}\right) & \leq\frac{3}{4}\log\left(\frac{4R^{2}nd}{L^{2}M_{n}^{2}}\right)\exp\left(-21\left(T-1\right)^{2}\log\left(2d\right)\log^{2}n\right)+\frac{1}{25T^{2}\log\left(2d\right)n\log^{2}n}.
\end{align*}
}{\scriptsize\par}
\end{proof}
\begin{lem}
\label{lem:inegalite_essentielle}Recall that $\beta_{0}=\underset{\beta\in\Psi_{U}}{\arg\min}\left\{ \mathcal{R}\left(\beta\right)\right\} $
and $\hat{\beta}:=\underset{\beta\in\Psi_{U}}{\arg\min}\left\{ \mathcal{\hat{R}}_{n}\left(\beta\right)+\lambda\left\Vert \beta\right\Vert _{1}\right\} $.
It holds
\[
\mathcal{E}\left(\hat{\beta},\beta_{0}\right)+\lambda\left\Vert \hat{\beta}\right\Vert _{1}\leq\left|V_{n}\left(\hat{\beta}\right)-V_{n}\left(\beta_{0}\right)\right|+\lambda\left\Vert \beta_{0}\right\Vert _{1}.
\]
\end{lem}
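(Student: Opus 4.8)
The plan is to obtain this ``basic inequality'' directly from the two defining optimization properties at play, linking the empirical and population risks through the empirical process $V_n$. The starting observation is purely definitional: since $V_n(\beta)=(P_n-P)(\rho_\beta)$, $\hat{\mathcal{R}}_n(\beta)=P_n\rho_\beta$ and $\mathcal{R}(\beta)=P\rho_\beta$, one has the exact decomposition
\[
\hat{\mathcal{R}}_n(\beta)=\mathcal{R}(\beta)+V_n(\beta)\qquad\text{for every }\beta .
\]
This is the only identity needed to convert statements about the empirical criterion into statements about the population risk plus a stochastic fluctuation term.

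Next I would exploit the minimality of $\hat\beta$. By definition, $\hat\beta$ minimizes $\beta\mapsto\hat{\mathcal{R}}_n(\beta)+\lambda\left\Vert\beta\right\Vert_1$ over $\Psi_U$, and $\beta_0\in\Psi_U$ is an admissible competitor (see below), so
\[
\hat{\mathcal{R}}_n(\hat\beta)+\lambda\left\Vert\hat\beta\right\Vert_1\leq\hat{\mathcal{R}}_n(\beta_0)+\lambda\left\Vert\beta_0\right\Vert_1 .
\]
Substituting the decomposition $\hat{\mathcal{R}}_n=\mathcal{R}+V_n$ on both sides and rearranging, the population terms combine into the excess risk $\mathcal{E}(\hat\beta,\beta_0)=\mathcal{R}(\hat\beta)-\mathcal{R}(\beta_0)$, while the empirical-process terms combine into $V_n(\beta_0)-V_n(\hat\beta)$, yielding
\[
\mathcal{E}(\hat\beta,\beta_0)+\lambda\left\Vert\hat\beta\right\Vert_1\leq V_n(\beta_0)-V_n(\hat\beta)+\lambda\left\Vert\beta_0\right\Vert_1 .
\]
A final bound of the right-hand fluctuation term by its absolute value, $V_n(\beta_0)-V_n(\hat\beta)\leq\left|V_n(\hat\beta)-V_n(\beta_0)\right|$, produces exactly the claimed inequality.

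The main (and essentially only) point requiring a word of justification is that $\beta_0$ is an admissible point of the constrained problem, i.e.\ that $\beta_0\in\Psi_U$; this is guaranteed by the convention fixed in Section~\ref{sec:An-oracle-inequality}, where among the two population minimizers $\pm\beta_0$ one selects the representative satisfying $\beta_0^t U>0$, which happens with probability one over the choice of $U$. Once this feasibility is granted, every remaining step is a one-line algebraic manipulation, so no genuine analytic obstacle arises; the lemma is the standard first inequality underlying the subsequent oracle analysis in the proof of Theorem~\ref{thm:main_result}.
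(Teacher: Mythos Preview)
Your proof is correct and follows essentially the same approach as the paper: start from the minimality of $\hat\beta$ on $\Psi_U$ with $\beta_0$ as competitor, rewrite the empirical risks via $\hat{\mathcal R}_n=\mathcal R+V_n$, and bound the resulting fluctuation by its absolute value. The only addition is your explicit remark that $\beta_0\in\Psi_U$, which the paper takes for granted from the conventions of Section~\ref{sec:An-oracle-inequality}.
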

\begin{proof}
By definition of $\hat{\beta}$, we have: 
\[
\mathcal{\hat{R}}_{n}\left(\hat{\beta}\right)+\lambda\left\Vert \hat{\beta}\right\Vert _{1}\leq\mathcal{\hat{R}}_{n}\left(\beta_{0}\right)+\lambda\left\Vert \beta_{0}\right\Vert _{1}.
\]
Injecting the excess risk on both sides of the inequality gives
\[
\mathcal{E}\left(\hat{\beta},\beta_{0}\right)+\lambda\left\Vert \hat{\beta}\right\Vert _{1}\leq\mathcal{R}(\hat{\beta})-\mathcal{R}(\beta_{0})+\mathcal{\hat{R}}_{n}\left(\beta_{0}\right)-\mathcal{\hat{R}}_{n}\left(\hat{\beta}\right)+\lambda\left\Vert \beta_{0}\right\Vert _{1}
\]
Then the result comes from the inequality: 
\[
\mathcal{R}(\hat{\beta})-\mathcal{R}(\beta_{0})+\mathcal{\hat{R}}_{n}\left(\beta_{0}\right)-\mathcal{\hat{R}}_{n}\left(\hat{\beta}\right)\leq\left|V_{n}\left(\hat{\beta}\right)-V_{n}\left(\beta_{0}\right)\right|.
\]
\end{proof}

\subsection{Some further technical lemmas }
\begin{lem}
\label{lem:orthogonality_implies_indep}Assuming $a\in\mathbb{R}^{d}$,
$Z\sim a+N$, $N\sim\mathcal{N}\left(0,I_{d}\right)$, $\beta_{0}=Ra/\left\Vert a\right\Vert _{2}$,
and $h_{\perp}\in\beta_{0}^{\perp}$, then $h_{\perp}^{t}Z$ and $Z^{t}\beta_{0}$
are two independent Gaussian variables.
\end{lem}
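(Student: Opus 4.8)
The plan is to exploit the standard fact that linear functionals of a Gaussian vector are jointly Gaussian, together with the equivalence between uncorrelatedness and independence in the Gaussian case. First I would write $Z=a+N$ with $N\sim\mathcal{N}\left(0,I_{d}\right)$ and observe that the map $N\mapsto\left(h_{\perp}^{t}Z,\beta_{0}^{t}Z\right)=\left(h_{\perp}^{t}a+h_{\perp}^{t}N,\beta_{0}^{t}a+\beta_{0}^{t}N\right)$ is affine. Since the affine image of a Gaussian vector is again Gaussian, the pair $\left(h_{\perp}^{t}Z,\beta_{0}^{t}Z\right)$ is a bivariate Gaussian vector; in particular each coordinate is marginally a real Gaussian variable, as claimed in the statement.

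Next I would compute the covariance of the two coordinates. Since the constant shifts $h_{\perp}^{t}a$ and $\beta_{0}^{t}a$ do not affect the covariance,
\[
\mathrm{Cov}\left(h_{\perp}^{t}Z,\beta_{0}^{t}Z\right)=\mathrm{Cov}\left(h_{\perp}^{t}N,\beta_{0}^{t}N\right)=h_{\perp}^{t}\,\mathrm{Cov}\left(N\right)\,\beta_{0}=h_{\perp}^{t}I_{d}\beta_{0}=h_{\perp}^{t}\beta_{0}.
\]
By the hypothesis $h_{\perp}\in\beta_{0}^{\perp}$ we have $h_{\perp}^{t}\beta_{0}=0$, so the two coordinates are uncorrelated.

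Finally, I would invoke the classical fact that the components of a jointly Gaussian vector are independent if and only if they are uncorrelated. As $\left(h_{\perp}^{t}Z,\beta_{0}^{t}Z\right)$ is jointly Gaussian with vanishing covariance, the variables $h_{\perp}^{t}Z$ and $Z^{t}\beta_{0}$ are independent, which is exactly the assertion. There is essentially no obstacle here: the only point requiring a moment of care is to check \emph{joint} rather than merely marginal Gaussianity, but this is immediate since both functionals are read off the single underlying Gaussian vector $N$; everything else reduces to the covariance computation above, which vanishes precisely because of the orthogonality assumption.
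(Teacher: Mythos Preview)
Your proof is correct and follows essentially the same approach as the paper: both arguments reduce to showing that $\mathrm{Cov}\left(h_{\perp}^{t}Z,\beta_{0}^{t}Z\right)=h_{\perp}^{t}\beta_{0}=0$ and then use that uncorrelated jointly Gaussian variables are independent. If anything, you are slightly more careful in explicitly justifying the \emph{joint} Gaussianity of the pair before invoking the uncorrelated-implies-independent step, which the paper's proof leaves implicit.
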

\begin{proof}
Note that $h_{\perp}^{t}a=0.$ We have

\begin{align*}
Cov\left(h_{\perp}^{t}Z,Z^{t}\beta_{0}\right) & =\mathbb{E}\left[\left(h_{\perp}^{t}Z-\mathbb{E}\left[h_{\perp}^{t}Z\right]\right)\left(Z^{t}\beta_{0}-\mathbb{E}\left[Z^{t}\beta_{0}\right]\right)\right]\\
 & =\mathbb{E}\left[\left(h_{\perp}^{t}\left(a+N\right)-\mathbb{E}\left[h_{\perp}^{t}\left(a+N\right)\right]\right)\left(\left(a+N\right)^{t}\beta_{0}-\mathbb{E}\left[\left(a+N\right)^{t}\beta_{0}\right]\right)\right]\\
 & =\mathbb{E}\left[\left(h_{\perp}^{t}N-\mathbb{E}\left[h_{\perp}^{t}N\right]\right)\left(a^{t}\beta_{0}+N^{t}\beta_{0}-\mathbb{E}\left[a^{t}\beta_{0}+N^{t}\beta_{0}\right]\right)\right]\\
 & =\mathbb{E}\left[\left(h_{\perp}^{t}N-h_{\perp}^{t}\mathbb{E}\left[N\right]\right)\left(N^{t}\beta_{0}-\mathbb{E}\left[N^{t}\right]\beta_{0}\right)\right]\\
 & =h_{\perp}^{t}\mathbb{E}\left[NN^{t}\right]\beta_{0}\\
 & =h_{\perp}^{t}\beta_{0}\\
 & =0.
\end{align*}
\end{proof}
\begin{lem}
\label{lem:risk_bilinear_simple_form}With $Z$, $\beta_{0}$, $\alpha$
and $\mathcal{R}$ usual notations and for all $h:=h_{\parallel}+h_{\perp}\in Vect(\beta_{0})\oplus\beta_{0}^{\perp}$
such that $\left\Vert h\right\Vert _{2}=1$ and with $\eta:=\left\Vert h_{\parallel}\right\Vert _{2}$,
we have
\begin{align*}
\left(d_{\beta_{0}}^{2}\mathcal{R}\right)\left(h,h\right) & =\eta^{2}\mathbb{E}\left[\frac{1}{R^{2}}\left(Z^{t}\beta_{0}\right)^{2}\alpha\left(Z^{t}\beta_{0}\right)\right]+\left(1-\eta^{2}\right)\mathbb{E}\left[\alpha\left(Z^{t}\beta_{0}\right)\right].
\end{align*}
\end{lem}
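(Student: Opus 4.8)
The plan is to start from the closed form of the Hessian and progressively simplify the integrand using the special structure of the problem at $\beta_{0}$. First I would invoke formula~\eqref{eq:risk_bilinear_op} of Lemma~\ref{lem:formula_derivatives} at $\beta=\beta_{0}$, which gives
\[
\left(d_{\beta_{0}}^{2}\mathcal{R}\right)\left(h,h\right)=\mathbb{E}\left[\left(X^{t}h\right)^{2}\alpha\left(X^{t}\beta_{0}\right)\right].
\]
The first simplification is to remove the Rademacher sign. Since $X=\varepsilon Z$ with $\varepsilon^{2}=1$, one has $\left(X^{t}h\right)^{2}=\left(Z^{t}h\right)^{2}$; moreover a direct check on the definition of $\alpha$ shows that $\alpha$ is an even function, so that $\alpha\left(X^{t}\beta_{0}\right)=\alpha\left(\varepsilon Z^{t}\beta_{0}\right)=\alpha\left(Z^{t}\beta_{0}\right)$. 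The integrand therefore does not depend on $\varepsilon$, and the expectation reduces to
\[
\left(d_{\beta_{0}}^{2}\mathcal{R}\right)\left(h,h\right)=\mathbb{E}\left[\left(Z^{t}h\right)^{2}\alpha\left(Z^{t}\beta_{0}\right)\right].
\]

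Next I would split $h=h_{\parallel}+h_{\perp}$ along $Vect(\beta_{0})\oplus\beta_{0}^{\perp}$ and expand the square, writing $\left(Z^{t}h\right)^{2}=\left(Z^{t}h_{\parallel}\right)^{2}+2\left(Z^{t}h_{\parallel}\right)\left(Z^{t}h_{\perp}\right)+\left(Z^{t}h_{\perp}\right)^{2}$, and treat the three resulting expectations separately. For the parallel part, write $h_{\parallel}=t\beta_{0}$; since $\left\Vert \beta_{0}\right\Vert _{2}=R$ we get $\left|t\right|=\eta/R$, hence $\left(Z^{t}h_{\parallel}\right)^{2}=\frac{\eta^{2}}{R^{2}}\left(Z^{t}\beta_{0}\right)^{2}$, which produces the first term of the claimed identity.

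The orthogonal part is where Lemma~\ref{lem:orthogonality_implies_indep} enters: because $h_{\perp}\in\beta_{0}^{\perp}$, the Gaussian variables $Z^{t}h_{\perp}$ and $Z^{t}\beta_{0}$ are independent, so the two remaining expectations factorize. For the cross term this produces the factor $\mathbb{E}\left[Z^{t}h_{\perp}\right]=a^{t}h_{\perp}$, which vanishes since $a$ is collinear with $\beta_{0}$ and thus orthogonal to $h_{\perp}$; the cross term is therefore zero. For the last term, factorization yields $\mathbb{E}\left[\left(Z^{t}h_{\perp}\right)^{2}\right]\mathbb{E}\left[\alpha\left(Z^{t}\beta_{0}\right)\right]$, and since $Z\sim\mathcal{N}\left(a,I_{d}\right)$ with $a^{t}h_{\perp}=0$ one has $\mathbb{E}\left[\left(Z^{t}h_{\perp}\right)^{2}\right]=\left\Vert h_{\perp}\right\Vert _{2}^{2}=1-\eta^{2}$, using $\left\Vert h\right\Vert _{2}=1$.

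Summing the three contributions then gives exactly the stated formula. The only genuinely delicate points are checking that $\alpha$ is even, so that the Rademacher factor disappears, and confirming the cancellation of the cross term; both reduce to the single geometric fact that $\beta_{0}$, and hence the mean $a$, is orthogonal to $h_{\perp}$, so I expect no real obstacle beyond these bookkeeping steps.
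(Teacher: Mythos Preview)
Your proposal is correct and follows essentially the same route as the paper's proof: both start from the Hessian formula~\eqref{eq:risk_bilinear_op}, use the evenness of $\alpha$ and $\varepsilon^{2}=1$ to pass from $X$ to $Z$, decompose $h=h_{\parallel}+h_{\perp}$, and then use Lemma~\ref{lem:orthogonality_implies_indep} together with $a^{t}h_{\perp}=0$ to kill the cross term and factor the orthogonal term. There is no substantive difference between the two arguments.
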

\begin{proof}
We computed $d_{\beta_{0}}^{2}\mathcal{R}$ in Equation~(\ref{eq:risk_bilinear_op})
of Lemma \ref{lem:formula_derivatives}. The function $\alpha$ (see
Section \ref{sec:Notations}) is even, so the entries of the Hessian
$d_{\beta_{0}}^{2}\mathcal{R}$ are
\begin{align*}
\forall u,v\in\left\llbracket 1,d\right\rrbracket ,\left(d_{\beta_{0}}^{2}\mathcal{R}\right)_{u,v} & =\mathbb{E}\left[\left(\varepsilon Z_{v}\right)\left(\varepsilon Z_{u}\right)\alpha\left(\varepsilon Z^{t}\beta_{0}\right)\right]\\
 & =\mathbb{E}\left[Z_{v}Z_{u}\alpha\left(Z^{t}\beta_{0}\right)\right].
\end{align*}
Now, let us use the decomposition $h=h_{\parallel}+h_{\perp}$ and
remark that $h_{\parallel}=\epsilon\eta\frac{\beta_{0}}{\left\Vert \beta_{0}\right\Vert _{2}}=\epsilon\frac{\eta}{R}\beta_{0}$
with $\epsilon\in\left\{ -1,1\right\} $. It comes

\begin{align*}
\left(d_{\beta_{0}}^{2}\mathcal{R}\right)\left(h,h\right) & =\mathbb{E}\left[\left(h^{t}Z\right)^{2}\alpha\left(Z^{t}\beta_{0}\right)\right]\\
 & =\mathbb{E}\left[\left(\left(h_{\parallel}+h_{\perp}\right)^{t}Z\right)^{2}\alpha\left(Z^{t}\beta_{0}\right)\right]\\
 & =\mathbb{E}\left[\left(\left(h_{\parallel}^{t}Z\right)^{2}+2.h_{\parallel}^{t}Z.h_{\perp}^{t}Z+\left(h_{\perp}^{t}Z\right)^{2}\right)\alpha\left(Z^{t}\beta_{0}\right)\right]\\
 & =\mathbb{E}\left[\left(h_{\parallel}^{t}Z\right)^{2}\alpha\left(Z^{t}\beta_{0}\right)\right]+2\mathbb{E}\left[h_{\parallel}^{t}Z.h_{\perp}^{t}Z.\alpha\left(Z^{t}\beta_{0}\right)\right]+\mathbb{E}\left[\left(h_{\perp}^{t}Z\right)^{2}\alpha\left(Z^{t}\beta_{0}\right)\right]\\
 & =\mathbb{E}\left[\left(\epsilon\frac{\eta}{R}\beta_{0}^{t}Z\right)^{2}\alpha\left(Z^{t}\beta_{0}\right)\right]+2\mathbb{E}\left[h_{\parallel}^{t}Z.h_{\perp}^{t}Z.\alpha\left(Z^{t}\beta_{0}\right)\right]+\mathbb{E}\left[\left(h_{\perp}^{t}Z\right)^{2}\alpha\left(Z^{t}\beta_{0}\right)\right]\\
 & =\eta^{2}\mathbb{E}\left[\frac{1}{R^{2}}\left(Z^{t}\beta_{0}\right)^{2}\alpha\left(Z^{t}\beta_{0}\right)\right]+2\mathbb{E}\left[h_{\parallel}^{t}Z.h_{\perp}^{t}Z.\alpha\left(Z^{t}\beta_{0}\right)\right]+\mathbb{E}\left[\left(h_{\perp}^{t}Z\right)^{2}\alpha\left(Z^{t}\beta_{0}\right)\right].
\end{align*}
Also remark that $h_{\perp}^{t}Z$ and $Z^{t}\beta_{0}$ are Gaussian
random variables, because $Z$ is a Gaussian vector, that are independent
due to lemma~\ref{lem:orthogonality_implies_indep}.
\begin{align*}
\left(d_{\beta_{0}}^{2}\mathcal{R}\right)\left(h,h\right) & =\eta^{2}\mathbb{E}\left[\frac{1}{R^{2}}\left(Z^{t}\beta_{0}\right)^{2}\alpha\left(Z^{t}\beta_{0}\right)\right]+2\mathbb{E}\left[h_{\parallel}^{t}Z.h_{\perp}^{t}Z.\alpha\left(Z^{t}\beta_{0}\right)\right]+\mathbb{E}\left[\left(h_{\perp}^{t}Z\right)^{2}\alpha\left(Z^{t}\beta_{0}\right)\right]\\
 & =\eta^{2}\mathbb{E}\left[\frac{1}{R^{2}}\left(Z^{t}\beta_{0}\right)^{2}\alpha\left(Z^{t}\beta_{0}\right)\right]+2\mathbb{E}\left[h_{\parallel}^{t}Z.\alpha\left(Z^{t}\beta_{0}\right)\right]\underset{=0}{\underbrace{\mathbb{E}\left[h_{\perp}^{t}Z\right]}}+\mathbb{E}\left[\left(h_{\perp}^{t}Z\right)^{2}\right]\mathbb{E}\left[\alpha\left(Z^{t}\beta_{0}\right)\right]\\
 & =\eta^{2}\mathbb{E}\left[\frac{1}{R^{2}}\left(Z^{t}\beta_{0}\right)^{2}\alpha\left(Z^{t}\beta_{0}\right)\right]+\mathbb{E}\left[\left(h_{\perp}^{t}Z\right)^{2}\right]\mathbb{E}\left[\alpha\left(Z^{t}\beta_{0}\right)\right].
\end{align*}
Note that $\mathbb{E}\left[\left(h_{\perp}^{t}Z\right)^{2}\right]=\mathbb{E}\left[\left(h_{\perp}^{t}\left(a+N\right)\right)^{2}\right]=\mathbb{E}\left[\left(h_{\perp}^{t}N\right)^{2}\right]=h_{\perp}^{t}\mathbb{E}\left[NN^{t}\right]h_{\perp}=1-\eta^{2}$
hence:
\begin{align*}
\left(d_{\beta_{0}}^{2}\mathcal{R}\right)\left(h,h\right) & =\eta^{2}\mathbb{E}\left[\frac{1}{R^{2}}\left(Z^{t}\beta_{0}\right)^{2}\alpha\left(Z^{t}\beta_{0}\right)\right]+\left(1-\eta^{2}\right)\mathbb{E}\left[\alpha\left(Z^{t}\beta_{0}\right)\right].
\end{align*}
\end{proof}
\begin{lem}
\label{lem:control_of_A_B}For all $h:=h_{\parallel}+h_{\perp}\in Vect(\beta_{0})\oplus\beta_{0}^{\perp}$
such that $\left\Vert h\right\Vert _{2}=1$ and with $\eta:=\left\Vert h_{\parallel}\right\Vert _{2}$.
The two following quantities 
\begin{align*}
A & :=\eta^{2}\mathbb{E}\left[\frac{1}{R^{2}}\left(Z^{t}\beta_{0}\right)^{2}\alpha\left(Z^{t}\beta_{0}\right)\mathbb{I}_{\left\{ Z^{t}\beta_{0}>x_{1}\right\} }\right]+\left(1-\eta^{2}\right)\mathbb{E}\left[\alpha\left(Z^{t}\beta_{0}\right)\mathbb{I}_{\left\{ Z^{t}\beta_{0}>x_{1}\right\} }\right]\\
B & :=\eta^{2}\mathbb{E}\left[\frac{1}{R^{2}}\left(Z^{t}\beta_{0}\right)^{2}\alpha\left(Z^{t}\beta_{0}\right)\mathbb{I}_{\left\{ -x_{1}<Z^{t}\beta_{0}<x_{1}\right\} }\right]+\left(1-\eta^{2}\right)\mathbb{E}\left[\alpha\left(Z^{t}\beta_{0}\right)\mathbb{I}_{\left\{ -x_{1}<Z^{t}\beta_{0}<x_{1}\right\} }\right]
\end{align*}
are controlled by
\begin{align*}
A & >\left(\eta^{2}\frac{x_{1}^{2}}{R^{2}}+1-\eta^{2}\right)\left[R+\left[R\left(\left\Vert a\right\Vert _{2}-R\right)-\left(x_{1}+\frac{8}{100}\right)\right]G\left(\frac{x_{1}}{R}+R-\left\Vert a\right\Vert _{2}\right)\right]\gamma\left(\left\Vert a\right\Vert _{2}-\frac{x_{1}}{R}\right)e^{-x_{1}},\\
B & \leq\frac{1}{4}\left(\eta^{2}\frac{x_{1}^{2}}{R^{2}}+1-\eta^{2}\right)\left(\Phi^{c}\left(\left\Vert a\right\Vert _{2}-\frac{x_{1}}{R}\right)-\Phi^{c}\left(\left\Vert a\right\Vert _{2}+\frac{x_{1}}{R}\right)\right).
\end{align*}
\end{lem}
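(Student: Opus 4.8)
The plan is to read $A$ and $B$ as the contributions to the two–term expression of Lemma~\ref{lem:risk_bilinear_simple_form} coming from the events $\{Z^t\beta_0>x_1\}$ and $\{-x_1<Z^t\beta_0<x_1\}$. Write $Y:=Z^t\beta_0$; since $\beta_0=Ra/\|a\|_2$ one has $\beta_0^t a=R\|a\|_2$ and $\|\beta_0\|_2^2=R^2$, so $Y\sim\mathcal N(R\|a\|_2,R^2)$. I also use the sign pattern of $\alpha$: because $x_1$ is its unique positive zero and $\alpha(0)=-\tfrac14<0$, one has $\alpha<0$ on $(-x_1,x_1)$ and $\alpha>0$ on $(x_1,\infty)$, while $\eta^2\tfrac{x_1^2}{R^2}+1-\eta^2>0$ since $x_1/R\ge1$.

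For $A$, on $\{Y>x_1\}$ both $\alpha(Y)>0$ and $Y^2/R^2>x_1^2/R^2$ hold, so the first expectation dominates $\tfrac{x_1^2}{R^2}\mathbb E[\alpha(Y)\mathbb I_{\{Y>x_1\}}]$, whence
\[
A\ \geq\ \left(\eta^2\tfrac{x_1^2}{R^2}+1-\eta^2\right)\,\mathbb E\!\left[\alpha(Y)\,\mathbb I_{\{Y>x_1\}}\right].
\]
I would then minorize the surviving expectation with the pointwise inequality $\alpha(y)\geq e^{-y}\bigl(y-x_1-\tfrac{8}{100}\bigr)$, valid for $y\geq x_1$, and evaluate the Gaussian integral by completing the square: from $e^{-y}\gamma\!\bigl(\tfrac{y-R\|a\|_2}{R}\bigr)=e^{-R\|a\|_2+R^2/2}\,\gamma\!\bigl(\tfrac{y-R(\|a\|_2-R)}{R}\bigr)$ the effective mean becomes $R(\|a\|_2-R)$, after which the identities $\int_t^\infty\gamma=\Phi^c(t)=G(t)\gamma(t)$ and $\int_t^\infty u\gamma(u)\,du=\gamma(t)$ give the integral in closed form. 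Using finally $e^{-R\|a\|_2+R^2/2}\gamma\!\bigl(\tfrac{x_1}{R}+R-\|a\|_2\bigr)=e^{-x_1}\gamma\!\bigl(\|a\|_2-\tfrac{x_1}{R}\bigr)$ reproduces exactly the announced lower bound, the strictness coming from the minorant being strict on a set of positive probability.

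For $B$, on $(-x_1,x_1)$ we have $\alpha(Y)<0$ and $Y^2/R^2\leq x_1^2/R^2$, so $\tfrac{Y^2}{R^2}\alpha(Y)\geq\tfrac{x_1^2}{R^2}\alpha(Y)$ and therefore
\[
|B|\ \leq\ \left(\eta^2\tfrac{x_1^2}{R^2}+1-\eta^2\right)\,\mathbb E\!\left[|\alpha(Y)|\,\mathbb I_{\{-x_1<Y<x_1\}}\right];
\]
since $\alpha\le0$ there, $B\le0$ and the displayed upper bound on $B$ holds a fortiori, but it is this control of $|B|=-B$ that is used in Theorem~\ref{lem:Condition-2}. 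Writing $\alpha(y)=\tfrac{e^y}{(1+e^y)^2}\bigl(y\tanh(y/2)-1\bigr)$, the factor $\tfrac{e^y}{(1+e^y)^2}\le\tfrac14$ and $|y\tanh(y/2)-1|\le1$ on $(-x_1,x_1)$ yield $|\alpha|\le\tfrac14$ there, whence $\mathbb E[|\alpha(Y)|\mathbb I_{\{-x_1<Y<x_1\}}]\le\tfrac14\,\mathbb P(-x_1<Y<x_1)$. Standardizing $Y$ gives $\mathbb P(-x_1<Y<x_1)=\Phi^c\!\bigl(\|a\|_2-\tfrac{x_1}{R}\bigr)-\Phi^c\!\bigl(\|a\|_2+\tfrac{x_1}{R}\bigr)$, which is the claim.

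The main obstacle is the pointwise minorization $\alpha(y)\geq e^{-y}\bigl(y-x_1-\tfrac{8}{100}\bigr)$ on $(x_1,\infty)$: it is precisely this affine–times–exponential lower bound that, once integrated, produces the coefficient $R(\|a\|_2-R)-(x_1+\tfrac{8}{100})$, and the slack $\tfrac{8}{100}$ must be taken as small as possible while keeping the inequality globally valid, so that the resulting estimate stays sharp enough to drive the strong-convexity bound of Theorem~\ref{lem:Condition-2}. Establishing it reduces to a one–variable study of $y\mapsto\tfrac{1}{(1+e^{-y})^2}\bigl(y\tanh(y/2)-1\bigr)-\bigl(y-x_1-\tfrac{8}{100}\bigr)$, which vanishes at $y=x_1$ up to the $\tfrac{8}{100}$ gap and tends to $x_1+\tfrac{8}{100}-1>0$ as $y\to\infty$; everything else is a routine Gaussian computation.
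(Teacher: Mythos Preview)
Your approach is essentially identical to the paper's: the same reduction via $(Z^t\beta_0)^2\gtrless x_1^2$ on the two regions, the same bound $|\alpha|\le\tfrac14$ on $(-x_1,x_1)$ for $B$, and for $A$ the same pointwise minorant $\alpha(y)\ge(y-x_1-\tfrac{8}{100})e^{-y}$ on $[x_1,\infty)$ followed by completing the square to evaluate the Gaussian integrals (the paper packages the latter as Lemmas~\ref{lem:beginning_minoration}, \ref{lem:good_asympt_behaviour} and \ref{lem:good_asympt_behaviour-1}). You are also right that the crux is the pointwise inequality for $\alpha$; the paper establishes it in Lemma~\ref{lem:control_alpha_by_phi} not by the single monotonicity argument you sketch but by a somewhat laborious piecewise analysis on $[x_1,2]$, $[2,2.5]$, $[2.5,3]$ (chord/tangent comparisons using concavity, Lemma~\ref{lem:alpha_concave_x1_3}) and then an asymptotic estimate on $[3,\infty)$ (Lemma~\ref{lem:comparison_alpha_minus_phi_to_0}), so your ``routine one-variable study'' would still need to be filled in along these lines.
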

\begin{proof}
Let us first give an upper bound for the quantity $B$. Recall that,
from Lemma \ref{lem:Study_alpha} we have $-\alpha(x)\in\left(0,\frac{1}{4}\right)$
for $x\in\left[-x_{1},x_{1}\right]$. It holds
\begin{align*}
B & =-\eta^{2}\mathbb{E}\left[\frac{1}{R^{2}}\left(Z^{t}\beta_{0}\right)^{2}\alpha\left(Z^{t}\beta_{0}\right)\mathbb{I}_{\left\{ -x_{1}<Z^{t}\beta_{0}<x_{1}\right\} }\right]-\left(1-\eta^{2}\right)\mathbb{E}\left[\alpha\left(Z^{t}\beta_{0}\right)\mathbb{I}_{\left\{ -x_{1}<Z^{t}\beta_{0}<x_{1}\right\} }\right]\\
 & \leq\eta^{2}\mathbb{E}\left[\frac{1}{R^{2}}\left(Z^{t}\beta_{0}\right)^{2}\frac{1}{4}\mathbb{I}_{\left\{ -x_{1}<Z^{t}\beta_{0}<x_{1}\right\} }\right]+\left(1-\eta^{2}\right)\mathbb{E}\left[\frac{1}{4}\mathbb{I}_{\left\{ -x_{1}<Z^{t}\beta_{0}<x_{1}\right\} }\right]\\
 & =\frac{1}{4}\left(\eta^{2}\frac{x_{1}^{2}}{R^{2}}+1-\eta^{2}\right)\mathbb{P}\left[-x_{1}<Z^{t}\beta_{0}<x_{1}\right]\\
 & =\frac{1}{4}\left(\eta^{2}\frac{x_{1}^{2}}{R^{2}}+1-\eta^{2}\right)\mathbb{P}\left[-x_{1}<R\left\Vert a\right\Vert _{2}+R\mathcal{N}\left(0,1\right)<x_{1}\right]\\
 & =\frac{1}{4}\left(\eta^{2}\frac{x_{1}^{2}}{R^{2}}+1-\eta^{2}\right)\mathbb{P}\left[-\frac{x_{1}}{R}-\left\Vert a\right\Vert _{2}<\mathcal{N}\left(0,1\right)<\frac{x_{1}}{R}-\left\Vert a\right\Vert _{2}\right]\\
 & =\frac{1}{4}\left(\eta^{2}\frac{x_{1}^{2}}{R^{2}}+1-\eta^{2}\right)\mathbb{P}\left[\left\Vert a\right\Vert _{2}-\frac{x_{1}}{R}<\mathcal{N}\left(0,1\right)<\frac{x_{1}}{R}+\left\Vert a\right\Vert _{2}\right]\\
 & =\frac{1}{4}\left(\eta^{2}\frac{x_{1}^{2}}{R^{2}}+1-\eta^{2}\right)\left(\Phi^{c}\left(\left\Vert a\right\Vert _{2}-\frac{x_{1}}{R}\right)-\Phi^{c}\left(\left\Vert a\right\Vert _{2}+\frac{x_{1}}{R}\right)\right).
\end{align*}
Let us now turn to the lower bound for the quantity $A$: 
\begin{align*}
A & =\eta^{2}\mathbb{E}\left[\frac{1}{R^{2}}\left(Z^{t}\beta_{0}\right)^{2}\alpha\left(Z^{t}\beta_{0}\right)\mathbb{I}_{\left\{ Z^{t}\beta_{0}>x_{1}\right\} }\right]+\left(1-\eta^{2}\right)\mathbb{E}\left[\alpha\left(Z^{t}\beta_{0}\right)\mathbb{I}_{\left\{ Z^{t}\beta_{0}>x_{1}\right\} }\right]\\
 & \geq\eta^{2}\mathbb{E}\left[\frac{1}{R^{2}}x_{1}^{2}\alpha\left(Z^{t}\beta_{0}\right)\mathbb{I}_{\left\{ Z^{t}\beta_{0}>x_{1}\right\} }\right]+\left(1-\eta^{2}\right)\mathbb{E}\left[\alpha\left(Z^{t}\beta_{0}\right)\mathbb{I}_{\left\{ Z^{t}\beta_{0}>x_{1}\right\} }\right]\\
 & =\eta^{2}\frac{x_{1}^{2}}{R^{2}}\mathbb{E}\left[\alpha\left(Z^{t}\beta_{0}\right)\mathbb{I}_{\left\{ Z^{t}\beta_{0}>x_{1}\right\} }\right]+\left(1-\eta^{2}\right)\mathbb{E}\left[\alpha\left(Z^{t}\beta_{0}\right)\mathbb{I}_{\left\{ Z^{t}\beta_{0}>x_{1}\right\} }\right]\\
 & =\left(\eta^{2}\frac{x_{1}^{2}}{R^{2}}+1-\eta^{2}\right)\mathbb{E}\left[\alpha\left(Z^{t}\beta_{0}\right)\mathbb{I}_{\left\{ Z^{t}\beta_{0}>x_{1}\right\} }\right].
\end{align*}
We need now to control $\mathbb{E}\left[\alpha\left(Z^{t}\beta_{0}\right)\mathbb{I}_{\left\{ Z^{t}\beta_{0}>x_{1}\right\} }\right]$
from below. We first use Lemma~\ref{lem:beginning_minoration} to
get:

{\small{}
\begin{align*}
 & \mathbb{E}\left[\alpha\left(Z^{t}\beta_{0}\right)\mathbb{I}_{\left\{ Z^{t}\beta_{R}>x_{1}\right\} }\right]\\
 & \geq\int_{x_{1}}^{\infty}\left(\left(x-x_{1}-\frac{8}{100}\right)e^{-x}\frac{e^{-\left(x/R-\left\Vert a\right\Vert _{2}\right)^{2}/2}}{\sqrt{2\pi R^{2}}}\right)dx\\
 & =\int_{x_{1}}^{\infty}xe^{-x}\frac{e^{-\left(x/R-\left\Vert a\right\Vert \right)^{2}/2}}{\sqrt{2\pi R^{2}}}dx-\left(x_{1}+\frac{8}{100}\right)\int_{x_{1}}^{\infty}e^{-x}\frac{e^{-\left(x/R-\left\Vert a\right\Vert \right)^{2}/2}}{\sqrt{2\pi R^{2}}}dx.
\end{align*}
}Using the notations of Lemmas~\ref{lem:good_asympt_behaviour} and~\ref{lem:good_asympt_behaviour-1}
we obtain,

\begin{align}
\mathbb{E}\left[\alpha\left(Z^{t}\beta_{0}\right)\mathbb{I}_{\left\{ Z^{t}\beta_{0}>x_{1}\right\} }\right] & \geq J_{a,R}\left(1,x_{1}\right)-\left(x_{1}+\frac{8}{100}\right)K_{a,R}\left(1,x_{1}\right).\label{eq:beginning_minoration}
\end{align}
Hence, Lemmas~\ref{lem:good_asympt_behaviour} and~\ref{lem:good_asympt_behaviour-1}
give:{\small{}
\begin{align*}
 & \mathbb{E}\left[\alpha\left(Z^{t}\beta_{0}\right)\mathbb{I}_{\left\{ Z^{t}\beta_{0}>x_{1}\right\} }\right]\\
 & \geq R\left(1+\left(\left\Vert a\right\Vert -R\right)G\left(\frac{x_{1}}{R}+R-\left\Vert a\right\Vert \right)\right)\gamma\left(\frac{x_{1}}{R}-\left\Vert a\right\Vert \right)e^{-x_{1}}-\left(x_{1}+\frac{8}{100}\right)\gamma\left(\frac{x_{1}}{R}-\left\Vert a\right\Vert \right)G\left(\frac{x_{1}}{R}+R-\left\Vert a\right\Vert \right)e^{-x_{1}}\\
 & \geq\left[R+\left[R\left(\left\Vert a\right\Vert -R\right)-\left(x_{1}+\frac{8}{100}\right)\right]G\left(\frac{x_{1}}{R}+R-\left\Vert a\right\Vert \right)\right]\gamma\left(\left\Vert a\right\Vert -\frac{x_{1}}{R}\right)e^{-x_{1}}.
\end{align*}
}Finally,

\begin{align*}
A & >\left(\eta^{2}\frac{x_{1}^{2}}{R^{2}}+1-\eta^{2}\right)\left[R+\left[R\left(\left\Vert a\right\Vert -R\right)-\left(x_{1}+\frac{8}{100}\right)\right]G\left(\frac{x_{1}}{R}+R-\left\Vert a\right\Vert \right)\right]\gamma\left(\left\Vert a\right\Vert -\frac{x_{1}}{R}\right)e^{-x_{1}}.
\end{align*}
\end{proof}
\begin{lem}
\label{lem:risk_bilinear_values_control}Take $a\in\mathbb{R}^{d}$,
$R,\nu>0$ and $\beta_{0}:=R\frac{a}{\left\Vert a\right\Vert _{2}}$
if inequality
\begin{align}
R\left(1-\left(R-\left\Vert a\right\Vert _{2}+\frac{x_{1}+\frac{8}{100}}{R}\right)G\left(\frac{x_{1}}{R}+R-\left\Vert a\right\Vert _{2}\right)\right) & \geq\left(1+\nu\right)\frac{e^{x_{1}}}{4}G\left(\left\Vert a\right\Vert _{2}-\frac{x_{1}}{R}\right)\label{eq:final_condition_on_a_R_nu}
\end{align}
is true, then for all $h:=h_{\parallel}+h_{\perp}\in Vect(\beta_{0})\oplus\beta_{0}^{\perp}$
such that $\left\Vert h\right\Vert _{2}=1$ and $\eta:=\left\Vert h_{\parallel}\right\Vert _{2}$,
it also holds
\[
\left(d_{\beta_{0}}^{2}\mathcal{R}\right)\left(h,h\right)>\frac{\nu}{4}\left(\eta^{2}\frac{x_{1}^{2}}{R^{2}}+1-\eta^{2}\right)\left(\Phi^{c}\left(\left\Vert a\right\Vert _{2}-\frac{x_{1}}{R}\right)-\Phi^{c}\left(\left\Vert a\right\Vert _{2}+\frac{x_{1}}{R}\right)\right).
\]
\end{lem}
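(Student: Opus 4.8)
The plan is to read off $\left(d_{\beta_{0}}^{2}\mathcal{R}\right)\left(h,h\right)$ from Lemma~\ref{lem:risk_bilinear_simple_form}, split the expectation according to the sign of $\alpha$, invoke the two-sided control of Lemma~\ref{lem:control_of_A_B}, and finally recognize that the bracketed factor appearing in the lower bound for $A$ is \emph{exactly} the left-hand side of hypothesis~(\ref{eq:final_condition_on_a_R_nu}). First I would use that $x_{1}$ is the unique positive zero of the even function $\alpha$, so that $\alpha>0$ on $\left\{ \left|Z^{t}\beta_{0}\right|>x_{1}\right\} $ and $\alpha<0$ on $\left\{ \left|Z^{t}\beta_{0}\right|<x_{1}\right\} $. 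Decomposing the expectation in Lemma~\ref{lem:risk_bilinear_simple_form} over the three regions $\left\{ Z^{t}\beta_{0}>x_{1}\right\} $, $\left\{ -x_{1}<Z^{t}\beta_{0}<x_{1}\right\} $ and $\left\{ Z^{t}\beta_{0}<-x_{1}\right\} $, the first contribution is precisely $A$, the third is nonnegative (so it may be discarded in a lower bound), and the middle one is $-B$ with $B>0$ the quantity controlled in Lemma~\ref{lem:control_of_A_B}. This yields
\[
\left(d_{\beta_{0}}^{2}\mathcal{R}\right)\left(h,h\right)\geq A-B.
\]

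Next I would substitute the bounds of Lemma~\ref{lem:control_of_A_B}, i.e. the strict lower bound on $A$ and the upper bound $B\leq\frac{1}{4}\left(\eta^{2}\frac{x_{1}^{2}}{R^{2}}+1-\eta^{2}\right)\left(\Phi^{c}\left(\left\Vert a\right\Vert _{2}-\frac{x_{1}}{R}\right)-\Phi^{c}\left(\left\Vert a\right\Vert _{2}+\frac{x_{1}}{R}\right)\right)$. Factoring out the common positive term $\eta^{2}\frac{x_{1}^{2}}{R^{2}}+1-\eta^{2}$ (positive because $\eta\in[0,1]$ and $x_{1}/R\geq1$), the claim reduces to the scalar inequality
\[
\left[R+\left[R\left(\left\Vert a\right\Vert _{2}-R\right)-\left(x_{1}+\tfrac{8}{100}\right)\right]G\left(\tfrac{x_{1}}{R}+R-\left\Vert a\right\Vert _{2}\right)\right]\gamma\left(\left\Vert a\right\Vert _{2}-\tfrac{x_{1}}{R}\right)e^{-x_{1}}\geq\frac{1+\nu}{4}\left(\Phi^{c}\left(\left\Vert a\right\Vert _{2}-\tfrac{x_{1}}{R}\right)-\Phi^{c}\left(\left\Vert a\right\Vert _{2}+\tfrac{x_{1}}{R}\right)\right),
\]
where the $\frac{\nu}{4}$ on the right arises from absorbing the $B$-term, since $\frac{1}{4}+\frac{\nu}{4}=\frac{1+\nu}{4}$.

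The crucial observation closing the argument is that expanding the bracket $R+\left[R\left(\left\Vert a\right\Vert _{2}-R\right)-\left(x_{1}+\frac{8}{100}\right)\right]G\left(\frac{x_{1}}{R}+R-\left\Vert a\right\Vert _{2}\right)$ reproduces the left-hand side $R\left(1-\left(R-\left\Vert a\right\Vert _{2}+\frac{x_{1}+8/100}{R}\right)G\left(\frac{x_{1}}{R}+R-\left\Vert a\right\Vert _{2}\right)\right)$ of hypothesis~(\ref{eq:final_condition_on_a_R_nu}). Applying that hypothesis and rewriting $G=\Phi^{c}/\gamma$ gives
\[
\left[R+\cdots\right]\gamma\left(\left\Vert a\right\Vert _{2}-\tfrac{x_{1}}{R}\right)e^{-x_{1}}\geq\frac{1+\nu}{4}\,G\left(\left\Vert a\right\Vert _{2}-\tfrac{x_{1}}{R}\right)\gamma\left(\left\Vert a\right\Vert _{2}-\tfrac{x_{1}}{R}\right)=\frac{1+\nu}{4}\,\Phi^{c}\left(\left\Vert a\right\Vert _{2}-\tfrac{x_{1}}{R}\right),
\]
and since $\Phi^{c}\left(\left\Vert a\right\Vert _{2}+\frac{x_{1}}{R}\right)\geq0$ the right-hand side dominates $\frac{1+\nu}{4}\left(\Phi^{c}\left(\left\Vert a\right\Vert _{2}-\frac{x_{1}}{R}\right)-\Phi^{c}\left(\left\Vert a\right\Vert _{2}+\frac{x_{1}}{R}\right)\right)$, which is precisely the displayed scalar inequality. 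I do not expect a genuine obstacle once Lemmas~\ref{lem:risk_bilinear_simple_form} and~\ref{lem:control_of_A_B} are granted: the whole argument is bookkeeping, the only delicate points being the correct accounting of the sign of $\alpha$ across the three regions (to justify dropping the left tail and writing the middle region as $-B$) and the algebraic matching of the bracket with the hypothesis' left-hand side.
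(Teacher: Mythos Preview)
Your proposal is correct and follows essentially the same route as the paper: decompose via Lemma~\ref{lem:risk_bilinear_simple_form}, split on the sign of $\alpha$ into the three regions, discard the nonnegative left-tail contribution, apply the bounds of Lemma~\ref{lem:control_of_A_B}, and verify that hypothesis~(\ref{eq:final_condition_on_a_R_nu}) is precisely what forces the lower bound on $A$ to exceed $(1+\nu)$ times the upper bound on $B$ (after dropping $\Phi^{c}(\|a\|_{2}+x_{1}/R)\geq 0$). One harmless slip: you justify positivity of $\eta^{2}x_{1}^{2}/R^{2}+1-\eta^{2}$ via $x_{1}/R\geq 1$, which is not assumed in this lemma; the factor is in fact always positive for $\eta\in[0,1]$ since it equals $1-\eta^{2}(1-x_{1}^{2}/R^{2})\geq \min(1,x_{1}^{2}/R^{2})>0$.
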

\begin{proof}
Recall that$\beta_{0}:=Ra/\left\Vert a\right\Vert _{2}$. We proved
in Lemma~\ref{lem:risk_bilinear_simple_form}, that $\left(d_{\beta_{0}}^{2}\mathcal{R}\right)\left(h,h\right)$
is given by the following formula:
\begin{align*}
\left(d_{\beta_{0}}^{2}\mathcal{R}\right)\left(h,h\right) & =\eta^{2}\mathbb{E}\left[\frac{1}{R^{2}}\left(Z^{t}\beta_{0}\right)^{2}\alpha\left(Z^{t}\beta_{R}\right)\right]+\left(1-\eta^{2}\right)\mathbb{E}\left[\alpha\left(Z^{t}\beta_{0}\right)\right].
\end{align*}
We know from Lemma~\ref{lem:Study_alpha} that $\alpha$ is non-positive
on the interval $\left[-x_{1},x_{1}\right]$ and positive otherwise.
Consequently, we study the sign of $\left(d_{\beta_{0}}^{2}\mathcal{R}\right)\left(h,h\right)$
on the partition $\mathbb{R}=\left(-\infty-,x_{1}\right)\bigcup\left[-x_{1},x_{1}\right]\bigcup\left(x_{1},\infty\right)$:
\begin{align*}
\left(d_{\beta_{0}}^{2}\mathcal{R}\right)\left(h,h\right) & =\eta^{2}\mathbb{E}\left[\frac{1}{R^{2}}\left(Z^{t}\beta_{0}\right)^{2}\alpha\left(Z^{t}\beta_{0}\right)\right]+\left(1-\eta^{2}\right)\mathbb{E}\left[\alpha\left(Z^{t}\beta_{0}\right)\right]\\
\\
 & =\underset{A}{\underbrace{\eta^{2}\mathbb{E}\left[\frac{1}{R^{2}}\left(Z^{t}\beta_{0}\right)^{2}\alpha\left(Z^{t}\beta_{0}\right)\mathbb{I}_{\left\{ Z^{t}\beta_{0}>x_{1}\right\} }\right]+\left(1-\eta^{2}\right)\mathbb{E}\left[\alpha\left(Z^{t}\beta_{0}\right)\mathbb{I}_{\left\{ Z^{t}\beta_{0}>x_{1}\right\} }\right]}}\\
 & \quad+\underset{-B}{\underbrace{\eta^{2}\mathbb{E}\left[\frac{1}{R^{2}}\left(Z^{t}\beta_{0}\right)^{2}\alpha\left(Z^{t}\beta_{0}\right)\mathbb{I}_{\left\{ -x_{1}<Z^{t}\beta_{0}<x_{1}\right\} }\right]+\left(1-\eta^{2}\right)\mathbb{E}\left[\alpha\left(Z^{t}\beta_{0}\right)\mathbb{I}_{\left\{ -x_{1}<Z^{t}\beta_{0}<x_{1}\right\} }\right]}}\\
 & \quad+\eta^{2}\mathbb{E}\left[\frac{1}{R^{2}}\left(Z^{t}\beta_{0}\right)^{2}\alpha\left(Z^{t}\beta_{0}\right)\mathbb{I}_{\left\{ Z^{t}\beta_{0}<x_{1}\right\} }\right]+\left(1-\eta^{2}\right)\mathbb{E}\left[\alpha\left(Z^{t}\beta_{0}\right)\mathbb{I}_{\left\{ Z^{t}\beta_{0}<x_{1}\right\} }\right]
\end{align*}
We have found in Lemma~\ref{lem:control_of_A_B} two quantities $a>0$
and $b>0$ such that $A>a$ and $b\geq B$:
\begin{align*}
a:= & \left(\eta^{2}\frac{x_{1}^{2}}{R^{2}}+1-\eta^{2}\right)\left[R+\left[R\left(\left\Vert a\right\Vert _{2}-R\right)-\left(x_{1}+\frac{8}{100}\right)\right]G\left(\frac{x_{1}}{R}+R-\left\Vert a\right\Vert _{2}\right)\right]\gamma\left(\left\Vert a\right\Vert _{2}-\frac{x_{1}}{R}\right)e^{-x_{1}},\\
b:= & \frac{1}{4}\left(\eta^{2}\frac{x_{1}^{2}}{R^{2}}+1-\eta^{2}\right)\left(\Phi^{c}\left(\left\Vert a\right\Vert _{2}-\frac{x_{1}}{R}\right)-\Phi^{c}\left(\left\Vert a\right\Vert _{2}+\frac{x_{1}}{R}\right)\right).
\end{align*}
 If $a>\left(1+\nu\right)b$ for some $\nu>0$ then we have $\left(d_{\beta_{0}}^{2}\mathcal{R}\right)\left(h,h\right)>A-B>a-b>\nu b$.
As 
\[
b<\frac{1}{4}\left(\eta^{2}\frac{x_{1}^{2}}{R^{2}}+1-\eta^{2}\right)\Phi^{c}\left(\left\Vert a\right\Vert _{2}-\frac{x_{1}}{R}\right),
\]
the condition ``$a>\left(1+\nu\right)b$'' is satisfied when these
successive conditions are true:{\small{}
\begin{multline*}
\left(\eta^{2}\frac{x_{1}^{2}}{R^{2}}+1-\eta^{2}\right)\left[R+\left[R\left(\left\Vert a\right\Vert _{2}-R\right)-\left(x_{1}+\frac{8}{100}\right)\right]G\left(\frac{x_{1}}{R}+R-\left\Vert a\right\Vert _{2}\right)\right]\gamma\left(\left\Vert a\right\Vert _{2}-\frac{x_{1}}{R}\right)e^{-x_{1}}\\
>\left(1+\nu\right)\frac{1}{4}\left(\eta^{2}\frac{x_{1}^{2}}{R^{2}}+1-\eta^{2}\right)\Phi^{c}\left(\left\Vert a\right\Vert _{2}-\frac{x_{1}}{R}\right)
\end{multline*}
}{\small\par}

(simplify $\left(\eta^{2}\frac{x_{1}^{2}}{R^{2}}+1-\eta^{2}\right)$and
$R$ in factor in the left-hand side){\small{}
\begin{multline*}
R\left[1-\left(R-\left\Vert a\right\Vert _{2}+\frac{x_{1}+\frac{8}{100}}{R}\right)G\left(\frac{x_{1}}{R}+R-\left\Vert a\right\Vert _{2}\right)\right]\gamma\left(\left\Vert a\right\Vert _{2}-\frac{x_{1}}{R}\right)\\
>\left(1+\nu\right)\frac{e^{x_{1}}}{4}\Phi^{c}\left(\left\Vert a\right\Vert _{2}-\frac{x_{1}}{R}\right)
\end{multline*}
}{\small\par}

(divide by $\gamma\left(\left\Vert a\right\Vert _{2}-\frac{x_{1}}{R}\right)$
and make Mill's ratio appear)

{\small{}
\begin{align*}
R\left(1-\left(R-\left\Vert a\right\Vert _{2}+\frac{x_{1}+\frac{8}{100}}{R}\right)G\left(\frac{x_{1}}{R}+R-\left\Vert a\right\Vert _{2}\right)\right) & \geq\left(1+\nu\right)\frac{e^{x_{1}}}{4}G\left(\left\Vert a\right\Vert _{2}-\frac{x_{1}}{R}\right).
\end{align*}
}To conclude, when the latter inequality is true, one has $\left(d_{\beta_{0}}^{2}\mathcal{R}\right)\left(h,h\right)>\nu b$.
\end{proof}
\begin{lem}
\label{lem:Study_alpha}Study of $\alpha(x)=-\frac{e^{x}}{\left(1+e^{x}\right)^{2}}\left(1+x\frac{1-e^{x}}{1+e^{x}}\right)$.
At $x=0$, $\alpha(0)=-\frac{1}{4}$ is a global minimum, $x_{\alpha_{max}}\in\left[2,3\right]$
is the positive real where $\alpha$ is maximal with value $\alpha_{max}$,
its derivative is bounded $\left\Vert \alpha'\right\Vert _{\infty}\leq0.22$
and by definition of $x_{1}$ (see Section~\ref{sec:Notations}),
$\alpha(x_{1})=0$ with 
\begin{equation}
x_{1}\approx1.54340463.\label{eq:approx_x1}
\end{equation}

\begin{table}[h]
\begin{centering}
\begin{tabular}{|c|c|c|c|c|c|c|c|c|c|c|c|c|c|}
\hline 
x & $0$ &  & $x_{2}$ &  & $1$ &  & $x_{1}$ &  & $2$ &  & $x_{\alpha_{max}}$ &  & $\infty$\tabularnewline
\hline 
\hline 
sign of $f''$ & $-$ & $-$ & $-$ & $-$ &  & $-$ & $-$ & $-$ & $-$ & $-$ & $-$ & $-$ & $-$\tabularnewline
\hline 
variations of $f'$ & $1$ & $\searrow$ & $0$ & $\searrow$ &  & $\searrow$ & $\searrow$ & $\searrow$ & $\searrow$ & $\searrow$ & $\searrow$ & $\searrow$ & $-\infty$\tabularnewline
\hline 
sign of $f'$ & $+$ & $+$ & $0$ & $-$ &  & $-$ & $-$ & $-$ & $-$ & $-$ & $-$ & $-$ & $-$\tabularnewline
\hline 
variations of $f$ & $2$ & $\nearrow$ & $f\left(x_{2}\right)$ & $\searrow$ & 2 & $\searrow$ & $0$ & $\searrow$ & $3-e^{2}$ & $\searrow$ & $\searrow$ & $\searrow$ & $-\infty$\tabularnewline
\hline 
sign of $f$ & $+$ & $+$ & $+$ & $+$ & $+$ & $+$ & $0$ & $-$ & $-$ & $-$ & $-$ & $-$ & $-$\tabularnewline
\hline 
sign of $\alpha$ & $-\frac{1}{4}$ & $-$ & $-$ & $-$ & $-$ & $-$ & $0$ & $+$ & $+$ & $+$ & $\alpha_{max}$ & $+$ & $0$\tabularnewline
\hline 
\end{tabular}
\par\end{centering}
\caption{sign and variation table of $f$, sign table of $\alpha$\label{tab:sign-and-variation-alpha}}
\end{table}

\end{lem}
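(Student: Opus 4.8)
The plan is to reduce every claim about $\alpha$ to the elementary study of one auxiliary function. First I would put the bracket of $\alpha$ over a common denominator,
\[
1+x\frac{1-e^{x}}{1+e^{x}}=\frac{(1+e^{x})+x(1-e^{x})}{1+e^{x}}=\frac{f(x)}{1+e^{x}},\qquad f(x):=1+x+(1-x)e^{x},
\]
so that $\alpha(x)=-\dfrac{e^{x}}{(1+e^{x})^{3}}f(x)$. The prefactor $e^{x}(1+e^{x})^{-3}$ is positive, so the sign of $\alpha$ is everywhere opposite to that of $f$, which is precisely the correspondence displayed in the last two rows of Table~\ref{tab:sign-and-variation-alpha}; note also $f(0)=2$, $f(1)=2$, $f(2)=3-e^{2}$, matching the table. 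A direct check gives $\alpha(-x)=\alpha(x)$, so it suffices to analyse $x\ge 0$ and extend by symmetry.

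To fill the table I would differentiate $f$ twice: $f'(x)=1-xe^{x}$ and $f''(x)=-(1+x)e^{x}$. On $[0,\infty)$ one has $f''<0$, so $f'$ is strictly decreasing from $f'(0)=1$ to $-\infty$ and vanishes at a unique $x_{2}\in(0,1)$ (the root of $xe^{x}=1$). Hence $f$ increases on $[0,x_{2}]$ and decreases afterwards; since $f(0)=2>0$ and $f\to-\infty$, it has a single zero $x_{1}>x_{2}$, positive on $[0,x_{1})$ and negative on $(x_{1},\infty)$. Evaluating $f$ at the endpoints of $[1.54,1.55]$ certifies $x_{1}\approx1.5434$ and $\alpha(x_{1})=0$. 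This yields the sign of $\alpha$ (negative on $(-x_{1},x_{1})$, zero at $\pm x_{1}$, positive outside) and, since $\alpha(0)=-f(0)/(1+1)^{3}=-\tfrac14$, the entry at the origin.

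For the variation of $\alpha$ — the global minimum at $0$ and the location of the maximizer — I would differentiate the factored form once more to get
\[
\alpha'(x)=-\frac{e^{x}}{(1+e^{x})^{4}}\,h(x),\qquad h(x):=(x-2)e^{2x}-4xe^{x}+(x+2),
\]
so $\operatorname{sign}\alpha'=-\operatorname{sign} h$. I would then run the same nested scheme on $h$: $h''(x)=4e^{x}\bigl((x-1)e^{x}-(x+2)\bigr)$, whose bracket $\psi(x)=(x-1)e^{x}-(x+2)$ has derivative $\psi'(x)=xe^{x}-1=-f'(x)$, letting me reuse the analysis of $f'$ to show that $\psi$, then $h'$, then $h$ each change sign exactly once on $(0,\infty)$. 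Since $h(0)=0$, $h(x)=-6x+o(x)$ near $0$, and $h\to+\infty$, there is a single $x_{\alpha_{max}}$ with $h<0$ on $(0,x_{\alpha_{max}})$ and $h>0$ afterwards; the sign change $h(2)=4-8e^{2}<0<e^{6}-12e^{3}+5=h(3)$ places $x_{\alpha_{max}}\in(2,3)$. Thus $\alpha$ increases strictly on $[0,x_{\alpha_{max}}]$ and decreases to its limit $0^{+}$ afterwards, so on $[0,\infty)$ its minimum is $\alpha(0)=-\tfrac14$ and its maximum is $\alpha_{max}=\alpha(x_{\alpha_{max}})$; evenness extends the minimum to all of $\mathbb{R}$.

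Finally, for $\|\alpha'\|_{\infty}\le0.22$ I would use that $\alpha'$ is odd, so it suffices to bound $|\alpha'|$ on $[0,\infty)$, where $\alpha'\ge0$ on $[0,x_{\alpha_{max}}]$ and $\alpha'\le0$ afterwards; each one-sided supremum is attained at an interior zero of $\alpha''$, the dominant one sitting near $x=1$, where $\alpha'(1)=e\,(e^{2}+4e-3)/(1+e)^{4}\approx0.217$. I expect this quantitative step, together with the certified localisation of $x_{1}$ and $x_{\alpha_{max}}$, to be the main obstacle: the qualitative sign and monotonicity picture falls out cleanly from the two factorisations $\alpha=-e^{x}(1+e^{x})^{-3}f$ and $\alpha'=-e^{x}(1+e^{x})^{-4}h$, but converting the numerical values into rigorous inequalities requires careful interval estimates rather than a closed form.
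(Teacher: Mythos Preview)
Your reduction to $f(x)=1+x+(1-x)e^{x}$ and the study of $f',f''$ is exactly what the paper does, so the sign table and the localisation of $x_{1}$ follow identically. The genuine departure is in the treatment of $\alpha'$: the paper rewrites $\alpha'$ in terms of $\tanh(x/2)$,
\[
\alpha'(x)=\tfrac{1}{4}\bigl(1-\tanh^{2}(x/2)\bigr)\Bigl[\tfrac{x}{2}\bigl(1-3\tanh^{2}(x/2)\bigr)+2\tanh(x/2)\Bigr],
\]
and then simply appeals to a figure to assert both $x_{\alpha_{\max}}\in[2,3]$ and $\|\alpha'\|_{\infty}\le0.22$. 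Your second factorisation $\alpha'=-e^{x}(1+e^{x})^{-4}h$ with $h(x)=(x-2)e^{2x}-4xe^{x}+(x+2)$ is cleaner for rigour: the nested sign analysis of $h'',h',h$ (recycling $f'$ through $\psi'=-f'$) proves analytically that $h$ has a unique positive zero, and the evaluations $h(2)<0<h(3)$ then pin down $x_{\alpha_{\max}}\in(2,3)$ without any picture. This also yields the global minimum at $0$ as a corollary of monotonicity, which the paper only records in the table. For the Lipschitz bound, you are in the same position as the paper---neither argument is fully closed---but you correctly identify the bottleneck (the positive lobe of $\alpha'$ near $x=1$, value $\approx0.217$) and flag that finishing requires interval arithmetic, whereas the paper defers entirely to a plot. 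Overall your route is at least as complete and strictly more analytic on the monotonicity and $x_{\alpha_{\max}}$ claims.
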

\begin{proof}
First, remark that $\forall x>0$,
\begin{align*}
\alpha(x)\geq0 & \Leftrightarrow1+x\frac{1-e^{x}}{1+e^{x}}\leq0\\
 & \Leftrightarrow x\left(1-e^{x}\right)\leq-\left(1+e^{x}\right)\\
 & \Leftrightarrow1+e^{x}-xe^{x}+x\leq0\\
 & \Leftrightarrow f(x)\leq0.
\end{align*}
We study $f:x\mapsto1+e^{x}-xe^{x}+x$ for $x\in\mathbb{R}_{+}$ since
$\alpha$ is even. First of all, $f'\left(x\right)=1-xe^{x}$ and
$f''\left(x\right)=-\left(x+1\right)e^{x}$ which gives the sign and
variation table~\ref{tab:sign-and-variation-alpha}. It is obvious
that there exists $x_{1}>0$ such that $f\left(x_{1}\right)=0$. Set
$p_{x}=\left(1-e^{x}\right)^{-1}$ and $q_{x}=e^{x}\left(1-e^{x}\right)^{-1}$.
Note that $p_{x}-q_{x}=\frac{1-e^{x}}{1+e^{x}}=-\tanh\left(\frac{x}{2}\right)$
and $p_{x}q_{x}=\frac{1}{4}\left(\left(p_{x}+q_{x}\right)^{2}-\left(p_{x}-q_{x}\right)^{2}\right)=\frac{1}{4}\left(1-\tanh^{2}\left(\frac{x}{2}\right)\right)$.





\begin{align}
\alpha'(x) & =\frac{d}{dx}\left[-p_{x}q_{x}\left(1+x\left(p_{x}-q_{x}\right)\right)\right]\nonumber \\
 & =-\frac{d}{dx}\left[p_{x}\right]q_{x}\left(1+x\left(p_{x}-q_{x}\right)\right)-p_{x}\frac{d}{dx}\left[q_{x}\right]\left(1+x\left(p_{x}-q_{x}\right)\right)-p_{x}q_{x}\frac{d}{dx}\left[1+x\left(p_{x}-q_{x}\right)\right]\nonumber \\
 & =p_{x}q_{x}q_{x}\left(1+x\left(p_{x}-q_{x}\right)\right)-p_{x}p_{x}q_{x}\left(1+x\left(p_{x}-q_{x}\right)\right)-p_{x}q_{x}\left[\left(p_{x}-q_{x}\right)+x\frac{d}{dx}\left[\left(p_{x}-q_{x}\right)\right]\right]\nonumber \\
 & =p_{x}q_{x}\left[q_{x}\left(1+x\left(p_{x}-q_{x}\right)\right)-p_{x}\left(1+x\left(p_{x}-q_{x}\right)\right)-\left(p_{x}-q_{x}\right)-x\left(-p_{x}q_{x}-p_{x}q_{x}\right)\right]\nonumber \\
 & =p_{x}q_{x}\left[-\left(p_{x}-q_{x}\right)\left(1+x\left(p_{x}-q_{x}\right)\right)-\left(p_{x}-q_{x}\right)+2xp_{x}q_{x}\right]\nonumber \\
 & =p_{x}q_{x}\left[2xp_{x}q_{x}-\left(p_{x}-q_{x}\right)\left(2+x\left(p_{x}-q_{x}\right)\right)\right]\label{eq:alpha_derivative}\\
 & =p_{x}q_{x}\left[\frac{x}{2}\left(1-\tanh^{2}\left(\frac{x}{2}\right)\right)+\tanh\left(\frac{x}{2}\right)\left(2-x\tanh\left(\frac{x}{2}\right)\right)\right]\nonumber \\
 & =p_{x}q_{x}\left[\frac{x}{2}\left(1-3\tanh^{2}\left(\frac{x}{2}\right)\right)+2\tanh\left(\frac{x}{2}\right)\right]\nonumber \\
\alpha'(x) & =\frac{1}{4}\left(1-\tanh^{2}\left(\frac{x}{2}\right)\right)\left[\frac{x}{2}\left(1-3\tanh^{2}\left(\frac{x}{2}\right)\right)+2\tanh\left(\frac{x}{2}\right)\right].\nonumber 
\end{align}

One can see on Figure~\ref{fig:plot_of_alpha_prime} that the maximum
of $\alpha$ is attained at $2\leq x_{\alpha_{max}}\leq3$. The function
$\alpha$ is Lipschitz and one can see graphically on Figure~\ref{fig:plot_of_alpha_prime}
that $\left\Vert \alpha'\right\Vert _{\infty}\leq0.22$.
\end{proof}
\begin{lem}
\label{lem:alpha_concave_x1_3}$\alpha:x\mapsto-\frac{e^{x}}{\left(1+e^{x}\right)^{2}}\left(1+x\frac{1-e^{x}}{1+e^{x}}\right)$
is concave on $\left[x_{1},3\right]$.
\end{lem}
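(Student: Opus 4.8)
The plan is to establish concavity on $[x_1,3]$ by showing that $\alpha''(x)\le 0$ throughout this interval. Since Lemma~\ref{lem:Study_alpha} already provides a closed form for $\alpha'$ in terms of $t:=\tanh(x/2)$, namely $\alpha'(x)=\frac14(1-t^2)\bigl[\frac{x}{2}(1-3t^2)+2t\bigr]$, I would differentiate this once more using the chain rule together with the elementary identity $\frac{dt}{dx}=\frac12(1-t^2)$. A direct computation (collecting the terms that are linear in $x$ separately from those that are not) yields the compact expression
\[
\alpha''(x)=\frac{1-t^2}{4}\Bigl[xt\,(3t^2-2)+\tfrac32\,(1-3t^2)\Bigr],\qquad t=\tanh(x/2).
\]
Because $1-t^2>0$ for every real $x$, the sign of $\alpha''$ is that of the bracketed factor $B(x):=xt(3t^2-2)+\frac32(1-3t^2)$, so the whole lemma reduces to proving $B\le 0$ on $[x_1,3]$.

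The next step is to exploit that $t=\tanh(x/2)$ is increasing, so that $3t^2$ grows monotonically across $[x_1,3]$; I would split the interval at the point $x^\star$ where $3t^2=2$, i.e. $t=\sqrt{2/3}$ (numerically $x^\star\approx 2.29$, which lies strictly between $x_1\approx 1.54$ and $3$). On $[x_1,x^\star]$ one has $3t^2-2\le 0$, hence $xt(3t^2-2)\le0$ since $x,t>0$; moreover $3t_1^2>1$ (it suffices that $t_1=\tanh(x_1/2)>1/\sqrt3$, which follows from $x_1\ge 1.54$), so $1-3t^2<0$ and the second term is negative too. Both contributions being nonpositive, $B<0$ there. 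On $[x^\star,3]$ the first term is now nonnegative, and the task is to show the (more negative) second term dominates. Here I would use the crude bound $x\le 3$ together with $3t^2-2\ge0$ to obtain $xt(3t^2-2)\le 3t(3t^2-2)$, which reduces the desired inequality $B\le 0$ to $6t^3-3t^2-4t+1\le 0$. This cubic factors as $(t-1)(6t^2+3t-1)$, and on $[\sqrt{2/3},\tanh(3/2)]$, a subinterval of $(0.229,1)$ lying between the positive root of $6t^2+3t-1$ and $1$, we have $t-1<0$ and $6t^2+3t-1>0$, so the product is negative. Combining the two pieces gives $B\le 0$ on all of $[x_1,3]$, hence $\alpha''\le 0$ and concavity.

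The delicate point is really the second subinterval: there $x$ appears linearly while $t=\tanh(x/2)$ enters as a cubic, and the two are transcendentally coupled, so one cannot directly reason with a single polynomial in one variable. The idea that unlocks the estimate is that, precisely on the range where $3t^2-2$ carries the unfavorable sign, one may safely over-estimate $x$ by its upper endpoint $3$; this decouples $x$ from $t$ and leaves a genuine polynomial inequality in $t$ that factors cleanly. One must also keep track, via the monotonicity of $t$, of exactly where $3t^2-2$ and $1-3t^2$ change sign relative to $x_1$ and $x^\star$, and confirm the harmless numerical facts $t_1>1/\sqrt3$ and $\sqrt{2/3}>0.229$.
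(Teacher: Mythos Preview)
Your proof is correct and takes a genuinely different, cleaner route than the paper's. The paper establishes $\alpha''<0$ by brute-force interval arithmetic: it rewrites $\alpha''$ in terms of $p=1/(1+e^x)$ and an auxiliary quantity $\frac{d}{dx}\bigl[\alpha'/pq\bigr]$, splits $[x_1,3]$ into the three pieces $[x_1,2]$, $[2,2.5]$, $[2.5,3]$, and on each piece plugs in numerical envelopes for $p$, $pq$, $1-2p$, $\varpi(x)=1+x(p-q)$, and $\alpha'$ to squeeze $\alpha''$ into a strictly negative interval. Your approach instead differentiates the $\tanh$-form of $\alpha'$ once to obtain the compact identity
\[
\alpha''(x)=\tfrac{1-t^2}{4}\Bigl[xt(3t^2-2)+\tfrac32(1-3t^2)\Bigr],\qquad t=\tanh(x/2),
\]
splits at the single structurally natural point $x^\star$ where $3t^2=2$, and on the nontrivial piece uses $x\le 3$ to decouple $x$ from $t$ and reduce to the cubic $6t^3-3t^2-4t+1=(t-1)(6t^2+3t-1)$, whose sign on $[\sqrt{2/3},\tanh(3/2)]$ is immediate. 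The payoff is that your argument needs only two easy numerical checks ($\tanh(x_1/2)>1/\sqrt3$ and $\sqrt{2/3}$ exceeds the positive root $(-3+\sqrt{33})/12\approx0.229$), whereas the paper's proof relies on roughly two dozen evaluated bounds; on the other hand, the paper's method yields explicit two-sided numerical envelopes for $\alpha''$, which your argument does not.
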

\begin{proof}
The shape of $\alpha$ on $[x_{1},3]$ can be seen on figure~\ref{fig:plot_alpha_x1_3}.
\begin{figure}
\includegraphics[width=0.4\paperwidth]{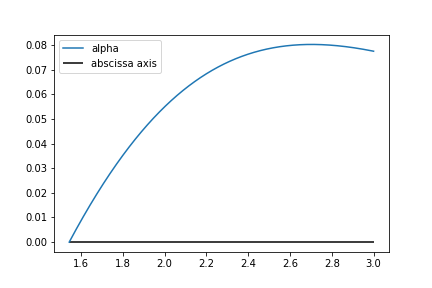}\caption{Plot of $\alpha$ on $[x_{1},3]$. 
\label{fig:plot_alpha_x1_3}}
\end{figure}

We use the following compact notations: $p=\frac{1}{1+e^{x}}$, $q=1-p$,
hence $\alpha(x)=-pq\left(1+x\left(p-q\right)\right)$. Recall that
$\frac{dp}{dx}=-pq$, $\frac{dq}{dx}=pq$ and that $\forall x>0,p<q$.
We proved in Equation~(\ref{eq:alpha_derivative}) that
\begin{align*}
\alpha'(x) & =pq\left[\left(q-p\right)\left(2+x\left(p-q\right)\right)+2xpq\right]\\
 & =p(1-p)\left[\left(1-2p\right)\left(2+x\left(p-q\right)\right)+2xp(1-p)\right]
\end{align*}

In this proof we will also need the variations of $\varpi:x\mapsto1+x\left(p-q\right)$:

\begin{align*}
\varpi'(x) & =\frac{d}{dx}\left(1+x\left(p-q\right)\right)\\
 & =\left(p-q\right)+x\frac{d}{dx}\left(p-q\right)\\
 & =\left(p-q\right)+x\left(-pq-pq\right)\\
 & =\left(p-q\right)-2xpq\\
 & <0
\end{align*}

We will want the sign of $\frac{d^{2}\alpha}{dx^{2}}$. First remark
that 
\begin{align*}
\frac{d}{dx}\left[\frac{\alpha'}{pq}\right] & =\frac{d\alpha'}{dx}\frac{1}{pq}+\alpha'\frac{d}{dx}\left[\frac{1}{pq}\right]\\
 & =\frac{d\alpha'}{dx}\frac{1}{pq}+\alpha'\frac{-1}{\left(pq\right)^{2}}\frac{d\left(pq\right)}{dx}\\
 & =\alpha''\frac{1}{pq}+\alpha'\frac{-1}{\left(pq\right)^{2}}\left(-pqq+ppq\right)\\
\frac{d}{dx}\left[\frac{\alpha'}{pq}\right] & =\alpha''\frac{1}{pq}-\alpha'\frac{1}{pq}\left(p-q\right)
\end{align*}

aalgebraic rearrangment give $\alpha''=p(1-p)\frac{d}{dx}\left[\frac{\alpha'}{pq}\right]+\alpha'\left(2p-1\right)$.
Now compute what is still missing:

\begin{align*}
\frac{d}{dx}\left[\frac{\alpha'}{pq}\right] & =\frac{d}{dx}\left[\frac{pq\left[\left(q-p\right)\left(2+x\left(p-q\right)\right)+2xpq\right]}{pq}\right]\\
 & =\frac{d}{dx}\left[\left(q-p\right)\left(2+x\left(p-q\right)\right)+2xpq\right]\\
 & =\frac{d}{dx}\left(q-p\right)\left(2+x\left(p-q\right)\right)+\left(q-p\right)\frac{d}{dx}\left(2+x\left(p-q\right)\right)+2\frac{d}{dx}\left(xpq\right)\\
 & =\left(pq+pq\right)\left(2+x\left(p-q\right)\right)+\left(q-p\right)\left[\left(p-q\right)+x\left(-pq-pq\right)\right]+2\left(pq-xpqq+xppq\right)\\
 & =2pq\left(2+x\left(p-q\right)\right)+\left(q-p\right)\left[\left(p-q\right)-2xpq\right]+2pq\left(1+x\left(p-q\right)\right)\\
 & =2pq\left(3+2x\left(p-q\right)\right)+\left(q-p\right)\left(p-q\right)-2xpq\left(q-p\right)\\
 & =2pq\left(3+2x\left(p-q\right)\right)-\left(q-p\right)^{2}+2xpq\left(p-q\right)\\
 & =2pq\left(3+2x\left(p-q\right)+x\left(p-q\right)\right)-\left(q-p\right)^{2}\\
 & =6pq\left(1+x\left(p-q\right)\right)-\left(q-p\right)^{2}\\
 & =6pq\varpi(x)-\left(p-q\right)^{2}\\
\frac{d}{dx}\left[\frac{\alpha'}{pq}\right] & =6p(1-p)\varpi(x)-\left(1-2p\right)^{2}
\end{align*}

-Case $x\in\left[x_{1},2\right]$:

we have $p\in\left[0.11,0.18\right]$, hence $1-2p\in\left[0.64,0.78\right]$,
$x\mapsto p$ is decreasing and $p\mapsto p(1-p)$ is increasing on
this intervalles of interest then $p(1-p)\in\left[p_{x=2}(1-p_{x=2}),p_{x=x_{1}}(1-p_{x=x_{1}})\right]\subset[0.09,0.15]$

then $\varpi$ is strictly decreasing and $\varpi(x)\in\left[\varpi(2),\varpi(x_{1})\right]\subset\left[-0.53,0\right]$
($0$ occurs because by definition $x_{1}$ is such that $0=\alpha(x_{1})=pq\varpi(x_{1})$),
all intervalle put together gives in case $x\in\left[x_{1},2\right]$:
\begin{align*}
\alpha'(x) & =\underset{\geq0.09}{\underbrace{p(1-p)}}\left[\underset{\geq0.64}{\underbrace{\left(1-2p\right)}}\left(1+\underset{\geq-0.53}{\underbrace{1+x\left(p-q\right)}}\right)+2\underset{\geq1.5435}{\underbrace{x_{1}}}\underset{\geq0.09}{\underbrace{p(1-p)}}\right]\\
 & \geq0.052
\end{align*}

\begin{align*}
\alpha'(x) & =\underset{\leq0.15}{\underbrace{p(1-p)}}\left[\underset{\leq0.78}{\underbrace{\left(1-2p\right)}}\left(1+\underset{\leq0}{\underbrace{1+x\left(p-q\right)}}\right)+2\underset{\leq1.5436}{\underbrace{x_{1}}}\underset{\leq0.15}{\underbrace{p(1-p)}}\right]\\
 & \leq0.19
\end{align*}

it holds
\begin{align*}
\frac{d}{dx}\left[\frac{\alpha'}{pq}\right] & =2p(1-p)\underset{\leq0}{\underbrace{\varpi(x)}}-\underset{\geq0.4}{\underbrace{\underset{\geq0.64}{\left(\underbrace{1-2p}\right)^{2}}}}\\
 & \leq-0.4
\end{align*}

\begin{align*}
\frac{d}{dx}\left[\frac{\alpha'}{pq}\right] & =2\underset{\leq0.15}{\underbrace{p(1-p)}}\underset{\geq-0.53}{\underbrace{\varpi(x)}}-\underset{\leq0.61}{\underbrace{\underset{\leq0.78}{\left(\underbrace{1-2p}\right)^{2}}}}\\
 & \geq-0.769
\end{align*}

And finally

\begin{align*}
\alpha''(x) & =\underset{\geq0.09}{\underbrace{p(1-p)}}\underset{\leq-0.4}{\underbrace{\frac{d}{dx}\left[\frac{\alpha'}{pq}\right]}}+\underset{\geq0.059}{\underbrace{\alpha'(x)}}\underset{\leq-0.64}{\underbrace{\left(1-2p\right)}}\\
 & \leq-0.07376
\end{align*}
\begin{align*}
\alpha''(x) & =\underset{\leq0.15}{\underbrace{pq}}\underset{\geq-0.769}{\underbrace{\frac{d}{dx}\left[\frac{\alpha'}{pq}\right]}}+\underset{\leq0.19}{\underbrace{\alpha'(x)}}\underset{\geq-0.78}{\underbrace{\left(1-2p\right)}}\\
 & \geq-0.26355
\end{align*}

$\alpha$ is concave on $\left[x_{1},2\right]$.

-We do the same in the case $x\in\left[2,2.5\right]$:

we have $p\in\left[0.075,0.12\right]$, hence $1-2p\in\left[0.76,0.85\right]$,
$x\mapsto p$ is decreasing and $p\mapsto p(1-p)$ is increasing on
this intervalles of interest then $p(1-p)\in\left[p_{x=2.5}(1-p_{x=2.5}),p_{x=2}(1-p_{x=2})\right]\subset[0.069,0.11]$,
$\varpi(x)\in\left[\varpi(3),\varpi(2)\right]\subset\left[-1.125,-0.52\right]$
and

\begin{align*}
\alpha'(x) & =\underset{\geq0.069}{\underbrace{p(1-p)}}\left[\underset{\geq0.76}{\underbrace{\left(1-2p\right)}}\left(1+\underset{\geq-1.125}{\underbrace{1+x\left(p-q\right)}}\right)+2\underset{\geq1.5435}{\underbrace{x_{1}}}\underset{\geq0.069}{\underbrace{p(1-p)}}\right]\\
 & \geq0.008
\end{align*}

\begin{align*}
\alpha'(x) & =\underset{\leq0.12}{\underbrace{p(1-p)}}\left[\underset{\leq0.85}{\underbrace{\left(1-2p\right)}}\left(1+\underset{\le-0.52}{\underbrace{1+x\left(p-q\right)}}\right)+2\underset{\leq1.5436}{\underbrace{x_{1}}}\underset{\leq0.12}{\underbrace{p(1-p)}}\right]\\
 & \leq0.094
\end{align*}

We now have
\begin{align*}
\frac{d}{dx}\left[\frac{\alpha'}{pq}\right] & =2\underset{\geq0.069}{\underbrace{p(1-p)}}\underset{\leq-0.52}{\underbrace{\varpi(x)}}-\underset{\geq0.5776}{\underbrace{\underset{\geq0.76}{\left(\underbrace{1-2p}\right)^{2}}}}\\
 & \leq-0.64936\leq-0.65
\end{align*}

\begin{align*}
\frac{d}{dx}\left[\frac{\alpha'}{pq}\right] & =2\underset{\leq0.11}{\underbrace{p(1-p)}}\underset{\geq-0.53}{\underbrace{\varpi(x)}}-\underset{\leq0.7225}{\underbrace{\underset{\leq0.85}{\left(\underbrace{1-2p}\right)^{2}}}}\\
 & \geq-0.8391\geq-0.84
\end{align*}

And finally
\begin{align*}
\alpha''(x) & =\underset{\geq0.069}{\underbrace{p(1-p)}}\underset{\leq-0.65}{\underbrace{\frac{d}{dx}\left[\frac{\alpha'}{pq}\right]}}+\underset{\geq0.008}{\underbrace{\alpha'(x)}}\underset{\leq-0.76}{\underbrace{\left(1-2p\right)}}\\
 & \leq-0.05093
\end{align*}
\begin{align*}
\alpha''(x) & =\underset{\leq0.11}{\underbrace{pq}}\underset{\geq-0.84}{\underbrace{\frac{d}{dx}\left[\frac{\alpha'}{pq}\right]}}+\underset{\leq0.094}{\underbrace{\alpha'(x)}}\underset{\geq-0.85}{\underbrace{\left(1-2p\right)}}\\
 & \geq-0.1723
\end{align*}

Consequently $\alpha$ is concave on $\left[2,2.5\right]$

-We do the same in the case $x\in\left[2.5,3\right]$:

in that case $p\in\left[0.047,0.076\right]$, hence $1-2p\in\left[0.848,0.906\right]$,
$x\mapsto p$ is decreasing and $p\mapsto p(1-p)$ is increasing on
this intervalles of interest then $p(1-p)\in\left[p_{x=3}(1-p_{x=3}),p_{x=2.5}(1-p_{x=2.5})\right]\subset[0.0447,0.071]$
and $\varpi(x)\in\left[\varpi(3),\varpi(2)\right]\subset\left[-1.72,-1.12\right]$.

\begin{align*}
\alpha'(x) & =\underset{\geq0.0447}{\underbrace{p(1-p)}}\left[\underset{\geq0.848}{\underbrace{\left(1-2p\right)}}\left(1+\underset{\geq-1.72}{\underbrace{1+x\left(p-q\right)}}\right)+2\underset{\geq1.5435}{\underbrace{x_{1}}}\underset{\geq0.0447}{\underbrace{p(1-p)}}\right]\\
 & \geq-0.02113
\end{align*}

\begin{align*}
\alpha'(x) & =\underset{\leq0.071}{\underbrace{p(1-p)}}\left[\underset{\leq0.906}{\underbrace{\left(1-2p\right)}}\left(1+\underset{\leq-1.12}{\underbrace{1+x\left(p-q\right)}}\right)+2\underset{\leq1.5436}{\underbrace{x_{1}}}\underset{\leq0.071}{\underbrace{p(1-p)}}\right]\\
 & \leq0.0079
\end{align*}

We now have
\begin{align*}
\frac{d}{dx}\left[\frac{\alpha'}{pq}\right] & =2\underset{\geq0.0447}{\underbrace{p(1-p)}}\underset{\leq-1.12}{\underbrace{\varpi(x)}}-\underset{\geq0.5776}{\underbrace{\underset{\geq0.848}{\left(\underbrace{1-2p}\right)^{2}}}}\\
 & \leq-0.83
\end{align*}

\begin{align*}
\frac{d}{dx}\left[\frac{\alpha'}{pq}\right] & =2\underset{\leq0.076}{\underbrace{p(1-p)}}\underset{\geq-1,72}{\underbrace{\varpi(x)}}-\underset{\leq0.7225}{\underbrace{\underset{\leq0.906}{\left(\underbrace{1-2p}\right)^{2}}}}\\
 & \geq-1,082
\end{align*}

Concerning $\alpha''$, since $\alpha'\in\left[-0.02113,0.0079\right]$,
the reasonning with an intervalle containing $0$ is a bit different:
$p(1-p)\frac{d}{dx}\left[\frac{\alpha'}{pq}\right]\in\left[-0.077,-0.039\right]$
and $\alpha'(x)\left(1-2p\right)\in[-0.0072,0.0191]$, consequently
\begin{align*}
\alpha''(x) & =p(1-p)\frac{d}{dx}\left[\frac{\alpha'}{pq}\right]+\alpha'(x)\left(1-2p\right)\\
 & \in\left[-0.082,-0.0199\right]
\end{align*}

Consequently $\alpha$ is concave on $\left[2.5,3\right]$
\end{proof}
\begin{lem}
\label{lem:comparison_alpha_minus_phi_to_0}$\forall x\geq3$, $\alpha(x)-\varphi(x)\geq xe^{-x}\left(\frac{x_{1}+0.08-1}{x}-4e^{-x}\right)$
where $\alpha:x\mapsto-\frac{e^{x}}{\left(1+e^{x}\right)^{2}}\left(1+x\frac{1-e^{x}}{1+e^{x}}\right)$
and $\varphi:x\mapsto\left(x-x_{1}-0.08\right)e^{-x}$.
\end{lem}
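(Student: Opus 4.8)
The plan is to remove all transcendental structure through the substitution $t:=e^{-x}$, which turns $\alpha$ into a rational function of $t$. First I would write the logistic weights as $p=\tfrac{1}{1+e^{x}}=\tfrac{t}{1+t}$ and $q=1-p=\tfrac{1}{1+t}$, so that $\tfrac{e^{x}}{(1+e^{x})^{2}}=\tfrac{t}{(1+t)^{2}}$ and $\tfrac{1-e^{x}}{1+e^{x}}=\tfrac{t-1}{t+1}$. Plugging these into the definition of $\alpha$ and combining the two factors gives the closed form
\[
\alpha(x)=\frac{t\left[(x-1)-t(1+x)\right]}{(1+t)^{3}},\qquad t=e^{-x}.
\]
This is the only genuinely computational step, and it is entirely mechanical.

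Next, writing $c:=x_{1}+0.08$ and $\varphi(x)=(x-c)\,t$, the difference factors as $\alpha(x)-\varphi(x)=t\left[\tfrac{(x-1)-t(1+x)}{(1+t)^{3}}-(x-c)\right]$, whereas the target lower bound is $xe^{-x}\left(\tfrac{c-1}{x}-4e^{-x}\right)=t\left[(c-1)-4xt\right]$. Since $t>0$, I would divide the desired inequality by $t$ and move $(x-c)$ to the right-hand side, which leaves the single reduced inequality
\[
\frac{(x-1)-t(1+x)}{(1+t)^{3}}\;\geq\;(x-1)-4xt .
\]
The useful feature here is that the constant $c=x_{1}+0.08$ cancels out completely, so the reduced inequality does not involve $x_{1}$ at all and its exact value is irrelevant.

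Finally I would clear the denominator by multiplying through by $(1+t)^{3}>0$ and set
\[
P(t):=\left[(x-1)-t(1+x)\right]-\left[(x-1)-4xt\right](1+t)^{3}.
\]
Expanding $(1+t)^{3}=1+3t+3t^{2}+t^{3}$ and collecting powers of $t$ --- the constant terms cancel --- produces
\[
P(t)=2t+(9x+3)t^{2}+(11x+1)t^{3}+4xt^{4}.
\]
For $x\geq 3$ (indeed for every $x>0$) and $t=e^{-x}>0$, all four coefficients are strictly positive, hence $P(t)>0$; this is exactly the reduced inequality and therefore the claim. The only point demanding care is the coefficient bookkeeping in the expansion of $P$: there is no analytic difficulty, and in particular the sign analysis of $\alpha$ from Lemma~\ref{lem:Study_alpha} is not needed here.
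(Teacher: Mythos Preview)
Your proof is correct. The substitution $t=e^{-x}$, the closed form for $\alpha$, the observation that the constant $c=x_{1}+0.08$ cancels in the reduced inequality, and the expansion
\[
P(t)=2t+(9x+3)t^{2}+(11x+1)t^{3}+4xt^{4}
\]
are all verified line by line; positivity of every coefficient for $x>0$ finishes it.

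The route, however, is genuinely different from the paper's. There the authors rewrite $(1+e^{-x})^{-3}=1-3e^{-x}+R(x)$ with an explicit remainder $R(x)$, expand $\alpha(x)-\varphi(x)$ as $xe^{-x}$ times a bracket, and then reach the claimed lower bound by \emph{dropping} two groups of terms that are shown separately to be nonnegative: the terms $2e^{-x}/x+3e^{-2x}+3e^{-2x}/x$, and the product $R(x)\bigl(1-\tfrac{1}{x}-e^{-x}-\tfrac{e^{-x}}{x}\bigr)$. The latter requires proving $R(x)\geq 0$ (via monotonicity and a limit) and $1-\tfrac{1}{x}-e^{-x}-\tfrac{e^{-x}}{x}\geq 0$, which is where the hypothesis $x\geq 3$ is actually used. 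Your polynomial argument bypasses both auxiliary sign checks entirely and in fact establishes the inequality for all $x>0$, not only $x\geq 3$; it is shorter and makes no use of the numerical value of $x_{1}$. The paper's approach, on the other hand, keeps visible which pieces of $\alpha-\varphi$ are being discarded, which can be informative if one later wants a sharper bound, but at the cost of two extra verifications.
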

\begin{proof}
Let us study $\alpha-\varphi$
\begin{align*}
\alpha(x)-\varphi(x) & =-\frac{e^{x}}{\left(1+e^{x}\right)^{2}}\left(1+x\frac{1-e^{x}}{1+e^{x}}\right)-\left(x-x_{1}-0.08\right)e^{-x}\\
 & =-\frac{e^{-2x}}{\left(1+e^{-x}\right)^{3}}\left(1+e^{x}+x-xe^{x}\right)-\left(x-x_{1}-0.08\right)e^{-x}\\
 & =\frac{xe^{-2x}}{\left(1+e^{-x}\right)^{3}}\left(e^{x}-\frac{e^{x}}{x}-1-\frac{1}{x}\right)-x\left(1-\frac{x_{1}+0.08}{x}\right)e^{-x}\\
 & =xe^{-x}\left[\frac{e^{-x}}{\left(1+e^{-x}\right)^{3}}\left(e^{x}-\frac{e^{x}}{x}-1-\frac{1}{x}\right)-\left(1-\frac{x_{1}+0.08}{x}\right)\right]\\
 & =xe^{-x}\left[\frac{1}{\left(1+e^{-x}\right)^{3}}\left(1-\frac{1}{x}-e^{-x}-\frac{e^{-x}}{x}\right)-\left(1-\frac{x_{1}+0.08}{x}\right)\right]
\end{align*}
Set $R(x):=\frac{1}{\left(1+e^{-x}\right)^{3}}-1+3e^{-x}$ and $\delta=x_{1}+0.08$.
We get

{\footnotesize{}
\begin{align*}
\alpha(x)-\varphi(x) & =xe^{-x}\left[\left(1-3e^{-x}+R(x)\right)\left(1-\frac{1}{x}-e^{-x}-\frac{e^{-x}}{x}\right)-\left(1-\frac{\delta}{x}\right)\right]\\
 & =xe^{-x}\left[\left(-1+\delta\right)\frac{1}{x}+\left(-1-3\right)e^{-x}+\left(-1+3\right)\frac{e^{-x}}{x}+3e^{-2x}+3\frac{e^{-2x}}{x}+R(x)\left(1-\frac{1}{x}-e^{-x}-\frac{e^{-x}}{x}\right)\right]\\
 & =xe^{-x}\left[\frac{\delta-1}{x}-4e^{-x}+\underset{\geq0}{\underbrace{2\frac{e^{-x}}{x}+3e^{-2x}+3\frac{e^{-2x}}{x}}}+R(x)\left(1-\frac{1}{x}-e^{-x}-\frac{e^{-x}}{x}\right)\right]\\
 & \geq xe^{-x}\left[\frac{\delta-1}{x}-4e^{-x}+R(x)\left(1-\frac{1}{x}-e^{-x}-\frac{e^{-x}}{x}\right)\right].
\end{align*}
}Let us now discuss the sign of $R(x)\left(1-\frac{1}{x}-e^{-x}-\frac{e^{-x}}{x}\right)$:\\
$R'(x)=3e^{-x}\left(1+e^{-x}\right)^{-4}-3e^{-x}=3e^{-x}\left(\left(1+e^{-x}\right)^{-4}-1\right)<0$,
$R$ is strictly decreasing. Since $\lim_{x\rightarrow+\infty}R(x)=0$,
necessarily $R\geq0$. In addition, since $x\geq3$, 
\[
1-\frac{1}{x}-e^{-x}-\frac{e^{-x}}{x}\geq1-\frac{1}{3}-e^{-x}-\frac{e^{-x}}{3}=\frac{2}{3}\left(1-2e^{-x}\right)\geq0.
\]
 Hence $R(x)\left(1-\frac{1}{x}-e^{-x}-\frac{e^{-x}}{x}\right)\geq0$
for $x\geq3$ and it comes
\[
\forall x\geq3,\alpha(x)-\varphi(x)\geq xe^{-x}\left(\frac{\delta-1}{x}-4e^{-x}\right).
\]
\end{proof}
\begin{lem}
\label{lem:control_alpha_by_phi}The function $\alpha:x\mapsto-\frac{e^{x}}{\left(1+e^{x}\right)^{2}}\left(1+x\frac{1-e^{x}}{1+e^{x}}\right)$
is greater than $\varphi:x\mapsto\left(x-x_{1}-0.08\right)e^{-x}$
on $\left[x_{1},\infty\right[$.
\end{lem}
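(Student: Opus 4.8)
The plan is to split $[x_1,\infty)$ into $[3,\infty)$ and $[x_1,3]$ and argue separately on each piece.

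On $[3,\infty)$ the work is essentially done by Lemma~\ref{lem:comparison_alpha_minus_phi_to_0}, which gives
\[
\alpha(x)-\varphi(x)\geq xe^{-x}\left(\frac{x_1+0.08-1}{x}-4e^{-x}\right).
\]
Since $x_1\geq 1.54$, the factor $x_1+0.08-1=x_1-0.92$ is positive, so it suffices to show $(x_1-0.92)e^{x}\geq 4x$ on $[3,\infty)$. Writing $\psi(x):=(x_1-0.92)e^{x}-4x$, one checks $\psi(3)\geq 0.62\,e^{3}-12>0$ and $\psi'(x)=(x_1-0.92)e^{x}-4>0$ for $x\geq 3$; hence $\psi$ is positive on $[3,\infty)$ and $\alpha\geq\varphi$ there.

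The interval $[x_1,3]$ is the delicate part. First, on $[x_1,x_1+0.08]$ one has $\varphi\leq 0\leq\alpha$, the second inequality holding because $\alpha\geq 0$ on $[x_1,\infty)$ by Lemma~\ref{lem:Study_alpha}; so only $[x_1+0.08,3]$ requires genuine work. There $\alpha$ is concave by Lemma~\ref{lem:alpha_concave_x1_3}, and a direct computation gives $\varphi''(x)=(x-x_1-2.08)e^{-x}$, which is negative on $[x_1,3]$ (its only zero is at $x_1+2.08>3$), so $\varphi$ is concave on $[x_1,3]$ as well. I would exploit both concavities to reduce the inequality to finitely many scalar checks: fix a partition $x_1=t_0<t_1<\dots<t_m=3$; on each cell $[t_i,t_{i+1}]$, concavity of $\alpha$ yields $\alpha\geq C_i$, the chord through $(t_i,\alpha(t_i))$ and $(t_{i+1},\alpha(t_{i+1}))$, while concavity of $\varphi$ yields $\varphi\leq T_i$, the tangent to $\varphi$ at the midpoint $m_i=(t_i+t_{i+1})/2$. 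As $C_i$ and $T_i$ are affine, the inequality $C_i\geq T_i$ on $[t_i,t_{i+1}]$ reduces to its validity at the two endpoints $t_i,t_{i+1}$; whenever these hold we obtain $\alpha\geq C_i\geq T_i\geq\varphi$ on the cell.

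The statement on $[x_1,3]$ thus reduces to the finite family of explicit inequalities $C_i(t_i)\geq T_i(t_i)$ and $C_i(t_{i+1})\geq T_i(t_{i+1})$, each involving only evaluations of $\alpha$, $\varphi$ and $\varphi'$ at the grid points. I expect the main obstacle to be quantitative rather than structural: the pointwise gap $\alpha-\varphi$ is small, of order $4\cdot 10^{-3}$ near $x\approx 2.2$, so the partition must be fine enough that the quadratic chord and tangent errors, controlled by $|\alpha''|$ (bounded by the estimates in the proof of Lemma~\ref{lem:alpha_concave_x1_3}) and by $|\varphi''|\leq 0.22$, stay below this gap. A uniform step of about $0.1$ makes both errors of order $10^{-3}$ and hence suffices, turning the claim into a finite, directly verifiable computation.
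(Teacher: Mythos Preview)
Your proposal is correct and follows essentially the same route as the paper: use Lemma~\ref{lem:comparison_alpha_minus_phi_to_0} on $[3,\infty)$, and on $[x_1,3]$ exploit concavity of both $\alpha$ (Lemma~\ref{lem:alpha_concave_x1_3}) and $\varphi$ to sandwich the inequality between chords of $\alpha$ and tangents of $\varphi$, reducing to finitely many numerical checks. The paper carries this out with the coarser partition $[x_1,2]\cup[2,2.5]\cup[2.5,3]$ (using non-midpoint tangent nodes, and on the last cell simply the global maximum of $\varphi$), so your proposed step size $0.1$ is more cautious than necessary but certainly sufficient.
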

\begin{proof}
Let us prove that $\alpha\geq\varphi$ by considering four intervals
$\left[x_{1},2\right]$, $\left[2,2.5\right]$, $\left[2.5,3\right]$
and $\left[3,\infty\right]$. We know that $\alpha$ is concave on
$\left[x_{1},3\right]$ according to lemma~\ref{lem:alpha_concave_x1_3}.
It is also the case of $\varphi$ because $\varphi''(x)=\left(x-x_{1}-0.08-2\right)e^{-x}$
which is negative on $\left[x_{1},3\right]$ since $x_{1}+0.08+2\approx3.62$.
Hence, $\alpha$ is above its geometrical chords and $\varphi$ below
its tangents on $\left[x_{1},3\right]$.

\uline{Case 1 on \mbox{$\left[x_{1},2\right]$}:}

\begin{figure}
\includegraphics[width=0.4\paperwidth]{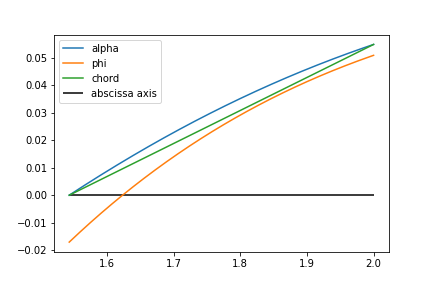}\caption{Plot of $\alpha$, $\varphi$ and its the chord on $[x_{1},2]$.\label{fig:plot_alpha_approx_x1_2}}
\end{figure}
The function $\alpha$ is above $l_{1}:x\mapsto\frac{\alpha\left(2\right)}{2-x_{1}}\left(x-x_{1}\right)$
and $\varphi$ is below $l_{2}:x\mapsto\varphi(1.85)+\varphi'(1.85)\left(x-1.85\right)$.
And as shown on figure~\ref{fig:plot_alpha_approx_x1_2}, $l_{1}\geq l_{2}$
on $\left[x_{1},2\right]$, one can compute the first coordinate of
their intersection point:
\[
x_{intersection}=\frac{\varphi(1.85)+\frac{\alpha\left(2\right)}{2-x_{1}}x_{1}-1.85\varphi'(1.85)}{\frac{\alpha\left(2\right)}{2-x_{1}}-\varphi'(1.85)}.
\]
A numerical computatuion gives $x_{intersection}\approx2.820$. The
two affine functions $l_{1}$ and $l_{2}$ intersect outside the intervalle
$\left[x_{1},2\right]$ and since at $x_{1}$ we have $l_{2}(x_{1})\approx-0.00165<0=l_{1}(x_{1})$,
we can conclude that on $\left[x_{1},2\right]$, $\alpha\geq l_{1}\geq l_{2}\geq\varphi$.

\uline{Case 2 on \mbox{$\left[2,2.5\right]$}:}

\begin{figure}
\includegraphics[width=0.4\paperwidth]{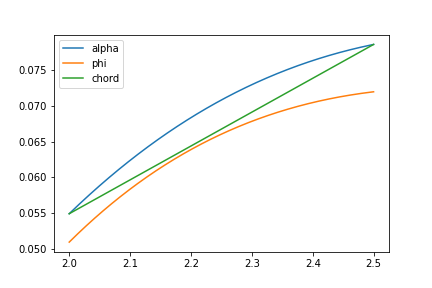}\caption{Plot of $\alpha$ and the chord of $\varphi$ on $[2,2.5]$. 
\label{fig:plot_alpha_approx_2_2.5}}
\end{figure}
The function $\alpha$ is above $l_{1}:x\mapsto\alpha\left(2\right)+\frac{\alpha\left(2.5\right)-\alpha\left(2\right)}{2.5-2}\left(x-2\right)$
and $\varphi$ is below $l_{2}:x\mapsto\varphi(2.2)+\varphi'(2.2)\left(x-2.2\right)$.
$l_{1}\geq l_{2}$ as well, one can check it with $l_{1}(2)\approx0.05493\geq0.05450\approx l_{2}(2)$
and $l_{1}(2.5)\approx0.0785\geq0.0779\approx l_{2}(2.5)$. Consequently
on $\left[2,2.5\right]$, $\alpha\geq l_{1}\geq l_{2}\geq\varphi$.

\uline{Case 3 on \mbox{$\left[2.5,3\right]$}:}

\begin{figure}
\includegraphics[width=0.4\paperwidth]{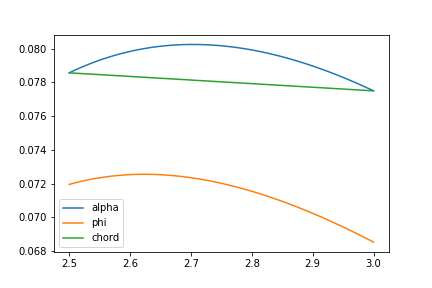}\caption{Plot of $\alpha$ and the chord of $\varphi$ on $[2.5,3]$.\label{fig:plot_alpha_approx_2.5_3}}
\end{figure}
The function $\alpha$ is above $l_{1}:x\mapsto\alpha\left(2.5\right)+\frac{\alpha\left(3\right)-\alpha\left(2.5\right)}{3-2.5}\left(x-2.5\right)$
and $\varphi$ is maximal at $x_{1}+1+0.08$ with approximate value
$0.07256$. And since $l_{1}(2.5)\approx0.07856\geq0.07256$ and $l_{1}(3)\approx0.07750\geq0.07256$,
we can conclude that $l_{1}$ is above the maximum of $\varphi$.
Hence, on $\left[2.5,3\right]$, $\alpha\geq l_{1}\geq\varphi$.

\uline{Case 4 on \mbox{$x\in\left[3,\infty\right]$}:}

\begin{figure}
\includegraphics[width=0.4\paperwidth]{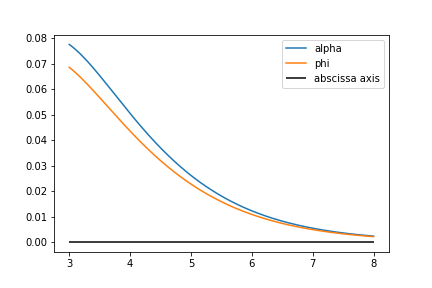}\caption{Plot of $\alpha$ and $\varphi$ on $[3,8]$
\label{fig:plot_alpha_approx_3_infty}}
\end{figure}
Thanks to Lemma~\ref{lem:comparison_alpha_minus_phi_to_0}, we know
that $\forall x\geq3$, $\alpha(x)-\varphi(x)\geq e^{-x}\left(x_{1}+0.08-1-4xe^{-x}\right)$.
Let us study the sign of $f:x\mapsto x_{1}+0.08-1-4xe^{-x}$. For
any $x\geq3$,
\[
f'(x)=-4e^{-x}+4xe^{-x}=4\left(x-1\right)e^{-x}\geq0.
\]
We also have $f(3)\approx0.026>0$. Consequently, $\forall x\geq3,f(x)\geq0$
and $\alpha(x)-\varphi(x)\geq0$ as well.

This completes the proof: $\forall x\geq x_{1},\alpha(x)-\varphi(x)\geq0$.
\end{proof}
\begin{lem}
\label{lem:beginning_minoration}Recall that $\alpha=-\frac{e^{x}}{\left(1+e^{x}\right)^{2}}\left(1+x\frac{1-e^{x}}{1+e^{x}}\right)$,
$Z\sim\mathcal{N}\left(a,I_{d}\right)$ and $\beta_{0}=Ra/\left\Vert a\right\Vert _{2}$,
$a\in\mathbb{R}^{d}$. We have
\begin{align}
\mathbb{E}\left[\alpha\left(Z^{t}\beta_{0}\right)\mathbb{I}_{\left\{ Z^{t}\beta_{0}>x_{1}\right\} }\right] & \geq\int_{x_{1}}^{\infty}\left(\left(x-x_{1}-\frac{8}{100}\right)e^{-x}\frac{e^{-\left(x/R-\left\Vert a\right\Vert _{2}\right)^{2}/2}}{\sqrt{2\pi R^{2}}}\right)dx.\label{eq:2_term_minoration}
\end{align}
\end{lem}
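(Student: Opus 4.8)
The plan is to rewrite the expectation as a one-dimensional Gaussian integral and then invoke the pointwise lower bound on $\alpha$ proved in Lemma \ref{lem:control_alpha_by_phi}. First I would identify the law of the scalar variable $Z^{t}\beta_{0}$. Since $Z\sim\mathcal{N}\left(a,I_{d}\right)$ and $\beta_{0}=Ra/\left\Vert a\right\Vert _{2}$, the variable $Z^{t}\beta_{0}$ is Gaussian with mean $a^{t}\beta_{0}=R\left\Vert a\right\Vert _{2}$ and variance $\left\Vert \beta_{0}\right\Vert _{2}^{2}=R^{2}$, hence $Z^{t}\beta_{0}\sim\mathcal{N}\left(R\left\Vert a\right\Vert _{2},R^{2}\right)$. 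Its density at a point $x$ is $\frac{1}{\sqrt{2\pi R^{2}}}\exp\left(-\left(x-R\left\Vert a\right\Vert _{2}\right)^{2}/\left(2R^{2}\right)\right)$, and the key algebraic remark is that $\left(x-R\left\Vert a\right\Vert _{2}\right)^{2}/\left(2R^{2}\right)=\left(x/R-\left\Vert a\right\Vert _{2}\right)^{2}/2$, which produces exactly the Gaussian factor appearing on the right-hand side of (\ref{eq:2_term_minoration}).

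With this identification, the transfer formula for the pushforward law gives $\mathbb{E}\left[\alpha\left(Z^{t}\beta_{0}\right)\mathbb{I}_{\left\{ Z^{t}\beta_{0}>x_{1}\right\} }\right]=\int_{x_{1}}^{\infty}\alpha\left(x\right)\frac{1}{\sqrt{2\pi R^{2}}}e^{-\left(x/R-\left\Vert a\right\Vert _{2}\right)^{2}/2}dx$. I would then apply Lemma \ref{lem:control_alpha_by_phi}, which states $\alpha\left(x\right)\geq\varphi\left(x\right)=\left(x-x_{1}-0.08\right)e^{-x}$ for all $x\geq x_{1}$. The indicator $\mathbb{I}_{\left\{ Z^{t}\beta_{0}>x_{1}\right\} }$ is precisely what confines the integration variable to the range $\left[x_{1},\infty\right)$ on which this comparison holds. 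Because the Gaussian density is nonnegative, substituting the lower bound $\varphi\left(x\right)$ for $\alpha\left(x\right)$ inside the integral preserves the inequality, and the resulting expression is exactly the right-hand side of (\ref{eq:2_term_minoration}), upon recalling that $8/100=0.08$.

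There is no serious obstacle here: once the explicit Gaussian density of $Z^{t}\beta_{0}$ is written down, the statement is an immediate consequence of the pointwise comparison $\alpha\geq\varphi$ on $\left[x_{1},\infty\right)$. The only two points requiring any care are checking that the support of the indicator coincides with the domain where Lemma \ref{lem:control_alpha_by_phi} is valid, and verifying the reparametrization of the Gaussian exponent that matches the target integrand; both verifications are elementary.
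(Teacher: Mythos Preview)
Your proposal is correct and follows essentially the same approach as the paper: identify the law of $Z^{t}\beta_{0}$ as $\mathcal{N}\left(R\left\Vert a\right\Vert _{2},R^{2}\right)$, write the expectation as a one-dimensional Gaussian integral, and apply the pointwise bound $\alpha\left(x\right)\geq\left(x-x_{1}-0.08\right)e^{-x}$ from Lemma~\ref{lem:control_alpha_by_phi} on $\left[x_{1},\infty\right)$. Your explicit remark on the reparametrization $\left(x-R\left\Vert a\right\Vert _{2}\right)^{2}/\left(2R^{2}\right)=\left(x/R-\left\Vert a\right\Vert _{2}\right)^{2}/2$ is a nice touch that the paper leaves implicit.
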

\begin{proof}
Recall that $\alpha(x)\geq\left(x-x_{1}-\frac{8}{100}\right)e^{-x}$
on $[x_{1},\infty[$ according to Lemma~\ref{lem:control_alpha_by_phi}.
Moreover $Z^{t}\beta_{0}\sim\mathcal{N}\left(a^{t}\beta_{0},\left\Vert \beta\right\Vert _{2}\right)$
with $a^{t}\beta_{0}=R\left\Vert a\right\Vert _{2}$, $\left\Vert \beta\right\Vert _{2}=R$.
This gives

{\footnotesize{}
\begin{align*}
\mathbb{E}\left[\alpha\left(Z^{t}\beta_{0}\right)\mathbb{I}_{\left\{ Z^{t}\beta_{0}>x_{1}\right\} }\right] & =\int_{x_{1}}^{\infty}\alpha(x)\frac{1}{\sqrt{2\pi R^{2}}}\exp\left(-\frac{\left(x-R\left\Vert a\right\Vert \right)^{2}}{2R^{2}}\right)dx\\
 & \geq\int_{x_{1}}^{\infty}\left(x-x_{1}-\frac{8}{100}\right)e^{-x}\frac{1}{\sqrt{2\pi R^{2}}}\exp\left(-\frac{\left(x/R-\left\Vert a\right\Vert \right)^{2}}{2}\right)dx.
\end{align*}
}{\footnotesize\par}
\end{proof}
\begin{lem}
\label{lem:good_asympt_behaviour}For any $a,z\in\mathbb{R}^{d},$
$\xi\in\mathbb{R}$ and $R>0$, it holds{\footnotesize{}
\begin{align*}
J_{a,R}\left(\xi,z\right):=\int_{z}^{\infty}\left(xe^{-\xi x}\frac{1}{\sqrt{2\pi R^{2}}}e^{-\left(x/R-\left\Vert a\right\Vert _{2}\right)^{2}/2}\right)dx & =R\left(1+\left(\left\Vert a\right\Vert _{2}-R\xi\right)G\left(\frac{z}{R}+R\xi-\left\Vert a\right\Vert _{2}\right)\right)\gamma\left(\frac{z}{R}-\left\Vert a\right\Vert _{2}\right)e^{-\xi z},
\end{align*}
}where $\gamma:x\rightarrow(2\pi)^{-1/2}e^{-\frac{1}{2}x^{2}}$ is
the standard Gaussian density, $\Phi^{c}$ is the standard Gaussian
tail function and $G:x\mapsto\Phi^{c}\left(x\right)/\gamma(x)$ is
the Gaussian Mill's ratio.
\end{lem}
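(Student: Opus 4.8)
The plan is to evaluate $J_{a,R}(\xi,z)$ as an elementary Gaussian integral by completing the square. Writing $b:=\left\Vert a\right\Vert_2$ for brevity, I would first merge the two exponential factors in the integrand, using
\[
-\xi x-\tfrac12\left(\tfrac{x}{R}-b\right)^2=-\frac{1}{2R^2}\bigl(x-m\bigr)^2-Rb\xi+\frac{R^2\xi^2}{2},\qquad m:=R(b-R\xi).
\]
Thus the integrand becomes a single shifted Gaussian kernel $e^{-(x-m)^2/(2R^2)}$ times the linear weight $x$, with the $x$-independent factor $e^{-Rb\xi+R^2\xi^2/2}$ pulled outside.

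Next I would perform the affine substitution $u=(x-m)/R$, turning the integral over $[z,\infty)$ into one over $[u_0,\infty)$ with $u_0=\frac{z}{R}+R\xi-b$, and producing the linear integrand $(m+Ru)e^{-u^2/2}$. Splitting this linear term yields exactly the two standard integrals
\[
\int_{u_0}^\infty e^{-u^2/2}\,du=\sqrt{2\pi}\,\Phi^c(u_0),\qquad \int_{u_0}^\infty u\,e^{-u^2/2}\,du=\sqrt{2\pi}\,\gamma(u_0).
\]
Collecting the $\tfrac{1}{\sqrt{2\pi R^2}}$ prefactor and simplifying, the integral reduces to $R\bigl((b-R\xi)\Phi^c(u_0)+\gamma(u_0)\bigr)e^{-Rb\xi+R^2\xi^2/2}$; using $\Phi^c(u_0)=G(u_0)\gamma(u_0)$ I then factor out $\gamma(u_0)$ to produce the bracket $1+(b-R\xi)G(u_0)$.

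The only delicate step is matching the remaining scalar prefactor $\gamma(u_0)\,e^{-Rb\xi+R^2\xi^2/2}$ with the claimed $\gamma(\tfrac{z}{R}-b)\,e^{-\xi z}$. Here I would expand $u_0^2=(\tfrac{z}{R}-b)^2+2(\tfrac{z}{R}-b)R\xi+R^2\xi^2$ inside $\gamma(u_0)$: the quadratic-in-$\xi$ contribution $-R^2\xi^2/2$ cancels against the $+R^2\xi^2/2$ in the outer factor, the constant $-\tfrac12(\tfrac{z}{R}-b)^2$ rebuilds $\gamma(\tfrac{z}{R}-b)$, and the cross term $-(\tfrac{z}{R}-b)R\xi$ combines with $-Rb\xi$ to give $-R\xi(\tfrac{z}{R}-b+b)=-\xi z$. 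This is the one place where a sign or coefficient slip would derail the identity, so I would carry the exponent algebra explicitly rather than skip it. Substituting $u_0=\tfrac{z}{R}+R\xi-b$ and $b=\left\Vert a\right\Vert_2$ back into the bracket then yields the stated closed form.
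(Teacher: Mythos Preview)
Your proposal is correct and follows essentially the same route as the paper: complete the square in the exponent, perform the affine change of variable $u=(x-m)/R$ with $m=R(\left\Vert a\right\Vert_2-R\xi)$, split the resulting linear integrand into the two standard Gaussian integrals, and then simplify the exponential prefactor via the identity $(2\left\Vert a\right\Vert_2-R\xi)R\xi+(z/R+R\xi-\left\Vert a\right\Vert_2)^2=(z/R-\left\Vert a\right\Vert_2)^2+2\xi z$, which is exactly your final exponent-matching step.
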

\begin{proof}
We have

\begin{align}
J_{a,R}\left(\xi,z\right) & =\int_{z}^{\infty}xe^{-\xi x}\frac{1}{\sqrt{2\pi R^{2}}}\exp\left(-\frac{\left(x-R\left\Vert a\right\Vert \right)^{2}}{2R^{2}}\right)dx\nonumber \\
 & =\int_{z}^{\infty}\frac{x}{\sqrt{2\pi R^{2}}}\exp\left(-\frac{\left(x-R\left\Vert a\right\Vert \right)^{2}+2R^{2}\xi x}{2R^{2}}\right)dx.\label{eq:save_computation_time}
\end{align}
Moreover, 
\[
\left(x-R\left\Vert a\right\Vert \right)^{2}+2R^{2}\xi x=\left(x+R^{2}\xi-R\left\Vert a\right\Vert _{2}\right)^{2}+R^{2}\xi\left(2R\left\Vert a\right\Vert _{2}-R^{2}\xi\right).
\]
Hence,

\begin{align}
J_{a,R}\left(\xi,z\right) & =\int_{z}^{\infty}\frac{x}{\sqrt{2\pi R^{2}}}\exp\left(-\frac{\left(x+R\left(R\xi-\left\Vert a\right\Vert _{2}\right)\right)^{2}+R^{2}\left(2\left\Vert a\right\Vert _{2}-R\xi\right)R\xi}{2R^{2}}\right)dx\nonumber \\
 & =e^{-\frac{\left(2\left\Vert a\right\Vert _{2}-R\xi\right)R\xi}{2}}\int_{z}^{\infty}x\frac{1}{\sqrt{2\pi R^{2}}}\exp\left(-\frac{\left(x/R+R\xi-\left\Vert a\right\Vert _{2}\right)^{2}}{2}\right)dx\label{eq:intermediary_integrale_J}
\end{align}
By the change the variable $y=x/R+R\xi-\left\Vert a\right\Vert _{2}$,
we get

\begin{align*}
J_{a,R}\left(\xi,z\right) & =e^{-\frac{\left(2\left\Vert a\right\Vert _{2}-R\xi\right)R\xi}{2}}\int_{z/R+R\xi-\left\Vert a\right\Vert _{2}}^{\infty}R\left(y-\left(R\xi-\left\Vert a\right\Vert _{2}\right)\right)\frac{1}{\sqrt{2\pi}}\exp\left(-\frac{y^{2}}{2}\right)dy\\
 & =e^{-\frac{\left(2\left\Vert a\right\Vert _{2}-R\xi\right)R\xi}{2}}\int_{z/R+R\xi-\left\Vert a\right\Vert _{2}}^{\infty}Ry\frac{1}{\sqrt{2\pi}}\exp\left(-\frac{y^{2}}{2}\right)dy\\
 & \quad-e^{-\frac{\left(2\left\Vert a\right\Vert _{2}-R\xi\right)R\xi}{2}}\int_{z/R+R\xi-\left\Vert a\right\Vert _{2}}^{\infty}R\left(R\xi-\left\Vert a\right\Vert _{2}\right)\frac{1}{\sqrt{2\pi}}\exp\left(-\frac{y^{2}}{2}\right)dy\\
 & =-Re^{-\frac{\left(2\left\Vert a\right\Vert _{2}-R\xi\right)R\xi}{2}}\left[\frac{1}{\sqrt{2\pi}}\exp\left(-\frac{y^{2}}{2}\right)\right]_{z/R+R\xi-\left\Vert a\right\Vert _{2}}^{\infty}\\
 & \quad-R\left(R\xi-\left\Vert a\right\Vert _{2}\right)e^{-\frac{\left(2\left\Vert a\right\Vert _{2}-R\xi\right)R\xi}{2}}\Phi^{c}\left(\frac{z}{R}+R\xi-\left\Vert a\right\Vert _{2}\right)\\
 & =Re^{-\frac{\left(2\left\Vert a\right\Vert _{2}-R\xi\right)R\xi}{2}}\left(\frac{1}{\sqrt{2\pi}}e^{-\frac{1}{2}\left(\frac{z}{R}+R\xi-\left\Vert a\right\Vert _{2}\right)^{2}}+\left(\left\Vert a\right\Vert _{2}-R\xi\right)\Phi^{c}\left(\frac{z}{R}+R\xi-\left\Vert a\right\Vert _{2}\right)\right)\\
 & =R\left[1+\left(\left\Vert a\right\Vert _{2}-R\xi\right)G\left(\frac{z}{R}+R\xi-\left\Vert a\right\Vert _{2}\right)\right]\frac{1}{\sqrt{2\pi}}e^{-\frac{R\xi\left(2\left\Vert a\right\Vert _{2}-R\xi\right)+\left(\frac{z}{R}+R\xi-\left\Vert a\right\Vert _{2}\right)^{2}}{2}}
\end{align*}
and since 
\begin{align}
\left(2\left\Vert a\right\Vert _{2}-R\xi\right)R\xi+\left(z/R+R\xi-\left\Vert a\right\Vert _{2}\right)^{2} & =\left(z/R-\left\Vert a\right\Vert _{2}\right)^{2}+2z\xi,\label{eq:useful_simplification-1}
\end{align}
we finally get the result.
\end{proof}
\begin{lem}
\label{lem:good_asympt_behaviour-1}For any $a,z\in\mathbb{R}^{d},$
$\xi\in\mathbb{R}$ and $R>0$, it holds
\begin{align*}
K_{a,R}\left(\xi,z\right):=\int_{z}^{\infty}e^{-\xi x}\frac{1}{\sqrt{2\pi R^{2}}}e^{-\left(x/R-\left\Vert a\right\Vert _{2}\right)^{2}/2}dx & =\gamma\left(\frac{z}{R}-\left\Vert a\right\Vert _{2}\right)G\left(\frac{z}{R}+R\xi-\left\Vert a\right\Vert _{2}\right)e^{-\xi z}
\end{align*}
where $\gamma$ is the standard Gaussian density, $\Phi^{c}$ is the
standard Gaussian tail function and $G:x\mapsto\Phi^{c}\left(x\right)/\gamma(x)$
is the Gaussian Mill's ratio.
\end{lem}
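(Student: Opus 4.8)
The plan is to compute this integral by exactly the strategy used for $J_{a,R}\left(\xi,z\right)$ in Lemma~\ref{lem:good_asympt_behaviour}, the task being strictly easier here since the integrand carries no extra factor of $x$; hence no boundary term appears and a single Gaussian tail suffices. First I would absorb the linear weight $e^{-\xi x}$ into the Gaussian by completing the square in the exponent, writing $\left(x-R\left\Vert a\right\Vert _{2}\right)^{2}+2R^{2}\xi x=\left(x+R^{2}\xi-R\left\Vert a\right\Vert _{2}\right)^{2}+R^{2}\xi\left(2R\left\Vert a\right\Vert _{2}-R^{2}\xi\right)$, which is the very algebraic identity already established in the proof of Lemma~\ref{lem:good_asympt_behaviour}. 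This lets me pull out the constant factor $e^{-\left(2\left\Vert a\right\Vert _{2}-R\xi\right)R\xi/2}$ in front of an integral of a single shifted Gaussian density.

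Next I would perform the change of variable $y=x/R+R\xi-\left\Vert a\right\Vert _{2}$, for which $dx=R\,dy$ and the lower endpoint $x=z$ becomes $y=z/R+R\xi-\left\Vert a\right\Vert _{2}$. Since $R/\sqrt{2\pi R^{2}}=1/\sqrt{2\pi}$, the integral collapses to the standard Gaussian tail, yielding
\[
K_{a,R}\left(\xi,z\right)=e^{-\left(2\left\Vert a\right\Vert _{2}-R\xi\right)R\xi/2}\,\Phi^{c}\left(\frac{z}{R}+R\xi-\left\Vert a\right\Vert _{2}\right).
\]
I would then write $\Phi^{c}=G\cdot\gamma$ by the very definition of the Mill's ratio, so that $\Phi^{c}\left(z/R+R\xi-\left\Vert a\right\Vert _{2}\right)=G\left(z/R+R\xi-\left\Vert a\right\Vert _{2}\right)\gamma\left(z/R+R\xi-\left\Vert a\right\Vert _{2}\right)$, which turns the prefactor into a product of the $G$ term and two Gaussian exponentials.

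The final step is purely to recombine those exponentials. Collecting the exponent coming from the constant factor together with that of $\gamma\left(z/R+R\xi-\left\Vert a\right\Vert _{2}\right)$ produces $-\tfrac12\left[\left(2\left\Vert a\right\Vert _{2}-R\xi\right)R\xi+\left(z/R+R\xi-\left\Vert a\right\Vert _{2}\right)^{2}\right]$, and here I would invoke the simplification $\left(2\left\Vert a\right\Vert _{2}-R\xi\right)R\xi+\left(z/R+R\xi-\left\Vert a\right\Vert _{2}\right)^{2}=\left(z/R-\left\Vert a\right\Vert _{2}\right)^{2}+2z\xi$, which is precisely~(\ref{eq:useful_simplification-1}). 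This rewrites the exponent as $-\tfrac12\left(z/R-\left\Vert a\right\Vert _{2}\right)^{2}-\xi z$, that is, it reconstitutes $\gamma\left(z/R-\left\Vert a\right\Vert _{2}\right)e^{-\xi z}$ alongside the surviving $G$ factor, giving exactly the claimed closed form. I do not anticipate any genuine obstacle in this lemma: every algebraic ingredient is already in place from the companion computation of $J_{a,R}$, and the argument reduces to a short, self-contained substitution.
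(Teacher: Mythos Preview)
Your proposal is correct and follows essentially the same approach as the paper: complete the square to extract the constant factor $e^{-(2\|a\|_2-R\xi)R\xi/2}$, change variables $y=x/R+R\xi-\|a\|_2$ to reduce to $\Phi^c$, write $\Phi^c=G\cdot\gamma$, and then invoke identity~(\ref{eq:useful_simplification-1}) to recombine the exponents into $\gamma(z/R-\|a\|_2)e^{-\xi z}$. The paper's proof is step-for-step the same.
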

\begin{proof}
By the same calculation as in Equation~\ref{eq:save_computation_time},
we can write

\begin{align*}
K_{a,R}\left(\xi,z\right) & =e^{-\frac{\left(2\left\Vert a\right\Vert _{2}-R\xi\right)R\xi}{2}}\int_{z}^{\infty}\frac{1}{\sqrt{2\pi R^{2}}}\exp\left(-\frac{\left(x/R+R\xi-\left\Vert a\right\Vert _{2}\right)^{2}}{2}\right)dx.
\end{align*}
By the change the variable $y=x/R+R\xi-\left\Vert a\right\Vert _{2}$,
we get

\begin{align*}
K_{a,R}\left(\xi,z\right) & =e^{-\frac{\left(2\left\Vert a\right\Vert _{2}-R\xi\right)R\xi}{2}}\int_{z/R+R\xi-\left\Vert a\right\Vert _{2}}^{\infty}\frac{1}{\sqrt{2\pi}}\exp\left(-\frac{y^{2}}{2}\right)dy\\
 & =e^{-\frac{\left(2\left\Vert a\right\Vert _{2}-R\xi\right)R\xi}{2}}\Phi^{c}\left(z/R+R\xi-\left\Vert a\right\Vert _{2}\right)\\
 & =\frac{1}{\sqrt{2\pi}}e^{-\frac{\left(2\left\Vert a\right\Vert _{2}-R\xi\right)R\xi+\left(z/R+R\xi-\left\Vert a\right\Vert _{2}\right)^{2}}{2}}.G\left(z/R+R\xi-\left\Vert a\right\Vert _{2}\right).
\end{align*}
By Identity (\ref{eq:useful_simplification-1}), it follows that

\begin{align*}
K_{a,R}\left(\xi,z\right) & =\frac{1}{\sqrt{2\pi}}e^{-\frac{1}{2}\left(z/R-\left\Vert a\right\Vert _{2}\right)^{2}-z\xi}G\left(z/R+R\xi-\left\Vert a\right\Vert _{2}\right),
\end{align*}
as expected. 
\end{proof}
\begin{lem}
\label{lem:rule_for_G}Set $G\left(x\right)=\frac{\Phi^{c}\left(x\right)}{\gamma\left(x\right)}$
the Mill's ratio of the standard gaussian distribution. $G$ satisfies:
$\forall x\in\mathbb{R}$
\begin{align*}
xG\left(x\right)-G'\left(x\right) & =1\\
G''(x)-xG'\left(x\right)-G\left(x\right) & =0\\
G'''(x)-2G'\left(x\right)-xG''\left(x\right) & =0
\end{align*}
\end{lem}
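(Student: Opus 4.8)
The plan is to observe that the Mill's ratio $G$ satisfies a simple first-order differential relation that ``closes on itself,'' after which the two higher-order identities follow by nothing more than repeated differentiation. First I would record the two elementary derivative facts underlying everything: since $\gamma(x)=(2\pi)^{-1/2}e^{-x^{2}/2}$ we have $\gamma'(x)=-x\gamma(x)$, and since $\Phi^{c}(x)=\int_{x}^{\infty}\gamma(u)\,du$ we have $\left(\Phi^{c}\right)'(x)=-\gamma(x)$.

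Applying the quotient rule to $G=\Phi^{c}/\gamma$ and substituting these two facts should immediately produce the first identity. Concretely,
\begin{align*}
G'(x) & =\frac{\left(\Phi^{c}\right)'(x)\gamma(x)-\Phi^{c}(x)\gamma'(x)}{\gamma(x)^{2}}=\frac{-\gamma(x)^{2}+x\Phi^{c}(x)\gamma(x)}{\gamma(x)^{2}}=xG(x)-1,
\end{align*}
which is exactly $xG(x)-G'(x)=1$. The key structural point is that the awkward $\Phi^{c}$ term reassembles into $G$ itself, leaving an inhomogeneous linear ODE $G'=xG-1$ with no further transcendental content.

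The remaining two identities are then obtained by differentiating this relation. Differentiating $G'(x)=xG(x)-1$ once gives $G''(x)=G(x)+xG'(x)$, i.e. the second identity $G''(x)-xG'(x)-G(x)=0$. Differentiating once more gives $G'''(x)=G'(x)+\bigl(G'(x)+xG''(x)\bigr)=2G'(x)+xG''(x)$, which is the third identity $G'''(x)-2G'(x)-xG''(x)=0$.

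I do not expect any genuine obstacle here: the entire argument reduces to the single observation that the first derivative of the Mill's ratio is expressible back in terms of $G$ through the relation $G'=xG-1$. The only point demanding care is the sign bookkeeping in the quotient rule (in particular the $-\gamma'(x)=x\gamma(x)$ sign), after which the second and third identities are purely mechanical consequences requiring no new ideas.
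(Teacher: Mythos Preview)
Your proposal is correct and follows essentially the same approach as the paper: compute $G'$ via the quotient rule using $(\Phi^{c})'=-\gamma$ and $\gamma'=-x\gamma$ to obtain $G'=xG-1$, then differentiate this relation twice to get the remaining two identities.
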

\begin{proof}
$G\left(x\right)=\frac{\Phi^{c}\left(x\right)}{\gamma\left(x\right)}$
and using the fact that $\frac{d\Phi^{c}}{dx}\left(x\right)=-\gamma\left(x\right)$
and $\gamma'\left(x\right)=-x\gamma\left(x\right)$ it comes:
\begin{align*}
G'\left(x\right) & =\frac{-\gamma\left(x\right)\gamma\left(x\right)-\Phi^{c}\left(x\right)\left(-x\gamma\left(x\right)\right)}{\gamma\left(x\right)\gamma\left(x\right)}\\
 & =-1+x\frac{\Phi^{c}\left(x\right)}{\gamma\left(x\right)}\\
 & =-1+xG\left(x\right)
\end{align*}

and $G'=xG-1\Rightarrow G''=G+xG'\Rightarrow G'''=G'+G'+xG''$
\end{proof}
\begin{prop}
\label{prop:control_of_G}The function $G\left(x\right)=\frac{\Phi^{c}\left(x\right)}{\gamma\left(x\right)}$
is known as the Gaussian Mill's ratio and $\forall x\geq0$

\[
0<\frac{2}{x+\sqrt{x^{2}+4}}\leq G\left(x\right)\leq\frac{2}{x+\sqrt{x^{2}+4.\frac{2}{\pi}}}
\]
\begin{figure}
\includegraphics[width=0.4\paperwidth]{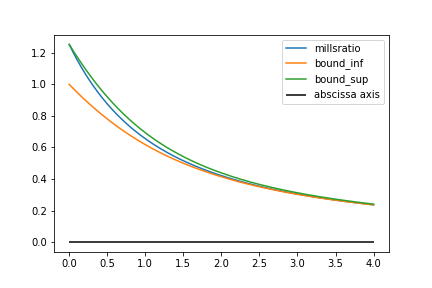}\caption{Plot of the function $G$ on $[0,4]$ 
\label{fig:plot_mill_s_ratio}}
\end{figure}
\end{prop}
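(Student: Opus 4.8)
The plan is to prove both inequalities by a single device. For a candidate bound $B_{c}(x):=\frac{2}{x+\sqrt{x^{2}+c}}$ (with $c=4$ for the lower bound and $c=8/\pi$ for the upper one), introduce the auxiliary function
\[
f_{c}(x):=\Phi^{c}(x)-\gamma(x)\,B_{c}(x).
\]
Since $\gamma>0$, the inequality $G(x)\geq B_{c}(x)$ is equivalent to $f_{c}(x)\geq 0$, and $G(x)\leq B_{c}(x)$ is equivalent to $f_{c}(x)\leq 0$. Using $\left(\Phi^{c}\right)'=-\gamma$ and $\gamma'(x)=-x\gamma(x)$ (the relations behind Lemma~\ref{lem:rule_for_G}), one obtains the clean identity
\[
f_{c}'(x)=\gamma(x)\bigl(xB_{c}(x)-B_{c}'(x)-1\bigr)=:\gamma(x)\,D_{c}(x),
\]
which concentrates the whole difficulty into the sign of the ``defect'' $D_{c}$. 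Since the true ratio obeys $xG-G'-1=0$, the quantity $D_{c}$ measures exactly how far $B_{c}$ is from satisfying this relation.

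Next I would compute $D_{c}$ explicitly. Multiplying by the positive factor $c\sqrt{x^{2}+c}$ gives
\[
c\sqrt{x^{2}+c}\,D_{c}(x)=\bigl(2x^{3}+2(c-1)x\bigr)-(2x^{2}+c-2)\sqrt{x^{2}+c}.
\]
For $c>2$ both bracketed terms are nonnegative on $[0,\infty)$ and their sum is strictly positive, so the sign of $D_{c}$ coincides with that of the difference of their squares; a short expansion shows this difference equals
\[
c\bigl[(4-c)x^{2}-(c-2)^{2}\bigr].
\]

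For the lower bound ($c=4$) this reduces to the negative constant $-16$, so $D_{4}<0$ everywhere and $f_{4}$ is strictly decreasing. Since $\Phi^{c}(x)\to 0$ and $\gamma(x)B_{4}(x)\to 0$ as $x\to\infty$ (the latter because $\gamma$ decays exponentially while $B_{4}(x)\sim 1/x$), we have $f_{4}(\infty)=0$, hence $f_{4}>0$ on $[0,\infty)$ and $G>B_{4}$. The leftmost inequality $0<B_{4}(x)$ is immediate.

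The main obstacle is the upper bound, where the naive monotonicity argument fails. For $c=8/\pi$ the factor $(4-c)x^{2}-(c-2)^{2}$ is negative near $0$ and positive for large $x$, so $D_{8/\pi}$ changes sign exactly once and $f_{8/\pi}$ first decreases and then increases. The saving feature, and the reason the constant is precisely $8/\pi$, is that both endpoint values vanish: $f_{8/\pi}(\infty)=0$ as before, while $f_{8/\pi}(0)=\Phi^{c}(0)-\gamma(0)B_{8/\pi}(0)=\tfrac12-\tfrac{1}{\sqrt{2\pi}}\cdot\tfrac{2}{\sqrt{8/\pi}}=0$. A function that vanishes at both ends of $[0,\infty)$ and is valley-shaped (decreasing then increasing) is necessarily $\leq 0$ throughout, which yields $G\leq B_{8/\pi}$, with equality only at $x=0$. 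I would finish by recording these two limit and boundary evaluations carefully, as they are what carry the argument.
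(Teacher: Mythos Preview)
Your proof is correct and self-contained. The auxiliary function $f_{c}=\Phi^{c}-\gamma B_{c}$, the derivative identity $f_{c}'=\gamma(xB_{c}-B_{c}'-1)$, and the algebraic reduction of the sign question to $c[(4-c)x^{2}-(c-2)^{2}]$ all check out; the endpoint evaluations $f_{4}(\infty)=0$, $f_{8/\pi}(0)=f_{8/\pi}(\infty)=0$ together with the monotonicity/valley-shape argument then yield both inequalities exactly as you describe.

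This is a genuinely different route from the paper. The paper does not prove the inequalities at all: it simply quotes the lower bound from Birnbaum (1942) and attributes the upper bound to Pollak via the survey of Gasull--Utzet. What you gain with your approach is an elementary, unified, and fully self-contained proof that also explains \emph{why} the constant $8/\pi$ is sharp at $x=0$ (it is forced by $f_{8/\pi}(0)=0$), and why $c=4$ is the unique value for which $(4-c)x^{2}-(c-2)^{2}$ is a nonzero constant, making the lower bound particularly clean. The paper's citation-based argument, by contrast, is shorter to state but relies on external sources and gives no insight into the role of these specific constants.
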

\begin{proof}
Focus on the first inequality: the lower bound is due to~\cite{birnbaum1942inequality}
and the upper bound is attributed to Pollak~\cite{MR83529} according
to~\cite{gasull2014approximating} in which one can find the inequality
in the first commentar of Remark~11 p~1848.
\end{proof}
\begin{prop}
\label{prop:G_decreasing}The Gaussian mill's ratio function $G:x\mapsto\frac{\Phi^{c}\left(x\right)}{\gamma\left(x\right)}$
is a strictly decreasing function on $\mathbb{R}$.
\end{prop}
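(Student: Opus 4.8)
The plan is to reduce the monotonicity of $G$ to a single pointwise inequality, exploiting the first-order differential identity already available for the Mill's ratio. From Lemma~\ref{lem:rule_for_G} we have $G'(x)=xG(x)-1$ for every $x\in\mathbb{R}$. Hence showing that $G$ is strictly decreasing is exactly equivalent to showing the strict inequality $xG(x)<1$ for all real $x$, and this is the only thing I would prove.

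First I would dispatch the range $x\le 0$. Since $\Phi^{c}(x)>0$ and $\gamma(x)>0$ for every real $x$, we always have $G(x)>0$; therefore when $x\le 0$ the product satisfies $xG(x)\le 0<1$, so $G'(x)=xG(x)-1<0$ at once.

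The substantive case is $x>0$, where I must establish $xG(x)<1$, that is $x\int_{x}^{\infty}e^{-u^{2}/2}\,du<e^{-x^{2}/2}$ after cancelling the factor $(2\pi)^{-1/2}$ in $G=\Phi^{c}/\gamma$. The cleanest route uses the elementary comparison $1\le u/x$, valid for $u\ge x>0$ and strict for $u>x$; integrating gives
\[
\int_{x}^{\infty}e^{-u^{2}/2}\,du<\int_{x}^{\infty}\frac{u}{x}\,e^{-u^{2}/2}\,du=\frac{1}{x}\,e^{-x^{2}/2},
\]
the final equality being the exact antiderivative of $u\,e^{-u^{2}/2}$. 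Multiplying through by $x$ yields $xG(x)<1$, hence $G'(x)<0$. Alternatively, one may invoke the explicit bound of Proposition~\ref{prop:control_of_G}: for $x\ge 0$ it gives $G(x)\le 2/(x+\sqrt{x^{2}+8/\pi})$, whence $xG(x)\le 2x/(x+\sqrt{x^{2}+8/\pi})<1$ because $\sqrt{x^{2}+8/\pi}>x$.

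Combining the two ranges shows $G'(x)<0$ on all of $\mathbb{R}$, so $G$ is strictly decreasing. I do not anticipate a genuine obstacle here; the only point requiring mild care is to keep the inequality in the $x>0$ comparison \emph{strict} (which holds because $u/x>1$ on a set of positive Lebesgue measure), so that the conclusion is strict monotonicity rather than mere non-increase.
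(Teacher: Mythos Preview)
Your proof is correct and follows essentially the same skeleton as the paper: use the identity $G'(x)=xG(x)-1$ from Lemma~\ref{lem:rule_for_G}, dispose of $x\le 0$ by the sign of $G$, and for $x>0$ show $xG(x)<1$. The only difference is that the paper establishes the $x>0$ case by invoking the Pollak upper bound of Proposition~\ref{prop:control_of_G}, whereas your primary argument is the elementary integral comparison $\int_x^\infty e^{-u^2/2}\,du<\int_x^\infty (u/x)e^{-u^2/2}\,du=x^{-1}e^{-x^2/2}$; this is slightly cleaner since it is self-contained and does not rely on an external Mills-ratio bound, while the paper's route reuses a proposition it has already stated.
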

\begin{proof}
We have seen in Lemma~\ref{lem:rule_for_G} that $G'=-1+xG$. Since
$G\geq0$, it is obvious that $G'<0$ on $\left]-\infty;0\right]$.
Furthermore, take $x>0$, then with proposition~\ref{prop:control_of_G}
we have 
\begin{align*}
G'(x) & =-1+xG(x)\\
 & \leq-1+x\frac{2}{x+\sqrt{x^{2}+4.\frac{2}{\pi}}}\\
 & =\frac{2}{1+\sqrt{1+\frac{8}{\pi x^{2}}}}-1
\end{align*}
One can see that $\forall x>0,\frac{2}{1+\sqrt{1+\frac{8}{\pi x^{2}}}}<1$,
hence $G'<0$ everywhere on $\left]0,\infty\right[$.
\end{proof}
\begin{lem}
\label{lem:monotonicity_wrt_a_in_Eq}Define $Eq_{a,b,c,d}:1+aG\left(-b-a\right)\geq cG\left(a+d\right)$
where $a,b,c,d\geq0$ and $G:x\mapsto\frac{\Phi^{c}\left(x\right)}{\gamma\left(x\right)}$
is the Gaussian mill's ratio where $\gamma$ and $\Phi^{c}$ are respectively
the density and the tail function of the standard univariate gaussian.
If $Eq_{a,b,c,d}$ holds true, then $\forall h>0,Eq_{a+h,b,c,d}$
holds true.
\end{lem}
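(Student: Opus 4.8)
The plan is to fix $b,c,d\ge 0$ and study the two sides of the inequality as functions of the single variable $a\ge 0$. Set
\[
L(a):=1+aG(-b-a),\qquad R(a):=cG(a+d),
\]
so that $Eq_{a,b,c,d}$ is precisely the statement $L(a)\ge R(a)$. I would prove that $L$ is nondecreasing on $\mathbb{R}_{+}$ while $R$ is nonincreasing. Granting this, for any $h>0$ one gets the chain $L(a+h)\ge L(a)\ge R(a)\ge R(a+h)$, where the middle inequality is the hypothesis $Eq_{a,b,c,d}$ and the two outer inequalities are the monotonicities; this is exactly $Eq_{a+h,b,c,d}$.

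The monotonicity of $R$ is immediate: since $c\ge 0$ and the Mill's ratio $G$ is strictly decreasing on all of $\mathbb{R}$ by Proposition \ref{prop:G_decreasing}, the map $a\mapsto cG(a+d)$ is nonincreasing. The substantive point is therefore the monotonicity of $L$. I would differentiate in $a$, using $\tfrac{d}{da}G(-b-a)=-G'(-b-a)$, to obtain $L'(a)=G(-b-a)-aG'(-b-a)$. Now I invoke the first-order identity $G'(x)=xG(x)-1$ from Lemma \ref{lem:rule_for_G}, evaluated at $x=-b-a$, which gives $G'(-b-a)=(-b-a)G(-b-a)-1$. Substituting and regrouping yields
\[
L'(a)=G(-b-a)\bigl(1+a(a+b)\bigr)+a.
\]
Because $\Phi^{c}$ and $\gamma$ are everywhere positive, we have $G>0$ on $\mathbb{R}$; combined with $a,b\ge 0$, every summand on the right is nonnegative, so $L'(a)\ge 0$ and $L$ is nondecreasing, which is the remaining ingredient.

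The only genuinely delicate step is the algebraic simplification of $L'$ through the $G$-identity of Lemma \ref{lem:rule_for_G}: without it the term $-aG'(-b-a)$ has an \emph{a priori} ambiguous sign, and it is the identity that converts it into the manifestly nonnegative expression $aG(-b-a)(a+b)+a$. I expect this to be the heart of the argument, and I would emphasize that the sign assumptions $a,b\ge 0$ (together with the positivity of $G$) are exactly what make all contributions to $L'$ add with the same sign; the role of $c,d\ge 0$ is confined to the easy monotonicity of $R$.
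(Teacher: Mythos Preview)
Your proof is correct, but it takes a more computational route than the paper's. The paper avoids differentiating $L$ altogether: it simply observes that since $G$ is strictly decreasing (Proposition~\ref{prop:G_decreasing}) and positive, the map $a\mapsto G(-b-a)$ is increasing and positive, so $a\mapsto aG(-b-a)$ is the product of two nonnegative increasing functions of $a$ and is therefore increasing. This gives the monotonicity of $L$ with no calculus and without invoking the identity $G'(x)=xG(x)-1$. In particular, what you call ``the only genuinely delicate step'' is not actually needed; the sign of $-aG'(-b-a)$ follows already from $a\ge 0$ and $G'<0$, so there is no ambiguity to resolve via Lemma~\ref{lem:rule_for_G}. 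Your approach does buy something: it produces an explicit, quantitatively useful lower bound $L'(a)\ge a$ for the derivative, which the paper's argument does not supply, though this extra information is not used in the sequel.
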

\begin{proof}
Start with $\left(a,b,c,d\right)$ such that $Eq_{a,b,c,d}$ holds
true and take $h>0$. We proved in prop~\ref{prop:G_decreasing}
that $G$ is a decreasing function,then $G\left(-b-a\right)<G\left(-b-\left(a+h\right)\right)$,
then one has $0\leq aG\left(-b-a\right)<\left(a+h\right)G\left(-b-\left(a+h\right)\right)$,
hence

\[
1+\left(a+h\right)G\left(-b-\left(a+h\right)\right)>1+aG\left(-b-a\right)
\]
But $\left(a,b,c,d\right)$ such that $Eq_{a,b,c,d}$ holds true,
therefore 
\[
1+\left(a+h\right)G\left(-b-\left(a+h\right)\right)>cG\left(a+d\right)
\]

Finally, use again the fact that $G$ is decreasing, to have $G\left(a+d\right)>G\left(\left(a+h\right)+d\right)$
and it comes
\[
1+\left(a+h\right)G\left(-b-\left(a+h\right)\right)>cG\left(\left(a+h\right)+d\right)
\]

To conclude, $Eq_{a+h,b,c,d}$ holds also true.
\end{proof}
\begin{lem}
\label{lem:particular_case_of_Eq}The equation $R\left(1-\left(R-\left\Vert a\right\Vert _{2}+\frac{x_{1}+\frac{8}{100}}{R}\right)G\left(\frac{x_{1}}{R}+R-\left\Vert a\right\Vert _{2}\right)\right)\geq\left(1+\nu\right)\frac{e^{x_{1}}}{4}G\left(\left\Vert a\right\Vert _{2}-\frac{x_{1}}{R}\right)$
holds true, in particular, for $R=\sqrt{x_{1}+0.08}\approx1.2741$,
$\left\Vert a\right\Vert _{2}=2R=R+\frac{x_{1}+0.08}{R}\approx2.548$
and $\nu=0.95$.
\end{lem}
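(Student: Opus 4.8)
The plan is to exploit the very specific algebraic relations imposed on the triplet $\left(\left\Vert a\right\Vert_2, R, \nu\right)$, which cause a dramatic simplification of the left-hand side. The choice $R=\sqrt{x_1+0.08}$ gives $R^2=x_1+0.08$, hence $\frac{x_1+0.08}{R}=R$; combined with $\left\Vert a\right\Vert_2=2R$, the coefficient multiplying the first Mill's ratio collapses to zero:
\[
R-\left\Vert a\right\Vert_2+\frac{x_1+\frac{8}{100}}{R}=R-2R+R=0.
\]
Consequently the whole left-hand side reduces to the constant $R$, and the factor $G\left(\frac{x_1}{R}+R-\left\Vert a\right\Vert_2\right)$, which equals $G\left(-0.08/R\right)$, becomes irrelevant.

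Next I would simplify the argument of the remaining Mill's ratio on the right. Writing $\frac{x_1}{R}=R-\frac{0.08}{R}$ (again from $R^2=x_1+0.08$), one obtains $\left\Vert a\right\Vert_2-\frac{x_1}{R}=2R-R+\frac{0.08}{R}=R+\frac{0.08}{R}$. The inequality to prove therefore reduces to the single scalar statement
\[
R\geq\left(1+\nu\right)\frac{e^{x_1}}{4}\,G\left(R+\frac{0.08}{R}\right),
\]
with $\nu=0.95$ and $R=\sqrt{x_1+0.08}$; this is now a purely numerical assertion about the single transcendental constant $x_1$.

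It then remains to verify this inequality numerically. With $x_1\approx1.5434$ one has $R\approx1.2741$, $e^{x_1}\approx4.681$ and $R+0.08/R\approx1.337$. Evaluating the standard Gaussian tail and density at $1.337$ gives $G\left(1.337\right)\approx0.555$, so the right-hand side is approximately $1.95\times1.170\times0.555\approx1.267$, which lies just below $R\approx1.274$.

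The main obstacle is precisely the thinness of this margin: the gap between the two sides is only about $0.007$, so the elementary Mill's-ratio bounds of Proposition \ref{prop:control_of_G} are too crude to conclude --- the upper bound there would only yield $G\left(1.337\right)\leq0.585$, which overshoots $R$. One must therefore either evaluate $\Phi^c$ and $\gamma$ at the point $R+0.08/R$ to sufficient precision, or supply a sharper upper estimate for $G$ near $1.34$. A secondary point requiring care is that $x_1$ is specified only as the root of $\alpha(x)=0$: since both sides of the reduced inequality are continuous (and in fact monotone) in $x_1$ on a neighborhood of $1.5434$, a fully rigorous argument should control the error made in replacing the exact root by the approximation $x_1\approx1.54340463$.
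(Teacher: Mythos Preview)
Your proposal is correct and follows essentially the same route as the paper: both arguments use the algebraic identities $R^{2}=x_{1}+0.08$ and $\left\Vert a\right\Vert_{2}=2R$ to collapse the left-hand side to $R$, rewrite the right-hand argument as $R+0.08/R$, and then verify the resulting scalar inequality numerically (the paper obtains the same values $R\approx1.2741$ versus right-hand side $\approx1.2668$). Your additional remarks about the tightness of the margin and the need to control the approximation of $x_{1}$ are well taken and in fact go beyond what the paper's proof makes explicit.
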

\begin{proof}
Replace the corresponding quantities to get as left side $R\left(1-\left(R-\left\Vert a\right\Vert _{2}+\frac{x_{1}+\frac{8}{100}}{R}\right)G\left(\frac{x_{1}}{R}+R-\left\Vert a\right\Vert _{2}\right)\right)=R=\sqrt{x_{1}+0.08}$
and as right side $\left(1+\nu\right)\frac{e^{x_{1}}}{4}G\left(R+\frac{x_{1}+\frac{8}{100}}{R}-\frac{x_{1}}{R}\right)=\left(1+\nu\right)\frac{e^{x_{1}}}{4}G\left(\sqrt{x_{1}+0.08}+\frac{0.08}{\sqrt{x_{1}+0.08}}\right)$.\\
Approximation show that $\sqrt{x_{1}+0.08}\approx1.2741$, $\frac{e^{x_{1}}}{4}\approx1.1701$,
$\sqrt{x_{1}+0.08}+\frac{0.08}{\sqrt{x_{1}+0.08}}\approx1.33700$
and $G\left(1.337\right)\approx0.5552$. On can see it is then enough
to takes $\nu=0.95$ because $\left(1+\nu\right)\frac{e^{x_{1}}}{4}G\left(\sqrt{x_{1}+0.08}+\frac{0.08}{\sqrt{x_{1}+0.08}}\right)\approx1.2668$
(the inequality holds true because $1.2741\geq1.2668$).
\end{proof}
\begin{lem}
\label{lem:Control_risk_linear_op}Recall that $\left(d_{\beta}\mathcal{R}\right)\left(\nu\right)=-\mathbb{E}\left[X^{t}\beta p_{\beta}\left(X\right)q_{\beta}\left(X\right)X^{t}\nu\right]$
for $\beta\in B_{2}\left(0,R\right)$ and $\nu\in\mathbb{R}^{d}$.
Assume that $a^{t}\beta-2\left\Vert \beta\right\Vert _{2}^{2}\geq0$.
If $\left\langle \nu,\beta\right\rangle \leq0$, it holds
\[
\left(d_{\beta}\mathcal{R}\right)\left(\nu\right)\geq\frac{1}{8}e^{-\left(2a^{t}\beta-\left\Vert \beta\right\Vert _{2}^{2}\right)/2}\left\langle -\nu,\frac{\beta}{\left\Vert \beta\right\Vert _{2}^{2}}\right\rangle \left(\left\Vert \beta\right\Vert _{2}^{2}+\left(a^{t}\beta-\left\Vert \beta\right\Vert _{2}^{2}\right)^{2}\right).
\]
\end{lem}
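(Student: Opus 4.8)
The plan is to collapse the $d$-dimensional expectation defining $(d_{\beta}\mathcal{R})(\nu)$ into a one-dimensional Gaussian integral along the direction $\beta$, and then to lower-bound that integral by an exponential tilt. Starting from $(d_{\beta}\mathcal{R})(\nu)=-\mathbb{E}[X^{t}\beta\,p_{\beta}(X)q_{\beta}(X)\,X^{t}\nu]$ of Equation~(\ref{eq:risk_linear_op}), I would write $X=\varepsilon Z$ and introduce $\phi(x):=e^{x}/(1+e^{x})^{2}=p_{\beta}q_{\beta}$, which is even, and $g(x):=x\phi(x)$, which is odd. Since $X^{t}\beta=\varepsilon Z^{t}\beta$ and $X^{t}\nu=\varepsilon Z^{t}\nu$, the factor $\varepsilon^{2}=1$ eliminates the Rademacher sign, giving $(d_{\beta}\mathcal{R})(\nu)=-\mathbb{E}[g(Z^{t}\beta)\,Z^{t}\nu]$ with $Z\sim\mathcal{N}(a,I_{d})$.

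Next I would decompose $\nu=\frac{\langle\nu,\beta\rangle}{\|\beta\|_{2}^{2}}\beta+\nu_{\perp}$ with $\nu_{\perp}\perp\beta$. Because $Z$ is Gaussian, $Z^{t}\beta$ and $Z^{t}\nu_{\perp}$ are uncorrelated, hence independent exactly as in Lemma~\ref{lem:orthogonality_implies_indep}, so the $\nu_{\perp}$ part factorises as $\mathbb{E}[g(Z^{t}\beta)]\,\mathbb{E}[Z^{t}\nu_{\perp}]$ and drops out (the direction $\beta\propto a$ relevant here forces $\mathbb{E}[Z^{t}\nu_{\perp}]=a^{t}\nu_{\perp}=0$). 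This leaves $(d_{\beta}\mathcal{R})(\nu)=\langle-\nu,\beta/\|\beta\|_{2}^{2}\rangle\,\mathbb{E}[Ug(U)]$, where $U:=Z^{t}\beta\sim\mathcal{N}(\mu,\sigma^{2})$ with $\mu=a^{t}\beta$ and $\sigma^{2}=\|\beta\|_{2}^{2}$. The prefactor is nonnegative thanks to $\langle\nu,\beta\rangle\le0$, so only the scalar bound on $\mathbb{E}[Ug(U)]=\mathbb{E}[U^{2}\phi(U)]$ remains.

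For that one-dimensional estimate I would use the elementary pointwise inequality $\phi(u)\ge\frac14 e^{-u}$ for $u\ge0$ (which follows from $1+e^{u}\le2e^{u}$), together with $U^{2}\phi(U)\ge0$ everywhere, to obtain $\mathbb{E}[U^{2}\phi(U)]\ge\frac14\,\mathbb{E}[U^{2}e^{-U}\mathbb{I}_{U\ge0}]$. Tilting by $e^{-u}$ converts the $\mathcal{N}(\mu,\sigma^{2})$ density into a $\mathcal{N}(\mu-\sigma^{2},\sigma^{2})$ density times the factor $e^{-(2\mu-\sigma^{2})/2}$, so $\mathbb{E}[U^{2}e^{-U}\mathbb{I}_{U\ge0}]=e^{-(2\mu-\sigma^{2})/2}\,\tilde{\mathbb{E}}[\tilde U^{2}\mathbb{I}_{\tilde U\ge0}]$ with $\tilde U\sim\mathcal{N}(\mu-\sigma^{2},\sigma^{2})$; an MGF computation gives $\tilde{\mathbb{E}}[\tilde U^{2}]=\sigma^{2}+(\mu-\sigma^{2})^{2}$, which reproduces the factor $\|\beta\|_{2}^{2}+(a^{t}\beta-\|\beta\|_{2}^{2})^{2}$ and the exponent $-(2a^{t}\beta-\|\beta\|_{2}^{2})/2$ of the target.

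The main obstacle is controlling the truncation to $\{U\ge0\}$, i.e. establishing $\tilde{\mathbb{E}}[\tilde U^{2}\mathbb{I}_{\tilde U\ge0}]\ge\frac12\,\tilde{\mathbb{E}}[\tilde U^{2}]$. This is precisely where the hypothesis $a^{t}\beta-2\|\beta\|_{2}^{2}\ge0$ enters: it makes the tilted mean $\mu-\sigma^{2}=a^{t}\beta-\|\beta\|_{2}^{2}$ nonnegative (with room to spare), and for a Gaussian density $f$ centred at a nonnegative point one has $u^{2}[f(u)-f(-u)]\ge0$ for $u>0$, which after integration yields the factor $\frac12$. Combining $\frac14\cdot\frac12=\frac18$ gives $\mathbb{E}[Ug(U)]\ge\frac18\bigl(\|\beta\|_{2}^{2}+(a^{t}\beta-\|\beta\|_{2}^{2})^{2}\bigr)e^{-(2a^{t}\beta-\|\beta\|_{2}^{2})/2}$, and multiplying by the nonnegative prefactor $\langle-\nu,\beta/\|\beta\|_{2}^{2}\rangle$ closes the argument.
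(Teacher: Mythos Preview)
Your proposal is correct and follows essentially the same route as the paper: orthogonal decomposition of $\nu$ along $\beta$, reduction via $X=\varepsilon Z$ to the one-dimensional integral $\mathbb{E}[U^{2}\phi(U)]$ with $U=Z^{t}\beta$, the pointwise bound $\phi(u)\ge\tfrac14 e^{-u}$ on $\{u\ge0\}$, the exponential tilt to a $\mathcal{N}(\mu-\sigma^{2},\sigma^{2})$ Gaussian, and the symmetry argument yielding the extra factor $\tfrac12$. Your parenthetical remark that the $\nu_{\perp}$ contribution only vanishes because $\beta\propto a$ is exactly the implicit restriction the paper makes when it writes $\mathbb{E}[Z^{t}\nu_{\perp}]=0$, and indeed the lemma is only ever invoked at $\beta=\beta_{0}=Ra/\|a\|_{2}$.
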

\begin{proof}
For $\nu\in\mathbb{R}^{d}$, decompose it on $\beta^{\perp}\oplus Vect\left(\beta\right)$
as $\nu=\nu_{\perp}+\nu_{\parallel}$, and set $\lambda_{\nu}:=\left\langle \nu,\frac{\beta}{\left\Vert \beta\right\Vert _{2}}\right\rangle $
so that $\nu_{\parallel}=\lambda_{\nu}\frac{\beta}{\left\Vert \beta\right\Vert _{2}}$
. Recall $X\sim\varepsilon_{Rad}Z$ where $\varepsilon_{Rad}$ is
a Rademacher random variable with distribution $\frac{1}{2}\delta_{-1}+\frac{1}{2}\delta_{1}$and
$Z\sim\mathcal{N}\left(a,I_{d}\right)$. As a consequence $Z^{t}\beta\sim\mathcal{N}\left(a^{t}\beta,\left\Vert \beta\right\Vert _{2}^{2}\right)$.
Set also $N\sim\mathcal{N}\left(0,1\right)$, so that $Z^{t}\beta=a^{t}\beta+\left\Vert \beta\right\Vert _{2}N$.
We have, by symmetry in $X$ and independence between $Z^{t}\beta$
and $Z^{t}\nu_{\perp}$,

\begin{align*}
\left(d_{\beta}\mathcal{R}\right)\left(\nu\right) & =-\mathbb{E}\left[X^{t}\beta p_{\beta}\left(X\right)q_{\beta}\left(X\right)X^{t}\nu\right]\\
 & =-\mathbb{E}\left[\frac{Z^{t}\beta e^{-Z^{t}\beta}}{\left(1+e^{-Z^{t}\beta}\right)^{2}}Z^{t}\left(\nu_{\perp}+\nu_{\parallel}\right)\right]\\
 & =-\mathbb{E}\left[\frac{Z^{t}\beta e^{-Z^{t}\beta}}{\left(1+e^{-Z^{t}\beta}\right)^{2}}Z^{t}\nu_{\perp}\right]-\mathbb{E}\left[\frac{Z^{t}\beta e^{-Z^{t}\beta}}{\left(1+e^{-Z^{t}\beta}\right)^{2}}Z^{t}\nu_{\parallel}\right]\\
 & =-\mathbb{E}\left[\frac{Z^{t}\beta e^{-X^{t}\beta}}{\left(1+e^{-X^{t}\beta}\right)^{2}}\right]\underset{=0}{\underbrace{\mathbb{E}\left[Z^{t}\nu_{\perp}\right]}}-\frac{\lambda_{\nu}}{\left\Vert \beta\right\Vert _{2}}\mathbb{E}\left[\frac{Z^{t}\beta e^{-Z^{t}\beta}}{\left(1+e^{-Z^{t}\beta}\right)^{2}}Z^{t}\beta\right]\\
 & =-\frac{\lambda_{\nu}}{\left\Vert \beta\right\Vert _{2}}\mathbb{E}\left[\zeta\left(a^{t}\beta+\left\Vert \beta\right\Vert _{2}N\right)\right],
\end{align*}
where $\zeta:x\mapsto\frac{x^{2}e^{x}}{\left(1+e^{x}\right)^{2}}$.
Note that the function $\zeta$ is even and that a simple calculation
gives $\forall x>0,\zeta(x)\geq\frac{x^{2}}{4}e^{-x}$. If $\lambda_{\nu}\leq0$,
\[
-\frac{\lambda_{\nu}}{\left\Vert \beta\right\Vert _{2}}\mathbb{E}\left[\zeta\left(a^{t}\beta+\left\Vert \beta\right\Vert _{2}N\right)\right]\geq-\frac{\lambda_{\nu}}{\left\Vert \beta\right\Vert _{2}}\int_{0}^{+\infty}\frac{x^{2}}{4}e^{-x}\frac{1}{\sqrt{2\pi\left\Vert \beta\right\Vert _{2}^{2}}}\exp\left(-\frac{\left(x-a^{t}\beta\right)^{2}}{2\left\Vert \beta\right\Vert _{2}^{2}}\right)dx.
\]
Set $N_{a,\beta}\sim\mathcal{N}\left(a^{t}\beta-2\left\Vert \beta\right\Vert _{2}^{2},\left\Vert \beta\right\Vert _{2}^{2}\right)$.
This gives

\begin{align*}
\mathbb{E}\left[\zeta\left(a^{t}\beta+\left\Vert \beta\right\Vert _{2}^{2}N\right)\right] & \geq\int_{0}^{+\infty}\frac{x^{2}}{4}e^{-x}\frac{1}{\sqrt{2\pi\left\Vert \beta\right\Vert _{2}^{2}}}\exp\left(-\frac{\left(x-a^{t}\beta\right)^{2}}{2\left\Vert \beta\right\Vert _{2}^{2}}\right)dx\\
 & =\int_{0}^{+\infty}\frac{x^{2}}{4}\frac{1}{\sqrt{2\pi\left\Vert \beta\right\Vert _{2}^{2}}}\exp\left(-\frac{\left(x-\left(a^{t}\beta-\left\Vert \beta\right\Vert _{2}^{2}\right)\right)^{2}+\left\Vert \beta\right\Vert _{2}^{2}\left(2a^{t}\beta-\left\Vert \beta\right\Vert _{2}^{2}\right)}{2\left\Vert \beta\right\Vert _{2}^{2}}\right)dx\\
 & =e^{-\left(2a^{t}\beta-\left\Vert \beta\right\Vert _{2}^{2}\right)/2}\int_{0}^{+\infty}\frac{x^{2}}{4}\frac{1}{\sqrt{2\pi\left\Vert \beta\right\Vert _{2}^{2}}}\exp\left(-\frac{\left(x-\left(a^{t}\beta-2\left\Vert \beta\right\Vert _{2}^{2}\right)\right)^{2}}{2\left\Vert \beta\right\Vert _{2}^{2}}\right)dx\\
 & \geq\frac{1}{8}e^{-\left(2a^{t}\beta-\left\Vert \beta\right\Vert _{2}^{2}\right)/2}\mathbb{E}\left[N_{a,\beta}^{2}\right]\\
 & =\frac{1}{8}e^{-\left(2a^{t}\beta-\left\Vert \beta\right\Vert _{2}^{2}\right)/2}\left(\mathbb{V}\left[N_{a,\beta}^{2}\right]+\mathbb{E}\left[N_{a,\beta}\right]^{2}\right)\\
 & =\frac{1}{8}e^{-\left(2a^{t}\beta-\left\Vert \beta\right\Vert _{2}^{2}\right)/2}\left(\left\Vert \beta\right\Vert _{2}^{2}+\left(a^{t}\beta-2\left\Vert \beta\right\Vert _{2}^{2}\right)^{2}\right),
\end{align*}
where in the second inequality, we used the fact that $a^{t}\beta-2\left\Vert \beta\right\Vert _{2}^{2}\geq0$.
Therefore
\[
\left(d_{\beta}\mathcal{R}\right)\left(\nu\right)\geq-\frac{\lambda_{\nu}}{8\left\Vert \beta\right\Vert _{2}}e^{-\left(2a^{t}\beta-\left\Vert \beta\right\Vert _{2}^{2}\right)/2}\left(\left\Vert \beta\right\Vert _{2}^{2}+\left(a^{t}\beta-\left\Vert \beta\right\Vert _{2}^{2}\right)^{2}\right).
\]
\end{proof}
\begin{defn}
\label{def:tensor_operator_norm}The operator norm $\left\Vert \cdot\right\Vert _{op}$
on the trilinear symmetric operator space with respect to $\left\Vert \cdot\right\Vert _{2}$
is defined as: for all $T$ symetric trilinear operator, $\left\Vert T\right\Vert _{op}:=\underset{u\in\partial B_{2}\left(0,1\right)}{\sup}\left|T\left(u,u,u\right)\right|$
as shown in equation~(2) in both~\cite{zhang2012best} and~\cite{qi2012spectral}.
\end{defn}
\begin{lem}
\label{lem:risk_trilinear_op_norm}With trilinear symmetric operator
defined above, the third derivative of the risk satisfies: $\forall\beta\in\mathbb{R}^{d}$,
\[
\left\Vert d_{t\beta+(1-t)\beta_{0}}^{3}\mathcal{R}\right\Vert _{op}\leq8e^{-\left(a^{t}\beta-\left\Vert \beta\right\Vert _{2}^{2}\right)}\sqrt{2\left(\left\Vert a\right\Vert ^{6}+\mathbb{E}\left[N_{0}^{6}\right]\right)\left(\left[\left\Vert \beta\right\Vert _{2}^{2}+\left[a^{t}\beta-2\left\Vert \beta\right\Vert _{2}^{2}\right]^{2}\right]+\left[a^{t}\beta-2\left\Vert \beta\right\Vert _{2}^{2}\right]+1\right)},
\]
 where $N_{0}\sim\mathcal{N}\left(0,1\right)$.
\end{lem}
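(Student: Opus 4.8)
The plan is to reduce the operator norm to a one–dimensional estimate, split it by Cauchy--Schwarz into a sixth--moment factor and an $L^{2}$--norm of $\alpha'$, and then combine a sharp pointwise bound on $\alpha'$ with an exponential--tilt computation. First I would use Definition~\ref{def:tensor_operator_norm} to write, with $\gamma:=t\beta+(1-t)\beta_{0}$ the point at which the derivative is taken, $\left\Vert d^{3}_{\gamma}\mathcal R\right\Vert_{op}=\sup_{\left\Vert u\right\Vert_{2}=1}\left|\left(d^{3}_{\gamma}\mathcal R\right)(u,u,u)\right|$, and then invoke the closed form~(\ref{eq:risk_trilinear_op}) of Lemma~\ref{lem:formula_derivatives} to get $\left(d^{3}_{\gamma}\mathcal R\right)(u,u,u)=\mathbb E\!\left[(X^{t}u)^{3}\alpha'(X^{t}\gamma)\right]$. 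Since $X=\varepsilon Z$ with $\varepsilon$ Rademacher and $\alpha'$ is odd (as $\alpha$ is even, see Lemma~\ref{lem:Study_alpha}), the quantity $\varepsilon^{3}\alpha'(\varepsilon\,Z^{t}\gamma)=\alpha'(Z^{t}\gamma)$ is symmetric, so the expectation equals $\mathbb E[(Z^{t}u)^{3}\alpha'(Z^{t}\gamma)]$ with $Z\sim\mathcal N(a,I_{d})$. Cauchy--Schwarz then gives $\left|\left(d^{3}_{\gamma}\mathcal R\right)(u,u,u)\right|\le\sqrt{\mathbb E[(Z^{t}u)^{6}]}\,\sqrt{\mathbb E[\alpha'(Z^{t}\gamma)^{2}]}$, and it remains to bound the two factors uniformly in $u$.

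For the sixth moment, $Z^{t}u=a^{t}u+N_{0}$ with $N_{0}:=N^{t}u\sim\mathcal N(0,1)$, so by convexity of $x\mapsto x^{6}$ and $|a^{t}u|\le\left\Vert a\right\Vert_{2}$ one gets $\mathbb E[(Z^{t}u)^{6}]\le 2^{5}\big((a^{t}u)^{6}+\mathbb E[N_{0}^{6}]\big)\le 32\big(\left\Vert a\right\Vert_{2}^{6}+\mathbb E[N_{0}^{6}]\big)$, i.e. the first factor is at most $4\sqrt2\,\sqrt{\left\Vert a\right\Vert_{2}^{6}+\mathbb E[N_{0}^{6}]}$. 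For the second factor I would establish the pointwise inequality $\alpha'(x)^{2}\le 4\,(x^{2}+x+1)\,e^{-2x}$ valid for every $x\in\mathbb R$: for $x<0$ it is immediate because $e^{-2x}$ is large while $x^{2}+x+1>0$, and for $x\ge 0$ it uses the genuine smallness and decay of $\alpha'$. Because the right--hand side carries $e^{-2x}$ (not $e^{-2|x|}$) and the signed polynomial $x^{2}+x+1$, the ensuing Gaussian integral is an exact exponential tilt: writing $Z^{t}\gamma\sim\mathcal N(a^{t}\gamma,\left\Vert\gamma\right\Vert_{2}^{2})$ and using $\mathbb E[f(Y)e^{-2Y}]=e^{-2a^{t}\gamma+2\left\Vert\gamma\right\Vert_{2}^{2}}\mathbb E[f(N_{a,\gamma})]$ with $N_{a,\gamma}\sim\mathcal N(a^{t}\gamma-2\left\Vert\gamma\right\Vert_{2}^{2},\left\Vert\gamma\right\Vert_{2}^{2})$, I obtain $\mathbb E[\alpha'(Z^{t}\gamma)^{2}]\le 4\,e^{-2(a^{t}\gamma-\left\Vert\gamma\right\Vert_{2}^{2})}\big(\mathbb E[N_{a,\gamma}^{2}]+\mathbb E[N_{a,\gamma}]+1\big)$, where $\mathbb E[N_{a,\gamma}^{2}]=\left\Vert\gamma\right\Vert_{2}^{2}+(a^{t}\gamma-2\left\Vert\gamma\right\Vert_{2}^{2})^{2}$ and $\mathbb E[N_{a,\gamma}]=a^{t}\gamma-2\left\Vert\gamma\right\Vert_{2}^{2}$. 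Taking square roots and multiplying, the constants combine as $4\sqrt2\cdot 2=8\sqrt2$, yielding exactly $8\,e^{-(a^{t}\gamma-\left\Vert\gamma\right\Vert_{2}^{2})}\sqrt{2(\left\Vert a\right\Vert_{2}^{6}+\mathbb E[N_{0}^{6}])(\cdots)}$, the right--hand side of the statement being this estimate read at the evaluation point $\gamma=t\beta+(1-t)\beta_{0}$ (so $\beta$ in the displayed bound should be understood as that point). Note that the signed tilt produces the term $\mathbb E[N_{a,\gamma}]$, rather than $\mathbb E[|N_{a,\gamma}|]$, without any sign assumption on $a^{t}\gamma-2\left\Vert\gamma\right\Vert_{2}^{2}$.

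The symmetrization, the Cauchy--Schwarz split, the convexity bound for the sixth moment, and the Gaussian tilt (a completion--of--squares computation identical in spirit to the one already used in Lemma~\ref{lem:Control_risk_linear_op}) are routine. The real obstacle is the pointwise inequality $\alpha'(x)^{2}\le 4(x^{2}+x+1)e^{-2x}$ on $[0,\infty)$: naive triangle--inequality bounds on the explicit expression~(\ref{eq:alpha_derivative}) for $\alpha'$ overestimate it badly near $x\approx 1$, since they ignore both the strong cancellation between the two terms $2xpq$ and $(p-q)\big(2+x(p-q)\big)$ and the fact that $\left\Vert\alpha'\right\Vert_{\infty}\le0.22$, so a crude estimate does not close. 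I would instead split $[0,\infty)$ into a moderate range, where $\alpha'(x)^{2}$ is controlled by the uniform bound $\left\Vert\alpha'\right\Vert_{\infty}^{2}\le 0.22^{2}$ from Lemma~\ref{lem:Study_alpha} against the not--yet--small right--hand side, and a tail range $x\ge x_{0}$, where the decay $|\alpha'(x)|\lesssim x\,e^{-x}$ readable from~(\ref{eq:alpha_derivative}) (consistent with $\alpha(x)\sim x e^{-x}$, used in Lemma~\ref{lem:control_alpha_by_phi}) makes $\alpha'(x)^{2}\sim x^{2}e^{-2x}$ sit comfortably below $4x^{2}e^{-2x}$; a short numerical check at the junction closes the remaining interval.
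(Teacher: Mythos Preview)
Your overall architecture is exactly the paper's: reduce to the sup over unit $u$ via Definition~\ref{def:tensor_operator_norm}, apply~(\ref{eq:risk_trilinear_op}), Cauchy--Schwarz into a sixth--moment factor and $\mathbb E[\alpha'(Z^{t}\gamma)^{2}]$, bound the sixth moment by $32(\Vert a\Vert_{2}^{6}+\mathbb E[N_{0}^{6}])$, bound $\alpha'$ pointwise, and finish by the exponential tilt (completion of squares) with $N_{a,\gamma}\sim\mathcal N(a^{t}\gamma-2\Vert\gamma\Vert_{2}^{2},\Vert\gamma\Vert_{2}^{2})$. You also correctly note that the right--hand side must be read at $\gamma=t\beta+(1-t)\beta_{0}$.

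The one place where you diverge from the paper is the pointwise estimate on $\alpha'$, and here you are working much harder than necessary. The paper does \emph{not} do a case split with numerical checks; it simply applies the triangle inequality to the explicit expression~(\ref{eq:alpha_derivative}) to obtain, for $x\ge 0$,
\[
|\alpha'(x)|\le p_{x}q_{x}\Big(\tfrac{x}{2}\big(1+3\tanh^{2}(x/2)\big)+2\Big)\le \frac{2(x+1)}{(e^{-x/2}+e^{x/2})^{2}}\le 2(x+1)e^{-x},
\]
hence $\alpha'(x)^{2}\le 4(x+1)^{2}e^{-2x}$, and the tilt then produces $\mathbb E[N_{a,\gamma}^{2}]+2\,\mathbb E[N_{a,\gamma}]+1$. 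Your assertion that ``naive triangle--inequality bounds \dots\ do not close'' is therefore mistaken: they close directly, without any appeal to $\Vert\alpha'\Vert_{\infty}\le 0.22$ or to a tail/middle split. The reason you think they fail is that you are targeting $x^{2}+x+1$ rather than $(x+1)^{2}=x^{2}+2x+1$: the ``$+[a^{t}\beta-2\Vert\beta\Vert_{2}^{2}]$'' in the displayed statement is a typo for ``$+2[a^{t}\beta-2\Vert\beta\Vert_{2}^{2}]$'', as the paper's own computation shows. With the correct coefficient $2$, the crude bound is exactly what is needed. (Your observation that $e^{-2x}$ is large for $x<0$ while the polynomial stays positive is still useful, since the paper's pointwise chain is written for $x\ge 0$; the negative half--line is then harmless for the same reason you give.)
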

\begin{proof}
if $u\in\partial B_{2}\left(0,1\right)$, then for $N\sim\mathcal{N}\left(0,I_{d}\right)$,
$N^{t}u\sim N_{0}\sim\mathcal{N}\left(0,1\right)$, and it is known
that $\mathbb{E}\left[\left|N_{0}\right|\right]=\sqrt{\frac{2}{\pi}}$
and $\mathbb{E}\left[\left|N_{0}\right|^{3}\right]=3\mathbb{E}\left[\left|N_{0}\right|\right]\mathbb{V}\left[\left|N_{0}\right|\right]+\mathbb{E}\left[\left|N_{0}\right|\right]^{3}$.
Owing to Equation~(\ref{eq:risk_trilinear_op}) and Cauchy-Schwarz
inequality, we have

\begin{align*}
\left|d_{\beta}^{3}\mathcal{R}\left(u,u,u\right)\right| & =\left|\mathbb{E}\left[\left(X^{t}u\right)^{3}.\alpha'\left(X^{t}\beta\right)\right]\right|\\
 & \leq\sqrt{\mathbb{E}\left[\left(X^{t}u\right)^{6}\right]\mathbb{E}\left[\left(\alpha'\left(X^{t}\beta\right)\right)^{2}\right]}.
\end{align*}
On the one hand, using the fact that $\forall a,b>0,\forall n\in\mathbb{N},\left(a+b\right)^{n}\leq2^{n-1}\left(a^{n}+b^{n}\right)$
we get 
\begin{align*}
\mathbb{E}\left[\left(X^{t}u\right)^{6}\right] & =\mathbb{E}\left[\left(a^{t}u+N^{t}u\right)^{6}\right]\\
 & =\mathbb{E}\left[\left(a^{t}u+N_{0}\right)^{6}\right]\\
 & \leq\mathbb{E}\left[2^{5}\left(\left(a^{t}u\right)^{6}+N_{0}^{6}\right)\right]\\
 & \leq32\left(\left\Vert a\right\Vert _{2}^{6}+\mathbb{E}\left[N_{0}^{6}\right]\right).
\end{align*}
On the other hand, we have already proved in Lemma~\ref{lem:Study_alpha}
that {\small{}$\forall x,\alpha'(x)=p_{x}q_{x}\left[\frac{x}{2}\left(1-3\tanh^{2}\left(\frac{x}{2}\right)\right)+2\tanh\left(\frac{x}{2}\right)\right]$.
Hence,}{\small\par}

\begin{align*}
\left|\alpha'(x)\right| & \leq p_{x}q_{x}\left|\frac{x}{2}\left(1-3\tanh^{2}\left(\frac{x}{2}\right)\right)+2\tanh\left(\frac{x}{2}\right)\right|\\
 & \leq p_{x}q_{x}\left(\frac{x}{2}\left|1-3\tanh^{2}\left(\frac{x}{2}\right)\right|+2\right)\\
 & \leq\frac{e^{x}}{\left(1+e^{x}\right)^{2}}\left(\frac{x}{2}\left(3\tanh^{2}\left(\frac{x}{2}\right)+1\right)+2\right)\\
 & \leq\frac{e^{x}}{e^{x}\left(e^{-x/2}+e^{x/2}\right)^{2}}\left(\frac{x}{2}\times4+2\right)\\
 & \leq\frac{2}{\left(e^{x/2}\right)^{2}}\left(x+1\right)\\
 & =2e^{-x}\left(x+1\right).
\end{align*}
Recall $Z^{t}\beta\sim\mathcal{N}\left(a^{t}\beta,\left\Vert \beta\right\Vert _{2}^{2}\right)$,
\begin{align}
\mathbb{E}\left[\left(\alpha'\left(X^{t}\beta\right)\right)^{2}\right] & =\mathbb{E}\left[\left(\alpha'\left(Z^{t}\beta\right)\right)^{2}\right]\nonumber \\
 & \leq\mathbb{E}\left[4e^{-2Z^{t}\beta}\left(Z^{t}\beta+1\right)^{2}\right]\nonumber \\
 & =4\int_{\mathbb{R}}\left(x+1\right)^{2}e^{-2x}\frac{1}{\sqrt{2\pi\left\Vert \beta\right\Vert _{2}}}\exp\left(-\frac{\left(x-a^{t}\beta\right)^{2}}{2\left\Vert \beta\right\Vert _{2}^{2}}\right)dx.\label{eq:save_computation_time_2}
\end{align}
By denoting $N_{a,\beta}\sim\mathcal{N}\left(a^{t}\beta-2\left\Vert \beta\right\Vert _{2}^{2},\left\Vert \beta\right\Vert _{2}^{2}\right)$,
we get

\begin{align*}
\mathbb{E}\left[\left(\alpha'\left(X^{t}\beta\right)\right)^{2}\right] & =4\int_{\mathbb{R}}\left(x+1\right)^{2}\frac{1}{\sqrt{2\pi\left\Vert \beta\right\Vert _{2}^{2}}}\exp\left(-\frac{\left(x+\left(2\left\Vert \beta\right\Vert _{2}^{2}-a^{t}\beta\right)\right)^{2}+2\left\Vert \beta\right\Vert _{2}^{2}\left(2a^{t}\beta-2\left\Vert \beta\right\Vert _{2}^{2}\right)}{2\left\Vert \beta\right\Vert _{2}^{2}}\right)dx\\
\\
 & =4e^{-2\left(a^{t}\beta-\left\Vert \beta\right\Vert _{2}^{2}\right)}\int_{\mathbb{R}}\left(x+1\right)^{2}\frac{1}{\sqrt{2\pi\left\Vert \beta\right\Vert _{2}^{2}}}\exp\left(-\frac{\left(x-\left(a^{t}\beta-2\left\Vert \beta\right\Vert _{2}^{2}\right)\right)^{2}}{2}\right)dx\\
 & =4e^{-2\left(a^{t}\beta-\left\Vert \beta\right\Vert _{2}^{2}\right)}\mathbb{E}\left[\left(N_{a,\beta}+1\right)^{2}\right]\\
 & =4e^{-2\left(a^{t}\beta-\left\Vert \beta\right\Vert _{2}^{2}\right)}\left(\mathbb{E}\left[N_{a,\beta}^{2}\right]+2\mathbb{E}\left[N_{a,\beta}\right]+1\right)\\
 & =4e^{-2\left(a^{t}\beta-\left\Vert \beta\right\Vert _{2}^{2}\right)}\left(\left[\left\Vert \beta\right\Vert _{2}^{2}+\left[a^{t}\beta-2\left\Vert \beta\right\Vert _{2}^{2}\right]^{2}\right]+2\left[a^{t}\beta-2\left\Vert \beta\right\Vert _{2}^{2}\right]+1\right)
\end{align*}
Finally, we have
\begin{align*}
\left|d_{\beta}^{3}\mathcal{R}\left(u,u,u\right)\right| & \leq\sqrt{8e^{-\left(a^{t}\beta-\left\Vert \beta\right\Vert _{2}^{2}\right)}\sqrt{2\left(\left\Vert a\right\Vert ^{6}+\mathbb{E}\left[N_{0}^{6}\right]\right)\left(\left[\left\Vert \beta\right\Vert _{2}^{2}+\left[a^{t}\beta-2\left\Vert \beta\right\Vert _{2}^{2}\right]^{2}\right]+2\left[a^{t}\beta-2\left\Vert \beta\right\Vert _{2}^{2}\right]+1\right)}},
\end{align*}
which gives the result, according to definition~\ref{def:tensor_operator_norm}.
\end{proof}
\begin{lem}
\label{lem:minoration_excess_risk_locally}Under the condition that
$\left\Vert a\right\Vert _{2}\geq2R$, $R=\sqrt{x_{1}+0.08}$, the
excess risk $\mathcal{E}\left(\cdot,\beta_{0}\right)$ satisfies around
$\beta_{0}$:\\
 $\forall\beta\in B_{2}\left(\beta_{0},\varepsilon\right)\bigcap B_{2}\left(0,R\right)$,

\[
\mathcal{E}\left(\beta,\beta_{0}\right)\geq e^{-\left(\left\Vert a\right\Vert _{2}R-R^{2}/2\right)}\left[\frac{1}{16}\left(1+\left(\left\Vert a\right\Vert _{2}-R\right)^{2}\right)\left\Vert \beta-\beta_{0}\right\Vert _{2}^{2}-24\left\Vert a\right\Vert ^{4}e^{R^{2}/2}e^{\varepsilon\left\Vert a\right\Vert _{2}}\left\Vert \beta-\beta_{0}\right\Vert _{2}^{3}\right].
\]
\end{lem}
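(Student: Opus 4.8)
The plan is to Taylor-expand the excess risk around $\beta_0$ to second order, with a third-order Lagrange remainder, and to exploit crucially that $\beta_0$ lies on the boundary sphere $\{\|\beta\|_2 = R\}$ rather than in the interior. Writing $v := \beta - \beta_0$ and using that the derivatives of $\mathcal{R}$ are uniformly bounded (Lemma~\ref{lem:formula_derivatives}), there is a $\xi$ on the segment $[\beta_0,\beta]$ with
\begin{equation*}
\mathcal{E}(\beta,\beta_0) = (d_{\beta_0}\mathcal{R})(v) + \tfrac12 (d^2_{\beta_0}\mathcal{R})(v,v) + \tfrac16 (d^3_\xi\mathcal{R})(v,v,v).
\end{equation*}
I will lower-bound the first term by a genuine quadratic in $\|v\|_2$, discard the second (Hessian) term as nonnegative, and absorb the third into the cubic correction.

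The heart of the argument is the first-order term. Since $\beta\in B_2(0,R)$ while $\|\beta_0\|_2=R$, expanding $\|\beta_0+v\|_2^2\le R^2$ yields $2\langle\beta_0,v\rangle+\|v\|_2^2\le 0$, hence $\langle\beta_0,v\rangle\le -\tfrac12\|v\|_2^2\le 0$. Thus $v$ meets the hypothesis $\langle v,\beta_0\rangle\le 0$ of Lemma~\ref{lem:Control_risk_linear_op}, and the other hypothesis $a^t\beta_0-2\|\beta_0\|_2^2 = R(\|a\|_2-2R)\ge 0$ holds precisely because $\|a\|_2\ge 2R$. Evaluating that lemma at $\beta_0=Ra/\|a\|_2$, where $a^t\beta_0=R\|a\|_2$ and $\|\beta_0\|_2^2=R^2$, the exponential prefactor is $e^{-(2a^t\beta_0-\|\beta_0\|_2^2)/2}=e^{-(\|a\|_2R-R^2/2)}$ and the quadratic factor is $\|\beta_0\|_2^2+(a^t\beta_0-\|\beta_0\|_2^2)^2=R^2(1+(\|a\|_2-R)^2)$, so that
\begin{equation*}
(d_{\beta_0}\mathcal{R})(v) \ge \frac18\, e^{-(\|a\|_2R-R^2/2)}\,\frac{|\langle\beta_0,v\rangle|}{R^2}\,R^2\bigl(1+(\|a\|_2-R)^2\bigr) \ge \frac{1}{16}\bigl(1+(\|a\|_2-R)^2\bigr)e^{-(\|a\|_2R-R^2/2)}\|v\|_2^2,
\end{equation*}
the last step using $|\langle\beta_0,v\rangle|\ge\tfrac12\|v\|_2^2$. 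The conceptual key is thus that the boundary constraint converts the non-vanishing radial derivative of $\mathcal{R}$ at $\beta_0$ into a quadratic lower bound, with exactly the constant $\tfrac1{16}$ and the factors $1+(\|a\|_2-R)^2$ and $e^{-(\|a\|_2R-R^2/2)}$ demanded by the statement.

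For the remaining two terms, the Hessian term is nonnegative: under $R=\sqrt{x_1+0.08}$ and $\|a\|_2\ge 2R$, Theorem~\ref{lem:Condition-2} gives $\Lambda_{min}>0$, so $(d^2_{\beta_0}\mathcal{R})(v,v)\ge 0$ and may be dropped. For the remainder I will bound $\bigl|\tfrac16(d^3_\xi\mathcal{R})(v,v,v)\bigr|\le\tfrac16\|d^3_\xi\mathcal{R}\|_{op}\|v\|_2^3$ via Lemma~\ref{lem:risk_trilinear_op_norm}. Along the segment, convexity gives $\|\xi\|_2\le R$, and $a^t\xi\ge a^t\beta_0-\|a\|_2\|\xi-\beta_0\|_2\ge R\|a\|_2-\varepsilon\|a\|_2$ (as $\|v\|_2<\varepsilon$), so the factor $e^{-(a^t\xi-\|\xi\|_2^2)}$ is at most $e^{-\|a\|_2R+\varepsilon\|a\|_2+R^2}$, while the polynomial factor of Lemma~\ref{lem:risk_trilinear_op_norm} is of order $\|a\|_2^4$; collapsing the numerical constants yields $\tfrac16\|d^3_\xi\mathcal{R}\|_{op}\le 24\|a\|_2^4 e^{R^2/2}e^{\varepsilon\|a\|_2}e^{-(\|a\|_2R-R^2/2)}$. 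Adding the three estimates and factoring out $e^{-(\|a\|_2R-R^2/2)}$ produces exactly the claimed inequality. I expect the only delicate step to be the constant bookkeeping in this last bound, where the factors $\|a\|_2^6$, $\mathbb{E}[N_0^6]$ and $(a^t\xi-2\|\xi\|_2^2)^2$ from Lemma~\ref{lem:risk_trilinear_op_norm} must be merged into the single prefactor $24\|a\|_2^4$ uniformly over $\xi\in[\beta_0,\beta]$.
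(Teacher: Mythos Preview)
Your proposal is correct and follows essentially the same route as the paper: Taylor expansion of $\mathcal{E}(\cdot,\beta_0)$ at $\beta_0$, the first-order term controlled via Lemma~\ref{lem:Control_risk_linear_op} together with the boundary inequality $\langle\beta_0-\beta,\beta_0\rangle\ge\tfrac12\|\beta-\beta_0\|_2^2$ (which the paper packages separately as Lemma~\ref{lem:comparison_scal_prod_and_sqared_norm}), the Hessian term discarded by Theorem~\ref{lem:Condition-2}, and the third-order remainder bounded through Lemma~\ref{lem:risk_trilinear_op_norm} with the polynomial factor collapsed to $3\|a\|_2^4$. The only cosmetic difference is that the paper uses the integral form of the remainder $\int_0^1(\cdot)\,dt$ rather than the Lagrange form, which explains why the paper's constant comes out as $8\times3=24$ directly, whereas your $\tfrac16\times24=4$ is actually sharper but you (correctly) relax it to $24$ to match the stated bound.
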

\begin{proof}
Fisrt note that $\mathcal{E}\left(\beta_{0},\beta_{0}\right)=0$ by
definition of $\mathcal{E}\left(\cdot,\beta_{0}\right)$. According
to lemma~\ref{lem:Control_risk_linear_op} we can control $\left(d_{\beta}\mathcal{R}\right)\left(\beta-\beta_{0}\right)$
from below.\\
Since $\left\langle \beta-\beta_{0},\beta_{0}\right\rangle \leq0$
and $a^{t}\beta_{0}-2\left\Vert \beta_{0}\right\Vert _{2}^{2}\geq0$,
we have

\begin{align*}
\left(d_{\beta_{0}}\mathcal{R}\right)\left(\beta-\beta_{0}\right) & \geq\frac{1}{8}e^{-\left(2a^{t}\beta_{0}-\left\Vert \beta_{0}\right\Vert _{2}^{2}\right)/2}\left\langle \beta_{0}-\beta,\frac{\beta_{0}}{\left\Vert \beta_{0}\right\Vert _{2}^{2}}\right\rangle \left(\left\Vert \beta_{0}\right\Vert _{2}^{2}+\left(a^{t}\beta_{0}-\left\Vert \beta_{0}\right\Vert _{2}^{2}\right)^{2}\right)\\
 & \geq\frac{1}{8}e^{-\left(\left\Vert a\right\Vert _{2}R-R^{2}/2\right)}\left\langle \beta_{0}-\beta,\beta_{0}\right\rangle \left(1+\left(\left\Vert a\right\Vert _{2}-R\right)^{2}\right).
\end{align*}
Use now Lemma~\ref{lem:comparison_scal_prod_and_sqared_norm},
\begin{align*}
\left(d_{\beta_{0}}\mathcal{R}\right)\left(\beta-\beta_{0}\right) & \geq\frac{1}{16}e^{-\left(\left\Vert a\right\Vert _{2}R-R^{2}/2\right)}\left(1+\left(\left\Vert a\right\Vert _{2}-R\right)^{2}\right)\left\Vert \beta-\beta_{0}\right\Vert _{2}^{2}
\end{align*}
According to lemma~\ref{lem:Condition-2}, we can control $\left(d_{\beta}^{2}\mathcal{R}\right)\left(\beta-\beta_{0},\beta-\beta_{0}\right)$
from below:

\begin{align*}
\left(d_{\beta}^{2}\mathcal{R}\right)\left(\beta-\beta_{0},\beta-\beta_{0}\right) & \geq\Lambda_{min}\left\Vert \beta-\beta_{0}\right\Vert _{2}^{2}\geq0.
\end{align*}
In addition, we can use Lemma~\ref{lem:risk_trilinear_op_norm} to
have{\footnotesize{}
\begin{align*}
 & \int_{0}^{1}\left|\left(d_{t\beta+(1-t)\beta_{0}}^{3}\mathcal{R}\right)\left(\beta-\beta_{0},\beta-\beta_{0},\beta-\beta_{0}\right)\right|dt\\
 & =\left\Vert \beta-\beta_{0}\right\Vert _{2}^{3}\int_{0}^{1}\left|\left(d_{t\beta+(1-t)\beta_{0}}^{3}\mathcal{R}\right)\left(\frac{\beta-\beta_{0}}{\left\Vert \beta-\beta_{0}\right\Vert _{2}},\frac{\beta-\beta_{0}}{\left\Vert \beta-\beta_{0}\right\Vert _{2}},\frac{\beta-\beta_{0}}{\left\Vert \beta-\beta_{0}\right\Vert _{2}}\right)\right|dt\\
 & \leq\left\Vert \beta-\beta_{0}\right\Vert _{2}^{3}\int_{0}^{1}\left\Vert d_{t\beta+(1-t)\beta_{0}}^{3}\mathcal{R}\right\Vert _{op}dt\\
 & \leq8\left\Vert \beta-\beta_{0}\right\Vert _{2}^{3}\int_{0}^{1}\exp\left(-\left(a^{t}\left(t\beta+(1-t)\beta_{0}\right)-\left\Vert t\beta+(1-t)\beta_{0}\right\Vert _{2}^{2}\right)\right)C_{3,a}\left(t\beta+(1-t)\beta_{0}\right)dt,
\end{align*}
}where 
\[
C_{3,a}:\mu\in\mathbb{R}^{d}\mapsto\sqrt{2\left(\left\Vert a\right\Vert _{2}^{6}+\mathbb{E}\left[N_{0}^{6}\right]\right)\left(\left[\left\Vert \mu\right\Vert _{2}^{2}+\left[a^{t}\nu-2\left\Vert \mu\right\Vert _{2}^{2}\right]^{2}\right]+2\left[a^{t}\nu-2\left\Vert \mu\right\Vert _{2}^{2}\right]+1\right)}.
\]
To bound $C_{3,a}$ from above, remark that $\forall\mu\in\Psi_{U}$,
$\left\Vert \mu\right\Vert _{2}^{2}\leq4R^{2}$, $-2R^{2}\leq a^{t}\nu-2\left\Vert \mu\right\Vert _{2}^{2}\leq R\left\Vert a\right\Vert _{2}-2R^{2}\leq R\left\Vert a\right\Vert _{2}$
and remark also that owing to $\left\Vert a\right\Vert \geq2R\approx2.548$
and article~\cite{wilkelbauer2012moments}, one has $\mathbb{E}\left[N_{0}^{6}\right]=15$
with $\mathcal{N}\left(0,1\right)$, so $\mathbb{E}\left[N_{0}^{6}\right]\leq\frac{1}{18}\left\Vert a\right\Vert _{2}^{6}$
. Therefore, all together this leads to

\begin{align}
C_{3,a}\left(\mu\right) & \leq\sqrt{2\left(\left\Vert a\right\Vert _{2}^{6}+\mathbb{E}\left[N_{0}^{6}\right]\right)\left(\left[R^{2}+R^{2}\left[\max\left(2R,\left\Vert a\right\Vert _{2}\right)\right]^{2}\right]+2\left[R\left\Vert a\right\Vert _{2}-2R^{2}\right]+1\right)}\nonumber \\
 & \leq\sqrt{2\left(\left\Vert a\right\Vert _{2}^{6}+\frac{1}{18}\left\Vert a\right\Vert _{2}^{6}\right)\left(\left[R^{2}+R^{2}\left\Vert a\right\Vert _{2}^{2}\right]+2\left[R\left\Vert a\right\Vert _{2}-2R^{2}\right]+1\right)}\nonumber \\
 & \leq\sqrt{\frac{19}{9}\left\Vert a\right\Vert _{2}^{6}\left(R^{2}\left\Vert a\right\Vert _{2}^{2}+2R\left\Vert a\right\Vert _{2}-3R^{2}+1\right)}\nonumber \\
 & \leq3\left\Vert a\right\Vert ^{4}.\label{eq:majoration_C_3aR-1}
\end{align}
Hence

{\footnotesize{}
\begin{align*}
 & \int_{0}^{1}\left|\left(d_{t\beta+(1-t)\beta_{0}}^{3}\mathcal{R}\right)\left(\beta-\beta_{0},\beta-\beta_{0},\beta-\beta_{0}\right)\right|dt\\
 & \leq24\left\Vert a\right\Vert ^{4}\left\Vert \beta-\beta_{0}\right\Vert _{2}^{3}\int_{0}^{1}\exp\left(-\left(a^{t}\left(t\beta+(1-t)\beta_{0}\right)-\left\Vert t\beta+(1-t)\beta_{0}\right\Vert _{2}^{2}\right)\right)dt\\
 & \leq24\left\Vert a\right\Vert ^{4}\left\Vert \beta-\beta_{0}\right\Vert _{2}^{3}\underset{\left\Vert \beta-\beta_{0}\right\Vert _{2}\leq\varepsilon}{\sup}\underset{0\leq t\leq1}{\sup}\exp\left(-\left(a^{t}\left(t\beta+(1-t)\beta_{0}\right)-\left\Vert t\beta+(1-t)\beta_{0}\right\Vert _{2}^{2}\right)\right)\\
 & \leq24\left\Vert a\right\Vert ^{4}\left\Vert \beta-\beta_{0}\right\Vert _{2}^{3}\exp\left(-\underset{\left\Vert \beta-\beta_{0}\right\Vert _{2}\leq\varepsilon}{\inf}\underset{0\leq t\leq1}{\inf}a^{t}\left(t\beta+(1-t)\beta_{0}\right)+\underset{\left\Vert \beta-\beta_{0}\right\Vert _{2}\leq\varepsilon}{\sup}\underset{0\leq t\leq1}{\sup}\left\Vert t\beta+(1-t)\beta_{0}\right\Vert _{2}^{2}\right)\\
 & \leq24\left\Vert a\right\Vert ^{4}\left\Vert \beta-\beta_{0}\right\Vert _{2}^{3}\exp\left(-a^{t}\beta_{0}+\left\Vert a\right\Vert _{2}\varepsilon+R^{2}\right)\\
 & \leq24\left\Vert a\right\Vert ^{4}e^{-\left\Vert a\right\Vert _{2}R+R^{2}}e^{\varepsilon\left\Vert a\right\Vert _{2}}\left\Vert \beta-\beta_{0}\right\Vert _{2}^{3}.
\end{align*}
}Finally, this gives 
\begin{align}
\mathcal{E}\left(\beta,\beta_{0}\right) & >e^{-\left(\left\Vert a\right\Vert _{2}R-R^{2}/2\right)}\left[\frac{1}{16}\left(1+\left(\left\Vert a\right\Vert _{2}-R\right)^{2}\right)\left\Vert \beta-\beta_{0}\right\Vert _{2}^{2}-24\left\Vert a\right\Vert ^{4}e^{R^{2}/2}e^{\varepsilon\left\Vert a\right\Vert _{2}}\left\Vert \beta-\beta_{0}\right\Vert _{2}^{3}\right],\label{eq:taylor_control_excess_risk-1}
\end{align}
as required.
\end{proof}
\begin{lem}
\label{lem:Minoration_min_eigenvalue}Minoration of $\Phi^{c}\left(\left\Vert a\right\Vert _{2}-\frac{x_{1}}{R}\right)-\Phi^{c}\left(\left\Vert a\right\Vert _{2}+\frac{x_{1}}{R}\right)$:
\[
\forall a,R,x_{1},\Phi^{c}\left(\left\Vert a\right\Vert _{2}-\frac{x_{1}}{R}\right)-\Phi^{c}\left(\left\Vert a\right\Vert _{2}+\frac{x_{1}}{R}\right)\geq2\frac{x_{1}}{R}\gamma\left(\left\Vert a\right\Vert _{2}+\frac{x_{1}}{R}\right)
\]
\end{lem}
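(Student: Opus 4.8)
The plan is to rewrite the left-hand side as an integral of the standard Gaussian density over a symmetric interval and then bound that density from below by its value at the right endpoint. Since $\frac{d}{dt}\Phi^{c}\left(t\right)=-\gamma\left(t\right)$, the fundamental theorem of calculus gives
\[
\Phi^{c}\left(\left\Vert a\right\Vert _{2}-\frac{x_{1}}{R}\right)-\Phi^{c}\left(\left\Vert a\right\Vert _{2}+\frac{x_{1}}{R}\right)=\int_{\left\Vert a\right\Vert _{2}-x_{1}/R}^{\left\Vert a\right\Vert _{2}+x_{1}/R}\gamma\left(u\right)du,
\]
an integral over an interval of length $2x_{1}/R$ centered at $\left\Vert a\right\Vert _{2}$.

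Next I would bound $\gamma$ from below on this interval. Recall that $\gamma$ is even and strictly decreasing on $\left[0,\infty\right)$, so $\gamma\left(u\right)=\gamma\left(\left|u\right|\right)$ is a decreasing function of $\left|u\right|$. For every $u$ in the integration interval one has $\left|u\right|\leq\left\Vert a\right\Vert _{2}+\frac{x_{1}}{R}$: indeed $u\leq\left\Vert a\right\Vert _{2}+\frac{x_{1}}{R}$ directly, while from $u\geq\left\Vert a\right\Vert _{2}-\frac{x_{1}}{R}$ we get $-u\leq\frac{x_{1}}{R}-\left\Vert a\right\Vert _{2}\leq\left\Vert a\right\Vert _{2}+\frac{x_{1}}{R}$ since $\left\Vert a\right\Vert _{2}\geq0$. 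Hence $\gamma\left(u\right)\geq\gamma\left(\left\Vert a\right\Vert _{2}+\frac{x_{1}}{R}\right)$ throughout the interval. This single inequality covers at once the regime where the interval lies entirely in $\mathbb{R}_{+}$ and the regime where it straddles the origin, so no case distinction on the sign of $\left\Vert a\right\Vert _{2}-\frac{x_{1}}{R}$ is required.

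Integrating this pointwise lower bound over the interval of length $2x_{1}/R$ then yields
\[
\int_{\left\Vert a\right\Vert _{2}-x_{1}/R}^{\left\Vert a\right\Vert _{2}+x_{1}/R}\gamma\left(u\right)du\geq\frac{2x_{1}}{R}\gamma\left(\left\Vert a\right\Vert _{2}+\frac{x_{1}}{R}\right),
\]
which is exactly the claimed inequality. There is no genuine obstacle in this argument: the only subtlety is that the statement is asserted for all $a,R,x_{1}$ without assuming $\left\Vert a\right\Vert _{2}\geq x_{1}/R$, but the uniform estimate $\left|u\right|\leq\left\Vert a\right\Vert _{2}+\frac{x_{1}}{R}$ on the whole interval makes the bound valid irrespective of the sign of the left endpoint, and the monotonicity of $\gamma$ in $\left|u\right|$ does all the work.
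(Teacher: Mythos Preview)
Your proof is correct and follows essentially the same route as the paper: write the difference of tails as $\int_{\|a\|_2 - x_1/R}^{\|a\|_2 + x_1/R}\gamma(u)\,du$ and bound $\gamma$ from below on this interval by its value at the right endpoint. The only difference is that you justify the pointwise bound $\gamma(u)\ge\gamma(\|a\|_2 + x_1/R)$ via $|u|\le \|a\|_2 + x_1/R$ and the evenness/monotonicity of $\gamma$, which makes the argument work even when the interval straddles the origin; the paper simply asserts the inequality without this extra care.
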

\begin{proof}
Simple computations give
\begin{align*}
\Phi^{c}\left(\left\Vert a\right\Vert _{2}-\frac{x_{1}}{R}\right)-\Phi^{c}\left(\left\Vert a\right\Vert _{2}+\frac{x_{1}}{R}\right) & =\int_{\left\Vert a\right\Vert _{2}-\frac{x_{1}}{R}}^{\left\Vert a\right\Vert _{2}+\frac{x_{1}}{R}}\gamma\left(x\right)d\lambda\left(x\right)\\
 & \geq\int_{\left\Vert a\right\Vert _{2}-\frac{x_{1}}{R}}^{\left\Vert a\right\Vert _{2}+\frac{x_{1}}{R}}\gamma\left(\left\Vert a\right\Vert _{2}+\frac{x_{1}}{R}\right)d\lambda\left(x\right)\\
 & \geq2\frac{x_{1}}{R}\gamma\left(\left\Vert a\right\Vert _{2}+\frac{x_{1}}{R}\right)
\end{align*}
\end{proof}
\begin{lem}
\label{lem:comparison_scal_prod_and_sqared_norm}$\forall\beta\in B_{2}\left(0,R\right)$,
if $\beta_{0}\in\partial B_{2}\left(0,R\right)$ then $\left\langle \beta_{0}-\beta,\beta_{0}\right\rangle \geq\frac{1}{2}\left\Vert \beta-\beta_{0}\right\Vert _{2}^{2}$.
\end{lem}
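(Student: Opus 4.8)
The plan is to prove the inequality by a direct algebraic manipulation of the Euclidean inner product, reducing the whole statement to the single geometric fact that $\left\Vert \beta\right\Vert _{2}\leq R=\left\Vert \beta_{0}\right\Vert _{2}$.

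First I would look at the quantity $2\left\langle \beta_{0}-\beta,\beta_{0}\right\rangle -\left\Vert \beta-\beta_{0}\right\Vert _{2}^{2}$ and expand both terms by bilinearity. Using $\left\langle \beta_{0}-\beta,\beta_{0}\right\rangle =\left\Vert \beta_{0}\right\Vert _{2}^{2}-\left\langle \beta,\beta_{0}\right\rangle$ together with $\left\Vert \beta-\beta_{0}\right\Vert _{2}^{2}=\left\Vert \beta\right\Vert _{2}^{2}-2\left\langle \beta,\beta_{0}\right\rangle +\left\Vert \beta_{0}\right\Vert _{2}^{2}$, the cross terms $\left\langle \beta,\beta_{0}\right\rangle$ cancel exactly, and the expression collapses to $\left\Vert \beta_{0}\right\Vert _{2}^{2}-\left\Vert \beta\right\Vert _{2}^{2}$.

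Second, I would invoke the two constraints on the norms: the hypothesis $\beta_{0}\in\partial B_{2}\left(0,R\right)$ gives $\left\Vert \beta_{0}\right\Vert _{2}^{2}=R^{2}$, while $\beta\in B_{2}\left(0,R\right)$ gives $\left\Vert \beta\right\Vert _{2}^{2}\leq R^{2}$. Hence $\left\Vert \beta_{0}\right\Vert _{2}^{2}-\left\Vert \beta\right\Vert _{2}^{2}\geq 0$, so that $2\left\langle \beta_{0}-\beta,\beta_{0}\right\rangle -\left\Vert \beta-\beta_{0}\right\Vert _{2}^{2}\geq 0$, and dividing by $2$ yields the asserted inequality $\left\langle \beta_{0}-\beta,\beta_{0}\right\rangle \geq\frac{1}{2}\left\Vert \beta-\beta_{0}\right\Vert _{2}^{2}$.

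There is no genuine obstacle to overcome here: the statement is an elementary polarization identity, and the only nontrivial ingredient is the defining bound $\left\Vert \beta\right\Vert _{2}\leq R$ on the ball. The one point worth handling carefully is the exact cancellation of the cross term, which is what makes the argument independent of the angle between $\beta$ and $\beta_{0}$ and reveals the underlying geometry, namely that a boundary point $\beta_{0}$ is the nearest point of the sphere to the center along its own radial direction.
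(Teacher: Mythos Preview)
Your proof is correct and is in fact cleaner than the paper's own argument. The paper proceeds by decomposing $\beta=\beta_{\parallel}+\beta_{\perp}$ along $\mathrm{Vect}(\beta_{0})\oplus\beta_{0}^{\perp}$, writing $\beta_{\parallel}=\lambda_{\beta}\beta_{0}$, and then bounding the ratio
\[
\frac{\langle \beta_{0}-\beta,\beta_{0}\rangle}{\|\beta-\beta_{0}\|_{2}^{2}}=\frac{(1-\lambda_{\beta})R^{2}}{\|\beta_{\perp}\|_{2}^{2}+(1-\lambda_{\beta})^{2}R^{2}}
\]
from below using $\|\beta_{\perp}\|_{2}^{2}\leq R^{2}-\lambda_{\beta}^{2}R^{2}$; after simplification the denominator becomes $2(1-\lambda_{\beta})R^{2}$ and the bound $\tfrac{1}{2}$ drops out. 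Your direct expansion bypasses the decomposition entirely and shows in one line that $2\langle\beta_{0}-\beta,\beta_{0}\rangle-\|\beta-\beta_{0}\|_{2}^{2}=\|\beta_{0}\|_{2}^{2}-\|\beta\|_{2}^{2}\geq 0$, which is the sharpest possible form of the identity and also avoids the removable $0/0$ at $\beta=\beta_{0}$ that the paper's ratio formulation silently glosses over. Both routes rely on exactly the same geometric input, namely $\|\beta\|_{2}\leq R=\|\beta_{0}\|_{2}$, but yours makes the mechanism transparent.
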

\begin{proof}
Decompose $\beta$ as $\beta_{\perp}+\beta_{\parallel}$ on $\beta_{0}^{\perp}\oplus Vect\left(\beta_{0}\right)$
and note that $\exists\lambda_{\beta}\in[-1,1],\beta_{\parallel}=\lambda_{\beta}\beta_{0}$.
We have
\[
\frac{\left\langle \beta_{0}-\beta,\beta_{0}\right\rangle }{\left\Vert \beta-\beta_{0}\right\Vert _{2}^{2}}=\frac{\left\langle \beta_{0}-\beta_{\parallel},\beta_{0}\right\rangle }{\left\Vert \beta_{\perp}\right\Vert _{2}^{2}+\left\Vert \beta_{\parallel}-\beta_{0}\right\Vert _{2}^{2}}=\frac{\left(1-\lambda_{\beta}\right)R^{2}}{\left\Vert \beta_{\perp}\right\Vert _{2}^{2}+\left(1-\lambda_{\beta}\right)^{2}R^{2}}
\]
Furthermore, we have $\left\Vert \beta\right\Vert _{2}^{2}\leq R^{2}$
and by pythagora's theorem $\left\Vert \beta_{\perp}\right\Vert _{2}^{2}\in\left[0,R^{2}-\lambda_{\beta}^{2}R^{2}\right]$.
Therefore,
\begin{align*}
\frac{\left\langle \beta_{0}-\beta,\beta_{0}\right\rangle }{\left\Vert \beta-\beta_{0}\right\Vert _{2}^{2}} & \geq\frac{\left(1-\lambda_{\beta}\right)R^{2}}{\left(1-\lambda_{\beta}^{2}\right)R^{2}+\left(1-\lambda_{\beta}\right)^{2}R^{2}}\\
 & \geq\frac{1-\lambda_{\beta}}{1-\lambda_{\beta}^{2}+\left(1-2\lambda_{\beta}+\lambda_{\beta}^{2}\right)}\\
 & \geq\frac{1}{2}.
\end{align*}
\end{proof}
\begin{defn}
\label{def:empirical_L2_ norm}When one has $P_{n}=\frac{1}{n}\sum_{i=1}^{n}\delta_{x_{i}}$
with $x_{1},\dots,x_{n}\in\mathcal{X}$, define the following ``empirical-$L^{2}$-
norm'' as: 
\[
\forall f:\mathcal{X}\rightarrow\mathbb{R},\left\Vert f\right\Vert _{P_{n}}:=\sqrt{\frac{1}{n}\sum_{i=1}^{n}f^{2}\left(X^{(i)}\right)}
\]
\end{defn}
.
\begin{defn}
\label{def:entropy_of_a_set}For $\delta>0$, the $\delta$-covering
number $N\left(\delta,\mathscr{H},\left\Vert \cdot\right\Vert \right)$
of a set $\mathscr{H}$ is the smallest number of closed balls, with
respect to $\left\Vert \cdot\right\Vert $ with radius $\delta$,
that covers the space. The set of the centers of the balls is called
a $\delta$-covering set. The entropy of $\mathscr{H}$ with respect
to a norm $\left\Vert \cdot\right\Vert $ is $H\left(\cdot,\mathscr{H},\left\Vert \cdot\right\Vert \right)=\log N\left(\cdot,\mathscr{H},\left\Vert \cdot\right\Vert \right)$.
\end{defn}
.
\begin{lem}
\label{lem:entropy_lemma}Define $\Theta\left(\varepsilon\right):=\left\{ \beta\in B_{2}\left(0,R\right):\left\Vert \beta-\beta_{0}\right\Vert _{1}\leq\varepsilon\right\} $
and take 
\[
\mathscr{H}_{\varepsilon,M_{n}}:=\left\{ \left(\rho_{\beta}-\rho_{\beta_{0}}\right)I_{\left\{ G\leq M_{n}\right\} }-\mathbb{E}\left[\left(\rho_{\beta}(X)-\rho_{\beta_{0}}(X)\right)I_{\left\{ G(X)\leq M_{n}\right\} }\right]:\beta\in\Theta\left(\varepsilon\right)\right\} ,
\]
 where $G\left(X\right):=\left\Vert X\right\Vert _{\infty}$. Recall
that $L$ is the Lipschitz constant of $\rho$.\\
Then for all $u>0$ and $M_{n}>0$, the entropy of $\mathscr{H}_{\varepsilon,M_{n}}$
with respect to the empirical-$L^{2}$-norm $\left\Vert \cdot\right\Vert _{P_{n}}$
(see definition~\ref{def:empirical_L2_ norm}) satisfies
\[
H\left(u,\mathscr{H}_{\varepsilon,M_{n}},\left\Vert \cdot\right\Vert _{P_{n}}\right)\leq\left(\frac{4L^{2}M_{n}^{2}\varepsilon^{2}}{u^{2}}+1\right)\log\left(2d\right).
\]
\end{lem}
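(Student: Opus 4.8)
The plan is to reduce the problem to a covering-number estimate for a class of \emph{linear} functions, and then to invoke Maurey's empirical method. Throughout write $g_\beta:=\left(\rho_\beta-\rho_{\beta_0}\right)I_{\{G\le M_n\}}$, so that the generic element of $\mathscr{H}_{\varepsilon,M_n}$ is the centered function $f_\beta:=g_\beta-\mathbb{E}[g_\beta]$, and set $\gamma:=\beta-\beta_0\in B_1(0,\varepsilon)$ for $\beta\in\Theta(\varepsilon)$.

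First I would record the elementary consequence of the truncation. For the coordinate functions $\psi_j(X):=X_j\,I_{\{G(X)\le M_n\}}$, $j=1,\dots,d$, one has $|\psi_j(X)|\le\|X\|_\infty I_{\{\|X\|_\infty\le M_n\}}\le M_n$, hence $\|\psi_j\|_{P_n}\le M_n$ and, after recentering, $\|\psi_j-P\psi_j\|_{P_n}\le 2M_n$. Writing $\ell_\gamma(X):=(\gamma^tX)I_{\{G(X)\le M_n\}}=\sum_j\gamma_j\psi_j(X)$, the linear surrogate class is $\{\ell_\gamma:\gamma\in B_1(0,\varepsilon)\}=\varepsilon\cdot\mathrm{absconv}\{\psi_1,\dots,\psi_d\}$.

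Next comes the Lipschitz reduction. Since $\rho$ is $L$-Lipschitz and $\rho_{\beta_0}$ cancels in differences, for every pair $\beta,\beta'\in\Theta(\varepsilon)$ and every sample point,
\[
\bigl|\left(\rho_\beta-\rho_{\beta'}\right)(X^{(i)})\bigr|\,I_{\{G(X^{(i)})\le M_n\}}\le L\,\bigl|\ell_{\gamma-\gamma'}(X^{(i)})\bigr|,
\]
so the map $\gamma\mapsto g_{\beta_0+\gamma}$ is an $L$-contraction from $(\{\ell_\gamma\},\|\cdot\|_{P_n})$ to the uncentered class $\{g_\beta\}$; a $u/L$-net of the linear class therefore transports to a $u$-net. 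It then remains to count a net of $\varepsilon\cdot\mathrm{absconv}\{\psi_j\}$, which is exactly Maurey's empirical method: if $v_1,\dots,v_N$ lie in a Hilbert space with $\|v_j\|\le b$, any element of $\mathrm{absconv}\{v_j\}$ is within $b/\sqrt m$ of an $m$-fold signed average $\tfrac1m\sum_{k=1}^m\epsilon_k v_{j_k}$ (sample the indices from the mixing weights and take a realization attaining the mean bound $\mathbb{E}\|\cdot\|^2\le b^2/m$). There are at most $(2N)^m$ such averages, whence $H(\delta,\mathrm{absconv}\{v_j\},\|\cdot\|)\le\lceil b^2/\delta^2\rceil\log(2N)$. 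Applying this in the empirical-$L^2$ geometry with $N=d$, generators of norm at most $2\varepsilon M_n$, scale $\delta=u/L$, and $\lceil t\rceil\le t+1$ gives exactly $\left(\tfrac{4L^2M_n^2\varepsilon^2}{u^2}+1\right)\log(2d)$.

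The main obstacle is the recentering by $P$ built into $\mathscr{H}_{\varepsilon,M_n}$: the contraction principle is transparent for the uncentered functions $g_\beta$, but subtracting the constant $\mathbb{E}[g_\beta]$ is \emph{not} a contraction for $\|\cdot\|_{P_n}$. I would handle this through the $L^2(P_n)$-orthogonal splitting $f_\beta=(g_\beta-P_ng_\beta)+(P_n-P)g_\beta$, where the empirically centered part is controlled by the contraction above and the remaining one-dimensional constant part $(P_n-P)g_\beta$ (bounded by $2LM_n\varepsilon$ in absolute value) must be absorbed. Equivalently, one runs Maurey directly on the centered generators $\psi_j-P\psi_j$ of norm at most $2M_n$; checking that this reproduces precisely the factor $4$, rather than producing a spurious additive $\log(1/u)$ term from covering the constant part, is the delicate point of the argument.
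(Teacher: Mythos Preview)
Your overall route---Lipschitz reduction followed by Maurey's empirical method on an absolute convex hull---is the same engine the paper uses, but you have manufactured an obstacle that is not there, and the detour through the linear surrogate class $\{\ell_\gamma\}$ is unnecessary.

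The ``delicate point'' you flag about the $P$-centering dissolves once you compare two elements of the class directly. For $\beta,\beta'\in\Theta(\varepsilon)$, the difference $f_\beta-f_{\beta'}=(g_\beta-g_{\beta'})-\bigl(P g_\beta-P g_{\beta'}\bigr)$ is a function minus a \emph{constant}, and that constant satisfies $|P(g_\beta-g_{\beta'})|\le \mathbb{E}\bigl[|\rho_\beta(X)-\rho_{\beta'}(X)|I_{\{G(X)\le M_n\}}\bigr]\le L M_n\|\beta-\beta'\|_1$ by the same Lipschitz/H\"older step you already use. Since also $\|g_\beta-g_{\beta'}\|_{P_n}\le L M_n\|\beta-\beta'\|_1$, the elementary inequality $(a+b)^2\le 2a^2+2b^2$ (or the triangle inequality) gives
\[
\|f_\beta-f_{\beta'}\|_{P_n}\le 2LM_n\,\|\beta-\beta'\|_1.
\]
No orthogonal splitting, no separate covering of a one-dimensional constant part, no risk of a spurious $\log(1/u)$ term: the map $\beta\mapsto f_\beta$ is simply $2LM_n$-Lipschitz from $(\mathbb{R}^d,\|\cdot\|_1)$ into $(\mathscr{H}_{\varepsilon,M_n},\|\cdot\|_{P_n})$. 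This is exactly what the paper records, and it reduces the problem in one stroke to
\[
H\bigl(u,\mathscr{H}_{\varepsilon,M_n},\|\cdot\|_{P_n}\bigr)\le H\Bigl(\tfrac{u}{2LM_n},\,\Theta(\varepsilon),\,\|\cdot\|_1\Bigr)\le H\Bigl(\tfrac{u}{2LM_n},\,\varepsilon\,\mathrm{conv}\{0,\pm e_1,\dots,\pm e_d\},\,\|\cdot\|_1\Bigr).
\]
The final covering is then Maurey in $\mathbb{R}^d$ (the paper cites Lemma~14.28 of B\"uhlmann--van~de~Geer), yielding $\bigl(\tfrac{4L^2M_n^2\varepsilon^2}{u^2}+1\bigr)\log(2d)$ exactly as you state.

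So your proposal is correct in spirit and would go through, but the paper's route is shorter: it works at the parameter level from the outset instead of passing through the function-space convex hull $\{\ell_\gamma\}$, and thereby sidesteps the centering issue you spend the last paragraph worrying about.
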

\begin{proof}
Let $\widehat{X_{i}},\dots,\widehat{X_{i}}$ be i.i.d copies of $X$
and set $\mathscr{B}_{\varepsilon,M_{n}}:=\left\{ f_{\beta,\beta'}:X\mapsto\frac{X^{t}}{M_{n}}\left(\beta-\beta'\right)I_{\left\{ G(X)\leq M_{n}\right\} }:\beta,\beta'\in\Theta\left(\varepsilon\right)\right\} $.\\
One has $\forall\beta,\beta'\in\Theta\left(\varepsilon\right)$, 
\[
\left|\rho_{\beta}\left(X\right)-\rho_{\beta'}\left(X\right)\right|=\left|\rho\left(X^{t}\beta\right)-\rho\left(X^{t}\beta'\right)\right|\leq L\left|X^{t}\beta-X^{t}\beta'\right|.
\]
With $\forall a,b>0,\left(a+b\right)^{2}\leq2\left(a^{2}+b^{2}\right)$,
it follows that
\begin{align*}
 & \left\Vert \rho_{\beta}I_{\left\{ G(\cdot)\leq M_{n}\right\} }-\mathbb{E}\left[\rho_{\beta}I_{\left\{ G(\cdot)\leq M_{n}\right\} }\right]-\rho_{\beta'}I_{\left\{ G(\cdot)\leq M_{n}\right\} }+\mathbb{E}\left[\rho_{\beta'}I_{\left\{ G(\cdot)\leq M_{n}\right\} }\right]\right\Vert _{P_{n}}^{2}\\
 & =\frac{1}{n}\sum_{i=1}^{n}\left(\rho_{\beta}\left(X^{(i)}\right)I_{\left\{ G\left(X^{(i)}\right)\leq M_{n}\right\} }-\mathbb{E}\left[\rho_{\beta}\left(X\right)I_{\left\{ G\left(X\right)\leq M_{n}\right\} }\right]-\left(\rho_{\beta'}\left(X^{(i)}\right)I_{\left\{ G\left(X^{(i)}\right)\leq M_{n}\right\} }-\mathbb{E}\left[\rho_{\beta'}\left(X\right)I_{\left\{ G\left(X\right)\leq M_{n}\right\} }\right]\right)\right)^{2}\\
 & \leq\frac{2}{n}\sum_{i=1}^{n}\left(\rho_{\beta}\left(X^{(i)}\right)-\rho_{\beta'}\left(X^{(i)}\right)\right)^{2}I_{\left\{ G\left(X^{(i)}\right)\leq M_{n}\right\} }+\frac{2}{n}\sum_{i=1}^{n}\left(\mathbb{E}\left[\rho_{\beta'}\left(X\right)I_{\left\{ G\left(X\right)\leq M_{n}\right\} }\right]-\mathbb{E}\left[\rho_{\beta}\left(X\right)I_{\left\{ G\left(X\right)\leq M_{n}\right\} }\right]\right)^{2}\\
 & \leq\frac{2}{n}\sum_{i=1}^{n}\left(\rho_{\beta}\left(X^{(i)}\right)-\rho_{\beta'}\left(X^{(i)}\right)\right)^{2}I_{\left\{ G\left(X^{(i)}\right)\leq M_{n}\right\} }+2\mathbb{E}\left[\left(\rho_{\beta'}\left(X\right)-\rho_{\beta}\left(X\right)\right)^{2}I_{\left\{ G\left(X\right)\leq M_{n}\right\} }\right].
\end{align*}
Furthermore,
\begin{align*}
\frac{2}{n}\sum_{i=1}^{n}\left(\rho_{\beta}\left(X^{(i)}\right)-\rho_{\beta'}\left(X^{(i)}\right)\right)^{2}I_{\left\{ G\left(X^{(i)}\right)\leq M_{n}\right\} } & \leq\frac{2}{n}\sum_{i=1}^{n}L^{2}\left|X^{(i)t}\beta-X^{(i)t}\beta'\right|^{2}I_{\left\{ G\left(X^{(i)}\right)\leq M_{n}\right\} }\\
 & \leq\frac{2}{n}L^{2}\sum_{i=1}^{n}G\left(X^{(i)}\right)^{2}\left\Vert \beta-\beta'\right\Vert _{1}^{2}I_{\left\{ G\left(X^{(i)}\right)\leq M_{n}\right\} }\\
 & \leq2L^{2}M_{n}^{2}\left\Vert \beta-\beta'\right\Vert _{1}^{2}.
\end{align*}
One also has 
\[
\text{\ensuremath{\mathbb{E}\left[\left(\rho_{\beta'}\left(X\right)-\rho_{\beta}\left(X\right)\right)^{2}I_{\left\{  G\left(X\right)\leq M_{n}\right\}  }\right]\leq L^{2}M_{n}^{2}\left\Vert \beta-\beta'\right\Vert _{1}^{2}}}.
\]
Hence
\begin{align}
\left\Vert \rho_{\beta}I_{\left\{ G(\cdot)\leq M_{n}\right\} }-\mathbb{E}\left[\rho_{\beta}I_{\left\{ G(\cdot)\leq M_{n}\right\} }\right]+\rho_{\beta'}I_{\left\{ G(\cdot)\leq M_{n}\right\} }-\mathbb{E}\left[\rho_{\beta'}I_{\left\{ G(\cdot)\leq M_{n}\right\} }\right]\right\Vert _{P_{n}}^{2} & \leq4L^{2}M_{n}^{2}.\left\Vert \beta-\beta'\right\Vert _{1}^{2}.\label{eq:distance_2_empirical_proc}
\end{align}
This relation enables us to state 
\[
H\left(u,\mathscr{H}_{\varepsilon,M_{n}},\left\Vert \cdot\right\Vert _{P_{n}}\right)\leq H\left(\frac{u}{2LM_{n}},\Theta\left(\varepsilon\right),\left\Vert \cdot\right\Vert _{1}\right).
\]
Define the convex hull of a set of vectors $\left\{ e_{j}\right\} _{j=1}^{d}$
as $Conv\left\{ e_{j}\right\} _{j=1}^{d}:=\left\{ \left.\sum_{j=1}^{d}v_{j}e_{j}\right|v_{i}\geq0,\left\Vert v\right\Vert _{1}=1\right\} $
and take in particular the vectors $\left\{ e_{j}\right\} _{j=1}^{d}$
of the canonical basis in $\mathbb{R}^{d}$. Then 
\[
\Theta\left(\varepsilon\right)\subset\beta_{0}+\varepsilon.Conv\left\{ 0,\left\{ \pm e_{j}\right\} _{j=1}^{d}\right\} .
\]
Owing to the definition of $e_{j}$, we have $\forall j,\left\Vert e_{j}\right\Vert _{1}=1$.
so we can use Lemma~14.28 in~\cite{buhlmann2011statistics} to get
\begin{align*}
H\left(u,\Theta\left(\varepsilon\right),\left\Vert \cdot\right\Vert _{1}\right) & \leq H\left(u,\beta_{0}+\varepsilon.Conv\left\{ 0,\left\{ \pm e_{j}\right\} _{j=1}^{d}\right\} ,\left\Vert \cdot\right\Vert _{1}\right)\\
 & \leq H\left(\frac{u}{\varepsilon},Conv\left\{ 0,\left\{ \pm e_{j}\right\} _{j=1}^{d}\right\} ,\left\Vert \cdot\right\Vert _{1}\right)\\
 & \leq H\left(\frac{u}{\varepsilon},Conv\left\{ 0,\left\{ \pm e_{j}\right\} _{j=1}^{d}\right\} ,\left\Vert \cdot\right\Vert _{1}\right)\\
 & \leq\left\lceil \frac{\varepsilon^{2}}{u^{2}}\right\rceil \left(1+\log\left(1+\left(2d+1\right)\frac{u^{2}}{\varepsilon^{2}}\right)\right)\wedge\left\lceil \frac{\varepsilon^{2}}{u^{2}}\right\rceil \log\left(2d\right)\\
 & \leq\left(\frac{\varepsilon^{2}}{u^{2}}+1\right)\log\left(2d\right),
\end{align*}
which gives the result. 
\end{proof}
\begin{lem}
\label{lem:control_empirical_process_trunc}Let $\varepsilon>0$ and
$X^{(1)},...,X^{(i)},...,X^{(n)}$ be i.i.d. copies of $X$. Let also
\[
\mathscr{H}_{\varepsilon,M_{n}}:=\left\{ \left(\rho_{\beta}-\rho_{\beta_{0}}\right)I_{\left\{ G\leq M_{n}\right\} }-\mathbb{E}\left[\left(\rho_{\beta}(X)-\rho_{\beta_{0}}(X)\right)I_{\left\{ G(X)\leq M_{n}\right\} }\right]:\beta\in\Theta\left(\varepsilon\right)\right\} ,
\]
 where $G\left(X\right):=\left\Vert X\right\Vert _{\infty}$ and 
\[
\Theta\left(\varepsilon\right):=\left\{ \beta\in\mathbb{R}^{d}:\beta\in B_{2}\left(0,R\right),\left\Vert \beta-\beta_{0}\right\Vert _{1}\leq\varepsilon\right\} .
\]
Recall that we set $L$, the Lipschitz norm of $\rho$. One has $\forall T\geq1$,
$\forall n\geq2$, 

{\footnotesize{}
\[
P\left(\underset{\beta\in\Theta\left(\varepsilon\right)}{\sup}\frac{\left|V_{n}^{trunc}\left(\beta\right)-V_{n}^{trunc}\left(\beta_{0}\right)\right|}{\varepsilon}\geq\frac{3LM_{n}T\left(5\sqrt{3\log\left(2d\right)}\log n+4\right)}{\sqrt{n}}\right)<\exp\left(-21\left(T-1\right)^{2}\log\left(2d\right)\log^{2}n\right).
\]
}{\footnotesize\par}
\end{lem}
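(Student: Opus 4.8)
The plan is to read the left-hand side as the supremum of a centered empirical process indexed by the class $\mathscr{H}_{\varepsilon,M_{n}}$ and to control it by feeding the entropy estimate of Lemma~\ref{lem:entropy_lemma} into a chaining tail bound. First I would note that the elements of $\mathscr{H}_{\varepsilon,M_{n}}$ are already centered, so that for $h$ the function attached to $\beta$ one has $P_{n}h=V_{n}^{trunc}(\beta)-V_{n}^{trunc}(\beta_{0})$ and hence $\sup_{\beta\in\Theta(\varepsilon)}|V_{n}^{trunc}(\beta)-V_{n}^{trunc}(\beta_{0})|=\sup_{h\in\mathscr{H}_{\varepsilon,M_{n}}}|P_{n}h|$. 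The $L$-Lipschitz property of $\rho$, the truncation $I_{\{G\leq M_{n}\}}$ and the constraint $\|\beta-\beta_{0}\|_{1}\leq\varepsilon$ give the uniform envelope $\|h\|_{\infty}\leq 2LM_{n}\varepsilon$, while inequality~(\ref{eq:distance_2_empirical_proc}) bounds the empirical $L^{2}$-radius of the class around the origin by $2LM_{n}\varepsilon$. Since both the envelope and this radius are proportional to $\varepsilon$, the factor $\varepsilon$ cancels against the normalisation on the left, consistently with the $\varepsilon$-free right-hand side.

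Next I would insert the estimate $H(u,\mathscr{H}_{\varepsilon,M_{n}},\|\cdot\|_{P_{n}})\leq(4L^{2}M_{n}^{2}\varepsilon^{2}u^{-2}+1)\log(2d)$ into a Dudley / generic-chaining bound for bounded empirical processes, whose increments are sub-Gaussian because the class is uniformly bounded. Using $\sqrt{a+b}\leq\sqrt{a}+\sqrt{b}$, the square-root entropy splits into a constant piece $\sqrt{\log(2d)}$ and a piece $2LM_{n}\varepsilon\sqrt{\log(2d)}\,u^{-1}$; the integral of the latter over the dyadic resolution scales of the chaining is logarithmic. Because this $u^{-1}$ contribution is non-integrable at $0$, the chaining has to be stopped at the finest scale resolvable by the empirical measure, which is of order $n^{-1/2}$; this truncation is precisely what produces the $\log n$ factors, and carefully propagating the absolute chaining constants is what yields the prefactor $5\sqrt{3\log(2d)}\log n+4$ and fixes the in-expectation level at $B:=3LM_{n}(5\sqrt{3\log(2d)}\log n+4)n^{-1/2}\varepsilon$ (the $T=1$ value).

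Finally I would pass from this in-expectation control to the stated deviation bound. As every $h$ is bounded by $2LM_{n}\varepsilon$, a concentration inequality of Talagrand--Bousquet type (or the chaining tail inequality used with a free deviation parameter) gives a sub-Gaussian tail around the expectation. Writing the threshold as $T$ times the baseline $B$ turns the deviation $t=(T-1)B$ into an exponent of order $n t^{2}/(2LM_{n}\varepsilon)^{2}$, and since $B^{2}\asymp L^{2}M_{n}^{2}\varepsilon^{2}\log(2d)\log^{2}n\,/\,n$ the envelope and $\varepsilon$ cancel, leaving an exponent proportional to $(T-1)^{2}\log(2d)\log^{2}n$; tracking constants then delivers the claimed $\exp(-21(T-1)^{2}\log(2d)\log^{2}n)$.

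The hard part will be the divergence of the entropy integral at $u=0$: one must justify that only finitely many resolution scales contribute, using the discreteness of $\|\cdot\|_{P_{n}}$ on the $n$ observations, so that the chaining can be halted at scale $\sim n^{-1/2}$ without loss --- this is the mechanism generating the $\log n$ powers. The remaining difficulty is bookkeeping: every absolute constant in the chaining bound and in the concentration step must be tracked so as to reproduce the exact numerical values $5\sqrt{3}$, $4$ and $21$ appearing in the statement.
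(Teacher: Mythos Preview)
Your proposal is correct and follows essentially the same route as the paper: bound the empirical $L^2$-radius by $R_n=2LM_n\varepsilon$, plug the entropy estimate of Lemma~\ref{lem:entropy_lemma} into a truncated chaining bound stopped at dyadic scale $\sim n^{-1/2}$ to control the expectation, and then apply a bounded-differences concentration inequality around that expectation with the substitution $t=(T-1)B$. The paper carries this out by quoting two ready-made results from \cite{buhlmann2011statistics}: Corollary~14.4 (which packages the truncated dyadic chaining and directly outputs the constant $5\sqrt{A}\log n+4$ with $A=3\log(2d)$) and Theorem~14.2 (Massart's inequality, giving the exponent $nt^{2}/8$), so the exact numerical constants $5\sqrt{3}$, $4$ and $21$ come for free rather than from hand-tracked bookkeeping.
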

\begin{proof}
According to equation~(\ref{eq:distance_2_empirical_proc}), $\forall\widetilde{\rho}_{\beta}\in\mathscr{H}_{\varepsilon,M_{n}}$,
$\left\Vert \widetilde{\rho}_{\beta}\right\Vert _{P_{n}}\leq2LM_{n}\varepsilon=:R_{n}$
and $\mathbb{E}\left(\widetilde{\rho}_{\beta}\left(X\right)\right)=0$.
Hence, using Lemma~\ref{lem:entropy_lemma} and Definition~\ref{def:entropy_of_a_set},
we have 
\begin{align*}
\log\left(1+N\left(u,\mathscr{H}_{\varepsilon,M_{n}},\left\Vert \cdot\right\Vert _{P_{n}}\right)\right) & \leq1+H\left(u,\mathscr{H}_{\varepsilon,M_{n}},\left\Vert \cdot\right\Vert _{P_{n}}\right)\\
 & \leq1+\left(\frac{4L^{2}M_{n}^{2}\varepsilon^{2}}{u^{2}}+1\right)\log\left(2d\right)\\
 & \leq\left(\frac{4L^{2}M_{n}^{2}\varepsilon^{2}}{u^{2}}+2\right)\log\left(2d\right)
\end{align*}
Take $u:=2^{-s}R_{n}$ where $0\leq s\leq S:=\min\left\{ s\geq1:2^{-s}\leq\frac{4}{\sqrt{n}}\right\} $
(i.e. $u\in\left[\frac{2}{\sqrt{n}}R_{n},R_{n}\right]$),\\
then one has $\forall0\leq s\leq S$,
\begin{align*}
\log\left(1+N\left(2^{-s}R_{n},\mathscr{H}_{\varepsilon,M_{n}},\left\Vert \cdot\right\Vert _{P_{n}}\right)\right) & \leq\left(\frac{4L^{2}M_{n}^{2}\varepsilon^{2}}{2^{-2s}R_{n}^{2}}+2\right)\log\left(2d\right)\\
 & \leq\left(2^{2s}+2\right)\log\left(2d\right)\\
 & \leq2^{2s}\left(1+2^{1-2s}\right)\log\left(2d\right)\\
 & \leq2^{2s}\times3\log\left(2d\right)
\end{align*}
Now one can apply \cite[Corollary 14.4]{buhlmann2011statistics},
where in our case $A:=3\log\left(2d\right)$. Note that $4\log n\leq3\log_{2}n\leq5\log n$.
We get
\[
\mathbb{E}\left[\underset{\widetilde{\rho}\in\mathscr{H}_{\varepsilon,M_{n}}}{\sup}\left|\frac{1}{n}\sum_{i=1}^{n}\widetilde{\rho}_{\beta}\left(X^{(i)}\right)\right|\right]\leq\frac{R_{n}}{\sqrt{n}}\left(5\sqrt{A}\log n+4\right).
\]
One can apply the Massart's concentration inequality, recalled for
instance in~\cite[Theorem 14.2]{buhlmann2011statistics}. ,Then,
$\forall t>0$,
\[
P\left(\underset{\widetilde{\rho}\in\mathscr{H}_{\varepsilon,M_{n}}}{\sup}\left|\frac{1}{n}\sum_{i=1}^{n}\frac{\widetilde{\rho}_{\beta}\left(X^{(i)}\right)}{R_{n}}\right|\geq\mathbb{E}\left[\underset{\widetilde{\rho}\in\mathscr{H}_{\varepsilon,M_{n}}}{\sup}\left|\frac{1}{n}\sum_{i=1}^{n}\frac{\widetilde{\rho}_{\beta}\left(X^{(i)}\right)}{R_{n}}\right|\right]+t\right)\leq e^{-nt^{2}/8},
\]
which gives

\[
P\left(\underset{\widetilde{\rho}\in\mathscr{H}_{\varepsilon,M_{n}}}{\sup}\left|\frac{1}{n}\sum_{i=1}^{n}\widetilde{\rho}_{\beta}\left(X^{(i)}\right)\right|\geq\frac{R_{n}}{\sqrt{n}}\left(5\sqrt{A}\log n+4\right)+R_{n}t\right)\leq e^{-nt^{2}/8}.
\]
A change of variable $t=\frac{1}{\sqrt{n}}\left(T-1\right)\left(5\sqrt{A}\log n+4\right)$
leads to: $\forall T\geq1$, 

\[
P\left(\underset{\widetilde{\rho}\in\mathscr{H}_{\varepsilon,M_{n}}}{\sup}\left|\frac{1}{n}\sum_{i=1}^{n}\widetilde{\rho}_{\beta}\left(X^{(i)}\right)\right|\geq\frac{R_{n}}{\sqrt{n}}T\left(5\sqrt{A}\log n+4\right)\right)<\exp\left(-\frac{\left(T-1\right)^{2}\left(5\sqrt{A}\log n+4\right)^{2}}{8}\right)
\]
Note that $\forall\widetilde{\rho}_{\beta}\in\mathscr{H}_{\varepsilon,M_{n}}$,
{\scriptsize{}
\begin{align*}
\frac{1}{n}\sum_{i=1}^{n}\widetilde{\rho}_{\beta}\left(X^{(i)}\right) & =V_{n}^{trunc}\left(\beta\right)-V_{n}^{trunc}\left(\beta_{0}\right).
\end{align*}
}Consequently, $\forall T\geq1$, 

{\scriptsize{}
\begin{align*}
P\left(\underset{\beta\in\Theta\left(\varepsilon\right)}{\sup}\left|V_{n}^{trunc}\left(\beta\right)-V_{n}^{trunc}\left(\beta_{0}\right)\right|\geq\frac{3LM_{n}\varepsilon T\left(5\sqrt{3\log\left(2d\right)}\log n+4\right)}{\sqrt{n}}\right) & <\exp\left(-\frac{\left(3T/2-1\right)^{2}\left(5\sqrt{3\log\left(2d\right)}\log n+4\right)^{2}}{8}\right)\\
 & <\exp\left(-21\left(T-1\right)^{2}\log\left(2d\right)\log^{2}n\right).
\end{align*}
}{\scriptsize\par}
\end{proof}
\begin{lem}
\label{lem:peeling_device}Grant the notations of Lemma~\ref{lem:control_empirical_process_trunc}
and set $\lambda_{0}:=3LM_{n}\left(5\sqrt{3\log\left(2d\right)}\log n+4\right)n^{-1/2}$.
One has $\forall T\geq1$, $\forall n\geq2$,
\begin{align*}
P\left(\underset{\beta\in B_{2}(0,R)}{\sup}\frac{\left|V_{n}^{trunc}\left(\beta\right)-V_{n}^{trunc}\left(\beta_{0}\right)\right|}{\left\Vert \beta-\beta_{0}\right\Vert _{1}\lor\lambda_{0}}\geq T\lambda_{0}\right) & \leq\frac{3}{4}\log\left(\frac{4R^{2}nd}{L^{2}M_{n}^{2}}\right)\exp\left(-21\left(T-1\right)^{2}\log\left(2d\right)\log^{2}n\right).
\end{align*}
\end{lem}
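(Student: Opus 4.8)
The plan is to run the peeling (slicing) device that the lemma's name advertises: decompose the ball $B_2(0,R)$ according to the size of $\|\beta-\beta_0\|_1$ into a central core and a logarithmic number of dyadic shells, control the empirical fluctuation on each piece with Lemma~\ref{lem:control_empirical_process_trunc}, and conclude by a union bound. Throughout I abbreviate $\Delta(\beta):=V_n^{trunc}(\beta)-V_n^{trunc}(\beta_0)$.

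First I would fix the geometry. Since $\beta,\beta_0\in B_2(0,R)$ one has $\|\beta-\beta_0\|_2\le 2R$, hence $\|\beta-\beta_0\|_1\le\sqrt d\,\|\beta-\beta_0\|_2\le 2R\sqrt d$. I set the core $\Theta(\lambda_0)=\{\beta\in B_2(0,R):\|\beta-\beta_0\|_1\le\lambda_0\}$ and, for $j\ge1$, the shells
\[
S_j:=\left\{\beta\in B_2(0,R):2^{j-1}\lambda_0<\|\beta-\beta_0\|_1\le2^{j}\lambda_0\right\},
\]
keeping the indices $1\le j\le J$ with $J=\lceil\log_2(2R\sqrt d/\lambda_0)\rceil$, so that $B_2(0,R)=\Theta(\lambda_0)\cup\bigcup_{j=1}^{J}S_j$. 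Using the crude bound $\lambda_0\ge LM_n n^{-1/2}$ (which holds since $5\sqrt{3\log(2d)}\log n+4\ge4$) gives $2R\sqrt d/\lambda_0\le 2R\sqrt{nd}/(LM_n)$, whence
\[
J\le\frac{1}{2\log 2}\log\!\left(\frac{4R^{2}nd}{L^{2}M_n^{2}}\right)\le\frac34\log\!\left(\frac{4R^{2}nd}{L^{2}M_n^{2}}\right),
\]
because $\tfrac{1}{2\log 2}\approx0.72<0.75$. This is precisely the prefactor in the statement, with a little room to spare for the extra $+1$ coming from the core.

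Then on each piece I would invoke Lemma~\ref{lem:control_empirical_process_trunc}, whose decisive feature is that its probability bound $\exp(-21(T-1)^2\log(2d)\log^2 n)$ is \emph{independent} of the radius $\varepsilon$ of $\Theta(\varepsilon)$. On the core $\|\beta-\beta_0\|_1\vee\lambda_0=\lambda_0$ and $\Theta(\lambda_0)$ is exactly the domain of the lemma with $\varepsilon=\lambda_0$, so $\{\sup_{\Theta(\lambda_0)}|\Delta(\beta)|/\lambda_0\ge T\lambda_0\}$ is controlled directly at parameter $T$. On a shell $S_j$ one has $\|\beta-\beta_0\|_1\vee\lambda_0=\|\beta-\beta_0\|_1>2^{j-1}\lambda_0$, while $S_j\subset\Theta(2^{j}\lambda_0)$, so that
\[
\sup_{\beta\in S_j}\frac{|\Delta(\beta)|}{\|\beta-\beta_0\|_1\vee\lambda_0}\le 2\sup_{\beta\in\Theta(2^{j}\lambda_0)}\frac{|\Delta(\beta)|}{2^{j}\lambda_0}.
\]
Summing the $J+1$ resulting tail bounds and using $J+1\le\frac34\log(4R^{2}nd/(L^{2}M_n^{2}))$ then yields the claimed inequality.

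The delicate point — and the step I expect to be the real obstacle — is the threshold bookkeeping on the shells. The dyadic width produces the factor $2$ in the last display, so quoting the packaged statement of Lemma~\ref{lem:control_empirical_process_trunc} would only control the shell ratio at level $T\lambda_0$ through the lemma read at parameter $T/2$, which degrades the exponent. The intended resolution is that Lemma~\ref{lem:control_empirical_process_trunc} is stated with built-in slack: its threshold carries the coefficient $3LM_n$ rather than the $2LM_n$ that the Dudley/Massart bound naturally produces, a spare factor $3/2$ that is there precisely to absorb the shell width. Consequently, rather than quoting the packaged inequality, one should re-enter its concentration step (the Massart inequality with mean bounded via the entropy of Lemma~\ref{lem:entropy_lemma}) directly with the shell-adapted radius $\varepsilon=2^{j}\lambda_0$, solve for the admissible deviation parameter, and verify that the surviving per-shell exponent can be kept of the form $21(T-1)^2\log(2d)\log^2 n$. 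Checking that the numerical constants genuinely line up once the factor $2$, the $3/2$ slack, and the lower bound $B:=5\sqrt{3\log(2d)}\log n+4$ are all accounted for is the main calculation, and it is where the argument is most at risk of losing a constant; this bookkeeping, together with confirming $\tfrac34$ dominates $\tfrac{1}{2\log 2}$ after the core is included, is the heart of the proof.
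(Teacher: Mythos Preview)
Your overall architecture---peel $B_2(0,R)$ into a core plus $O(\log(nd))$ dyadic shells, apply Lemma~\ref{lem:control_empirical_process_trunc} on each piece with its radius-independent tail bound, and union-bound---is exactly the paper's, and your shell count leading to the prefactor $\tfrac34\log\!\bigl(4R^{2}nd/(L^{2}M_n^{2})\bigr)$ is the same computation the paper carries out for $j_{\sup}-j_{\inf}+2$.

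The gap is in your handling of the shell ratio. You lower-bound the denominator by the inner radius $2^{j-1}\lambda_0$, lose a factor $2$, and propose to absorb it with the ``spare'' $3/2$ in the threshold of Lemma~\ref{lem:control_empirical_process_trunc}. That $3/2$ is not spare: it is already fully spent producing the constant $21$. Inside that lemma the Dudley/Massart step gives $R_n=2LM_n\varepsilon$, so the stated threshold $3LM_n\varepsilon$ corresponds to reading the concentration at $T'=\tfrac32T$, yielding exponent $(3T/2-1)^{2}B^{2}/8$ with $B=5\sqrt{3\log(2d)}\log n+4$. Since $B^{2}\ge 75\log(2d)\log^{2}n$ and $\bigl((3T/2-1)/(T-1)\bigr)^{2}\downarrow\tfrac94$, one gets $(\tfrac94)\cdot\tfrac{75}{8}=\tfrac{675}{32}\approx 21.09$, which is how $21$ arises---with essentially no margin. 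If you re-enter Massart at the shell radius $\varepsilon=2^{j}\lambda_0$ and solve for the deviation matching your threshold $T\lambda_0\cdot 2^{j-1}\lambda_0$, you obtain $t=B(3T/4-1)/\sqrt n$, hence exponent $(3T/4-1)^{2}\cdot\tfrac{75}{8}\log(2d)\log^{2}n$. But $\bigl((3T/4-1)/(T-1)\bigr)^{2}<\tfrac{9}{16}$, so this is at best about $5.3\,(T-1)^{2}\log(2d)\log^{2}n$, far short of $21(T-1)^{2}\log(2d)\log^{2}n$; and for $1\le T\le\tfrac43$ the Massart step does not even apply since $t\le0$. So the bookkeeping you flag as ``most at risk'' in fact fails outright.

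The paper does not pay this factor $2$: on the shell $\Theta_j=\{2^{-j-1}\le\|\beta-\beta_0\|_1\le2^{-j}\}$ it uses that both $\|\beta-\beta_0\|_1\le2^{-j}$ and $\lambda_0\le2^{-j}$ (for $j\le j_{\sup}$), replaces the denominator by the \emph{outer} radius $2^{-j}$, and then applies Lemma~\ref{lem:control_empirical_process_trunc} with $\varepsilon=2^{-j}$ at level $T$ directly, getting the per-shell bound $\exp\!\bigl(-21(T-1)^{2}\log(2d)\log^{2}n\bigr)$ with no adjustment. That outer-radius replacement, not a slack-recovery argument, is the route the paper takes.
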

\begin{proof}
Let $\lambda_{0}>0$, $n\geq2$ and $T\geq1$. Let us use a peeling:
define $\Theta:=B_{2}\left(0,R\right)$ and divide it into slices
as follows:
\[
\Theta_{j}:=\left\{ \beta\in B_{2}\left(0,R\right):2^{-j-1}\leq\left\Vert \beta-\beta_{0}\right\Vert _{1}\leq2^{-j}\right\} .
\]
Note that $\exists j_{inf},j_{sup}\in\mathbb{Z}$, $\exists r>0$,
$2^{-j_{sup}-1}\leq\lambda_{0}\leq2^{-j_{sup}}$ and $2^{-j_{inf}-1}\leq r:=2R\sqrt{d}\leq2^{-j_{inf}}$
with 
\[
\Theta\subset\bigcup_{j=j_{inf}}^{j_{sup}}\Theta_{j}\bigcup B_{1}\left(\beta_{0},2^{-j_{sup}}\right)
\]
 and $\Theta$$\subset$$B_{1}\left(\beta_{0},2^{-j_{inf}-1}\right)$.
One can also prove that $j_{inf}=\left\lfloor -\log_{2}r\right\rfloor =\left\lfloor -\log_{2}2R\sqrt{d}\right\rfloor \leq-1$
because $R,d\geq1$ and $j_{sup}=\left\lfloor -\log_{2}\left(\lambda_{0}\right)\right\rfloor $
. Hence{\scriptsize{}
\begin{align*}
P\left(\underset{\beta\in\Theta}{\sup}\frac{\left|V_{n}^{trunc}\left(\beta\right)-V_{n}^{trunc}\left(\beta_{0}\right)\right|}{\left\Vert \beta-\beta_{0}\right\Vert _{1}\lor\lambda_{0}}\geq T\lambda_{0}\right) & \leq\sum_{j=j_{inf}}^{j_{sup}}P\left(\underset{\beta\in\Theta_{j}}{\sup}\frac{\left|V_{n}^{trunc}\left(\beta\right)-V_{n}^{trunc}\left(\beta_{0}\right)\right|}{\left\Vert \beta-\beta_{0}\right\Vert _{1}\lor\lambda_{0}}\geq T\lambda_{0}\right)\\
 & \quad+P\left(\underset{\beta\in B_{1}\left(\beta_{0},2^{-j_{sup}}\right)}{\sup}\frac{\left|V_{n}^{trunc}\left(\beta\right)-V_{n}^{trunc}\left(\beta_{0}\right)\right|}{\left\Vert \beta-\beta_{0}\right\Vert _{1}\lor\lambda_{0}}\geq T\lambda_{0}\right).
\end{align*}
}Use the fact that $\forall j\in\left\llbracket j_{inf},j_{sup}\right\rrbracket $,
$\lambda_{0}\leq2^{-j}$ and $\forall\beta\in B_{1}\left(\beta_{0},2^{-j_{sup}}\right)$,
$\left\Vert \beta-\beta_{0}\right\Vert _{1}\lor\lambda_{0}\leq2^{-j_{sup}}$:

{\scriptsize{}
\begin{align*}
P\left(\underset{\beta\in\Theta}{\sup}\frac{\left|V_{n}^{trunc}\left(\beta\right)-V_{n}^{trunc}\left(\beta_{0}\right)\right|}{\left\Vert \beta-\beta_{0}\right\Vert _{1}\lor\lambda_{0}}\geq T\lambda_{0}\right) & \leq\sum_{j=j_{inf}}^{j_{sup}}P\left(\underset{\beta\in\Theta_{j}}{\sup}\frac{\left|V_{n}^{trunc}\left(\beta\right)-V_{n}^{trunc}\left(\beta_{0}\right)\right|}{2^{-j}}\geq T\lambda_{0}\right)\\
 & +P\left(\underset{\beta\in B_{1}\left(\beta_{0},2^{-j_{sup}}\right)}{\sup}\frac{\left|V_{n}^{trunc}\left(\beta\right)-V_{n}^{trunc}\left(\beta_{0}\right)\right|}{2^{-j_{sup}}}\geq T\lambda_{0}\right).
\end{align*}
}By applying Lemma~\ref{lem:control_empirical_process_trunc} with
$\lambda_{0}=3LM_{n}\left(5\sqrt{3\log\left(2d\right)}\log n+4\right)n^{-1/2}$,
we get

{\footnotesize{}
\begin{align*}
P\left(\underset{\beta\in\Theta_{j}}{\sup}\frac{\left|V_{n}^{trunc}\left(\beta\right)-V_{n}^{trunc}\left(\beta_{0}\right)\right|}{2^{-j}}\geq T\lambda_{0}\right) & <\exp\left(-21\left(T-1\right)^{2}\log\left(2d\right)\log^{2}n\right).
\end{align*}
}Then

{\scriptsize{}
\begin{align*}
P\left(\underset{\beta\in\Theta}{\sup}\frac{\left|V_{n}^{trunc}\left(\beta\right)-V_{n}^{trunc}\left(\beta_{0}\right)\right|}{\left\Vert \beta-\beta_{0}\right\Vert _{1}\lor\lambda_{0}}\geq T\lambda_{0}\right) & \leq\sum_{j=j_{inf}}^{j_{sup}}\exp\left(-21\left(T-1\right)^{2}\log\left(2d\right)\log^{2}n\right)\\
 & +\exp\left(-21\left(T-1\right)^{2}\log\left(2d\right)\log^{2}n\right)\\
\\
 & \leq\left(j_{sup}-j_{inf}+2\right)\exp\left(-21\left(T-1\right)^{2}\log\left(2d\right)\log^{2}n\right).
\end{align*}
}Simplify now the expression of $j_{sup}-j_{inf}+2$,
\begin{align*}
j_{sup}-j_{inf}+2 & =\left\lceil \log_{2}2R\sqrt{d}\right\rceil -\left\lceil \log_{2}\left(\lambda_{0}\right)\right\rceil +2\\
 & \leq\log_{2}8R\sqrt{d}+1-\log_{2}\left(\lambda_{0}\right)\\
 & \leq\log_{2}\frac{16R\sqrt{d}}{\lambda_{0}}\\
 & \leq\log_{2}\left(\frac{2R\sqrt{nd}}{LM_{n}\sqrt{3\log\left(2d\right)}\log n}\right)\\
 & \leq\log_{2}\left(\frac{2R\sqrt{nd}}{LM_{n}}\right)\\
 & \leq\frac{3}{2}\log\left(\sqrt{\frac{4R^{2}nd}{L^{2}M_{n}^{2}}}\right)\\
 & \leq\frac{3}{4}\log\left(\frac{4R^{2}nd}{L^{2}M_{n}^{2}}\right).
\end{align*}
This finally gives the result.
\end{proof}
\begin{lem}
\label{lem:control_E(G_trunc)}With $G\left(X\right):=\left\Vert X\right\Vert _{\infty}$
and $a$ and $X$ defined in the section Notations: if $M_{n}=\left\Vert a\right\Vert _{\infty}+\sqrt{2\log d}+\sqrt{2\log n}$
then
\[
\mathbb{E}\left(G\left(X\right)I_{\left\{ G\left(X\right)>M_{n}\right\} }\right)\leq2\left(M_{n}+1\right)\frac{e^{-2\sqrt{\log d\log\left(1+n\right)}}}{n}
\]
and 

\[
\mathbb{E}\left(G\left(X\right)^{2}I_{\left\{ G\left(X\right)>M_{n}\right\} }\right)\leq2\left(M_{n}^{2}+\left\Vert a\right\Vert _{\infty}+1\right)\frac{e^{-2\sqrt{\log d\log\left(1+n\right)}}}{n}.
\]
\end{lem}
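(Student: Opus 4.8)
The plan is to reduce everything to the sup-norm of the underlying Gaussian vector and then to a one-dimensional tail computation. First I would observe that, since $X=\varepsilon Z$ with $\varepsilon\in\{-1,1\}$ and $Z\sim\mathcal{N}(a,I_d)$, one has $\|X\|_\infty=\|\varepsilon Z\|_\infty=\|Z\|_\infty$ pointwise; hence, writing $W:=\|Z\|_\infty$, it suffices to bound $\mathbb{E}[W\,I_{\{W>M_n\}}]$ and $\mathbb{E}[W^2 I_{\{W>M_n\}}]$. Setting $b:=\|a\|_\infty$ and $c:=M_n-b=\sqrt{2\log d}+\sqrt{2\log(1+n)}$ (the $M_n$ of Theorem \ref{thm:main_result}), the whole computation hinges on the elementary expansion $(M_n-b)^2=2\log d+2\log(1+n)+4\sqrt{\log d\,\log(1+n)}$, which gives the key relation
\[
d\,e^{-(M_n-b)^2/2}=\frac{1}{1+n}\,e^{-2\sqrt{\log d\,\log(1+n)}}.
\]

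Next I would invoke the layer-cake identities, valid for any nonnegative $W$ and threshold $M$:
\[
\mathbb{E}[W\,I_{\{W>M\}}]=M\,\mathbb{P}(W>M)+\int_M^\infty \mathbb{P}(W>t)\,dt,\qquad
\mathbb{E}[W^2 I_{\{W>M\}}]=M^2\,\mathbb{P}(W>M)+\int_M^\infty 2t\,\mathbb{P}(W>t)\,dt.
\]
To control the tail $\mathbb{P}(W>t)$ for $t\geq M_n$, I would use a union bound over coordinates together with a Gaussian tail estimate. Since each $Z_i\sim\mathcal{N}(a_i,1)$ with $|a_i|\leq b$, one has $\mathbb{P}(|Z_i|>t)\leq 2\Phi^c(t-b)$ for $t\geq b$, hence $\mathbb{P}(W>t)\leq 2d\,\Phi^c(t-b)$; the bound $\Phi^c(u)\leq\tfrac12 e^{-u^2/2}$, which follows from the Mill's ratio estimate of Proposition \ref{prop:control_of_G}, then yields $\mathbb{P}(W>t)\leq d\,e^{-(t-b)^2/2}$ for all $t\geq b$.

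Finally I would substitute $u=t-b$ and integrate, using $\int_c^\infty u\,e^{-u^2/2}\,du=e^{-c^2/2}$ and $\int_c^\infty e^{-u^2/2}\,du\leq c^{-1}e^{-c^2/2}$ (again from Proposition \ref{prop:control_of_G}, since Mill's ratio is at most $c^{-1}$), noting that $c>1$ for $n\geq2$. For the first moment this produces a prefactor $M_n+c^{-1}\leq M_n+1$ multiplying $d\,e^{-c^2/2}$; for the second moment, writing $t=u+b$ inside the integral gives the prefactor $M_n^2+2+2b/c\leq M_n^2+2b+2$. Applying the key relation together with $\frac{1}{1+n}\leq\frac2n$ and the elementary bound $M_n^2+2b+2\leq 2(M_n^2+b+1)$ then delivers exactly
\[
\mathbb{E}[W\,I_{\{W>M_n\}}]\leq 2(M_n+1)\frac{e^{-2\sqrt{\log d\,\log(1+n)}}}{n},\qquad
\mathbb{E}[W^2 I_{\{W>M_n\}}]\leq 2(M_n^2+\|a\|_\infty+1)\frac{e^{-2\sqrt{\log d\,\log(1+n)}}}{n}.
\]
No step is genuinely hard: the only real care lies in the bookkeeping of the polynomial prefactors and in checking $c>1$ (so the $c^{-1}$ factors are absorbed), which is where I expect the minor friction to be.
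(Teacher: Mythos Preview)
Your proof is correct and follows essentially the same route as the paper: both use the union-bound Gaussian tail estimate $\mathbb{P}(\|X\|_\infty>t)\leq Cd\,e^{-(t-\|a\|_\infty)^2/2}$, combine it with the layer-cake/integration-by-parts identity for $\mathbb{E}[W^k I_{\{W>M\}}]$, and then plug in the expansion of $(M_n-\|a\|_\infty)^2$ to extract the factor $e^{-2\sqrt{\log d\,\log(1+n)}}/(1+n)$. The only cosmetic differences are that you use the sharper constant $d$ (via $\Phi^c(u)\leq\tfrac12 e^{-u^2/2}$) where the paper uses $2d$, and you bound $\int_c^\infty e^{-u^2/2}du$ by Mill's ratio $\leq c^{-1}e^{-c^2/2}$ where the paper multiplies the integrand by $u\geq 1$; both tricks require $c\geq 1$, which you correctly note holds for $n\geq 2$.
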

\begin{proof}
First note that for $y\geq\left\Vert a\right\Vert _{\infty}$,
\begin{align*}
P\left(G\left(X\right)>y\right) & \leq P\left(\max_{j}\left|Z_{j}\right|>y-\left\Vert a\right\Vert _{\infty}\right)\\
 & \leq dP\left(\left|Z_{1}\right|>y-\left\Vert a\right\Vert _{\infty}\right)\\
 & \leq2de^{-\frac{\left(y-\left\Vert a\right\Vert _{\infty}\right)^{2}}{2}}.
\end{align*}
Take now $M\geq\left\Vert a\right\Vert _{\infty}+1$, we have
\begin{align*}
\mathbb{E}\left(G\left(X\right)I_{\left\{ G\left(X\right)>M\right\} }\right) & =-\int_{M}^{\infty}y\frac{dP\left(G\left(X\right)>y\right)}{dy}dy\\
 & =\left[-yP\left(G\left(X\right)>y\right)\right]_{M}^{\infty}+\int_{M}^{\infty}1\times P\left(G\left(X\right)>y\right)dy\\
 & \leq2Mde^{-\frac{\left(M-\left\Vert a\right\Vert _{\infty}\right)^{2}}{2}}+\int_{M}^{\infty}\left(y-\left\Vert a\right\Vert _{\infty}\right)\times P\left(G\left(X\right)>y\right)dy\\
 & =2Mde^{-\frac{\left(M-\left\Vert a\right\Vert _{\infty}\right)^{2}}{2}}+2\int_{M}^{\infty}\left(y-\left\Vert a\right\Vert _{\infty}\right)\times de^{-\frac{\left(y-\left\Vert a\right\Vert _{\infty}\right)^{2}}{2}}dy\\
 & =2Mde^{-\frac{\left(M-\left\Vert a\right\Vert _{\infty}\right)^{2}}{2}}+2\left[-de^{-\frac{\left(y-\left\Vert a\right\Vert _{\infty}\right)^{2}}{2}}\right]_{M}^{\infty}\\
 & =2Mde^{-\frac{\left(M-\left\Vert a\right\Vert _{\infty}\right)^{2}}{2}}+2de^{-\frac{\left(M-\left\Vert a\right\Vert _{\infty}\right)^{2}}{2}}\\
 & =2\left(M+1\right)de^{-\frac{\left(M-\left\Vert a\right\Vert _{\infty}\right)^{2}}{2}}
\end{align*}
and

{\small{}
\begin{align*}
\mathbb{E}\left(G\left(X\right)^{2}I_{\left\{ G\left(X\right)>M\right\} }\right) & =-\int_{M}^{\infty}y^{2}\frac{dP\left(G\left(X\right)>y\right)}{dy}dy\\
 & =\left[-y^{2}P\left(G\left(X\right)>y\right)\right]_{M}^{\infty}+\int_{M}^{\infty}y\times P\left(G\left(X\right)>y\right)dy\\
 & =2M^{2}de^{-\frac{\left(M-\left\Vert a\right\Vert _{\infty}\right)^{2}}{2}}+\int_{M}^{\infty}\left(y-\left\Vert a\right\Vert _{\infty}\right)\times P\left(G\left(X\right)>y\right)dy+\int_{M}^{\infty}\left\Vert a\right\Vert _{\infty}\times P\left(G\left(X\right)>y\right)dy\\
 & \leq2M^{2}de^{-\frac{\left(M-\left\Vert a\right\Vert _{\infty}\right)^{2}}{2}}+2de^{-\frac{\left(M-\left\Vert a\right\Vert _{\infty}\right)^{2}}{2}}+2\left\Vert a\right\Vert _{\infty}\int_{M}^{\infty}\underset{\geq1}{\underbrace{\left(y-\left\Vert a\right\Vert _{\infty}\right)}}\times de^{-\frac{\left(y-\left\Vert a\right\Vert _{\infty}\right)^{2}}{2}}dy\\
 & \leq M^{2}de^{-\frac{\left(M-\left\Vert a\right\Vert _{\infty}\right)^{2}}{2}}+de^{-\frac{\left(M-\left\Vert a\right\Vert _{\infty}\right)^{2}}{2}}+2\left\Vert a\right\Vert _{\infty}de^{-\frac{\left(M-\left\Vert a\right\Vert _{\infty}\right)^{2}}{2}}\\
 & =2\left(M^{2}+\left\Vert a\right\Vert _{\infty}+1\right)de^{-\frac{\left(M-\left\Vert a\right\Vert _{\infty}\right)^{2}}{2}}.
\end{align*}
}Hence, for $M_{n}:=\left\Vert a\right\Vert _{\infty}+\sqrt{2\log d}+\sqrt{2\log\left(1+n\right)}\geq\left\Vert a\right\Vert _{\infty}+1$,
we have
\begin{align*}
\mathbb{E}\left(G\left(X\right)I_{\left\{ G\left(X\right)>M_{n}\right\} }\right) & \leq2\left(M_{n}+1\right)de^{-\frac{\left(M_{n}-\left\Vert a\right\Vert _{\infty}\right)^{2}}{2}}\\
 & \leq2\left(M_{n}+1\right)de^{-\frac{\left(\sqrt{2\log d}+\sqrt{2\log\left(1+n\right)}\right)^{2}}{2}}\\
 & \leq2\left(M_{n}+1\right)de^{-\log d-2\sqrt{\log d\log\left(1+n\right)}-\log\left(1+n\right)}\\
 & \leq2\left(M_{n}+1\right)\frac{e^{-2\sqrt{\log d\log\left(1+n\right)}}}{1+n}\\
 & \leq2\left(M_{n}+1\right)\frac{e^{-2\sqrt{\log d\log\left(1+n\right)}}}{n}
\end{align*}
and
\begin{align*}
\mathbb{E}\left(G\left(X\right)^{2}I_{\left\{ G\left(X\right)>M_{n}\right\} }\right) & \leq2\left(M_{n}^{2}+\left\Vert a\right\Vert _{\infty}+1\right)de^{-\frac{\left(M_{n}-\left\Vert a\right\Vert _{\infty}\right)^{2}}{2}}\\
 & =2\left(M_{n}^{2}+\left\Vert a\right\Vert _{\infty}+1\right)\frac{e^{-2\sqrt{\log d\log\left(1+n\right)}}}{1+n}\\
 & \leq2\left(M_{n}^{2}+\left\Vert a\right\Vert _{\infty}+1\right)\frac{e^{-2\sqrt{\log d\log\left(1+n\right)}}}{n}.
\end{align*}
\end{proof}
\begin{lem}
\label{lem:control_of_F(X)} Assume that $\left\Vert a\right\Vert _{2}\geq2R\approx2.548$.
Set 
\[
F\left(X\right):=G\left(X\right)I_{\left\{ G\left(X\right)>M_{n}\right\} }+\mathbb{E}\left[G\left(X\right)I_{\left\{ G\left(X\right)>M_{n}\right\} }\right],
\]
where $G\left(X\right)=\left\Vert X\right\Vert _{\infty}$. Moreover,
take the following constants: $M_{n}:=\left\Vert a\right\Vert _{\infty}+\sqrt{2\log d}+\sqrt{2\log\left(1+n\right)}$,
$\lambda_{0}:=3LM_{n}n^{-1/2}\left(5\sqrt{3\log\left(2d\right)}\log n+4\right)$.
It holds: $\forall T>0$,
\[
P\left(\frac{1}{n}\sum_{i=1}^{n}F\left(X^{(i)}\right)\geq\frac{\lambda_{0}T}{L}\right)\leq4\frac{L^{2}}{\lambda_{0}^{2}T^{2}}\frac{M_{n}^{2}+\left\Vert a\right\Vert _{\infty}+1}{n^{2}}.
\]
\end{lem}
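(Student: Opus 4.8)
The plan is to treat $\frac{1}{n}\sum_{i=1}^{n}F(X^{(i)})$ as a nonnegative random variable and apply Markov's inequality to its \emph{square}; the quadratic bound is exactly what produces the $T^{-2}$ and $n^{-2}$ decay in the statement. First I would record the structural observation that, writing $W:=G(X)I_{\{G(X)>M_{n}\}}\geq0$ and $\mu:=\mathbb{E}[G(X)I_{\{G(X)>M_{n}\}}]$, the definition of $F$ gives $F(X)=W+\mu$, so that with $W_{i}:=G(X^{(i)})I_{\{G(X^{(i)})>M_{n}\}}$ and $\overline{W}_{n}:=\frac{1}{n}\sum_{i=1}^{n}W_{i}$ one has $\frac{1}{n}\sum_{i=1}^{n}F(X^{(i)})=\overline{W}_{n}+\mu\geq0$. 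For $t:=\lambda_{0}T/L>0$ Markov's inequality applied to the square then yields
\[
P\!\left(\frac{1}{n}\sum_{i=1}^{n}F(X^{(i)})\geq t\right)=P\!\left(\big(\overline{W}_{n}+\mu\big)^{2}\geq t^{2}\right)\leq\frac{\mathbb{E}\big[(\overline{W}_{n}+\mu)^{2}\big]}{t^{2}}=\frac{L^{2}}{\lambda_{0}^{2}T^{2}}\,\mathbb{E}\big[(\overline{W}_{n}+\mu)^{2}\big],
\]
which already isolates the announced prefactor $L^{2}/(\lambda_{0}^{2}T^{2})$.

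Next I would compute the second moment. Since the $W_{i}$ are i.i.d.\ with mean $\mu$, one has $\mathbb{E}[\overline{W}_{n}]=\mu$ and $\mathbb{V}(\overline{W}_{n})=\mathbb{V}(W)/n$, hence
\[
\mathbb{E}\big[(\overline{W}_{n}+\mu)^{2}\big]=\mathbb{V}(\overline{W}_{n})+\big(\mathbb{E}[\overline{W}_{n}]+\mu\big)^{2}=\frac{\mathbb{V}(W)}{n}+4\mu^{2}\leq\frac{\mathbb{E}[W^{2}]}{n}+4\mu^{2}.
\]
Here $\mathbb{E}[W^{2}]=\mathbb{E}[G(X)^{2}I_{\{G(X)>M_{n}\}}]$ and $\mu=\mathbb{E}[G(X)I_{\{G(X)>M_{n}\}}]$ are exactly the two truncated moments controlled by Lemma~\ref{lem:control_E(G_trunc)}: writing $E:=e^{-2\sqrt{\log d\,\log(1+n)}}\leq1$, that lemma gives $\mathbb{E}[W^{2}]\leq2(M_{n}^{2}+\|a\|_{\infty}+1)E/n$ and $\mu\leq2(M_{n}+1)E/n$. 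Substituting, the leading term $\mathbb{E}[W^{2}]/n\leq2(M_{n}^{2}+\|a\|_{\infty}+1)E/n^{2}$ is of the exact order appearing on the right-hand side, while $4\mu^{2}\leq16(M_{n}+1)^{2}E^{2}/n^{2}$ carries the extra factor $E^{2}$ and is genuinely of lower order.

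Assembling these estimates gives
\[
\mathbb{E}\big[(\overline{W}_{n}+\mu)^{2}\big]\leq\frac{E}{n^{2}}\Big(2(M_{n}^{2}+\|a\|_{\infty}+1)+16(M_{n}+1)^{2}E\Big),
\]
and it remains to absorb both contributions into the single constant $4$. Applying $E\leq1$ to the first summand consumes half of the budget; \textbf{the main obstacle} is the quadratic term $16(M_{n}+1)^{2}E^{2}$, whose $(M_{n}+1)^{2}$ growth must be dominated by the remaining $2(M_{n}^{2}+\|a\|_{\infty}+1)$. This is precisely where the \emph{square} of the exponential factor is decisive: in the high-dimensional regime $\log d\,\log(1+n)$ is bounded away from $0$ for $n\geq2$, so $E^{2}$ is small enough that $16(M_{n}+1)^{2}E^{2}\leq2(M_{n}^{2}+\|a\|_{\infty}+1)$, giving $\mathbb{E}[(\overline{W}_{n}+\mu)^{2}]\leq4(M_{n}^{2}+\|a\|_{\infty}+1)/n^{2}$. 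Combined with the Markov step this yields exactly
\[
P\!\left(\frac{1}{n}\sum_{i=1}^{n}F(X^{(i)})\geq\frac{\lambda_{0}T}{L}\right)\leq4\,\frac{L^{2}}{\lambda_{0}^{2}T^{2}}\,\frac{M_{n}^{2}+\|a\|_{\infty}+1}{n^{2}}.
\]
Everything but the bookkeeping of the $\mu^{2}$ term is a direct application of the second-moment method together with Lemma~\ref{lem:control_E(G_trunc)}.
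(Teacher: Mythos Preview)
Your second--moment route is different from the paper's and is appealing because it never touches the explicit form of $\lambda_{0}$; that very feature, however, is where it breaks. The paper recenters and applies Chebyshev: writing $\mu=\mathbb{E}[G(X)I_{\{G(X)>M_{n}\}}]$ one has $P(\overline{W}_{n}+\mu\geq t)=P(\overline{W}_{n}-\mu\geq t-2\mu)\leq \mathbb{E}[W^{2}]/[n(t-2\mu)^{2}]$, and then uses the \emph{specific} value $\lambda_{0}=3LM_{n}n^{-1/2}(5\sqrt{3\log(2d)}\log n+4)$ together with Lemma~\ref{lem:control_E(G_trunc)} to show that $2\mu\leq t/2$, so that the denominator $(t-2\mu)^{2}$ is comparable to $t^{2}$. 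The shift $\mu$ is thus handled through the threshold, not through the second moment.

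In your argument the shift appears as $4\mu^{2}$ inside $\mathbb{E}[(\overline{W}_{n}+\mu)^{2}]$, and you claim $16(M_{n}+1)^{2}E^{2}\leq 2(M_{n}^{2}+\|a\|_{\infty}+1)$ with $E=e^{-2\sqrt{\log d\,\log(1+n)}}$. This is exactly where there is a gap: for $d=1$ one has $\log d=0$, hence $E=1$, and the inequality collapses to $8(M_{n}+1)^{2}\leq M_{n}^{2}+\|a\|_{\infty}+1$, which is false (the left side is of order $8M_{n}^{2}$). Your sentence ``in the high-dimensional regime $\log d\,\log(1+n)$ is bounded away from $0$'' silently excludes $d=1$, but neither the lemma nor the paper does; and the paper's Chebyshev argument goes through for $d=1$ precisely because $\lambda_{0}$ carries the needed $M_{n}/\sqrt{n}$ scale. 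So the missing idea is that the constant $4$ is not attainable from a pure second-moment bound on $\overline{W}_{n}+\mu$; one must exploit that the particular threshold $\lambda_{0}T/L$ already dominates $2\mu$. If you insert that step (i.e.\ verify $4L(M_{n}+1)e^{-2\sqrt{\log d\log(1+n)}}/(n\lambda_{0}T)\leq 1/2$, which uses the explicit $\lambda_{0}$), you recover the paper's bound.
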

\begin{proof}
Note that with our choice of $\lambda_{0}$, we have by Lemma \ref{lem:control_E(G_trunc)}:$\lambda_{0}T/L\geq2\mathbb{E}\left[G\left(X\right)I_{\left\{ G\left(X\right)>M_{n}\right\} }\right]$.
Hence,{\scriptsize{}
\begin{align*}
P\left(\frac{1}{n}\sum_{i=1}^{n}F\left(X^{(i)}\right)\geq\frac{\lambda_{0}T}{L}\right) & =P\left(\frac{1}{n}\sum_{i=1}^{n}\left(G\left(X^{(i)}\right)I_{\left\{ G\left(X^{(i)}\right)>M_{n}\right\} }+\mathbb{E}\left[G\left(X\right)I_{\left\{ G\left(X\right)>M_{n}\right\} }\right]\right)\geq\frac{\lambda_{0}T}{L}\right)\\
 & =P\left(\frac{1}{n}\sum_{i=1}^{n}\left(G\left(X^{(i)}\right)I_{\left\{ G\left(X^{(i)}\right)>M_{n}\right\} }-\mathbb{E}\left[G\left(X\right)I_{\left\{ G\left(X\right)>M_{n}\right\} }\right]\right)\geq\frac{\lambda_{0}T}{L}-2\mathbb{E}\left[G\left(X\right)I_{\left\{ G\left(X\right)>M_{n}\right\} }\right]\right)\\
 & \leq\frac{\mathbb{V}\left(\frac{1}{n}\sum_{i=1}^{n}G\left(X^{(i)}\right)I_{\left\{ G\left(X^{(i)}\right)>M_{n}\right\} }\right)}{\left(\frac{\lambda_{0}T}{L}-2\mathbb{E}\left[G\left(X\right)I_{\left\{ G\left(X\right)>M_{n}\right\} }\right]\right)^{2}}\\
 & \leq\frac{\mathbb{E}\left(G\left(X\right)^{2}I_{\left\{ G\left(X\right)>M_{n}\right\} }\right)}{n\left(\frac{\lambda_{0}T}{L}-2\mathbb{E}\left[G\left(X\right)I_{\left\{ G\left(X\right)>M_{n}\right\} }\right]\right)^{2}}.
\end{align*}
}From Lemma~\ref{lem:control_E(G_trunc)}, we get

\begin{align*}
P\left(\frac{1}{n}\sum_{i=1}^{n}F\left(X^{(i)}\right)\geq\frac{\lambda_{0}T}{L}\right) & \leq\frac{\mathbb{E}\left(G\left(X\right)^{2}I_{\left\{ G\left(X\right)>M_{n}\right\} }\right)}{n\left(\frac{\lambda_{0}T}{L}-2\mathbb{E}\left[G\left(X\right)I_{\left\{ G\left(X\right)>M_{n}\right\} }\right]\right)^{2}}.\\
 & \leq\frac{2\left(M_{n}^{2}+\left\Vert a\right\Vert _{\infty}+1\right)\frac{e^{-2\sqrt{\log d\log\left(1+n\right)}}}{n}}{n\left(\frac{\lambda_{0}T}{L}-4\left(M_{n}+1\right)\frac{e^{-2\sqrt{\log d\log\left(1+n\right)}}}{n}\right)^{2}}\\
 & \leq2L^{2}\frac{M_{n}^{2}+\left\Vert a\right\Vert _{\infty}+1}{n^{2}\lambda_{0}^{2}T^{2}}\frac{e^{-2\sqrt{\log d\log\left(1+n\right)}}}{\left(1-4L\frac{M_{n}+1}{n\lambda_{0}T}e^{-2\sqrt{\log d\log\left(1+n\right)}}\right)^{2}}.
\end{align*}
It holds, for $n\geq2$,

\begin{align*}
L\frac{M_{n}+1}{n\lambda_{0}T}e^{-2\sqrt{\log d\log\left(1+n\right)}} & \leq L\frac{M_{n}+1}{n\lambda_{0}}\\
 & =L\frac{M_{n}+1}{nT.\frac{3LM_{n}\left(5\sqrt{3\log\left(2d\right)}\log n+4\right)}{\sqrt{n}}}\\
 & =\frac{1+\frac{1}{M_{n}}}{3\sqrt{n}T\left(5\sqrt{3\log\left(2d\right)}\log n+4\right)}\\
 & \leq\frac{1}{3}\frac{1+\frac{1}{\sqrt{2\log3}}}{\sqrt{2}\left(5\sqrt{3\log2}.\log2+4\right)}\\
 & <\frac{1}{8}.
\end{align*}
Finally, we conclude that 
\begin{align*}
P\left(\frac{1}{n}\sum_{i=1}^{n}F\left(X^{(i)}\right)\geq\frac{\lambda_{0}T}{L}\right) & <4L^{2}\frac{M_{n}^{2}+\left\Vert a\right\Vert _{\infty}+1}{n^{2}\lambda_{0}^{2}T^{2}}.
\end{align*}
\end{proof}
\begin{lem}
\label{lem:control_V_tail}Recall from Lemma~\ref{lem:control_empirical_process_trunc}
that $V_{n}^{trunc}\left(\beta\right)=(P_{n}-P)(\rho_{\beta}I_{\left\{ G\leq M_{n}\right\} })$
and $V_{n}\left(\beta\right)=\left(P_{n}-P\right)\rho_{\beta}$. Recall
also from Lemma~\ref{lem:control_of_F(X)} that $F\left(X\right)=G\left(X\right)I_{\left\{ G\left(X\right)>M_{n}\right\} }+\mathbb{E}\left[G\left(X\right)I_{\left\{ G\left(X\right)>M_{n}\right\} }\right]$
with $G\left(X\right)=\left\Vert X\right\Vert _{\infty}$. It holds
true that $\forall T\geq1$,
\[
P\left(\sup_{\beta\in B_{2}\left(0,R\right)}\frac{\left|V_{n}^{trunc}\left(\beta\right)-V_{n}^{trunc}\left(\beta_{0}\right)-\left(V_{n}\left(\beta\right)-V_{n}\left(\beta_{0}\right)\right)\right|}{\left\Vert \beta-\beta_{0}\right\Vert _{1}\lor\lambda_{0}}>T\lambda_{0}\right)\leq P\left(\frac{1}{n}\sum_{i=1}^{n}F\left(X^{(i)}\right)>\frac{T\lambda_{0}}{L}\right).
\]
\end{lem}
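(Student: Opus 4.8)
The plan is to reduce the statement to a single deterministic, uniform-in-$\beta$ pointwise inequality, after which the claimed probability bound follows from a trivial inclusion of events. Concretely, I would aim to show that on the whole probability space,
\[
\sup_{\beta\in B_{2}\left(0,R\right)}\frac{\left|V_{n}^{trunc}\left(\beta\right)-V_{n}^{trunc}\left(\beta_{0}\right)-\left(V_{n}\left(\beta\right)-V_{n}\left(\beta_{0}\right)\right)\right|}{\left\Vert \beta-\beta_{0}\right\Vert _{1}\lor\lambda_{0}}\leq\frac{L}{n}\sum_{i=1}^{n}F\left(X^{(i)}\right).
\]
Granting this bound, the event whose probability we must control is contained in $\left\{ \frac{1}{n}\sum_{i}F\left(X^{(i)}\right)>T\lambda_{0}/L\right\} $, and taking probabilities finishes the argument at once.

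The first step is to rewrite the process difference using the tail indicator. Since by definition $V_{n}\left(\beta\right)-V_{n}^{trunc}\left(\beta\right)=\left(P_{n}-P\right)\left(\rho_{\beta}I_{\left\{ G>M_{n}\right\} }\right)$, subtracting the same identity at $\beta_{0}$ gives
\[
V_{n}^{trunc}\left(\beta\right)-V_{n}^{trunc}\left(\beta_{0}\right)-\left(V_{n}\left(\beta\right)-V_{n}\left(\beta_{0}\right)\right)=-\left(P_{n}-P\right)\left(\left(\rho_{\beta}-\rho_{\beta_{0}}\right)I_{\left\{ G>M_{n}\right\} }\right).
\]
I would then bound the absolute value of this centered average by splitting $\left|\left(P_{n}-P\right)(\cdot)\right|$ into an empirical part and an expectation part via the triangle inequality, and control the integrand using that $\rho$ is $L$-Lipschitz together with H\"older's inequality: $\left|\rho_{\beta}\left(X\right)-\rho_{\beta_{0}}\left(X\right)\right|\leq L\left|X^{t}\left(\beta-\beta_{0}\right)\right|\leq L\left\Vert X\right\Vert _{\infty}\left\Vert \beta-\beta_{0}\right\Vert _{1}=LG\left(X\right)\left\Vert \beta-\beta_{0}\right\Vert _{1}$.

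The decisive bookkeeping step is to observe that the common factor $L\left\Vert \beta-\beta_{0}\right\Vert _{1}$ factors out, leaving exactly
\[
\frac{1}{n}\sum_{i=1}^{n}G\left(X^{(i)}\right)I_{\left\{ G\left(X^{(i)}\right)>M_{n}\right\} }+\mathbb{E}\left[G\left(X\right)I_{\left\{ G\left(X\right)>M_{n}\right\} }\right]=\frac{1}{n}\sum_{i=1}^{n}F\left(X^{(i)}\right),
\]
where the last equality holds because the constant expectation term, summed $n$ times and divided by $n$, reproduces itself, matching the definition of $F$. Dividing by $\left\Vert \beta-\beta_{0}\right\Vert _{1}\lor\lambda_{0}\geq\left\Vert \beta-\beta_{0}\right\Vert _{1}$ and using $\left\Vert \beta-\beta_{0}\right\Vert _{1}/(\left\Vert \beta-\beta_{0}\right\Vert _{1}\lor\lambda_{0})\leq1$ erases all dependence on $\beta$, which is what makes the estimate survive the supremum.

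I do not expect any genuine analytic obstacle: the whole argument is a deterministic pointwise estimate, the only probabilistic content being the final inclusion of events. The single point demanding care is the centering term — one must retain the expectation of the tail rather than discard it, since it is precisely the second summand defining $F$; writing $\left(P_{n}-P\right)$ out explicitly and applying the triangle inequality to its two pieces separately makes this transparent and avoids losing a constant.
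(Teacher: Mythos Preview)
Your proposal is correct and matches the paper's proof essentially step for step: rewrite the difference via the tail indicator, split $\left(P_{n}-P\right)$ by the triangle inequality, apply the Lipschitz bound on $\rho$ together with H\"older to pull out $L\left\Vert \beta-\beta_{0}\right\Vert _{1}$, recognize the remaining sum as $\frac{1}{n}\sum_{i}F\left(X^{(i)}\right)$, and conclude by event inclusion. Your explicit remark that dividing by $\left\Vert \beta-\beta_{0}\right\Vert _{1}\lor\lambda_{0}$ kills the $\beta$-dependence and hence survives the supremum is, if anything, slightly more explicit than the paper's ``the result directly follows.''
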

\begin{proof}
Basic computations and H{\"o}lder's inequality give
\begin{align*}
 & \left|V_{n}^{trunc}\left(\beta\right)-V_{n}^{trunc}\left(\beta_{0}\right)-\left(V_{n}\left(\beta\right)-V_{n}\left(\beta_{0}\right)\right)\right|\\
 & =\left|\left(P_{n}-P\right)\left(\rho_{\beta}I_{\left\{ G>M_{n}\right\} }\right)-\left(P_{n}-P\right)\left(\rho_{\beta_{0}}I_{\left\{ G>M_{n}\right\} }\right)\right|\\
 & \leq\left|P_{n}\left[\left(\rho_{\beta}-\rho_{\beta_{0}}\right)I_{\left\{ G>M_{n}\right\} }\right]\right|+\left|P\left[\left(\rho_{\beta}\left(X\right)-\rho_{\beta_{0}}\left(X\right)\right)I_{\left\{ G\left(X\right)>M_{n}\right\} }\right]\right|\\
 & \leq\frac{1}{n}\sum_{i=1}^{n}L\left|X^{(i)}\left(\beta-\beta_{0}\right)\right|I_{\left\{ G\left(X^{(i)}\right)>M_{n}\right\} }+\mathbb{E}\left[L\left|X\left(\beta-\beta_{0}\right)\right|I_{\left\{ G\left(X\right)>M_{n}\right\} }\right]\\
 & \leq L\left(\frac{1}{n}\sum_{i=1}^{n}\left\Vert X^{(i)}\right\Vert _{\infty}I_{\left\{ G\left(X^{(i)}\right)>M_{n}\right\} }\left\Vert \beta-\beta_{0}\right\Vert _{1}+\mathbb{E}\left[\left\Vert X\right\Vert _{\infty}I_{\left\{ G\left(X\right)>M_{n}\right\} }\right]\left\Vert \beta-\beta_{0}\right\Vert _{1}\right)\\
 & \leq\frac{L\left\Vert \beta-\beta_{0}\right\Vert _{1}}{n}\sum_{i=1}^{n}F\left(X^{(i)}\right),
\end{align*}
and the result directly follows.
\end{proof}
\bibliographystyle{plain}
\bibliography{biblio_clustering_grande_dim,mybibfile,Slope_heuristics_regression_13,bibliKMOM}

\end{document}